\DeclareFontFamily{OT1}{pzc}{}
\DeclareFontShape{OT1}{pzc}{m}{it}{<-> s * [1.10] pzcmi7t}{}
\DeclareMathAlphabet{\mathpzc}{OT1}{pzc}{m}{it}
\crefname{defin}{Definition}{Definitions}
\crefname{eg}{Example}{Examples}
\crefname{egs}{Examples}{Examples}
\crefname{convention}{Convention}{Convention}
\crefname{lem}{Lemma}{Lemmas}
\crefname{prop}{Proposition}{Propositions}
\crefname{theo}{Theorem}{Theorems}
\crefname{equation}{}{}
\crefname{enumi}{}{}
\newcommand\C{\mathbb{C}}
\newcommand\N{\mathbb{N}}
\newcommand\OO{\mathbb{O}}
\newcommand\lOO{\widetilde{\mathbf{O}}}
\newcommand\rOO{\mathbf{O}}
\newcommand\Z{\mathbb{Z}}
\newcommand\kk{\Bbbk}
\newcommand\one{\mathbbm{1}}
\newcommand\ba{\mathbf{a}}
\newcommand\op{\mathrm{op}}
\newcommand\rev{\mathrm{rev}}
\newcommand\Laurent[1]{(\! ( #1 )\! )}  
\newcommand\Taylor[1]{\llbracket #1 \rrbracket}
\newcommand\AB{\mathpzc{AB}}            
\newcommand\AK{\mathpzc{AK}}            
\newcommand\cA{\mathcal{A}}
\newcommand\CB{\mathpzc{CB}}            
\newcommand\CK{\mathpzc{CK}}            
\newcommand\cC{\mathcal{C}}
\newcommand\cEnd{\mathpzc{End}}
\newcommand\cR{\mathcal{R}}
\newcommand\cI{\mathcal{I}}             
\newcommand\cJ{\mathcal{J}}             
\newcommand\go{{\mathsf{B}}}            
\newcommand\gok{{\mathsf{K}}}           
\DeclareMathOperator{\End}{End}
\DeclareMathOperator{\id}{id}
\tikzset{anchorbase/.style={>=To,baseline={([yshift=-0.5ex]current bounding box.center)}}}
\tikzset{wipe/.style={white,line width=3pt}}
\tikzset{ 
    centerzero/.style={>=To,baseline={([yshift=-0.5ex](#1))}},
    centerzero/.default={0,0}
}
\tikzset{module/.style={green!70!black, thick}}
\tikzset{alg/.style={very thick,blue}}
\newcommand\braidup{to[out=up,in=down]}
\newcommand\braiddown{to[out=down,in=up]}
\newcommand\dotlabel[1]{$\scriptstyle{#1}$}
\newcommand\strandlabel[1]{$\scriptstyle{#1}$}
\newcommand\botlabel[1]{node[anchor=north] {\strandlabel{#1}}}
\newcommand\toplabel[1]{node[anchor=south] {\strandlabel{#1}}}
\newcommand{\coupon}[3][0.15]{
    \filldraw[draw=black,fill=white] (#2) circle (#1);
    \node at (#2) {$\scriptscriptstyle{#3}$}
}
\newcommand\singdot[2][white]{
    \filldraw[fill=#1, draw=black] (#2) circle (1.5pt)
}
\newcommand\multdot[4][white]{
    \filldraw[fill=#1, draw=black] (#2) circle (1.5pt) node[anchor=#3] {\dotlabel{#4}}
}
\newcommand\uptriforce[2][white]{
    \filldraw[fill=#1, draw=black] (#2)++(0,0.075) to ++(0.06,-0.15) to ++(-0.12,0) -- cycle
}
\newcommand\downtriforce[2][white]{
    \filldraw[fill=#1, draw=black] (#2)++(0,-0.075) to ++(0.06,0.15) to ++(-0.12,0) -- cycle
}
\newcommand\bub[1]{
    \draw (#1)++(0,0.2) arc(90:-270:0.2)
}
\newcommand\multbubr[3][white]{
    \draw (#2)++(0,0.2) arc(90:-270:0.2);
    \filldraw[fill=#1, draw=black] (#2)++(+0.2,0) circle (1.5pt) node[anchor=west] {\dotlabel{#3}}
}
\newcommand\bubgenr[3][white]{
    \draw (#3)++(0,0.2) arc(90:-270:0.2);
    \node at (#3) {\dotlabel{#2}};
    \filldraw[fill=#1, draw=black] (#3)++(0.2,0) circle (1.5pt)
}
\newcommand\bubgenl[3][white]{
    \draw (#3)++(0,0.2) arc(90:-270:0.2);
    \node at (#3) {\dotlabel{#2}};
    \filldraw[fill=#1, draw=black] (#3)++(-0.2,0) circle (1.5pt)
}
\newcommand\uptribubr[2][white]{
    \bub{#2};
    \filldraw[fill=#1, draw=black] (#2)++(0.2,0.075) to ++(0.06,-0.15) to ++(-0.12,0) -- cycle
}
\newcommand\uptribubl[2][white]{
    \bub{#2};
    \filldraw[fill=#1, draw=black] (#2)++(-0.2,0.075) to ++(0.06,-0.15) to ++(-0.12,0) -- cycle
}
\newcommand\downtribubr[2][white]{
    \bub{#2};
    \filldraw[fill=#1, draw=black] (#2)++(0.2,-0.075) to ++(0.06,0.15) to ++(-0.12,0) -- cycle
}
\newcommand\downtribubl[2][white]{
    \bub{#2};
    \filldraw[fill=#1, draw=black] (#2)++(-0.2,-0.075) to ++(0.06,0.15) to ++(-0.12,0) -- cycle
}
\newcommand\bubble{%
    \begin{tikzpicture}[centerzero]
        \bub{0,0};
    \end{tikzpicture}
}
\newcommand\multbubbler[2][white]{
    \begin{tikzpicture}[centerzero]
        \multbubr[#1]{0,0}{#2};
    \end{tikzpicture}
}
\newcommand\bubblegenr[2][white]{
    \begin{tikzpicture}[centerzero]
        \bubgenr[#1]{#2}{0,0};
    \end{tikzpicture}
}
\newcommand\bubblegenl[2][white]{
    \begin{tikzpicture}[centerzero]
        \bubgenl[#1]{#2}{0,0};
    \end{tikzpicture}
}
\newcommand\uptribubbler[1][white]{%
    \begin{tikzpicture}[centerzero]
        \uptribubr[#1]{0,0};
    \end{tikzpicture}
}
\newcommand\uptribubblel[1][white]{%
    \begin{tikzpicture}[centerzero]
        \uptribubl[#1]{0,0};
    \end{tikzpicture}
}
\newcommand\downtribubbler[1][white]{%
    \begin{tikzpicture}[centerzero]
        \downtribubr[#1]{0,0};
    \end{tikzpicture}
}
\newcommand\downtribubblel[1][white]{%
    \begin{tikzpicture}[centerzero]
        \downtribubl[#1]{0,0};
    \end{tikzpicture}
}
\newcommand\idstrand{
    \begin{tikzpicture}[centerzero]
        \draw[-] (0,-0.2) -- (0,0.2);
    \end{tikzpicture}
}
\newcommand\dotstrand[1][white]{    
    \begin{tikzpicture}[centerzero]
        \draw (0,-0.2) -- (0,0.2);
        \singdot[#1]{0,0};
    \end{tikzpicture}
}
\newcommand\multdotstrand[3][white]{
    \begin{tikzpicture}[centerzero]
        \draw (0,-0.2) -- (0,0.2);
        \multdot[#1]{0,0}{#2}{#3};
    \end{tikzpicture}
}
\newcommand\crossmor{
    \begin{tikzpicture}[centerzero]
        \draw[-] (0.2,-0.2) -- (-0.2,0.2);
        \draw[-] (-0.2,-0.2) -- (0.2,0.2);
    \end{tikzpicture}
}
\newcommand\poscross{
    \begin{tikzpicture}[centerzero]
        \draw[-] (0.2,-0.2) -- (-0.2,0.2);
        \draw[wipe] (-0.2,-0.2) -- (0.2,0.2);
        \draw[-] (-0.2,-0.2) -- (0.2,0.2);
    \end{tikzpicture}
}
\newcommand\negcross{
    \begin{tikzpicture}[centerzero]
        \draw[-] (-0.2,-0.2) -- (0.2,0.2);
        \draw[wipe] (0.2,-0.2) -- (-0.2,0.2);
        \draw[-] (0.2,-0.2) -- (-0.2,0.2);
    \end{tikzpicture}
}
\newcommand{\cupmor}{
    \begin{tikzpicture}[anchorbase]
        \draw[-] (-0.15,0.15) -- (-0.15,0) arc(180:360:0.15) -- (0.15,0.15);
    \end{tikzpicture}
}
\newcommand{\capmor}{
    \begin{tikzpicture}[anchorbase]
        \draw[-] (-0.15,-0.15) -- (-0.15,0) arc(180:0:0.15) -- (0.15,-0.15);
    \end{tikzpicture}
}
\newtheorem{theo}{Theorem}[section]
\newtheorem{prop}[theo]{Proposition}
\newtheorem{lem}[theo]{Lemma}
\newtheorem{cor}[theo]{Corollary}
\theoremstyle{definition}
\newtheorem{defin}[theo]{Definition}
\newtheorem{rem}[theo]{Remark}
\newtheorem{convention}[theo]{Convention}
\numberwithin{equation}{section}
    \newcommand{\acomments}[1]{
        \ \\
        {\color{red}
            \textbf{AS:} #1
        }
        \ \\
    }
    \newcommand{\bcomments}[1]{
        \ \\
        {\color{purple}
            \textbf{BW:} #1
        }
        \ \\
    }
    \newcommand{\acomments}[1]{}
    \newcommand{\bcomments}[1]{}
    \newcommand{\details}[1]{
        \ \\
        {\color{OliveGreen}
            \textbf{Details:} #1
        }
        \\
    }
    \newcommand{\details}[1]{}
\begin{document}

\title{Bubbles in the affine Brauer and Kauffman categories}

\author{Alistair Savage}
\address[A.S.]{
  Department of Mathematics and Statistics \\
  University of Ottawa \\
  Ottawa, ON, Canada
}
\urladdr{\href{https://alistairsavage.ca}{alistairsavage.ca}, \textrm{\textit{ORCiD}:} \href{https://orcid.org/0000-0002-2859-0239}{orcid.org/0000-0002-2859-0239}}
\email{alistair.savage@uottawa.ca}

\author{Ben Webster}
\address[B.W.]{
    Department of Pure Mathematics, University of Waterloo \&
    Perimeter Institute for Theoretical Physics\\
    Waterloo, ON, Canada
}
\email{ben.webster@uwaterloo.ca}

\begin{abstract}
    We introduce a generating function approach to the affine Brauer and Kauffman categories and show how it allows one to efficiently recover important sets of relations in these categories.  We use this formalism to deduce restrictions on possible categorical actions and show how this recovers admissibility results that have appeared in the literature on cyclotomic Birman--Murakami--Wenzl (BMW) algebras and their degenerate versions, also known as cyclotomic Nazarov--Wenzl algebras or VW algebras.
\end{abstract}

\subjclass[2020]{Primary: 18M05, 18M30; Secondary:17B10, 20G05}

\keywords{Brauer category, Kauffman category, monoidal category, string diagram, Nazarov--Wenzl algebra, Birman--Murakami--Wenzl algebra, Nazarov--Wenzl algebra, VW algebra}

\ifboolexpr{togl{comments} or togl{details}}{%
  {\color{magenta}DETAILS OR COMMENTS ON}
}{%
}

\maketitle
\thispagestyle{empty}


\section{Introduction}

The \emph{affine Brauer category} $\AB$, introduced in \cite{RS19}, and the \emph{affine Kauffman category} $\AK$, introduced in \cite{GRS22}, are diagrammatic monoidal categories related to the invariant theory of the orthogonal and symplectic groups (or, more generally, the orthosymplectic supergroups) and their quantum analogues.  Both categories are generated by a single object, denoted $\go$ or $\gok$, with identity morphism denoted by an unoriented string.  The morphisms are generated by a cup, cap, dot, and crossing, subject to relations given in \cref{brauer,dotmoves} for $\AB$ and in \cref{braid,drops,skein,kauffdot} for $\AK$.
The resulting morphism spaces are spanned by diagrams like
\begin{equation}\label{example-diagram}
    \begin{tikzpicture}[centerzero]
        \draw (0,-0.6) \braidup (0.3,0.6);
        \draw (0.3,-0.6) -- (0.3,-0.4) to[out=up,in=up,looseness=2] (0.6,-0.4) -- (0.6,-0.6);
        \draw (0.9,-0.6) \braidup (1.5,0.6);
        \draw (1.2,-0.6) \braidup (0.6,0.4) -- (0.6,0.6);
        \draw (0,0.6) -- (0,0.4) to[out=down,in=down,looseness=1.5] (0.9,0.4) -- (0.9,0.6);
        \draw (1.2,0.6) -- (1.2,0.4) to[out=down,in=down,looseness=1.5] (1.8,0.4) -- (1.8,0.6);
        \singdot{0,0.4};
        \singdot{0.6,0.4};
        \singdot{1.2,0.4};
    \end{tikzpicture}
    \ .
\end{equation}
This particular diagram represents a morphism $\go^5 \to \go^7$, corresponding to the number of open ends on the bottom and top of the diagram.

The purpose of the current paper is to introduce a generating function formalism for these categories.  A similar technique was introduced in \cite{BSW-HKM} for the Heisenberg category and its quantum analogue, which include the affine \emph{oriented} Brauer category and the affinization of the HOMFLY-PT skein category as special cases.  The generating function approach was used there to develop a precise relationship between those categories and Kac--Moody 2-categories.  In the current paper, this approach allows one to simplify many arguments that have appeared in the study of cyclotomic Birman--Murakami--Wenzl (BMW) algebras and their degenerate analogues.  Cyclotomic BMW algebras were introduced in \cite{Har01} as quotients of BMW algebras.  The degenerate analogues, also known as cyclotomic Nazarov--Wenzl algebras, were introduced in \cite{AMR06} as quotients of the affine Brauer algebras defined in \cite{Naz96} (originally called \emph{affine Wenzl algebras} there).  In \cite{ES18}, the terminology \emph{affine/cyclotomic VW algebras} is used for these degenerate versions.

For the purpose of this introduction, we focus on the affine Brauer category, since our treatment of the affine Kauffman category is parallel.  As the name suggests, the affine oriented Brauer category is spanned by diagrams much like those in \cref{example-diagram}, but equipped with orientations.  This difference arises from the fact that the generating object in the affine Brauer category is self-dual, whereas the generating object in the affine oriented Brauer category is not.  Frequently, we can guess how results in the affine oriented and unoriented Brauer categories are related by considering an unoriented strand as a combination of both possible orientations---in this paper, we will not try to make this observation precise, but it is a useful guide to what to expect.

In the affine oriented Brauer category or, more generally, the Heisenberg category, the endomorphisms of the unit object form a polynomial ring in infinitely many variables---the clockwise dotted bubbles and the counterclockwise dotted bubbles are both sets of free generators for this polynomial ring.  In fact, it is combinatorially more natural  to identify this ring with symmetric functions, sending the clockwise and counterclockwise bubbles to the elementary and complete symmetric functions (up to sign).  These are related by the so-called \emph{infinite grassmannian relation}, which expresses the clockwise dotted bubbles in terms of the counterclockwise ones, or vice versa.

Following the principle articulated above, one expects that the endomorphism ring of the identity in the affine Brauer category is generated by dotted bubbles, but that these are not algebraically independent, since they are related to both clockwise and counterclockwise bubbles.  Indeed, in the affine Brauer category, the bubbles with an even number of dots freely generate the endomorphism algebra as a polynomial ring and the bubbles with odd numbers of dots can be written in terms of them by \cite[Th. ~B \& Lem.~3.4]{RS19}.

For those not used to working with degenerate cyclotomic BMW algebras, the algebraic dependencies can appear quite strange.  Thus, in the affine Brauer category, we propose a different generating set for the endomorphism ring of the identity functor, which we think of as the coefficients of a power series
\[
    \bubblegenr{u}
    = 1 + u^{-1} \bubble{} + u^{-2} \left( \multbubbler{}\!\!\!+ \tfrac{1}{2}\bubble{} \right) + u^{-3} \left( \multbubbler{2}\! + \tfrac{1}{2}\multbubbler{}\!\!\! + \tfrac{1}{4}\bubble{} \right) + \dotsb.
\]
(See \cref{eatz} for the precise definition.)  Using this set of generators simplifies calculations in the affine Brauer category considerably.  In particular, the dependence of odd-degree bubbles on even ones can be expressed simply by
\[
    \begin{tikzpicture}[centerzero]
        \draw (0.3,0) arc(0:360:0.3);
        \node at (0,0) {\dotlabel{-u}};
        \singdot{0.3,0};
    \end{tikzpicture}
    \ \bubblegenr{u}
    = 1.
\]
Thus, in any representation where all bubbles act by scalars, there are algebraic restrictions on the scalars by which they can act.  In particular, if $L$ is any object in a module category and $\End(L)=\kk$ is a field, then $\bubblegenr{u}$ must act by a power series $\OO_L(u) \in \kk \llbracket u^{-1} \rrbracket$.  One of our main results is \cref{hemlock}, which states that
\[
    \OO_L(u) = \frac{\left( (-1)^{\deg m} u - \frac{1}{2} \right) m(-u)}{(u-\frac{1}{2}) m(u)} \in 1 + u^{-1} \kk \llbracket u^{-1} \rrbracket,
\]
where $m$ is the minimal polynomial of the dot.  This is equivalent to the ``admissibility'' conditions that have appeared in the literature on degenerate cyclotomic BMW algebras \cite{AMR06,Goo11}; see \cref{bench}.  In particular, it is clear from existing nondegeneracy results in the literature (in particular, \cite[Th. C]{RS19}) that this is the only possible expression for $\OO_L(u)$; however, we are able to give a simple direct proof using power series, without using any explicit calculation of bases or construction of representations.  \Cref{backhome} gives an analogous result for the affine Kauffman category with a similar proof.    We feel that \cref{hemlock,backhome} give a particularly concise and elegant formulation of the concept of admissibility.

Closely related to the notion of a categorical action is that of a cyclotomic quotient, which is the quotient of the affine Brauer category by a left tensor ideal that specializes the dotted bubbles to scalars and imposes a polynomial relation on the dot.  In \cite[Th.~C]{RS19}, these quotients are studied and a nondegeneracy theorem is proved in the case of $\mathbf{u}$-admissible parameters;  see \cref{sec:admissibleBrauer} for a more detailed discussion of admissibility conditions for parameters.  However, we consider these quotients from a slightly different perspective than the existing literature---rather than fixing the polynomial relation on the dot first and then analyzing compatible choices of the bubble parameters, we fix the bubble parameters first and then study the cyclotomic quotient for different choices of the polynomial relation.  In particular, in \cref{UNAM}, we give an explicit description of the minimal polynomial of the dot in the quotient categories without any assumption of admissibility, and thus identify each cyclotomic Brauer category with a standard one, where the bubble scalars are determined by the minimal polynomial; see \cref{tacos}.  This extends \cite[Th.~C]{RS19} to give a basis for the morphism spaces of the cyclotomic quotient in all cases.  In \cref{sec:cyclotomicKauffman}, we perform a similar analysis of the cyclotomic Kauffman categories of \cite{GRS22}.

In addition to providing a useful formalism for studying the affine Brauer and Kauffman categories, we also expect the generating function approach developed in the current paper to be a key ingredient in relating these categories to categorified iquantum groups, including those defined in \cite{BSWW18,BWW24}, in a manner analogous to the connection between the Heisenberg category and Kac--Moody 2-categories developed in \cite{BSW-HKM}.  This is one of the main motivations for developing such a generating function approach.

\subsection*{Acknowledgements}

This research of A.S.\ was supported by NSERC Discovery Grant RGPIN-2023-03842 and B.W.\ was supported by NSERC Discovery Grants RGPIN-2018-03974 and RGPIN-2024-03760.  This research was supported in part by Perimeter Institute for Theoretical Physics. Research at Perimeter Institute is supported by the Government of Canada through the Department of Innovation, Science and Economic Development Canada and by the Province of Ontario through the Ministry of Research, Innovation and Science.
The authors are grateful for the support and hospitality of the Sydney Mathematical Research Institute (SMRI).  They would also like to thank Jon Brundan, Mengmeng Gao, Hebing Rui, and Linliang Song for helpful comments on an earlier draft of this paper.

\section{The affine Brauer category\label{sec:AB}}

In this section, we recall the definition of the affine Brauer category and introduce the generating function formalism.  We use the generating function approach to recover several important results about the category. We let $\kk$ be an arbitrary commutative ring in which $2$ is invertible.  Throughout this paper, all categories are $\kk$-linear.  We also adopt the convention that $0 \in \N$.

\begin{defin}[{\cite[Def.~1.2]{RS19}}] \label{ABdef}
    The \emph{affine Brauer} category $\AB$ is the strict $\kk$-linear monoidal category generated by an object $\go$ and morphisms
    \[
        \capmor \colon \go \otimes \go \to \one,\quad
        \cupmor \colon \one \to \go \otimes \go,\quad
        \crossmor \colon \go \otimes \go \to \go \otimes \go,\quad
        \dotstrand \colon \go \to \go,
    \]
    subject to the relations
    \begin{gather} \label{brauer}
        \begin{tikzpicture}[centerzero]
            \draw (0.2,-0.4) to[out=135,in=down] (-0.15,0) to[out=up,in=225] (0.2,0.4);
            \draw (-0.2,-0.4) to[out=45,in=down] (0.15,0) to[out=up,in=-45] (-0.2,0.4);
        \end{tikzpicture}
        \ =\
        \begin{tikzpicture}[centerzero]
            \draw (-0.2,-0.4) -- (-0.2,0.4);
            \draw (0.2,-0.4) -- (0.2,0.4);
        \end{tikzpicture}
        \ ,\quad
        \begin{tikzpicture}[centerzero]
            \draw (0.4,-0.4) -- (-0.4,0.4);
            \draw (0,-0.4) to[out=135,in=down] (-0.32,0) to[out=up,in=225] (0,0.4);
            \draw (-0.4,-0.4) -- (0.4,0.4);
        \end{tikzpicture}
        \ =\
        \begin{tikzpicture}[centerzero]
            \draw (0.4,-0.4) -- (-0.4,0.4);
            \draw (0,-0.4) to[out=45,in=down] (0.32,0) to[out=up,in=-45] (0,0.4);
            \draw (-0.4,-0.4) -- (0.4,0.4);
        \end{tikzpicture}
        \ ,\quad
        \begin{tikzpicture}[centerzero]
            \draw (-0.3,0.4) -- (-0.3,0) arc(180:360:0.15) arc(180:0:0.15) -- (0.3,-0.4);
        \end{tikzpicture}
        =
        \begin{tikzpicture}[centerzero]
            \draw (0,-0.4) -- (0,0.4);
        \end{tikzpicture}
        = \
        \begin{tikzpicture}[centerzero]
            \draw (-0.3,-0.4) -- (-0.3,0) arc(180:0:0.15) arc(180:360:0.15) -- (0.3,0.4);
        \end{tikzpicture}
        \ ,\quad
        \begin{tikzpicture}[anchorbase]
            \draw (-0.15,-0.4) to[out=60,in=-90] (0.15,0) arc(0:180:0.15) to[out=-90,in=120] (0.15,-0.4);
        \end{tikzpicture}
        = \,
        \capmor
        \ ,\quad
        \begin{tikzpicture}[centerzero]
            \draw (-0.2,-0.3) -- (-0.2,-0.1) arc(180:0:0.2) -- (0.2,-0.3);
            \draw (-0.3,0.3) \braiddown (0,-0.3);
        \end{tikzpicture}
        =
        \begin{tikzpicture}[centerzero]
            \draw (-0.2,-0.3) -- (-0.2,-0.1) arc(180:0:0.2) -- (0.2,-0.3);
            \draw (0.3,0.3) \braiddown (0,-0.3);
        \end{tikzpicture}
        \ ,
        \\ \label{dotmoves}
        \begin{tikzpicture}[centerzero]
            \draw (-0.35,-0.35) -- (0.35,0.35);
            \draw (0.35,-0.35) -- (-0.35,0.35);
            \singdot{-0.17,0.17};
        \end{tikzpicture}
        -
        \begin{tikzpicture}[centerzero]
            \draw (-0.35,-0.35) -- (0.35,0.35);
            \draw (0.35,-0.35) -- (-0.35,0.35);
            \singdot{0.17,-0.17};
        \end{tikzpicture}
        \ = \
        \begin{tikzpicture}[centerzero]
            \draw (-0.2,-0.35) -- (-0.2,0.35);
            \draw (0.2,-0.35) -- (0.2,0.35);
        \end{tikzpicture}
        \, -\,
        \begin{tikzpicture}[centerzero]
            \draw (-0.2,-0.35) -- (-0.2,-0.3) arc(180:0:0.2) -- (0.2,-0.35);
            \draw (-0.2,0.35) -- (-0.2,0.3) arc(180:360:0.2) -- (0.2,0.35);
        \end{tikzpicture}
        \ ,\qquad
        \begin{tikzpicture}[anchorbase]
            \draw (-0.2,-0.3) -- (-0.2,-0.1) arc(180:0:0.2) -- (0.2,-0.3);
            \singdot{-0.2,-0.1};
        \end{tikzpicture}
        \ = -\
        \begin{tikzpicture}[anchorbase]
            \draw (-0.2,-0.3) -- (-0.2,-0.1) arc(180:0:0.2) -- (0.2,-0.3);
            \singdot{0.2,-0.1};
        \end{tikzpicture}
        \ .
    \end{gather}
    We call the morphism $\dotstrand$ a \emph{dot}.
\end{defin}
One can easily verify from these relations that there is no nontrivial grading by any abelian group on the morphisms in this category in which the dot, crossing, cup and cap morphisms are homogeneous.
\begin{rem} \label{RSflip}
    The affine Brauer category of \cite[Def.~1.2]{RS19} is actually the reverse of ours.  Precisely, let $\AB'$ be the category of \cite[Def.~1.2]{RS19}.  Then, comparing presentations, one sees that we have an isomorphism of $\kk$-linear monoidal categories $\AB \xrightarrow{\cong} (\AB')^\rev$ defined on generating morphisms by
    \[
        \capmor \mapsto \capmor\, ,\quad
        \cupmor \mapsto \cupmor\, ,\quad
        \dotstrand \mapsto \dotstrand\, ,\quad
        \crossmor \mapsto \crossmor\, ,
    \]
    where $(\AB')^\rev$ denotes the reversed category of $\AB'$, with tensor product reversed.  Intuitively, this isomorphism flips diagrams in a vertical line.  Combined with the isomorphism \cref{yak} below, this implies that the category of \cite[Def.~1.2]{RS19} is also isomorphic to the category defined in \cref{ABdef}.  However, it is more useful to think of the two categories as the reverses of each other, since we will work with \emph{left} tensor ideals, whereas \cite{RS19} uses \emph{right} tensor ideals.

    The definition \cite[Def.~1.2]{RS19} includes some additional defining relations: the first and second relations in \cref{mirror} below.  However,  these follow from \cref{brauer,dotmoves} using adjunction.
\end{rem}

\begin{prop} \label{mirrorprop}
    The following relations hold in $\AB$:
    \begin{gather} \label{mirror}
        \begin{tikzpicture}[anchorbase]
            \draw (-0.15,0.4) to[out=-60,in=90] (0.15,0) arc(360:180:0.15) to[out=90,in=240] (0.15,0.4);
        \end{tikzpicture}
        =\ \cupmor
        \ ,\qquad
        \begin{tikzpicture}[centerzero]
            \draw (-0.2,0.3) -- (-0.2,0.1) arc(180:360:0.2) -- (0.2,0.3);
            \draw (-0.3,-0.3) \braidup (0,0.3);
        \end{tikzpicture}
        =
        \begin{tikzpicture}[centerzero]
            \draw (-0.2,0.3) -- (-0.2,0.1) arc(180:360:0.2) -- (0.2,0.3);
            \draw (0.3,-0.3) \braidup (0,0.3);
        \end{tikzpicture}
        \ ,\qquad
        \begin{tikzpicture}[centerzero]
            \draw (-0.35,-0.35) -- (0.35,0.35);
            \draw (0.35,-0.35) -- (-0.35,0.35);
            \singdot{-0.17,-0.17};
        \end{tikzpicture}
        -
        \begin{tikzpicture}[centerzero]
            \draw (-0.35,-0.35) -- (0.35,0.35);
            \draw (0.35,-0.35) -- (-0.35,0.35);
            \singdot{0.17,0.17};
        \end{tikzpicture}
        \ = \
        \begin{tikzpicture}[centerzero]
            \draw (-0.2,-0.35) -- (-0.2,0.35);
            \draw (0.2,-0.35) -- (0.2,0.35);
        \end{tikzpicture}
        \, -\,
        \begin{tikzpicture}[centerzero]
            \draw (-0.2,-0.35) -- (-0.2,-0.3) arc(180:0:0.2) -- (0.2,-0.35);
            \draw (-0.2,0.35) -- (-0.2,0.3) arc(180:360:0.2) -- (0.2,0.35);
        \end{tikzpicture}
        \ ,\qquad
        \begin{tikzpicture}[anchorbase]
            \draw (-0.2,0.3) -- (-0.2,0.1) arc(180:360:0.2) -- (0.2,0.3);
            \singdot{-0.2,0.1};
        \end{tikzpicture}
        \ = -\
        \begin{tikzpicture}[anchorbase]
            \draw (-0.2,0.3) -- (-0.2,0.1) arc(180:360:0.2) -- (0.2,0.3);
            \singdot{0.2,0.1};
        \end{tikzpicture}
        \ .
    \end{gather}
\end{prop}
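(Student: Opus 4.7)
The plan is to derive each of the four relations in \cref{mirror} from its counterpart in \cref{brauer,dotmoves} by the standard ``mate'' or rotation trick, using the self-duality of $\go$ encoded in $\cupmor$ and $\capmor$ together with the zig-zag identities (third relation in \cref{brauer}).

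For the first relation, I start from the zig-zag expression $\cupmor = (\id_\go \otimes \capmor \otimes \id_\go) \circ (\cupmor \otimes \cupmor)$ and use the fourth relation in \cref{brauer} to replace the middle $\capmor$ by the curl-on-cap morphism. Straightening the resulting S-shape via zig-zag preserves the curl, producing the desired identity cup-with-curl $=$ cup. Entirely analogous arguments, applied to the fifth relation in \cref{brauer} and to the second relation in \cref{dotmoves}, respectively yield the second and fourth relations in \cref{mirror}; the minus sign in the dot-on-cup identity is simply transported through the straightening.

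The third relation has source and target matching those of the first relation in \cref{dotmoves}, so instead of attaching cups and caps I conjugate both sides of that relation by $\crossmor$. Using $\crossmor \circ \crossmor = \id_{\go \otimes \go}$ (first relation in \cref{brauer}), the dotted crossings on the left-hand side rearrange so that the dots end up on the opposite strand of the crossing and in the opposite positions, which is exactly what the third relation in \cref{mirror} requires. The cup-cap term on the right-hand side is fixed by this conjugation because $\cupmor$ and $\capmor$ are symmetric in their two strands; this symmetry is a short preliminary consequence of the fifth relation in \cref{brauer} combined with the already-established second relation in \cref{mirror}.

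The main technical point is careful diagrammatic bookkeeping: one must track which strand each dot or curl lives on and verify that the zig-zag straightening and the conjugation transport positions and signs correctly. This is routine but warrants drawing the intermediate diagrams explicitly.
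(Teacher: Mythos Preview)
Your approach is correct and, for the first, second, and fourth relations, essentially identical to the paper's: derive each from its counterpart in \cref{brauer,dotmoves} by rotating with cups and caps via the zig-zag identities.

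For the third relation you diverge. The paper rotates: it first rewrites $\crossmor$ as a cap-crossing-cup sandwich, then slides the dot through the crossing twice using the first relation in \cref{dotmoves}, and finally simplifies the resulting correction terms. Your conjugation-by-$\crossmor$ argument is shorter: using $\crossmor^2 = \id$ you immediately convert the two dotted terms on the left of \cref{dotmoves} into the two dotted terms of the third relation in \cref{mirror}, and the right-hand side is preserved because the cup-cap morphism is $\crossmor$-invariant. This avoids the double application of \cref{dotmoves} and the attendant cancellations. One small correction: the $\crossmor$-invariance of $\cupmor \circ \capmor$ follows from the \emph{fourth} relation in \cref{brauer} (giving $\capmor \circ \crossmor = \capmor$) together with the \emph{first} relation in \cref{mirror} (giving $\crossmor \circ \cupmor = \cupmor$), not the fifth and second as you wrote. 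Since you have already established the first relation in \cref{mirror} by rotation, there is no circularity.
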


\begin{proof}
    The proofs all involve rotating the relations \cref{brauer,dotmoves} using cups and caps.  We give the proof of the third relation in \cref{mirror}, since the others are analogous.  First note that, using the fourth and sixth equalities in \cref{brauer}, we have
    \[
        \begin{tikzpicture}[centerzero]
            \draw (0.4,0.3) -- (0.4,0.1) to[out=down,in=right] (0.2,-0.2) to[out=left,in=right] (-0.2,0.2) to[out=left,in=up] (-0.4,-0.1) -- (-0.4,-0.3);
            \draw (-0.2,-0.3) \braidup (0.2,0.3);
        \end{tikzpicture}
        =
        \begin{tikzpicture}[centerzero]
            \draw (0.4,0.3) -- (0.4,0.1) to[out=down,in=right] (0.2,-0.2) to[out=left,in=right] (-0.2,0.2) to[out=left,in=up] (-0.4,-0.1) -- (-0.4,-0.3);
            \draw (-0.2,-0.3) \braidup (-0.6,0.3);
        \end{tikzpicture}
        =
        \begin{tikzpicture}[centerzero]
            \draw (-0.3,-0.3) -- (0.3,0.3);
            \draw (0.3,-0.3) -- (-0.3,0.3);
        \end{tikzpicture}
        \ .
    \]
    Thus,
    \begin{multline*}
        \begin{tikzpicture}[centerzero]
            \draw (-0.4,-0.4) -- (0.4,0.4);
            \draw (0.4,-0.4) -- (-0.4,0.4);
            \singdot{-0.17,-0.17};
        \end{tikzpicture}
        =
        \begin{tikzpicture}[centerzero]
            \draw (0.5,0.4) -- (0.5,0.1) to[out=down,in=right] (0.25,-0.25) to[out=left,in=right] (-0.25,0.25) to[out=left,in=up] (-0.5,-0.1) -- (-0.5,-0.4);
            \draw (-0.2,-0.4) \braidup (0.2,0.4);
            \singdot{-0.5,-0.1};
        \end{tikzpicture}
        \overset{\cref{dotmoves}}{=} -\
        \begin{tikzpicture}[centerzero]
            \draw (0.5,0.4) -- (0.5,0.1) to[out=down,in=right] (0.25,-0.25) to[out=left,in=right] (-0.25,0.25) to[out=left,in=up] (-0.5,-0.1) -- (-0.5,-0.4);
            \draw (-0.2,-0.4) \braidup (0.2,0.4);
            \singdot{-0.08,0.14};
        \end{tikzpicture}
        \overset{\cref{dotmoves}}{=}\
        -\,
        \begin{tikzpicture}[centerzero]
            \draw (0.5,0.4) -- (0.5,0.1) to[out=down,in=right] (0.25,-0.25) to[out=left,in=right] (-0.25,0.25) to[out=left,in=up] (-0.5,-0.1) -- (-0.5,-0.4);
            \draw (-0.2,-0.4) \braidup (0.2,0.4);
            \singdot{0.08,-0.14};
        \end{tikzpicture}
        \, -\,
        \begin{tikzpicture}[centerzero]
            \draw (-0.2,-0.35) -- (-0.2,-0.3) arc(180:0:0.2) -- (0.2,-0.35);
            \draw (-0.2,0.35) -- (-0.2,0.3) arc(180:360:0.2) -- (0.2,0.35);
        \end{tikzpicture}
        \, +\,
        \begin{tikzpicture}[centerzero]
            \draw (-0.3,0.4) -- (-0.3,0) arc(180:360:0.15) arc(180:0:0.15) -- (0.3,-0.4);
            \draw (0.6,-0.4) -- (0.6,0.4);
        \end{tikzpicture}
        \\
        \overset{\cref{brauer}}{\underset{\cref{dotmoves}}{=}}\
        \begin{tikzpicture}[centerzero]
            \draw (0.5,0.4) -- (0.5,0.1) to[out=down,in=right] (0.25,-0.25) to[out=left,in=right] (-0.25,0.25) to[out=left,in=up] (-0.5,-0.1) -- (-0.5,-0.4);
            \draw (-0.2,-0.4) \braidup (0.2,0.4);
            \singdot{0.5,0.1};
        \end{tikzpicture}
        \, -\,
        \begin{tikzpicture}[centerzero]
            \draw (-0.2,-0.35) -- (-0.2,-0.3) arc(180:0:0.2) -- (0.2,-0.35);
            \draw (-0.2,0.35) -- (-0.2,0.3) arc(180:360:0.2) -- (0.2,0.35);
        \end{tikzpicture}
        \, +\,
        \begin{tikzpicture}[centerzero]
            \draw (-0.2,-0.35) -- (-0.2,0.35);
            \draw (0.2,-0.35) -- (0.2,0.35);
        \end{tikzpicture}
        \, =\,
        \begin{tikzpicture}[centerzero]
            \draw (-0.35,-0.35) -- (0.35,0.35);
            \draw (0.35,-0.35) -- (-0.35,0.35);
            \singdot{0.17,0.17};
        \end{tikzpicture}
        \, -\,
        \begin{tikzpicture}[centerzero]
            \draw (-0.2,-0.35) -- (-0.2,-0.3) arc(180:0:0.2) -- (0.2,-0.35);
            \draw (-0.2,0.35) -- (-0.2,0.3) arc(180:360:0.2) -- (0.2,0.35);
        \end{tikzpicture}
        \, + \,
        \begin{tikzpicture}[centerzero]
            \draw (-0.2,-0.35) -- (-0.2,0.35);
            \draw (0.2,-0.35) -- (0.2,0.35);
        \end{tikzpicture}
        \ . \qedhere
    \end{multline*}
\end{proof}

All rotations of the relations \cref{brauer,dotmoves} by $180\degree$ also hold in $\AB$. This shows that there is a functor $\mathbf{D} \colon \cC \to \cC^\op$ given by rotating diagrams through $180\degree$.  In terms of the theory of monoidal categories, this is the duality functor, and the fact that $\mathbf{D}$ is strictly monoidal and that $\mathbf{D}^2 = \id_\cC$ shows that the category $\AB$ is strict pivotal---this is precisely the definition of a strictly pivotal category.
Note, this does not mean that we can freely isotope diagrams, since the final relation of \cref{dotmoves} shows that we will pick up signs when dots move past minima or maxima.

Although we do not use them in the current paper, we point out two useful symmetries of the affine Brauer category.  It follows from \cref{mirrorprop} that reflecting in a horizontal line gives an isomorphism of monoidal categories
\[
    \AB \to \AB^\op,
\]
where $\AB^\op$ denotes the opposite category of $\AB$.  On the other hand, reflecting diagrams in a vertical line and then multiplying all dots by $-1$ gives an isomorphism of monoidal categories
\begin{equation} \label{yak}
    \AB \to \AB^\rev.
\end{equation}

We define
\[
    \begin{tikzpicture}[centerzero]
        \draw (0,-0.2) -- (0,0.2);
        \multdot{0,0}{east}{n};
    \end{tikzpicture}
    :=
    \left( \dotstrand \right)^{\circ n}.
\]
to denote the $n$-fold vertical composition of $\dotstrand$ with itself.  To simplify algebraic manipulations with sums of powers, we will also use the notation
\[
    \begin{tikzpicture}[centerzero]
        \draw (0,-0.2) -- (0,0.2);
        \multdot{0,0}{east}{x^n};
    \end{tikzpicture}
    :=
    \begin{tikzpicture}[centerzero]
        \draw (0,-0.2) -- (0,0.2);
        \multdot{0,0}{west}{n};
    \end{tikzpicture}
    \ ,\quad n \in \N.
\]
We similarly interpret dots labelled by polynomials in $x$ by extending this notation linearly.  We view the power series
\[
    (u-x)^{-1} = u^{-1} + u^{-2} x + u^{-3} x^2 + \dotsb \in \kk[x] \llbracket u^{-1} \rrbracket
\]
as a generating function for the different possible numbers of dots on a string.  Since this power series will appear so frequently, we introduce the notation
\[
    \begin{tikzpicture}[centerzero]
        \draw (0,-0.3) -- (0,0.3);
        \uptriforce{0,0};
    \end{tikzpicture}
    :=
    \begin{tikzpicture}[centerzero]
        \draw (0,-0.3) -- (0,0.3);
        \multdot{0,0}{west}{(u-x)^{-1}};
    \end{tikzpicture}
    \qquad \text{and} \qquad
    \begin{tikzpicture}[centerzero]
        \draw (0,-0.3) -- (0,0.3);
        \downtriforce{0,0};
    \end{tikzpicture}
    :=
    \begin{tikzpicture}[centerzero]
        \draw (0,-0.3) -- (0,0.3);
        \multdot{0,0}{west}{(u+x)^{-1}};
    \end{tikzpicture}
    .
\]
Then, for example, we have
\begin{equation} \label{trirot}
    \begin{tikzpicture}[anchorbase]
        \draw (-0.2,-0.3) -- (-0.2,-0.1) arc(180:0:0.2) -- (0.2,-0.3);
        \uptriforce{-0.2,-0.1};
    \end{tikzpicture}
    \ = \
    \begin{tikzpicture}[anchorbase]
        \draw (-0.2,-0.3) -- (-0.2,-0.1) arc(180:0:0.2) -- (0.2,-0.3);
        \downtriforce{0.2,-0.1};
    \end{tikzpicture}
    \ ,\qquad
    \begin{tikzpicture}[anchorbase]
        \draw (-0.2,-0.3) -- (-0.2,-0.1) arc(180:0:0.2) -- (0.2,-0.3);
        \downtriforce{-0.2,-0.1};
    \end{tikzpicture}
    \ = \
    \begin{tikzpicture}[anchorbase]
        \draw (-0.2,-0.3) -- (-0.2,-0.1) arc(180:0:0.2) -- (0.2,-0.3);
        \uptriforce{0.2,-0.1};
    \end{tikzpicture}
    \ ,\qquad
    \begin{tikzpicture}[anchorbase]
        \draw (-0.2,0.3) -- (-0.2,0.1) arc(180:360:0.2) -- (0.2,0.3);
        \uptriforce{-0.2,0.1};
    \end{tikzpicture}
    \ = \
    \begin{tikzpicture}[anchorbase]
        \draw (-0.2,0.3) -- (-0.2,0.1) arc(180:360:0.2) -- (0.2,0.3);
        \downtriforce{0.2,0.1};
    \end{tikzpicture}
    \ ,\qquad
    \begin{tikzpicture}[anchorbase]
        \draw (-0.2,0.3) -- (-0.2,0.1) arc(180:360:0.2) -- (0.2,0.3);
        \downtriforce{-0.2,0.1};
    \end{tikzpicture}
    \ = \
    \begin{tikzpicture}[anchorbase]
        \draw (-0.2,0.3) -- (-0.2,0.1) arc(180:360:0.2) -- (0.2,0.3);
        \uptriforce{0.2,0.1};
    \end{tikzpicture}
    \ .
\end{equation}
It is also useful to note that
\begin{equation} \label{diamond}
    \begin{tikzpicture}[centerzero]
        \draw (0,-0.35) -- (0,0.35);
        \uptriforce{0,0.15};
        \downtriforce{0,-0.15};
    \end{tikzpicture}
    =
    \begin{tikzpicture}[centerzero]
        \draw (0,-0.35) -- (0,0.35);
        \uptriforce{0,-0.15};
        \downtriforce{0,0.15};
    \end{tikzpicture}
    = \tfrac{1}{2u}
    \left(
        \begin{tikzpicture}[centerzero]
            \draw (0,-0.2) -- (0,0.2);
            \uptriforce{0,0};
        \end{tikzpicture}
        +
        \begin{tikzpicture}[centerzero]
           \draw (0,-0.2) -- (0,0.2);
            \downtriforce{0,0};
        \end{tikzpicture}
    \right).
\end{equation}

For a Laurent series $p(u) \in \kk\Laurent{u^{-1}}$, we let $[p(u)]_{u^r}$ denote its $u^r$-coefficient, and we define
\begin{equation} \label{bmx}
    [p(u)]_{u^{\ge 0}} = [p]_{\ge 0}(u) := \sum_{r \ge 0} [p(u)]_{u^r} u^r \in \kk[u].
\end{equation}
For any polynomial $p(u) \in \kk[u]$, we have
\begin{equation} \label{trick}
    \begin{tikzpicture}[centerzero]
        \draw (0,-0.3) -- (0,0.3);
        \multdot{0,0}{east}{p(x)};
    \end{tikzpicture}
    =
    \left[
        \begin{tikzpicture}[centerzero]
            \draw (0,-0.3) -- (0,0.3);
            \uptriforce{0,0};
        \end{tikzpicture}
        \ p(u)
    \right]_{u^{-1}},
    \qquad
    \begin{tikzpicture}[centerzero]
        \draw (0,-0.3) -- (0,0.3);
        \multdot{0,0}{east}{p(-x)};
    \end{tikzpicture}
    =
    \left[
        \begin{tikzpicture}[centerzero]
            \draw (0,-0.3) -- (0,0.3);
            \downtriforce{0,0};
        \end{tikzpicture}
        \ p(u)
    \right]_{u^{-1}}.
\end{equation}
More generally, if $p(u) \in \kk\Laurent{u^{-1}}$, then
\begin{equation} \label{trick+}
    \begin{tikzpicture}[centerzero]
        \draw (0,-0.3) -- (0,0.3);
        \multdot{0,0}{east}{[p]_{\ge 0}(x)};
    \end{tikzpicture}
    =
    \left[
        \begin{tikzpicture}[centerzero]
            \draw (0,-0.3) -- (0,0.3);
            \uptriforce{0,0};
        \end{tikzpicture}
        \ p(u)
    \right]_{u^{-1}}
    , \qquad
    \begin{tikzpicture}[centerzero]
        \draw (0,-0.3) -- (0,0.3);
        \multdot{0,0}{east}{[p]_{\ge 0}(-x)};
    \end{tikzpicture}
    =
    \left[
        \begin{tikzpicture}[centerzero]
            \draw (0,-0.3) -- (0,0.3);
            \downtriforce{0,0};
        \end{tikzpicture}
        \ p(u)
    \right]_{u^{-1}}
    .
\end{equation}

\begin{lem}
    The following relations hold in $\AB$:
    \begin{align} \label{uptricross}
        \begin{tikzpicture}[anchorbase]
            \draw (-0.3,-0.5) -- (0.3,0.5);
            \draw (0.3,-0.5) -- (-0.3,0.5);
            \uptriforce{-0.15,0.25};
        \end{tikzpicture}
        -
        \begin{tikzpicture}[anchorbase]
            \draw (-0.3,-0.5) -- (0.3,0.5);
            \draw (0.3,-0.5) -- (-0.3,0.5);
            \uptriforce{0.15,-0.25};
        \end{tikzpicture}
        &=
        \begin{tikzpicture}[anchorbase]
            \draw (-0.2,-0.5) -- (-0.2,0.5);
            \draw (0.2,-0.5) -- (0.2,0.5);
            \uptriforce{-0.2,0};
            \uptriforce{0.2,0};
        \end{tikzpicture}
        -
        \begin{tikzpicture}[anchorbase]
            \draw (-0.2,-0.5) -- (-0.2,-0.3) arc(180:0:0.2) -- (0.2,-0.5);
            \draw (-0.2,0.5) -- (-0.2,0.3) arc(180:360:0.2) -- (0.2,0.5);
            \uptriforce{-0.2,0.3};
            \uptriforce{0.2,-0.3};
        \end{tikzpicture}
        ,&
        \begin{tikzpicture}[anchorbase]
            \draw (-0.3,-0.5) -- (0.3,0.5);
            \draw (0.3,-0.5) -- (-0.3,0.5);
            \uptriforce{-0.15,-0.25};
        \end{tikzpicture}
        -
        \begin{tikzpicture}[anchorbase]
            \draw (-0.3,-0.5) -- (0.3,0.5);
            \draw (0.3,-0.5) -- (-0.3,0.5);
            \uptriforce{0.15,0.25};
        \end{tikzpicture}
        &=
        \begin{tikzpicture}[anchorbase]
            \draw (-0.2,-0.5) -- (-0.2,0.5);
            \draw (0.2,-0.5) -- (0.2,0.5);
            \uptriforce{-0.2,0};
            \uptriforce{0.2,0};
        \end{tikzpicture}
        -
        \begin{tikzpicture}[anchorbase]
            \draw (-0.2,-0.5) -- (-0.2,-0.3) arc(180:0:0.2) -- (0.2,-0.5);
            \draw (-0.2,0.5) -- (-0.2,0.3) arc(180:360:0.2) -- (0.2,0.5);
            \uptriforce{0.2,0.3};
            \uptriforce{-0.2,-0.3};
        \end{tikzpicture}
        \ ,
        \\ \label{downtricross}
        \begin{tikzpicture}[anchorbase]
            \draw (-0.3,-0.5) -- (0.3,0.5);
            \draw (0.3,-0.5) -- (-0.3,0.5);
            \downtriforce{-0.15,0.25};
        \end{tikzpicture}
        -
        \begin{tikzpicture}[anchorbase]
            \draw (-0.3,-0.5) -- (0.3,0.5);
            \draw (0.3,-0.5) -- (-0.3,0.5);
            \downtriforce{0.15,-0.25};
        \end{tikzpicture}
        &=
        \begin{tikzpicture}[anchorbase]
            \draw (-0.2,-0.5) -- (-0.2,-0.3) arc(180:0:0.2) -- (0.2,-0.5);
            \draw (-0.2,0.5) -- (-0.2,0.3) arc(180:360:0.2) -- (0.2,0.5);
            \downtriforce{-0.2,0.3};
            \downtriforce{0.2,-0.3};
        \end{tikzpicture}
        -
        \begin{tikzpicture}[anchorbase]
            \draw (-0.2,-0.5) -- (-0.2,0.5);
            \draw (0.2,-0.5) -- (0.2,0.5);
            \downtriforce{-0.2,0};
            \downtriforce{0.2,0};
        \end{tikzpicture}
        ,&
        \begin{tikzpicture}[anchorbase]
            \draw (-0.3,-0.5) -- (0.3,0.5);
            \draw (0.3,-0.5) -- (-0.3,0.5);
            \downtriforce{-0.15,-0.25};
        \end{tikzpicture}
        -
        \begin{tikzpicture}[anchorbase]
            \draw (-0.3,-0.5) -- (0.3,0.5);
            \draw (0.3,-0.5) -- (-0.3,0.5);
            \downtriforce{0.15,0.25};
        \end{tikzpicture}
        &=
        \begin{tikzpicture}[anchorbase]
            \draw (-0.2,-0.5) -- (-0.2,-0.3) arc(180:0:0.2) -- (0.2,-0.5);
            \draw (-0.2,0.5) -- (-0.2,0.3) arc(180:360:0.2) -- (0.2,0.5);
            \downtriforce{0.2,0.3};
            \downtriforce{-0.2,-0.3};
        \end{tikzpicture}
        -
        \begin{tikzpicture}[anchorbase]
            \draw (-0.2,-0.5) -- (-0.2,0.5);
            \draw (0.2,-0.5) -- (0.2,0.5);
            \downtriforce{-0.2,0};
            \downtriforce{0.2,0};
        \end{tikzpicture}
        \ .
    \end{align}
\end{lem}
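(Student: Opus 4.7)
The plan is to derive each of the four identities by iterating the corresponding one-dot relation from~\cref{dotmoves} or~\cref{mirror} and then packaging the result as a formal power series in $u^{-1}$.

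To prove the first relation in~\cref{uptricross}, the key step is to establish, for every $n \ge 0$, the multi-dot identity
\begin{equation*}
    \begin{tikzpicture}[centerzero]
        \draw (-0.35,-0.35) -- (0.35,0.35);
        \draw (0.35,-0.35) -- (-0.35,0.35);
        \multdot{-0.17,0.17}{east}{n};
    \end{tikzpicture}
    -
    \begin{tikzpicture}[centerzero]
        \draw (-0.35,-0.35) -- (0.35,0.35);
        \draw (0.35,-0.35) -- (-0.35,0.35);
        \multdot{0.17,-0.17}{west}{n};
    \end{tikzpicture}
    = \sum_{\substack{j, k \ge 0 \\ j + k = n - 1}}
    \left(
        \begin{tikzpicture}[anchorbase]
            \draw (-0.2,-0.5) -- (-0.2,0.5);
            \draw (0.2,-0.5) -- (0.2,0.5);
            \multdot{-0.2,0}{east}{j};
            \multdot{0.2,0}{west}{k};
        \end{tikzpicture}
        -
        \begin{tikzpicture}[anchorbase]
            \draw (-0.2,-0.5) -- (-0.2,-0.3) arc(180:0:0.2) -- (0.2,-0.5);
            \draw (-0.2,0.5) -- (-0.2,0.3) arc(180:360:0.2) -- (0.2,0.5);
            \multdot{-0.2,0.3}{east}{j};
            \multdot{0.2,-0.3}{west}{k};
        \end{tikzpicture}
    \right),
\end{equation*}
which can be obtained as a telescoping sum: for $0 \le a \le n$, let $Z_a$ denote the crossing bearing $a$ dots on the NW strand above the crossing and $n - a$ dots below it; then the left-hand side equals $Z_n - Z_0 = \sum_{a = 1}^n (Z_a - Z_{a-1})$, and each step $Z_a - Z_{a-1}$ is a single application of~\cref{dotmoves} to the bottommost upper dot, with the $a-1$ upper spectator dots landing on the top of the left strand (respectively, the top-left of the cap) and the $n - a$ lower spectator dots landing on the bottom of the right strand (respectively, the bottom-right of the cup). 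Setting $j = a - 1$ and $k = n - a$ then yields the formula.

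Multiplying this multi-dot identity by $u^{-n-1}$ and summing over $n \ge 0$, the left-hand side collapses to the LHS of the first relation in~\cref{uptricross}, since the upward-triangle token on a strand stands by definition for $(u - x)^{-1} = \sum_{n \ge 0} u^{-n-1} x^n$. On the right-hand side, the elementary power series identity $\sum_{n \ge 0} u^{-n-1} \sum_{j + k = n - 1} x^j y^k = (u - x)^{-1} (u - y)^{-1}$ converts the convolution into the required product of two upward-triangle tokens on the identity and handle. The second relation in~\cref{uptricross} follows from the same argument applied to the third relation in~\cref{mirror}, which is the NE-strand analogue of~\cref{dotmoves}. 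For the two relations in~\cref{downtricross}, multiply the same iterated identities instead by $(-1)^n u^{-n-1}$ and sum: since the downward-triangle token stands for $(u+x)^{-1} = \sum_{n \ge 0} u^{-n-1} (-x)^n$, the left-hand sides become the downward-triangle versions, and the overall sign $(-1)^{j + k + 1}$ on the right-hand side swaps the relative order of the identity and handle terms, matching the stated form of~\cref{downtricross}. The only real obstacle is the bookkeeping in the telescoping step, but the placement of the spectator dots is forced by the diagrammatic routing of the strands and is easily checked by hand at $n = 2$.
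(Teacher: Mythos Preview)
Your proof is correct and follows essentially the same approach as the paper: iterate the one-dot relation \cref{dotmoves} (resp.\ \cref{mirror}) to obtain the multi-dot identity, then multiply by $u^{-n-1}$ and sum, with the $(-1)^n$ twist for \cref{downtricross} being exactly the paper's substitution $u \mapsto -u$ followed by a global sign. One small slip: in your description of the spectator placement you have swapped the words ``cup'' and ``cap'' (the $j$ upper dots land on the left of the \emph{cup} at the top, and the $k$ lower dots on the right of the \emph{cap} at the bottom), but the positions you intend are the correct ones.
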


\begin{proof}
By \cref{dotmoves}, we have
    \[
        \begin{tikzpicture}[anchorbase]
            \draw (-0.3,-0.5) -- (0.3,0.5);
            \draw (0.3,-0.5) -- (-0.3,0.5);
            \multdot{0.15,-0.25}{west}{u-x};
        \end{tikzpicture}
        -
        \begin{tikzpicture}[anchorbase]
            \draw (-0.3,-0.5) -- (0.3,0.5);
            \draw (0.3,-0.5) -- (-0.3,0.5);
            \multdot{-0.15,0.25}{east}{u-x};
        \end{tikzpicture}
        \ =\
        \begin{tikzpicture}[anchorbase]
            \draw (-0.2,-0.5) -- (-0.2,0.5);
            \draw (0.2,-0.5) -- (0.2,0.5);
        \end{tikzpicture}
        -
        \begin{tikzpicture}[anchorbase]
            \draw (-0.2,-0.5) -- (-0.2,-0.3) arc(180:0:0.2) -- (0.2,-0.5);
            \draw (-0.2,0.5) -- (-0.2,0.3) arc(180:360:0.2) -- (0.2,0.5);
        \end{tikzpicture}
        \ .
    \]
    Composing with $(u-x)^{-1}$ on the top-left string and bottom-right string then yields the first relation in \cref{uptricross}.  The second equality in \cref{uptricross} is proved similarly.  The equalities in \cref{downtricross} are obtained from those in \cref{uptricross} by replacing $u$ by $-u$ and then multiplying all diagrams by $-1$.  Note that we \emph{cannot} replace $x$ by $-x$ in general, since the defining relations of $\AB$ are not homogeneous in the number of dots.  Alternatively, the relations \cref{downtricross} can be obtained by rotating the relations \cref{uptricross} using cups and caps.
\end{proof}

We define
\begin{equation} \label{eatz}
    \bubblegenr{u} := \tfrac{2u}{2u-1} \left( \uptribubbler - \tfrac{1}{2u} + 1 \right)
    = \tfrac{2u}{2u-1} \uptribubbler + 1
    , \qquad
    \bubblegenl{u} := \tfrac{2u}{2u+1} \left( \uptribubblel - \tfrac{1}{2u} - 1 \right)
    = \tfrac{2u}{2u+1} \uptribubblel - 1.
\end{equation}

We interpret these equations by Laurent expansion in $u^{-1}$ at $u=\infty$;  that is,
\[
    \bubblegenr{u}
    = 1 + \left( 1+\tfrac{1}{2u} + \dotsb + \tfrac{1}{(2u)^n} + \dotsb \right) \uptribubbler
    = 1 + u^{-1} \bubble{} + u^{-2} \left( \multbubbler{}\!\!\!+ \tfrac{1}{2}\bubble{} \right) + \dotsb.	
\]

\begin{prop} \label{lanote}
    The following relations hold in $\AB$:
    \begin{gather} \label{infgrass}
        \bubblegenl{u} = -\,
        \begin{tikzpicture}[centerzero]
            \draw (0.3,0) arc(0:360:0.3);
            \node at (0,0) {\dotlabel{-u}};
            \singdot{0.3,0};
        \end{tikzpicture}
        \ ,\qquad\qquad
        \bubblegenl{u} \ \bubblegenr{u} = -1,
        \\ \label{curlsup}
        \begin{tikzpicture}[anchorbase]
            \draw (0,-0.5) to[out=up,in=180] (0.3,0.2) to[out=0,in=up] (0.45,0) to[out=down,in=0] (0.3,-0.2) to[out=180,in=down] (0,0.5);
            \uptriforce{0.45,0};
        \end{tikzpicture}
        = \left( 1 - \tfrac{1}{2u} \right)
        \begin{tikzpicture}[centerzero]
            \draw (0,-0.5) -- (0,0.5);
            \downtriforce{0,0};
            \bubgenr{u}{0.4,0};
        \end{tikzpicture}
        - \tfrac{1}{2u}
        \begin{tikzpicture}[centerzero]
            \draw (0,-0.5) -- (0,0.5);
            \uptriforce{0,0};
        \end{tikzpicture}
        \ , \qquad
        \begin{tikzpicture}[anchorbase]
            \draw (0,-0.5) to[out=up,in=0] (-0.3,0.2) to[out=180,in=up] (-0.45,0) to[out=down,in=180] (-0.3,-0.2) to[out=0,in=down] (0,0.5);
            \uptriforce{-0.45,0};
        \end{tikzpicture}
        = - \left( 1 + \tfrac{1}{2u} \right)\
        \begin{tikzpicture}[centerzero]
            \draw (0,-0.5) -- (0,0.5);
            \downtriforce{0,0};
            \bubgenl{u}{-0.4,0};
        \end{tikzpicture}
        + \tfrac{1}{2u}
        \begin{tikzpicture}[centerzero]
            \draw (0,-0.5) -- (0,0.5);
            \uptriforce{0,0};
        \end{tikzpicture}
        \ ,
        \\ \label{curlsdown}
        \begin{tikzpicture}[anchorbase]
            \draw (0,-0.5) to[out=up,in=180] (0.3,0.2) to[out=0,in=up] (0.45,0) to[out=down,in=0] (0.3,-0.2) to[out=180,in=down] (0,0.5);
            \downtriforce{0.45,0};
        \end{tikzpicture}
        = - \left( 1 + \tfrac{1}{2u} \right)
        \begin{tikzpicture}[centerzero]
            \draw (0,-0.5) -- (0,0.5);
            \uptriforce{0,0};
            \bubgenl{u}{0.5,0};
        \end{tikzpicture}
        + \tfrac{1}{2u}
        \begin{tikzpicture}[centerzero]
            \draw (0,-0.5) -- (0,0.5);
            \downtriforce{0,0};
        \end{tikzpicture}
        \ , \qquad
        \begin{tikzpicture}[anchorbase]
            \draw (0,-0.5) to[out=up,in=0] (-0.3,0.2) to[out=180,in=up] (-0.45,0) to[out=down,in=180] (-0.3,-0.2) to[out=0,in=down] (0,0.5);
            \downtriforce{-0.45,0};
        \end{tikzpicture}
        = \left( 1 - \tfrac{1}{2u} \right)\
        \begin{tikzpicture}[centerzero]
            \draw (0,-0.5) -- (0,0.5);
            \uptriforce{0,0};
            \bubgenr{u}{-0.5,0};
        \end{tikzpicture}
        - \tfrac{1}{2u}
        \begin{tikzpicture}[centerzero]
            \draw (0,-0.5) -- (0,0.5);
            \downtriforce{0,0};
        \end{tikzpicture}
        \ ,
        \\ \label{bubslide}
        \begin{tikzpicture}[anchorbase]
            \draw (0,-0.5) -- (0,0.5);
            \bubgenr{u}{-0.4,0};
        \end{tikzpicture}
        \ =
        \begin{tikzpicture}[anchorbase]
            \draw (0,-0.5) -- (0,0.5);
            \multdot{0,0}{east}{\frac{1-(u+x)^{-2}}{1-(u-x)^{-2}}};
            \bubgenr{u}{0.4,0};
        \end{tikzpicture}
        \ , \qquad
        \begin{tikzpicture}[anchorbase]
            \draw (0,-0.5) -- (0,0.5);
            \bubgenl{u}{-0.4,0};
        \end{tikzpicture}
        \ =
        \begin{tikzpicture}[anchorbase]
            \draw (0,-0.5) -- (0,0.5);
            \multdot{0,0}{east}{\frac{1-(u-x)^{-2}}{1-(u+x)^{-2}}};
            \bubgenl{u}{0.5,0};
        \end{tikzpicture}
        \ .
    \end{gather}
\end{prop}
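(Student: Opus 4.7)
The plan is to establish the four families of identities in sequence, using strategic closures via cups and caps applied to the generating-function skein relations \eqref{uptricross}--\eqref{downtricross} as the main technical tool. I would first prove the first equation in \eqref{infgrass}: by the fourth equation of \eqref{dotmoves} and its cup analog from \eqref{mirror}, moving a single dot from one side of a cap or cup to the other flips its sign, so a bubble with $n$ dots placed on the left equals $(-1)^n$ times the bubble with $n$ dots on the right. Packaging this as a generating-function identity gives $\uptribubblel = \downtribubbler$, and substituting into the definitions \eqref{eatz} yields $\bubblegenl{u} = -\bubblegenr{-u}$; the right-hand side of this is precisely the picture appearing in the first equation of \eqref{infgrass}.

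Next I would derive the curl relations \eqref{curlsup} and \eqref{curlsdown} by applying the closure $f \mapsto (\id_\go \otimes \capmor) \circ (f \otimes \id_\go) \circ (\id_\go \otimes \cupmor)$ and its left-sided mirror to both sides of the relations in \eqref{uptricross} and \eqref{downtricross}. Each crossing term on the left becomes a curl with a triangle-dot placed either on the main strand or, after moving the dot across the closing cap via \eqref{dotmoves}, on the apex of the loop. On the right, the two-strand term becomes a strand carrying a triangle alongside a floating bubble $\uptribubblel$, which by the first equation of \eqref{infgrass} can be rewritten in terms of $\bubblegenr{u}$; the cup-cap term, after closure and the Brauer straightening relations in \eqref{brauer}, simplifies via \eqref{diamond} and \eqref{trirot} to a strand decorated with a specific combination of $\uptriforce$ and $\downtriforce$. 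Algebraic rearrangement then produces \eqref{curlsup}, with \eqref{curlsdown} following by the symmetric argument or by substituting $u \mapsto -u$ and invoking the first equation of \eqref{infgrass}.

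For the second equation in \eqref{infgrass} and for the bubble slide \eqref{bubslide}, I would further close off the main strand of \eqref{curlsup}: attaching a cup at the bottom and a cap at the top reduces the curl to a closed figure-eight-type diagram which, after Brauer simplification, relates $\bubblegenl{u}$ and $\bubblegenr{u}$ to a scalar and yields $\bubblegenl{u}\bubblegenr{u} = -1$. The bubble slide \eqref{bubslide} is then obtained by applying \eqref{curlsup} and \eqref{curlsdown} to the same decorated strand with a floating bubble on opposite sides; equating the two resulting expressions and solving for the ratio produces the dot factor $\frac{1-(u+x)^{-2}}{1-(u-x)^{-2}}$ featured in the slide formula, with the second equation in \eqref{bubslide} following from the first by the first equation of \eqref{infgrass}.

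The main technical obstacle throughout is the careful diagrammatic bookkeeping---tracking signs and positions of triangle-dots as they pass through closing caps and cups, and correctly simplifying the cup-cap residue terms after closure of \eqref{uptricross}. Once these diagrammatic simplifications are performed, the remaining work consists of routine power-series manipulations.
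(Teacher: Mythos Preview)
Your plan for \eqref{infgrass} and \eqref{curlsup}/\eqref{curlsdown} is essentially the paper's, modulo a harmless reordering: the paper proves the second equation of \eqref{infgrass} directly, by closing \eqref{uptricross} into a bubble \emph{before} deriving the curl relations, whereas you derive the curls first and then close once more. Both routes land on the same quadratic identity \eqref{peets}. One small correction: when you right-close the crossing relation, the floating bubble lands to the \emph{right} of the remaining strand and, after \eqref{trirot}, carries $\uptribubbler$ rather than $\uptribubblel$; this is precisely what feeds into the definition \eqref{eatz} of $\bubblegenr{u}$, so no appeal to the first equation of \eqref{infgrass} is needed at that point.

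Your description of \eqref{bubslide}, however, is not yet a plan. ``Applying \eqref{curlsup} and \eqref{curlsdown} to the same decorated strand with a floating bubble on opposite sides'' does not by itself produce any curls to which those relations could be applied. The missing step is to \emph{slide the bubble through the strand}: rewrite the bubble on the left as a closed loop wrapped across the strand via two crossings (using the first relation in \eqref{brauer}), then push the triangle through one of those crossings with \eqref{uptricross}. This yields one term with the bubble now on the right plus two curl-type correction terms, and it is to \emph{those} curls that \eqref{curlsup} and \eqref{curlsdown} apply. Without this intermediate manipulation there is no equation relating bubble-on-left to bubble-on-right, and hence nothing to ``solve for the ratio'' $\frac{1-(u+x)^{-2}}{1-(u-x)^{-2}}$. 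Once you insert this step, the rest of your outline goes through.
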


\begin{proof}
    To see the first equality of \cref{infgrass}, we note that
    \[
        \begin{tikzpicture}[centerzero]
            \draw (0.3,0) arc(0:360:0.3);
            \node at (0,0) {\dotlabel{-u}};
            \singdot{0.3,0};
        \end{tikzpicture}
        = \tfrac{-2u}{-2u-1} \left( -\, \downtribubbler + \tfrac{1}{2u} + 1 \right)
        \overset{\cref{trirot}}{=} -\, \bubblegenl{u}.
    \]
    Next,
    \[
        \uptribubblel
        \overset{\cref{brauer}}{=}
        \begin{tikzpicture}[centerzero]
            \draw (-0.2,-0.25) \braidup (0.2,0.25) arc(0:180:0.2) \braiddown (0.2,-0.25) arc(360:180:0.2);
            \uptriforce{-0.2,-0.25};
        \end{tikzpicture}
        \overset{\cref{uptricross}}{=}
        \begin{tikzpicture}[centerzero]
            \draw (-0.2,-0.25) \braidup (0.2,0.25) arc(0:180:0.2) \braiddown (0.2,-0.25) arc(360:180:0.2);
            \uptriforce{0.2,0.25};
        \end{tikzpicture}
        +
        \begin{tikzpicture}[centerzero]
            \draw (0.2,-0.15) -- (0.2,0.15) arc(0:180:0.2) -- (-0.2,-0.15) arc(180:360:0.2);
            \uptriforce{0.2,0};
            \uptriforce{-0.2,0};
        \end{tikzpicture}
        - \uptribubblel \ \uptribubbler
        \overset{\cref{mirror}}{\underset{\cref{trirot}}{=}}
        \uptribubbler
        +
        \begin{tikzpicture}[centerzero]
            \draw (0.2,-0.15) -- (0.2,0.15) arc(0:180:0.2) -- (-0.2,-0.15) arc(180:360:0.2);
            \downtriforce{0.2,0.15};
            \uptriforce{0.2,-0.15};
        \end{tikzpicture}
        - \uptribubblel \ \uptribubbler
        \overset{\cref{diamond}}{=}
        \uptribubbler
        + \tfrac{1}{2u} \left( \uptribubbler + \downtribubbler \right)
        - \uptribubblel \ \uptribubbler\, .
    \]
    Thus
    \begin{equation} \label{peets}
        \uptribubblel \ \uptribubbler
        = \left( \tfrac{1}{2u} + 1 \right) \uptribubbler + \left( \tfrac{1}{2u} - 1 \right) \uptribubblel\, ,
    \end{equation}
    which implies that
    \begin{equation}\label{euclid}
        \left( \uptribubblel - \tfrac{1}{2u} - 1 \right)
        \left( \uptribubbler - \tfrac{1}{2u} + 1 \right)
        = \left( \tfrac{1}{2u} + 1 \right) \left( \tfrac{1}{2u} - 1 \right).    	
    \end{equation}
    Multiplying both sides by $\frac{4u^2}{(1+2u)(1-2u)}$ then gives the second equality of \cref{infgrass}.

    To prove the first equality in \cref{curlsup}, we compute
    \begin{multline*}
        \begin{tikzpicture}[centerzero]
            \draw (0,-0.5) to[out=up,in=180] (0.3,0.2) to[out=0,in=up] (0.45,0) to[out=down,in=0] (0.3,-0.2) to[out=180,in=down] (0,0.5);
            \uptriforce{0.45,0};
        \end{tikzpicture}
        \overset{\cref{trirot}}{=}
        \begin{tikzpicture}[centerzero]
            \draw (0,-0.5) to[out=up,in=180] (0.3,0.2) to[out=0,in=up] (0.45,0) to[out=down,in=0] (0.3,-0.2) to[out=180,in=down] (0,0.5);
            \downtriforce{0.15,-0.14};
        \end{tikzpicture}
        \overset{\cref{downtricross}}{=}
        \begin{tikzpicture}[centerzero]
            \draw (0,-0.5) to[out=up,in=180] (0.3,0.2) to[out=0,in=up] (0.45,0) to[out=down,in=0] (0.3,-0.2) to[out=180,in=down] (0,0.5);
            \downtriforce{0.01,0.27};
        \end{tikzpicture}
        +
        \begin{tikzpicture}[centerzero]
            \draw (0,-0.5) -- (0,0.5);
            \downtriforce{0,0};
            \downtribubl{0.5,0};
        \end{tikzpicture}
        -
        \begin{tikzpicture}[centerzero]
            \draw (0,-0.5) -- (0,-0.2) arc(180:0:0.15) arc(180:360:0.15) -- (0.6,0.2) arc(0:180:0.15) arc(360:180:0.15) -- (0,0.5);
            \downtriforce{0.3,-0.2};
            \downtriforce{0,0.3};
        \end{tikzpicture}
        \overset{\cref{trirot}}{\underset{\cref{brauer}}{=}}
        \begin{tikzpicture}[centerzero]
            \draw (0,-0.5) -- (0,0.5);
            \downtriforce{0,0};
        \end{tikzpicture}
        +
        \begin{tikzpicture}[centerzero]
            \draw (0,-0.5) -- (0,0.5);
            \downtriforce{0,0};
            \downtribubl{0.5,0};
        \end{tikzpicture}
        -
        \begin{tikzpicture}[centerzero]
            \draw (0,-0.5) -- (0,0.5);
            \downtriforce{0,0.2};
            \uptriforce{0,-0.2};
        \end{tikzpicture}
        \overset{\cref{diamond}}{=}
        \begin{tikzpicture}[centerzero]
            \draw (0,-0.5) -- (0,0.5);
            \downtriforce{0,0};
        \end{tikzpicture}
        +
        \begin{tikzpicture}[centerzero]
            \draw (0,-0.5) -- (0,0.5);
            \downtriforce{0,0};
            \downtribubl{0.5,0};
        \end{tikzpicture}
        - \tfrac{1}{2u}
        \left(
            \begin{tikzpicture}[centerzero]
                \draw (0,-0.5) -- (0,0.5);
                \uptriforce{0,0};
            \end{tikzpicture}
            +
            \begin{tikzpicture}[centerzero]
                \draw (0,-0.5) -- (0,0.5);
                \downtriforce{0,0};
            \end{tikzpicture}
        \right)
        \\
        \overset{\cref{trirot}}{=}
        \begin{tikzpicture}[centerzero]
            \draw (0,-0.5) -- (0,0.5);
            \downtriforce{0,0};
        \end{tikzpicture}
        \left( \uptribubbler - \tfrac{1}{2u} + 1 \right)
        - \tfrac{1}{2u}\,
        \begin{tikzpicture}[centerzero]
            \draw (0,-0.5) -- (0,0.5);
            \uptriforce{0,0};
        \end{tikzpicture}
        \overset{\cref{eatz}}{=} \left( 1 - \tfrac{1}{2u} \right)
        \begin{tikzpicture}[centerzero]
            \draw (0,-0.5) -- (0,0.5);
            \downtriforce{0,0};
            \bubgenr{u}{0.4,0};
        \end{tikzpicture}
        - \tfrac{1}{2u}\,
        \begin{tikzpicture}[centerzero]
            \draw (0,-0.5) -- (0,0.5);
            \uptriforce{0,0};
        \end{tikzpicture}
        \, .
    \end{multline*}
    The proof of the second equality in \cref{curlsup} and both equalities in \cref{curlsdown} are analogous.

    To prove the first equality in \cref{bubslide}, we compute
    \begin{multline*}
        \begin{tikzpicture}[centerzero]
            \draw (0,-0.6) -- (0,0.6);
            \uptribubr{-0.5,0};
        \end{tikzpicture}
        \ \overset{\cref{brauer}}{=}\
        \begin{tikzpicture}[centerzero]
            \draw (0,-0.6) -- (0,0.6);
            \draw (-0.5,0) to[out=90,in=180] (-0.3,0.4) to[out=0,in=90,looseness=0.5] (0.2,-0.2) to[out=down,in=down] (-0.5,0);
            \uptriforce{-0.15,0.33};
        \end{tikzpicture}
        \ \overset{\cref{uptricross}}{=}\
        \begin{tikzpicture}[centerzero]
            \draw (0,-0.6) -- (0,0.6);
            \draw (-0.5,0) to[out=90,in=180] (-0.3,0.4) to[out=0,in=90,looseness=0.5] (0.25,-0.2) to[out=down,in=down] (-0.5,0);
            \uptriforce{0.15,0.03};
        \end{tikzpicture}
        +
        \begin{tikzpicture}[centerzero]
            \draw (0,-0.6) -- (0,-0.45) \braidup (-0.2,0) -- (-0.2,0.2) arc(0:180:0.2) -- (-0.6,-0.2) arc(180:270:0.2) to[out=right,in=down] (0,0.4) -- (0,0.6);
            \uptriforce{-0.2,0};
            \uptriforce{0,0.4};
        \end{tikzpicture}
        -
        \begin{tikzpicture}[centerzero]
            \draw (0,-0.6) to[out=up,in=down] (-0.2,-0.15) arc (180:0:0.15) to[out=down,in=down] (-0.5,-0.25) -- (-0.5,0.25) arc(180:0:0.15) arc(180:360:0.15) -- (0.1,0.6);
            \uptriforce{0.1,-0.15};
            \uptriforce{-0.2,0.25};
        \end{tikzpicture}
        \ \overset{\cref{brauer}}{=}\
        \begin{tikzpicture}[centerzero]
            \draw (0,-0.6) -- (0,0.6);
            \uptribubr{0.4,0};
        \end{tikzpicture}
        +
        \begin{tikzpicture}[centerzero]
            \draw (0,-0.6) -- (0,-0.5) to[out=up,in=0] (-0.3,0.2) to[out=left,in=up] (-0.45,0) to[out=down,in=left] (-0.3,-0.2) to[out=right,in=down] (0,0.5) -- (0,0.6);
            \downtriforce{-0.45,0};
            \uptriforce{0,0.4};
        \end{tikzpicture}
        -
        \begin{tikzpicture}[centerzero]
            \draw (0,-0.6) -- (0,-0.5) to[out=up,in=180] (0.3,0.2) to[out=0,in=up] (0.45,0) to[out=down,in=0] (0.3,-0.2) to[out=180,in=down] (0,0.5) -- (0,0.6);
            \uptriforce{0.45,0};
            \downtriforce{0,0.4};
        \end{tikzpicture}
        \\
        \overset{\cref{curlsup}}{\underset{\cref{curlsdown}}{=}}\
        \begin{tikzpicture}[centerzero]
            \draw (0,-0.6) -- (0,0.6);
            \uptribubr{0.4,0};
        \end{tikzpicture}
        + \left( 1 - \tfrac{1}{2u} \right) \bubblegenr{u}\
        \begin{tikzpicture}[centerzero]
            \draw (0,-0.6) -- (0,0.6);
            \uptriforce{0,-0.2};
            \uptriforce{0,0.2};
        \end{tikzpicture}
        - \left( 1 - \tfrac{1}{2u} \right)
        \begin{tikzpicture}[centerzero]
            \draw (0,-0.6) -- (0,0.6);
            \downtriforce{0,0.2};
            \downtriforce{0,-0.2};
            \bubgenr{u}{0.4,0};
        \end{tikzpicture}
        \ .
    \end{multline*}
    Thus
    \[
        \begin{tikzpicture}[centerzero]
            \draw (0,-0.4) -- (0,0.4);
            \uptribubr{-0.5,0};
        \end{tikzpicture}
        \ - \left( 1 - \tfrac{1}{2u} \right)\
        \begin{tikzpicture}[centerzero]
            \draw (0,-0.4) -- (0,0.4);
            \bubgenr{u}{-0.5,0};
            \uptriforce{0,0.15};
            \uptriforce{0,-0.15};
        \end{tikzpicture}
        \ =\
        \begin{tikzpicture}[centerzero]
            \draw (0,-0.4) -- (0,0.4);
            \uptribubr{0.4,0};
        \end{tikzpicture}
        - \left( 1 - \tfrac{1}{2u} \right)
        \begin{tikzpicture}[centerzero]
            \draw (0,-0.4) -- (0,0.4);
            \downtriforce{0,0.2};
            \downtriforce{0,-0.2};
            \bubgenr{u}{0.4,0};
        \end{tikzpicture}
        \ .
    \]
    Adding $\left( 1-\frac{1}{2u} \right) \idstrand$ to both sides, then multiplying both sides by $\frac{2u}{2u-1}$, gives
    \[
        \bubblegenr{u}
        \left(\
            \begin{tikzpicture}[centerzero]
                \draw (0,-0.4) -- (0,0.4);
            \end{tikzpicture}
            -
            \begin{tikzpicture}[centerzero]
                \draw (0,-0.4) -- (0,0.4);
                \uptriforce{0,0.2};
                \uptriforce{0,-0.2};
            \end{tikzpicture}
        \right)
        \ =\
        \left(\
            \begin{tikzpicture}[centerzero]
                \draw (0,-0.4) -- (0,0.4);
            \end{tikzpicture}
            -
            \begin{tikzpicture}[centerzero]
                \draw (0,-0.4) -- (0,0.4);
                \downtriforce{0,0.2};
                \downtriforce{0,-0.2};
            \end{tikzpicture}
        \right)
        \bubblegenr{u}
        \ .
    \]
    Since
    \[
        \begin{tikzpicture}[centerzero]
            \draw (0,-0.4) -- (0,0.4);
        \end{tikzpicture}
        -
        \begin{tikzpicture}[centerzero]
            \draw (0,-0.4) -- (0,0.4);
            \uptriforce{0,0.2};
            \uptriforce{0,-0.2};
        \end{tikzpicture}
        =
        \begin{tikzpicture}[centerzero]
            \draw (0,-0.4) -- (0,0.4);
            \multdot{0,0}{west}{1-(u-x)^{-2}};
        \end{tikzpicture}
        \qquad \text{and} \qquad
        \begin{tikzpicture}[centerzero]
            \draw (0,-0.4) -- (0,0.4);
        \end{tikzpicture}
        -
        \begin{tikzpicture}[centerzero]
            \draw (0,-0.4) -- (0,0.4);
            \downtriforce{0,0.2};
            \downtriforce{0,-0.2};
        \end{tikzpicture}
        =
        \begin{tikzpicture}[centerzero]
            \draw (0,-0.4) -- (0,0.4);
            \multdot{0,0}{west}{1-(u+x)^{-2}};
        \end{tikzpicture}
        ,
    \]
    the result follows.  Finally, to obtain the second equality in \cref{bubslide}, we multiply both sides of the first equality in \cref{bubslide} on the left and right by $\bubblegenl{u}$, use \cref{infgrass}, compose both sides with
    \[
        \begin{tikzpicture}[centerzero]
            \draw (0,-0.4) -- (0,0.4);
            \multdot{0,0}{west}{\frac{1-(u-x)^{-2}}{1-(u+x)^{-2}}};
        \end{tikzpicture}
        ,
    \]
    and then multiply both sides by $-1$.
\end{proof}

\begin{rem}
    The power series $(u-\frac{1}{2}) \bubblegenr{u}$ was considered in \cite[\S 4]{GRS23}, and \cref{infgrass,bubslide} correspond to Lemmas 4.1 and 4.2 there.  When the bubbles $\multbubbler{r}$, $r \in \N$, are evaluated at scalars, the second relation in \cref{infgrass} corresponds to \cite[Rem.~2.11]{AMR06}, particularly when written in the form \cref{euclid}.  Similarly, \cref{bubslide} corresponds to \cite[Lem. 4.15]{AMR06}.  We will discuss these connections in more detail in \cref{sec:admissibleBrauer}.
\end{rem}

The relations \cref{euclid} are equivalent to the following lemma, which shows how the bubbles with an odd number of dots can be written in terms of bubbles with fewer dots.

\begin{lem}[{\cite[Lem.~3.4]{RS19}}] \label{scratch}
    In $\AB$, we have
    \begin{equation} \label{pow}
        \multbubbler{2r+1}
        = \frac{1}{2} \left( -\, \multbubbler{2r} + \sum_{n=0}^{2r} (-1)^n\, \multbubbler{n} \multbubbler{2r-n} \right),\qquad
        r \in \N.
    \end{equation}
\end{lem}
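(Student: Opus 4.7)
The plan is to extract \cref{pow} from the infinite grassmannian relation \cref{euclid} by reading off the coefficient of $u^{-(2r+2)}$ on both sides.

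First I would rewrite \cref{euclid} using only clockwise bubbles. Applying the rotation relations \cref{trirot} to the cap (or equivalently the cup) of a bubble gives $\uptribubblel = \downtribubbler$, and hence the two Laurent expansions
\[
    \uptribubbler = \sum_{n \ge 0} u^{-n-1}\, \multbubbler{n},
    \qquad
    \uptribubblel = \sum_{n \ge 0} (-1)^n u^{-n-1}\, \multbubbler{n}.
\]
Writing $\beta_n := \multbubbler{n}$ and denoting the two factors on the left side of \cref{euclid} by
\[
    A(u) := \uptribubblel - \tfrac{1}{2u} - 1,
    \qquad
    C(u) := \uptribubbler - \tfrac{1}{2u} + 1,
\]
the relation \cref{euclid} becomes $A(u) C(u) = \tfrac{1}{4u^2} - 1$.

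Next I would compare coefficients of $u^{-(2r+2)}$ for $r \ge 0$. The key observation is that $\beta_{2r+1}$ occurs as the $u^{-(2r+2)}$-coefficient of \emph{both} $A(u)$ and $C(u)$; it appears with opposite signs in the two expansions, but the opposite constant terms $\mp 1$ in $A$ and $C$ ensure that the two occurrences in the product combine to $-2 \beta_{2r+1}$ rather than cancelling. Writing out the convolution
\[
    \sum_{k=0}^{2r+2} [A(u)]_{u^{-k}} \cdot [C(u)]_{u^{-(2r+2-k)}} = \bigl[\tfrac{1}{4u^2} - 1\bigr]_{u^{-(2r+2)}},
\]
isolating the contributions from $k \in \{0, 2r+2\}$ (which yield $-2\beta_{2r+1}$) and from $k \in \{1, 2r+1\}$ (which yield $(2\beta_0 - 1)\beta_{2r}$ after using $[A]_{u^{-1}} = [C]_{u^{-1}} = \beta_0 - \tfrac{1}{2}$), and reindexing the middle terms by $m = k - 1$, I obtain
\[
    2\beta_{2r+1} = (2\beta_0 - 1)\beta_{2r} + \sum_{m=1}^{2r-1} (-1)^m \beta_m \beta_{2r-m}
\]
for $r \ge 1$, with the $r = 0$ case handled by the nontrivial $u^{-2}$-coefficient $\tfrac{1}{4}$ on the right (cancelling the $\tfrac{1}{4}$ from $(\beta_0 - \tfrac{1}{2})^2$) to give $2\beta_1 = \beta_0^2 - \beta_0$.

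Finally, to identify this with \cref{pow}, I would peel off the $n = 0$ and $n = 2r$ terms of $\sum_{n=0}^{2r}(-1)^n \beta_n \beta_{2r-n}$, each equal to $\beta_0 \beta_{2r}$, so that the remaining terms are precisely $\sum_{m=1}^{2r-1}(-1)^m \beta_m \beta_{2r-m}$ and the leading $2\beta_0 \beta_{2r}$ combines with the $-\beta_{2r}$ outside to give $(2\beta_0 - 1)\beta_{2r}$. The only real obstacle here is the sign bookkeeping---all the substantive content is already encoded in \cref{euclid}.
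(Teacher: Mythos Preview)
Your argument is correct and essentially the same as the paper's: both extract the odd-bubble relation by reading off coefficients from the infinite grassmannian relation, using the rotation identity $\uptribubblel = \downtribubbler$ to express $\uptribubblel$ as $\sum_{n\ge 0}(-1)^n \beta_n u^{-n-1}$. The only cosmetic difference is that the paper works from the unfactored form \cref{peets} rather than \cref{euclid}; since the right-hand side of \cref{peets} is already linear in bubbles, the coefficient comparison there avoids the $(\beta_0-\tfrac12)^2$ cross term and the separate $r=0$ case, but the content is identical.
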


\begin{proof}
    Although \cref{pow} is proved in \cite[Lem.~3.4]{RS19}, we give here a proof showing how it follows from \cref{infgrass}.  As explained in the proof of \cref{lanote}, the second equation in \cref{infgrass} is equivalent to \cref{peets}.   From \cref{peets} and the second relation in \cref{dotmoves}, we have
    \[
        \left( \sum_{n=0}^\infty (-1)^n\, \multbubbler{n} u^{-n-1} \right)
        \left( \sum_{n=0}^\infty \multbubbler{n} u^{-n-1} \right)
        = \sum_{n=0}^\infty \left( \frac{1+(-1)^n}{2u} + 1 - (-1)^n\right) \multbubbler{n} u^{-n-1}.
    \]
    Equating coefficients of $u^{-m-1}$, $m \in \N$, gives
    \begin{equation} \label{hubie}
        \sum_{n=0}^{m-1} (-1)^n\, \multbubbler{n} \multbubbler{m-n-1}
        = \frac{1 + (-1)^{m-1}}{2}\, \multbubbler{m-1} + \big( 1 - (-1)^m \big)\, \multbubbler{m}.
    \end{equation}
    When $m$ is even, both sides of \cref{hubie} are equal to zero.  On the other hand, when $m = 2r+1$, $r \in \N$, \cref{hubie} yields \cref{pow}.
\end{proof}

\section{The affine Kauffman category\label{sec:AK}}

In this section we recall the definition of the affine Kauffman category of \cite{GRS22}.  As for the affine Brauer category, we use a generating function approach to deduce some important relations that hold in this category.  In this section, $\kk$ is an arbitrary commutative ring.

\begin{defin}[{\cite[Def.~1.3]{GRS22}}]
    Suppose $z,t \in \kk^\times$.  The \emph{affine Kauffman category} $\AK= \AK(z,t)$ is the strict $\kk$-linear monoidal category generated by an object $\gok$ and morphisms
    \[
        \capmor \colon \gok \otimes \gok \to \one,\quad
        \cupmor \colon \one \to \gok \otimes \gok,\quad
        \poscross,\, \negcross \colon \gok \otimes \gok \to \gok \otimes \gok,\quad
        \dotstrand[black], \multdotstrand[black]{east}{-1} \colon \gok \to \gok,
    \]
    subject to the relations
    \begin{gather} \label{braid}
        \begin{tikzpicture}[centerzero]
            \draw (0.2,-0.4) to[out=135,in=down] (-0.15,0) to[out=up,in=225] (0.2,0.4);
            \draw[wipe] (-0.2,-0.4) to[out=45,in=down] (0.15,0) to[out=up,in=-45] (-0.2,0.4);
            \draw (-0.2,-0.4) to[out=45,in=down] (0.15,0) to[out=up,in=-45] (-0.2,0.4);
        \end{tikzpicture}
        \ =\
        \begin{tikzpicture}[centerzero]
            \draw (-0.2,-0.4) -- (-0.2,0.4);
            \draw (0.2,-0.4) -- (0.2,0.4);
        \end{tikzpicture}
        \ =\
        \begin{tikzpicture}[centerzero]
            \draw (-0.2,-0.4) to[out=45,in=down] (0.15,0) to[out=up,in=-45] (-0.2,0.4);
            \draw[wipe] (0.2,-0.4) to[out=135,in=down] (-0.15,0) to[out=up,in=225] (0.2,0.4);
            \draw (0.2,-0.4) to[out=135,in=down] (-0.15,0) to[out=up,in=225] (0.2,0.4);
        \end{tikzpicture}
        \ ,\qquad
        \begin{tikzpicture}[centerzero]
            \draw (0.4,-0.4) -- (-0.4,0.4);
            \draw[wipe] (0,-0.4) to[out=135,in=down] (-0.32,0) to[out=up,in=225] (0,0.4);
            \draw (0,-0.4) to[out=135,in=down] (-0.32,0) to[out=up,in=225] (0,0.4);
            \draw[wipe] (-0.4,-0.4) -- (0.4,0.4);
            \draw (-0.4,-0.4) -- (0.4,0.4);
        \end{tikzpicture}
        \ =\
        \begin{tikzpicture}[centerzero]
            \draw (0.4,-0.4) -- (-0.4,0.4);
            \draw[wipe] (0,-0.4) to[out=45,in=down] (0.32,0) to[out=up,in=-45] (0,0.4);
            \draw (0,-0.4) to[out=45,in=down] (0.32,0) to[out=up,in=-45] (0,0.4);
            \draw[wipe] (-0.4,-0.4) -- (0.4,0.4);
            \draw (-0.4,-0.4) -- (0.4,0.4);
        \end{tikzpicture}
        \ ,
        \\ \label{drops}
        \begin{tikzpicture}[centerzero]
            \draw (-0.3,0.4) -- (-0.3,0) arc(180:360:0.15) arc(180:0:0.15) -- (0.3,-0.4);
        \end{tikzpicture}
        =
        \begin{tikzpicture}[centerzero]
            \draw (0,-0.4) -- (0,0.4);
        \end{tikzpicture}
        = \
        \begin{tikzpicture}[centerzero]
            \draw (-0.3,-0.4) -- (-0.3,0) arc(180:0:0.15) arc(180:360:0.15) -- (0.3,0.4);
        \end{tikzpicture}
        \ ,\qquad
        \begin{tikzpicture}[anchorbase]
            \draw (-0.15,-0.4) to[out=60,in=-90] (0.15,0) arc(0:180:0.15);
            \draw[wipe] (-0.15,0) to[out=-90,in=120] (0.15,-0.4);
            \draw (-0.15,0) to[out=-90,in=120] (0.15,-0.4);
        \end{tikzpicture}
        = t \
        \capmor
        \ ,\qquad
        \begin{tikzpicture}[centerzero]
            \draw (-0.2,-0.3) -- (-0.2,-0.1) arc(180:0:0.2) -- (0.2,-0.3);
            \draw[wipe] (-0.3,0.3) \braiddown (0,-0.3);
            \draw (-0.3,0.3) \braiddown (0,-0.3);
        \end{tikzpicture}
        =
        \begin{tikzpicture}[centerzero]
            \draw (-0.2,-0.3) -- (-0.2,-0.1) arc(180:0:0.2) -- (0.2,-0.3);
            \draw[wipe] (0.3,0.3) \braiddown (0,-0.3);
            \draw (0.3,0.3) \braiddown (0,-0.3);
        \end{tikzpicture}
        \ ,
        \\ \label{skein}
        \begin{tikzpicture}[centerzero]
            \draw (0.35,-0.35) -- (-0.35,0.35);
            \draw[wipe] (-0.35,-0.35) -- (0.35,0.35);
            \draw (-0.35,-0.35) -- (0.35,0.35);
        \end{tikzpicture}
        \ -\
        \begin{tikzpicture}[centerzero]
            \draw (-0.35,-0.35) -- (0.35,0.35);
            \draw[wipe] (0.35,-0.35) -- (-0.35,0.35);
            \draw (0.35,-0.35) -- (-0.35,0.35);
        \end{tikzpicture}
        = z
        \left(\
            \begin{tikzpicture}[centerzero]
                \draw (-0.2,-0.35) -- (-0.2,0.35);
                \draw (0.2,-0.35) -- (0.2,0.35);
            \end{tikzpicture}
            \ -\
            \begin{tikzpicture}[centerzero]
                \draw (-0.2,-0.35) -- (-0.2,-0.3) arc(180:0:0.2) -- (0.2,-0.35);
                \draw (-0.2,0.35) -- (-0.2,0.3) arc(180:360:0.2) -- (0.2,0.35);
            \end{tikzpicture}\
        \right)
        ,\qquad
        \bubble\ = \frac{t-t^{-1}}{z}+1,
        \\ \label{kauffdot}
        \begin{tikzpicture}[centerzero]
            \draw (-0.35,-0.35) -- (0.35,0.35);
            \draw[wipe] (0.35,-0.35) -- (-0.35,0.35);
            \draw (0.35,-0.35) -- (-0.35,0.35);
            \singdot[black]{-0.2,-0.2};
        \end{tikzpicture}
        \ =\
        \begin{tikzpicture}[centerzero]
            \draw (0.35,-0.35) -- (-0.35,0.35);
            \draw[wipe] (-0.35,-0.35) -- (0.35,0.35);
            \draw (-0.35,-0.35) -- (0.35,0.35);
            \singdot[black]{0.17,0.17};
        \end{tikzpicture}
        \ ,\qquad
        \begin{tikzpicture}[anchorbase]
            \draw (-0.2,-0.3) -- (-0.2,-0.1) arc(180:0:0.2) -- (0.2,-0.3);
            \singdot[black]{-0.2,-0.1};
        \end{tikzpicture}
        \ =\
        \begin{tikzpicture}[anchorbase]
            \draw (-0.2,-0.3) -- (-0.2,-0.1) arc(180:0:0.2) -- (0.2,-0.3);
            \multdot[black]{0.2,-0.1}{west}{-1};
        \end{tikzpicture}
        \ ,\qquad
        \begin{tikzpicture}[centerzero]
            \draw (0,-0.35) -- (0,0.35);
            \singdot[black]{0,-0.15};
            \multdot[black]{0,0.15}{east}{-1};
        \end{tikzpicture}
        \ =\
        \begin{tikzpicture}[centerzero]
            \draw (0,-0.35) -- (0,0.35);
        \end{tikzpicture}
        \ =
        \begin{tikzpicture}[centerzero]
            \draw (0,-0.35) -- (0,0.35);
            \singdot[black]{0,0.15};
            \multdot[black]{0,-0.15}{east}{-1};
        \end{tikzpicture}
        \ .
    \end{gather}
    The first relation in \cref{skein} is called the \emph{Kauffman skein relation}.  We call the morphism $\dotstrand[black]$ a \emph{dot}, or a \emph{Kauffman dot}, when we want to emphasize that we are working in $\AK$, as opposed to $\AB$.
\end{defin}

\begin{rem} \label{glass}
    The category of \cite[Def.~1.3]{GRS22} is the actually the reverse of ours.  To be precise, let $\AK'(z,\delta)$ be the category defined in \cite[Def.~1.3]{GRS22}.  Then, comparing the presentations, one sees that we have an isomorphism of $\kk$-linear monoidal categories $\AK(z,t) \xrightarrow{\cong} \AK'(z,t)^\rev$ defined on generating morphisms by
    \[
        \poscross \mapsto \poscross, \quad
        \negcross \mapsto \negcross, \quad
        \cupmor \mapsto \cupmor\, , \quad
        \capmor \mapsto \capmor\, , \quad
        \dotstrand[black] \mapsto \dotstrand[black]\, , \quad
        \multdotstrand[black]{east}{-1} \mapsto \dotstrand.
    \]
\end{rem}

\begin{lem}
    The following relations hold in $\AK$:
    \begin{gather} \label{chicken}
        \begin{tikzpicture}[anchorbase]
            \draw (0.15,0) arc(0:180:0.15) to[out=-90,in=120] (0.15,-0.4);
            \draw[wipe] (-0.15,-0.4) to[out=60,in=-90] (0.15,0);
            \draw (-0.15,-0.4) to[out=60,in=-90] (0.15,0);
        \end{tikzpicture}
        = t^{-1} \
        \capmor
        \ ,\qquad
        \begin{tikzpicture}[anchorbase]
            \draw (-0.15,0) to[out=90,in=-120] (0.15,0.4);
            \draw[wipe] (-0.15,0.4) to[out=-60,in=90] (0.15,0);
            \draw (-0.15,0.4) to[out=-60,in=90] (0.15,0) arc(0:-180:0.15);
        \end{tikzpicture}
        = t \
        \cupmor
        \ ,\qquad
        \begin{tikzpicture}[anchorbase]
            \draw (-0.15,0.4) to[out=-60,in=90] (0.15,0) arc(0:-180:0.15);
            \draw[wipe] (-0.15,0) to[out=90,in=-120] (0.15,0.4);
            \draw (-0.15,0) to[out=90,in=-120] (0.15,0.4);
        \end{tikzpicture}
        = t^{-1} \
        \cupmor
        \ ,\qquad
        \begin{tikzpicture}[anchorbase]
            \draw (-0.2,0.3) -- (-0.2,0.1) arc(180:360:0.2) -- (0.2,0.3);
            \singdot[black]{-0.2,0.1};
        \end{tikzpicture}
        \ =\
        \begin{tikzpicture}[anchorbase]
            \draw (-0.2,0.3) -- (-0.2,0.1) arc(180:360:0.2) -- (0.2,0.3);
            \multdot[black]{0.2,0.1}{west}{-1};
        \end{tikzpicture}
        ,
        \\ \label{nuggets}
        \begin{tikzpicture}[centerzero]
            \draw (-0.3,0.3) \braiddown (0,-0.3);
            \draw[wipe] (-0.2,-0.3) -- (-0.2,-0.1) arc(180:0:0.2) -- (0.2,-0.3);
            \draw (-0.2,-0.3) -- (-0.2,-0.1) arc(180:0:0.2) -- (0.2,-0.3);
        \end{tikzpicture}
        =
        \begin{tikzpicture}[centerzero]
            \draw (0.3,0.3) \braiddown (0,-0.3);
            \draw[wipe] (-0.2,-0.3) -- (-0.2,-0.1) arc(180:0:0.2) -- (0.2,-0.3);
            \draw (-0.2,-0.3) -- (-0.2,-0.1) arc(180:0:0.2) -- (0.2,-0.3);
        \end{tikzpicture}
        \ ,\qquad
        \begin{tikzpicture}[centerzero]
            \draw (-0.2,0.3) -- (-0.2,0.1) arc(180:360:0.2) -- (0.2,0.3);
            \draw[wipe] (-0.3,-0.3) \braidup (0,0.3);
            \draw (-0.3,-0.3) \braidup (0,0.3);
        \end{tikzpicture}
        =
        \begin{tikzpicture}[centerzero]
            \draw (-0.2,0.3) -- (-0.2,0.1) arc(180:360:0.2) -- (0.2,0.3);
            \draw[wipe] (0.3,-0.3) \braidup (0,0.3);
            \draw (0.3,-0.3) \braidup (0,0.3);
        \end{tikzpicture}
        \ ,\qquad
        \begin{tikzpicture}[centerzero]
            \draw (-0.3,-0.3) \braidup (0,0.3);
            \draw[wipe] (-0.2,0.3) -- (-0.2,0.1) arc(180:360:0.2) -- (0.2,0.3);
            \draw (-0.2,0.3) -- (-0.2,0.1) arc(180:360:0.2) -- (0.2,0.3);
        \end{tikzpicture}
        =
        \begin{tikzpicture}[centerzero]
            \draw (0.3,-0.3) \braidup (0,0.3);
            \draw[wipe] (-0.2,0.3) -- (-0.2,0.1) arc(180:360:0.2) -- (0.2,0.3);
            \draw (-0.2,0.3) -- (-0.2,0.1) arc(180:360:0.2) -- (0.2,0.3);
        \end{tikzpicture}
        \ .
    \end{gather}
\end{lem}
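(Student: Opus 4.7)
The plan is to derive all the relations in \cref{chicken,nuggets} from the defining relations of $\AK$, using the Kauffman skein relation \cref{skein} together with the strict pivotal structure provided by the cups and caps (via the snake identities, the first relation in \cref{drops}).

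First, I would prove the first relation in \cref{chicken}. Its left-hand side differs from the left-hand side of the third relation in \cref{drops} only in the type of a single crossing (positive in \cref{chicken}, negative in \cref{drops}). Applying the Kauffman skein relation at that crossing, then the third relation in \cref{drops} to evaluate the right-twisted cap, and then the bubble evaluation $\bubble = \tfrac{t-t^{-1}}{z}+1$ from \cref{skein}, I obtain
\[
    \begin{tikzpicture}[anchorbase]
        \draw (0.15,0) arc(0:180:0.15) to[out=-90,in=120] (0.15,-0.4);
        \draw[wipe] (-0.15,-0.4) to[out=60,in=-90] (0.15,0);
        \draw (-0.15,-0.4) to[out=60,in=-90] (0.15,0);
    \end{tikzpicture}
    \ =\
    \begin{tikzpicture}[anchorbase]
        \draw (-0.15,-0.4) to[out=60,in=-90] (0.15,0) arc(0:180:0.15);
        \draw[wipe] (-0.15,0) to[out=-90,in=120] (0.15,-0.4);
        \draw (-0.15,0) to[out=-90,in=120] (0.15,-0.4);
    \end{tikzpicture}
    \ +\ z \bigl( \capmor - \bubble\,\capmor \bigr)
    \ =\ t\,\capmor - (t-t^{-1})\,\capmor
    \ =\ t^{-1}\,\capmor.
\]

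Next, I would deduce the remaining relations in \cref{chicken} and the second relation in \cref{nuggets} by $180\degree$-rotation of previously established relations via the pivotal structure. Concretely, the second, third, and fourth relations in \cref{chicken} arise as $180\degree$-rotations of the third relation in \cref{drops}, the first relation in \cref{chicken} (just proved), and the second relation in \cref{kauffdot}, respectively; similarly, the second relation in \cref{nuggets} is a $180\degree$-rotation of the third relation in \cref{drops}. Each rotation is effected in the standard way by composing both sides with identity strands built from cup--cap pairs and simplifying via the snake identity, as in the proof of \cref{mirrorprop}.

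Finally, the first and third relations in \cref{nuggets} differ from their ``in-front'' counterparts (the third relation in \cref{drops} and the second relation in \cref{nuggets}, respectively) only in the crossing type where the sliding strand meets a leg of the arch. I would apply the Kauffman skein relation at that crossing on both sides of the corresponding in-front equation; the skein resolution terms $z(\idstrand\,\idstrand - \cupmor\,\capmor)$ attach to ambient diagrams that are isotopic between the two sides of the equation, so they cancel, leaving the desired ``behind'' version. The main bookkeeping obstacle is tracking the positive/negative crossing types correctly in the tikz diagrams and verifying the isotopy of these skein resolution terms on each side; once this is settled, the calculations are routine.
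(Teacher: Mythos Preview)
Your proposal is correct and follows essentially the same outline as the paper, which also treats these as standard consequences of the defining relations together with rotation via the pivotal structure. The one small difference is in the first relation of \cref{chicken}: you derive it from the skein relation and the bubble value, whereas the paper instead composes the third equality in \cref{drops} with $\poscross$ on the bottom and uses the first relation in \cref{braid} (invertibility of crossings) to cancel, avoiding any appeal to the bubble relation. Both arguments are valid; the paper's is marginally slicker since it uses only the braid inverse, while yours shows the relation already holds using the skein and bubble axioms alone. For the first relation in \cref{nuggets} your skein-and-cancellation argument is exactly what the paper writes out, and for the remaining relations both you and the paper invoke rotation (the paper would also get the third relation in \cref{nuggets} directly by rotating the first, rather than via a second skein argument).
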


\begin{proof}
    Since these relations follow from standard arguments, we give only a sketch of the proof.  The first relation in \cref{chicken} follows from the third equality in \cref{drops} by attaching $\poscross$ to the bottom of both diagrams, then using the first equality in \cref{braid}.  To prove the first equality in \cref{nuggets}, we compute
    \[
        \begin{tikzpicture}[centerzero]
            \draw (-0.3,0.3) \braiddown (0,-0.3);
            \draw[wipe] (-0.2,-0.3) -- (-0.2,-0.1) arc(180:0:0.2) -- (0.2,-0.3);
            \draw (-0.2,-0.3) -- (-0.2,-0.1) arc(180:0:0.2) -- (0.2,-0.3);
        \end{tikzpicture}
        \overset{\cref{skein}}{=}
        \begin{tikzpicture}[centerzero]
            \draw (-0.2,-0.3) -- (-0.2,-0.1) arc(180:0:0.2) -- (0.2,-0.3);
            \draw[wipe] (-0.3,0.3) \braiddown (0,-0.3);
            \draw (-0.3,0.3) \braiddown (0,-0.3);
        \end{tikzpicture}
        + z
        \left(\,
            \begin{tikzpicture}[centerzero]
                \draw (-0.3,-0.3) -- (-0.3,0.3);
                \draw (-0.1,-0.3) -- (-0.1,-0.1) arc(180:0:0.2) -- (0.3,-0.3);
            \end{tikzpicture}
            -
            \begin{tikzpicture}[centerzero]
                \draw (-0.3,-0.3) -- (-0.3,-0.1) arc(180:0:0.2) -- (0.1,-0.3);
                \draw (0.3,-0.3) -- (0.3,0.3);
            \end{tikzpicture}
        \, \right)
        \overset{\cref{drops}}{=}
        \begin{tikzpicture}[centerzero]
            \draw (-0.2,-0.3) -- (-0.2,-0.1) arc(180:0:0.2) -- (0.2,-0.3);
            \draw[wipe] (0.3,0.3) \braiddown (0,-0.3);
            \draw (0.3,0.3) \braiddown (0,-0.3);
        \end{tikzpicture}
        + z
        \left(\,
            \begin{tikzpicture}[centerzero]
                \draw (-0.3,-0.3) -- (-0.3,0.3);
                \draw (-0.1,-0.3) -- (-0.1,-0.1) arc(180:0:0.2) -- (0.3,-0.3);
            \end{tikzpicture}
            -
            \begin{tikzpicture}[centerzero]
                \draw (-0.3,-0.3) -- (-0.3,-0.1) arc(180:0:0.2) -- (0.1,-0.3);
                \draw (0.3,-0.3) -- (0.3,0.3);
            \end{tikzpicture}
        \, \right)
        \overset{\cref{skein}}{=}
        \begin{tikzpicture}[centerzero]
            \draw (0.3,0.3) \braiddown (0,-0.3);
            \draw[wipe] (-0.2,-0.3) -- (-0.2,-0.1) arc(180:0:0.2) -- (0.2,-0.3);
            \draw (-0.2,-0.3) -- (-0.2,-0.1) arc(180:0:0.2) -- (0.2,-0.3);
        \end{tikzpicture}
        \ .
    \]
    The remaining relations following from rotation using cups and caps.
\end{proof}

We define
\[
    \begin{tikzpicture}[centerzero]
        \draw (0,-0.2) -- (0,0.2);
        \multdot[black]{0,0}{east}{n};
    \end{tikzpicture}
    :=
    \left( \dotstrand[black] \right)^{\circ n},
    \qquad
    \begin{tikzpicture}[centerzero]
        \draw (0,-0.2) -- (0,0.2);
        \multdot[black]{0,0}{east}{-n};
    \end{tikzpicture}
    :=
    \left( \multdotstrand[black]{east}{-1} \right)^{\circ n},
    \qquad n \in \N.
\]
We then adopt the same generating function conventions as in \cref{sec:AB}, using $x$ to denote the dot.  The difference here is that the dot is now invertible.  Noting that
\[
    u(u-x)^{-1} = 1 + u^{-1}x + u^{-2}x^2 + \dotsb \in \kk[x,x^{-1}] \llbracket u^{-1} \rrbracket,
\]
it is convenient to define
\[
    \begin{tikzpicture}[centerzero]
        \draw (0,-0.3) -- (0,0.3);
        \uptriforce[black]{0,0};
    \end{tikzpicture}
    :=
    \begin{tikzpicture}[centerzero]
        \draw (0,-0.3) -- (0,0.3);
        \multdot[black]{0,0}{west}{u(u-x)^{-1}};
    \end{tikzpicture}
    \qquad \text{and} \qquad
    \begin{tikzpicture}[centerzero]
        \draw (0,-0.3) -- (0,0.3);
        \downtriforce[black]{0,0};
    \end{tikzpicture}
    :=
    \begin{tikzpicture}[centerzero]
        \draw (0,-0.3) -- (0,0.3);
        \multdot[black]{0,0}{west}{u(u-x^{-1})^{-1}};
    \end{tikzpicture}
    .
\]
It follows that
\begin{equation} \label{blackrot}
    \begin{tikzpicture}[anchorbase]
        \draw (-0.2,-0.3) -- (-0.2,-0.1) arc(180:0:0.2) -- (0.2,-0.3);
        \uptriforce[black]{-0.2,-0.1};
    \end{tikzpicture}
    \ = \
    \begin{tikzpicture}[anchorbase]
        \draw (-0.2,-0.3) -- (-0.2,-0.1) arc(180:0:0.2) -- (0.2,-0.3);
        \downtriforce[black]{0.2,-0.1};
    \end{tikzpicture}
    \ ,\qquad
    \begin{tikzpicture}[anchorbase]
        \draw (-0.2,-0.3) -- (-0.2,-0.1) arc(180:0:0.2) -- (0.2,-0.3);
        \downtriforce[black]{-0.2,-0.1};
    \end{tikzpicture}
    \ = \
    \begin{tikzpicture}[anchorbase]
        \draw (-0.2,-0.3) -- (-0.2,-0.1) arc(180:0:0.2) -- (0.2,-0.3);
        \uptriforce[black]{0.2,-0.1};
    \end{tikzpicture}
    \ ,\qquad
    \begin{tikzpicture}[anchorbase]
        \draw (-0.2,0.3) -- (-0.2,0.1) arc(180:360:0.2) -- (0.2,0.3);
        \uptriforce[black]{-0.2,0.1};
    \end{tikzpicture}
    \ = \
    \begin{tikzpicture}[anchorbase]
        \draw (-0.2,0.3) -- (-0.2,0.1) arc(180:360:0.2) -- (0.2,0.3);
        \downtriforce[black]{0.2,0.1};
    \end{tikzpicture}
    \ ,\qquad
    \begin{tikzpicture}[anchorbase]
        \draw (-0.2,0.3) -- (-0.2,0.1) arc(180:360:0.2) -- (0.2,0.3);
        \downtriforce[black]{-0.2,0.1};
    \end{tikzpicture}
    \ = \
    \begin{tikzpicture}[anchorbase]
        \draw (-0.2,0.3) -- (-0.2,0.1) arc(180:360:0.2) -- (0.2,0.3);
        \uptriforce[black]{0.2,0.1};
    \end{tikzpicture}
    \ .
\end{equation}
Using the identity
\[
    \frac{u^2}{(u-x)(u-x^{-1})}
    = \frac{u^2}{u^2-1} \left( \frac{u}{u-x} + \frac{u}{u-x^{-1}} - 1 \right),
\]
we see that
\begin{equation} \label{blackdub}
    \begin{tikzpicture}[centerzero]
        \draw (0,-0.35) -- (0,0.35);
        \uptriforce[black]{0,0.15};
        \downtriforce[black]{0,-0.15};
    \end{tikzpicture}
    =
    \begin{tikzpicture}[centerzero]
        \draw (0,-0.35) -- (0,0.35);
        \uptriforce[black]{0,-0.15};
        \downtriforce[black]{0,0.15};
    \end{tikzpicture}
    = \tfrac{u^2}{u^2-1}
    \left(
        \begin{tikzpicture}[centerzero]
            \draw (0,-0.2) -- (0,0.2);
            \uptriforce[black]{0,0};
        \end{tikzpicture}
        +
        \begin{tikzpicture}[centerzero]
           \draw (0,-0.2) -- (0,0.2);
            \downtriforce[black]{0,0};
        \end{tikzpicture}
        - \idstrand\
    \right).
\end{equation}

For any polynomial $p(u) \in \kk[u]$, we have
\begin{equation} \label{blacktrick}
    \begin{tikzpicture}[centerzero]
        \draw (0,-0.3) -- (0,0.3);
        \multdot[black]{0,0}{east}{p(x)};
    \end{tikzpicture}
    =
    \left[
        \begin{tikzpicture}[centerzero]
            \draw (0,-0.3) -- (0,0.3);
            \uptriforce[black]{0,0};
        \end{tikzpicture}
        \ p(u)
    \right]_{u^0},
    \qquad
    \begin{tikzpicture}[centerzero]
        \draw (0,-0.3) -- (0,0.3);
        \multdot[black]{0,0}{east}{p(x^{-1})};
    \end{tikzpicture}
    =
    \left[
        \begin{tikzpicture}[centerzero]
            \draw (0,-0.3) -- (0,0.3);
            \downtriforce[black]{0,0};
        \end{tikzpicture}
        \ p(u)
    \right]_{u^0}.
\end{equation}
Just as in \cref{trick+}, we can generalize to the case $p(u) \in \kk\Laurent{u^{-1}}$ using the truncation defined in \cref{bmx};  we find that
\begin{equation} \label{blacktrick+}
    \begin{tikzpicture}[centerzero]
        \draw (0,-0.3) -- (0,0.3);
        \multdot[black]{0,0}{east}{[p]_{\ge 0}(x)};
    \end{tikzpicture}
    =
    \left[
        \begin{tikzpicture}[centerzero]
            \draw (0,-0.3) -- (0,0.3);
            \uptriforce[black]{0,0};
        \end{tikzpicture}
        \ p(u)
    \right]_{u^0}
    , \qquad
    \begin{tikzpicture}[centerzero]
        \draw (0,-0.3) -- (0,0.3);
        \multdot[black]{0,0}{east}{[p]_{\ge 0}(x^{-1})};
    \end{tikzpicture}
    =
    \left[
        \begin{tikzpicture}[centerzero]
            \draw (0,-0.3) -- (0,0.3);
            \downtriforce[black]{0,0};
        \end{tikzpicture}
        \ p(u)
    \right]_{u^0}
    .
\end{equation}

\begin{lem}
    The following relations hold in $\AK$:
    \begin{equation} \label{blackslide}
        \begin{tikzpicture}[anchorbase]
            \draw (0.3,-0.5) -- (-0.3,0.5);
            \draw[wipe] (-0.3,-0.5) -- (0.3,0.5);
            \draw (-0.3,-0.5) -- (0.3,0.5);
            \uptriforce[black]{-0.16,-0.25};
        \end{tikzpicture}
        -
        \begin{tikzpicture}[anchorbase]
            \draw (-0.3,-0.5) -- (0.3,0.5);
            \draw[wipe] (0.3,-0.5) -- (-0.3,0.5);
            \draw (0.3,-0.5) -- (-0.3,0.5);
            \uptriforce[black]{0.19,0.33};
        \end{tikzpicture}
        = z
        \left(
            \begin{tikzpicture}[anchorbase]
                \draw (-0.2,-0.5) -- (-0.2,0.5);
                \draw (0.2,-0.5) -- (0.2,0.5);
                \uptriforce[black]{-0.2,0};
                \uptriforce[black]{0.2,0};
            \end{tikzpicture}
            -
            \begin{tikzpicture}[anchorbase]
                \draw (-0.2,-0.5) -- (-0.2,-0.3) arc(180:0:0.2) -- (0.2,-0.5);
                \draw (-0.2,0.5) -- (-0.2,0.3) arc(180:360:0.2) -- (0.2,0.5);
                \uptriforce[black]{0.2,0.3};
                \uptriforce[black]{-0.2,-0.3};
            \end{tikzpicture}
        \right),
        \qquad
        \begin{tikzpicture}[anchorbase]
            \draw (0.3,-0.5) -- (-0.3,0.5);
            \draw[wipe] (-0.3,-0.5) -- (0.3,0.5);
            \draw (-0.3,-0.5) -- (0.3,0.5);
            \downtriforce[black]{0.19,-0.33};
        \end{tikzpicture}
        -
        \begin{tikzpicture}[anchorbase]
            \draw (-0.3,-0.5) -- (0.3,0.5);
            \draw[wipe] (0.3,-0.5) -- (-0.3,0.5);
            \draw (0.3,-0.5) -- (-0.3,0.5);
            \downtriforce[black]{-0.16,0.25};
        \end{tikzpicture}
        = z
        \left(
            \begin{tikzpicture}[anchorbase]
                \draw (-0.2,-0.5) -- (-0.2,0.5);
                \draw (0.2,-0.5) -- (0.2,0.5);
                \downtriforce[black]{-0.2,0};
                \downtriforce[black]{0.2,0};
            \end{tikzpicture}
            -
            \begin{tikzpicture}[anchorbase]
                \draw (-0.2,-0.5) -- (-0.2,-0.3) arc(180:0:0.2) -- (0.2,-0.5);
                \draw (-0.2,0.5) -- (-0.2,0.3) arc(180:360:0.2) -- (0.2,0.5);
                \downtriforce[black]{-0.2,0.3};
                \downtriforce[black]{0.2,-0.3};
            \end{tikzpicture}
        \right).
    \end{equation}
\end{lem}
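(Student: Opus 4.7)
The plan is to establish, for each $n \ge 0$, a dotted $n$-version of each equation and then sum with $u^{-n}$ to produce the generating-function identities in \cref{blackslide}. I focus on the first equation. Write $P_{a, b}$ for the positive crossing with $a$ dots on the SW portion and $b$ dots on the NE portion of the SW-NE strand, and $N_{a, b}$ for the analogous negative crossing. The Kauffman skein relation in \cref{skein}, applied with dots on the SW-NE strand, yields
\[
    P_{k, n-k} - N_{k, n-k} = z \left( \mathrm{par}_{k, n-k} - \mathrm{cupcap}_{k, n-k} \right),
\]
where $\mathrm{par}_{k, n-k}$ and $\mathrm{cupcap}_{k, n-k}$ denote the two-parallel-strand and cup-cap diagrams carrying $k$ dots on the lower-left portion and $n-k$ dots on the upper-right portion. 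The first relation in \cref{kauffdot} slides one dot across a crossing, giving $N_{k, n-k} = P_{k-1, n-k+1}$ for $k \ge 1$. Summing these over $k = 0, 1, \ldots, n$ causes successive $N$-terms to cancel with the $P$-terms they equal, yielding the $n$-dot identity
\[
    P_{n, 0} - N_{0, n} = z \sum_{k=0}^n \left( \mathrm{par}_{k, n-k} - \mathrm{cupcap}_{k, n-k} \right).
\]

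Multiplying by $u^{-n}$ and summing over $n \ge 0$, using $u(u-x)^{-1} = \sum_{n \ge 0} u^{-n} x^n$, recovers the first equation of \cref{blackslide}: the left-hand side assembles into the difference of the two crossings decorated with $\uptriforce[black]$, and the right-hand side into $z$ times the difference of the $\mathrm{par}$ and $\mathrm{cupcap}$ diagrams with $\uptriforce[black]$ at the prescribed positions.

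The second equation is established by the same strategy after replacing $x$ with $x^{-1}$ and the SW-NE strand with the SE-NW strand. The main obstacle is to derive the required analog of \cref{kauffdot}'s first relation: a positive crossing with $x^{-1}$ on the SE portion of the SE-NW strand equals a negative crossing with $x^{-1}$ on the NW portion of the SE-NW strand. I expect to obtain this by pre- and post-composing the first relation in \cref{kauffdot} with inverse dots on the SW-NE strand, exploiting the third relation of \cref{kauffdot} that $x \cdot x^{-1} = \id$, and then using the pivotal structure of $\AK$ to rotate the resulting identity through cups and caps, with the second relation of \cref{kauffdot} converting $x$ into $x^{-1}$ whenever a strand bends through a cap. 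Once this analog slide is in hand, the telescoping argument runs verbatim, this time using $u(u-x^{-1})^{-1} = \sum_{n \ge 0} u^{-n} x^{-n}$.
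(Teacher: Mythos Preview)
Your proof of the first equality is correct and coincides with the paper's: both establish the $n$-dot identity by telescoping the skein relation \cref{skein} against the dot-slide from \cref{kauffdot}, then sum against $u^{-n}$.

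For the second equality your strategy is sound in spirit but differs from the paper's, which is shorter. The paper does not rederive a telescoped identity: it attaches a cap to the top of the rightmost strand and a cup to the bottom of the leftmost strand of the already-proved first equality and invokes \cref{blackrot}. Under this rotation the positive and negative crossings interchange, the parallel and cup-cap diagrams interchange, and each $\uptriforce$ (passing through exactly one cup or cap) becomes a $\downtriforce$; the two interchanges contribute cancelling signs and one obtains the second equality directly. Since your plan already requires rotating through cups and caps to transport the slide relation onto the SE--NW strand, it is more economical to rotate the finished identity than to rotate an ingredient and then redo the telescoping.

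There is also a slip in the slide relation you write down. For the telescoping to run verbatim and produce $P'_{n,0}-N'_{0,n}$ you need the analogue of $N_{k,n-k}=P_{k-1,n-k+1}$, namely
\[
    N(x^{-1}\ \text{on SE of the SE--NW strand}) \;=\; P(x^{-1}\ \text{on NW of the SE--NW strand}),
\]
whereas you state this with the positive and negative crossings exchanged. Both relations hold in $\AK$ (they differ by a $180^\circ$ rotation), but the version you wrote, fed into the telescoping, yields $P'_{0,n}-N'_{n,0}$ rather than the required left-hand side. Incidentally, rotating the first relation of \cref{kauffdot} once through a cup and a cap already gives the correct relation displayed above, so the preliminary pre/post-composition with inverse dots that you propose is unnecessary.
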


\begin{proof}
By the first equalities of \cref{skein} and \cref{kauffdot}, we have
    \[
    \begin{tikzpicture}[anchorbase]
            \draw (0.3,-0.5) -- (-0.3,0.5);
            \draw[wipe] (-0.3,-0.5) -- (0.3,0.5);
            \draw (-0.3,-0.5) -- (0.3,0.5);
            \multdot[black]{0.15,0.25}{west}{\frac{u-x}{u}};
        \end{tikzpicture}-    \begin{tikzpicture}[anchorbase]
            \draw (-0.3,-0.5) -- (0.3,0.5);
            \draw[wipe] (0.3,-0.5) -- (-0.3,0.5);
            \draw (0.3,-0.5) -- (-0.3,0.5);
            \multdot[black]{-0.15,-0.25}{east}{\frac{u-x}{u}};
        \end{tikzpicture}
        = z
        \left(
            \begin{tikzpicture}[anchorbase]
                \draw (-0.2,-0.5) -- (-0.2,0.5);
                \draw (0.2,-0.5) -- (0.2,0.5);
            \end{tikzpicture}
            -
            \begin{tikzpicture}[anchorbase]
                \draw (-0.2,-0.5) -- (-0.2,-0.3) arc(180:0:0.2) -- (0.2,-0.5);
                \draw (-0.2,0.5) -- (-0.2,0.3) arc(180:360:0.2) -- (0.2,0.5);
            \end{tikzpicture}
        \right).
    \]
    The first equality in \cref{blackslide} follows from multiplication by $\begin{tikzpicture}[centerzero]
            \draw (0,-0.2) -- (0,0.2);
            \uptriforce[black]{0,0};
        \end{tikzpicture}$ at the bottom left and top right on both sides of the equation.

    To obtain the second equality in \cref{blackslide}, we then attach a cap to the top of the rightmost strand, attach a cup to the bottom of the leftmost strand, and then use \cref{blackrot}.
\end{proof}

Define
\begin{equation} \label{eatzk}
    \begin{aligned}
        \bubblegenr[black]{u} &:= \frac{(t^{-1}-z)u^2 - t^{-1}}{u^2-zu-1} 1_\one + \frac{z(u^2-1)}{u^2-zu-1}\ \uptribubbler[black]
        \in t 1_\one + u^{-1} \End_{\AK(z,t)}(\one) \llbracket u^{-1} \rrbracket,
        \\
        \bubblegenl[black]{u} &:= \frac{(t+z)u^2-t}{u^2+zu-1} 1_\one  - \frac{z(u^2-1)}{u^2+zu-1}\ \uptribubblel[black]
        \in t^{-1} 1_\one + u^{-1} \End_{\AK(z,t)}(\one) \llbracket u^{-1} \rrbracket.
    \end{aligned}
\end{equation}

\begin{lem}\label{swap}
    There is an isomorphism of $\kk$-linear monoidal categories
    \[
        \beta \colon \AK(z,t) \xrightarrow{\cong} \AK(-z,t^{-1}),
    \]
    given by flipping crossings and taking inverses of dots.  More precisely, $\beta$ sends $\gok$ to $\gok$ and acts on the generating morphisms by
    \[
        \capmor \mapsto \capmor\, ,\qquad
        \cupmor \mapsto \cupmor\, ,\qquad
        \poscross \mapsto \negcross,\qquad
        \negcross \mapsto \poscross,\qquad
        \dotstrand[black] \mapsto \multdotstrand[black]{east}{-1}\, ,\qquad
        \multdotstrand[black]{east}{-1} \mapsto \dotstrand[black]\, .
    \]
    We also have
    \[
        \begin{tikzpicture}[centerzero]
            \draw (0,-0.3) -- (0,0.3);
            \uptriforce[black]{0,0};
        \end{tikzpicture}
        \mapsto
        \begin{tikzpicture}[centerzero]
            \draw (0,-0.3) -- (0,0.3);
            \downtriforce[black]{0,0};
        \end{tikzpicture}
        \ ,\qquad
        \begin{tikzpicture}[centerzero]
            \draw (0,-0.3) -- (0,0.3);
            \downtriforce[black]{0,0};
        \end{tikzpicture}
        \mapsto
        \begin{tikzpicture}[centerzero]
            \draw (0,-0.3) -- (0,0.3);
            \uptriforce[black]{0,0};
        \end{tikzpicture}
        \ ,\qquad
        \bubblegenl[black]{u} \mapsto \bubblegenr[black]{u}
        \ ,\qquad
        \bubblegenr[black]{u} \mapsto \bubblegenl[black]{u}
        \ .
    \]
\end{lem}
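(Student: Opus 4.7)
The plan is to show that $\beta$ extends to a strict $\kk$-linear monoidal functor by verifying that each defining relation of $\AK(z,t)$ maps to a valid relation in $\AK(-z, t^{-1})$.  Since swapping $\poscross \leftrightarrow \negcross$ and swapping $\dotstrand[black] \leftrightarrow \multdotstrand[black]{east}{-1}$ are both involutions, the same prescription defines a functor back from $\AK(-z, t^{-1})$ to $\AK(z,t)$, and these two functors are mutually inverse; so $\beta$ will automatically be an isomorphism once well-definedness is established.

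Checking each relation is largely mechanical.  The braid relations \cref{braid} are symmetric in $\poscross \leftrightarrow \negcross$ and transfer unchanged.  The relation $\capmor \circ (\text{crossing}) = t^{\pm 1} \capmor$ from \cref{drops} maps to a relation with the opposite crossing; this is precisely the corresponding identity from \cref{chicken} applied in the target, where the substitution $t \mapsto t^{-1}$ produces the required scalar.  The cap-slide relation in \cref{drops} maps to the first identity of \cref{nuggets} in the target.  The skein relation \cref{skein} acquires a sign when $\poscross \leftrightarrow \negcross$, which is absorbed by $z \mapsto -z$.  The scalar $(t-t^{-1})/z + 1$ for the single bubble is manifestly invariant under $(z,t) \mapsto (-z, t^{-1})$, and the bubble $\capmor \circ \cupmor$ is itself preserved by $\beta$.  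The remaining relations in \cref{kauffdot}---dot through crossing, dot through cap with inversion, and inverse-dot composition identities---transform into their analogues under the same involutive substitutions.

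For the triangle identities, one expands $u(u-x)^{-1} = \sum_{n \ge 0} u^{-n} x^n$ and applies $\beta$ termwise using $\beta(x^n) = x^{-n}$ to recognise the result as $u(u-x^{-1})^{-1}$.  The generating function identities then follow by applying $\beta$ to \cref{eatzk}: the scalar coefficient of $1_\one$ in $\bubblegenr[black]{u}$ coincides with that of $\bubblegenl[black]{u}$ in the target after substituting $z \mapsto -z$ and $t \mapsto t^{-1}$; the morphism $\uptribubbler[black]$ maps to $\downtribubbler[black]$, which in turn equals $\uptribubblel[black]$ by sliding the triangle dot over the top cap of the bubble via \cref{blackrot}.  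The matching image of $\bubblegenl[black]{u}$ then follows from $\beta \circ \beta = \id$.  I expect the main pitfall to be careful bookkeeping of the parameter substitutions---in particular, verifying that the derived relations in the target produce exactly the scalars demanded by $\beta$ applied to the original defining relations.
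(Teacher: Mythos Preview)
Your proposal is correct and follows exactly the same approach as the paper, which simply states that the result is a straightforward verification of the defining relations; you have merely supplied the routine details that the paper omits.
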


\begin{proof}
    This is a straightforward verification of the defining relations.
\end{proof}

We call $\beta$ the \emph{bar involution}.

\begin{prop}
    The following relation holds in $\AK$:
    \begin{equation} \label{infgrassk}
        \bubblegenl[black]{u} \ \bubblegenr[black]{u} = 1_\one.
    \end{equation}
\end{prop}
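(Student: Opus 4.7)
My plan is to derive a Kauffman analogue of the quadratic relation \cref{peets} relating $\uptribubblel[black]$ and $\uptribubbler[black]$, and then deduce \cref{infgrassk} by a direct algebraic manipulation of the defining formulas \cref{eatzk}. The target quadratic identity, extracted by expanding $\bubblegenl[black]{u}\,\bubblegenr[black]{u}-1_\one$ using \cref{eatzk} and using commutativity of $\End_{\AK}(\one)$, should take the form
\[
z(u^2-1)\,\uptribubblel[black]\,\uptribubbler[black]
= \bigl[(t+z)u^2 - t\bigr]\uptribubbler[black]
- \bigl[(t^{-1}-z)u^2 - t^{-1}\bigr]\uptribubblel[black]
- zu^2\,\bubble\,,
\]
after one substitutes $\bubble=(t-t^{-1})/z+1$ from \cref{skein}.

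To prove this quadratic identity, I would mimic the derivation of \cref{peets}. Starting with $\uptribubblel[black]$, insert a cancelling pair of crossings (using $\poscross\circ\negcross=\idstrand\otimes\idstrand$ from \cref{braid}) to express it as a bubble-shaped closed loop containing a double crossing, with $\uptriforce[black]$ inserted on one strand. Then apply \cref{blackslide} to slide $\uptriforce[black]$ successively through each of the two crossings. Each slide contributes, via the skein-type right-hand side of \cref{blackslide}, correction terms proportional to $z$ of two flavors: a parallel-strand configuration with two $\uptriforce[black]$ decorations, which simplifies via \cref{blackrot} and \cref{blackdub} to a multiple of $\uptribubbler[black]+\uptribubblel[black]-1_\one$; and a nested cup-cap configuration with two $\uptriforce[black]$ decorations, which after \cref{blackrot} becomes the product $\uptribubblel[black]\,\uptribubbler[black]$. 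The main "slid" term, once the double crossing is in turn resolved using \cref{braid} and the curl relations \cref{drops,chicken} (which contribute the factors $t^{\pm 1}$), becomes a scalar multiple of $\uptribubbler[black]$.

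Once this quadratic relation is in hand, the rest is formal. Plug \cref{eatzk} into the product $\bubblegenl[black]{u}\,\bubblegenr[black]{u}$, expand, clear the denominators $u^2\pm zu-1$ (whose product is $(u^2-1)^2-z^2u^2$), and observe that the resulting polynomial identity is exactly $z(u^2-1)$ times the quadratic relation above. Thus $\bubblegenl[black]{u}\,\bubblegenr[black]{u}=1_\one$, parallel to how \cref{euclid} was rescaled to recover the second equation in \cref{infgrass}.

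The main obstacle is Step 1: the bookkeeping is noticeably more delicate than in the Brauer case, because the Kauffman crossings come in two flavors and each slide via \cref{blackslide} produces both a $z$ scalar and additional subdiagrams that must be resolved via \cref{skein} and the curl identities \cref{chicken,drops}. Tracking the signs, the powers of $t^{\pm 1}$ arising from curl resolution, and the precise rational coefficients in $u$, while arranging the final relation to match the specific normalization of \cref{eatzk} (in which the apparently exotic denominators $u^2\pm zu-1$ and numerators $(t^{\pm1}\mp z)u^2-t^{\pm1}$ were chosen precisely to make this collapse happen), is where all the work lies.
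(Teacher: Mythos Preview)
Your proposal is correct and follows essentially the same strategy as the paper: derive a quadratic relation between $\uptribubblel[black]$ and $\uptribubbler[black]$ by sliding the decorated triangle through a crossing inside a closed loop (producing correction terms handled by \cref{blackrot}, \cref{blackdub}, and the curl relations), then factor and rescale to obtain \cref{infgrassk}. Your target quadratic identity is exactly the one the paper obtains (after clearing denominators).

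The one noteworthy difference is in how the crossing is introduced. You propose inserting a cancelling pair $\poscross\circ\negcross$ and sliding through both; since \cref{blackslide} flips the crossing type, this forces you to invoke the skein relation \cref{skein} to resolve the mismatched crossings, as you anticipate. The paper instead uses the curl relation \cref{chicken} to write $t^{-1}\,\uptribubblel[black]$ directly as a closed loop containing a \emph{single} crossing; one application of \cref{blackslide} then flips that crossing, and a second use of \cref{chicken} removes it at the cost of a factor $t$. This reduces the diagrammatic manipulation to a single slide and avoids the extra skein step, making the bookkeeping you flag as the main obstacle somewhat lighter.
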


\begin{proof}
    To simplify notation, we will drop $1_\one$, identifying a scalar $c \in \kk$ with $c 1_\one$ in what follows.  We have
    \begin{multline*}
        t^{-1}\, \uptribubblel[black]
        \overset{\cref{chicken}}{=}
        \begin{tikzpicture}[centerzero]
            \draw (0.2,0.25) arc(0:180:0.2) \braiddown (0.2,-0.25) arc(360:180:0.2);
            \draw[wipe] (-0.2,-0.25) \braidup (0.2,0.25);
            \draw (-0.2,-0.25) \braidup (0.2,0.25);
            \uptriforce[black]{-0.2,-0.25};
        \end{tikzpicture}
        \overset{\cref{blackslide}}{=}
        \begin{tikzpicture}[centerzero]
            \draw (-0.2,-0.25) \braidup (0.2,0.25) arc(0:180:0.2);
            \draw[wipe] (-0.2,0.25) \braiddown (0.2,-0.25) arc(360:180:0.2);
            \draw (-0.2,0.25) \braiddown (0.2,-0.25) arc(360:180:0.2);
            \uptriforce[black]{0.2,0.25};
        \end{tikzpicture}
        + z
        \left(
            \begin{tikzpicture}[centerzero]
                \draw (0.2,-0.15) -- (0.2,0.15) arc(0:180:0.2) -- (-0.2,-0.15) arc(180:360:0.2);
                \uptriforce[black]{0.2,0};
                \uptriforce[black]{-0.2,0};
            \end{tikzpicture}
            - \uptribubblel[black] \ \uptribubbler[black]
        \right)
        \overset{\cref{chicken}}{\underset{\cref{blackrot}}{=}}
        t\ \uptribubbler[black]
        + z
        \left(
            \begin{tikzpicture}[centerzero]
                \draw (0.2,-0.15) -- (0.2,0.15) arc(0:180:0.2) -- (-0.2,-0.15) arc(180:360:0.2);
                \downtriforce[black]{0.2,0.15};
                \uptriforce[black]{0.2,-0.15};
            \end{tikzpicture}
            - \uptribubblel[black] \ \uptribubbler[black]
        \right)
        \\
        \overset{\cref{blackdub}}{=}
        t\ \uptribubbler[black]
        + \tfrac{zu^2}{u^2-1} \left( \uptribubbler[black] + \downtribubbler[black] - \bubble \right)
        - z \left( \uptribubblel[black] \ \uptribubbler[black] \right)
        \\
        \overset{\cref{blackrot}}{\underset{\cref{skein}}{=}}
        t\ \uptribubbler[black]
        + \tfrac{zu^2}{u^2-1} \left( \uptribubbler[black] + \uptribubblel[black] - z^{-1}t + z^{-1}t^{-1} - 1 \right)
        - z \left( \uptribubblel[black] \ \uptribubbler[black] \right).
    \end{multline*}
    Thus
    \[
        \uptribubblel[black] \ \uptribubbler[black]
        + \left( z^{-1}t^{-1} - \tfrac{u^2}{u^2-1} \right)\, \uptribubblel[black]
        + \left( - z^{-1}t - \tfrac{u^2}{u^2-1} \right)\, \uptribubbler[black]
        + \tfrac{u^2}{u^2-1} \left( z^{-1}t - z^{-1}t^{-1} \right)
        = - \tfrac{u^2}{u^2-1},
    \]
    which implies that
    \[
        \left( \uptribubblel[black] - z^{-1}t - \tfrac{u^2}{u^2-1} \right)
        \left( \uptribubbler[black] + z^{-1}t^{-1} - \tfrac{u^2}{u^2-1} \right)
        = \tfrac{u^4}{(u^2-1)^2} - \tfrac{u^2}{u^2-1} - z^{-2}
        = \tfrac{u^2}{(u^2-1)^2} - z^{-2}.
    \]
    Multiplying both sides by $\frac{z^2(u^2-1)^2}{z^2u^2-(u^2-1)^2}$ and using \cref{eatzk} then gives \cref{infgrassk}.
\end{proof}

\begin{rem}
    When the bubbles $\multbubbler[black]{r}$, $r \in \Z$, are evaluated at scalars, the relation \cref{infgrassk} corresponds to \cite[(2.30)]{RX09}, noting that $\delta$ and $\varrho$ of \cite{RX09} correspond to our $z$ and $t^{-1}$, respectively.  We will discuss these connections in more detail in \cref{sec:admissibleKauffman}.
\end{rem}

\begin{prop}
    The following relations hold in $\AK$:
    \begin{equation} \label{blackcurl}
        \begin{tikzpicture}[anchorbase]
            \draw (0.3,0.2) to[out=0,in=up] (0.45,0) to[out=down,in=0] (0.3,-0.2) to[out=180,in=down] (0,0.5);
            \draw[wipe] (0,-0.5) to[out=up,in=180] (0.3,0.2);
            \draw (0,-0.5) to[out=up,in=180] (0.3,0.2);
            \uptriforce[black]{0.45,0};
        \end{tikzpicture}
        = \frac{u^2-zu-1}{u^2-1}\
        \begin{tikzpicture}[centerzero]
            \draw (0,-0.5) -- (0,0.5);
            \downtriforce[black]{0,0};
            \bubgenr[black]{u}{0.4,0};
        \end{tikzpicture}
        + \frac{zu^2}{u^2-1}
        \left(\,
            \begin{tikzpicture}[centerzero]
                \draw (0,-0.5) -- (0,0.5);
            \end{tikzpicture}
            -
            \begin{tikzpicture}[centerzero]
                \draw (0,-0.5) -- (0,0.5);
                \uptriforce[black]{0,0};
            \end{tikzpicture}
        \right),
        \qquad
        \begin{tikzpicture}[anchorbase]
            \draw (0,-0.5) to[out=up,in=180] (0.3,0.2) to[out=0,in=up] (0.45,0) to[out=down,in=0] (0.3,-0.2);
            \draw[wipe] (0.3,-0.2) to[out=180,in=down] (0,0.5);
            \draw (0.3,-0.2) to[out=180,in=down] (0,0.5);
            \downtriforce[black]{0.45,0};
        \end{tikzpicture}
        = \frac{u^2+zu-1}{u^2-1}
        \begin{tikzpicture}[centerzero]
            \draw (0,-0.5) -- (0,0.5);
            \uptriforce[black]{0,0};
            \bubgenl[black]{u}{0.45,0};
        \end{tikzpicture}
        - \frac{zu^2}{u^2-1}
        \left(\,
            \begin{tikzpicture}[centerzero]
                \draw (0,-0.5) -- (0,0.5);
            \end{tikzpicture}
            -
            \begin{tikzpicture}[centerzero]
                \draw (0,-0.5) -- (0,0.5);
                \downtriforce[black]{0,0};
            \end{tikzpicture}
        \right).
    \end{equation}
\end{prop}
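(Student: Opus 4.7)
We prove the first equation in \cref{blackcurl}; the second then follows by applying the bar involution $\beta$ of \cref{swap}, which interchanges $\poscross \leftrightarrow \negcross$, uptriforce $\leftrightarrow$ downtriforce, and $\bubblegenr[black]{u} \leftrightarrow \bubblegenl[black]{u}$ while substituting $z \mapsto -z$: these swaps transform the first equation into precisely the second. The argument for the first equation parallels the derivation of \cref{curlsup} for the affine Brauer category. First, apply one of the rotations in \cref{blackrot} to move the uptriforce at the top of the curl arc across the arc, where it reappears as a downtriforce on the under-strand just below the self-crossing. Then apply the second equation of \cref{blackslide} to slide this downtriforce across the positive crossing; this swaps the crossing from positive to negative (with the downtriforce now above it) and produces two $z$-scaled correction terms: an identity-resolution term and a cup-cap-resolution term.

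The three resulting summands simplify as follows. The main term is a negative curl with downtriforce above the crossing, which unwinds by the Kauffman-category Reidemeister~I identity (a consequence of \cref{drops,chicken} with R1 factor $t^{-1}$) to $t^{-1}$ times the identity strand decorated with a downtriforce. In the identity-resolution correction, the curl arc closes with the resolution's new vertical connector into a bubble carrying a downtriforce, which by a bubble rotation via \cref{blackrot} equals $\uptribubblel[black]$; by \cref{eatzk} this is expressible in terms of $\bubblegenl[black]{u}$. In the cup-cap-resolution correction, the two resulting bumps straighten by the zigzag identity (the first equation in \cref{drops}), and pushing the two downtriforces through the cups/caps via \cref{blackrot} leaves an uptriforce and a downtriforce on the main strand whose product collapses by \cref{blackdub} to a $\frac{u^2}{u^2-1}$-scaled combination of the identity, the uptriforce, and the downtriforce.

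Assembling all three contributions, using \cref{infgrassk} to rewrite $\bubblegenl[black]{u}$ as the inverse of $\bubblegenr[black]{u}$, and simplifying the resulting rational expressions in $u$, $z$, $t$ yields the right-hand side of the first equation of \cref{blackcurl}. The main obstacle is the careful bookkeeping of the rational coefficients that emerge: the $t^{-1}$ factor from R1, the $z$ factor from the slide, the $\frac{u^2}{u^2-1}$ factor from \cref{blackdub}, and the scalar coefficients from the definition \cref{eatzk}; the remaining task is to verify that these combine cleanly into the stated rational functions $\frac{u^2-zu-1}{u^2-1}$ and $\frac{zu^2}{u^2-1}$.
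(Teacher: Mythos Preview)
Your overall strategy matches the paper's proof exactly: rotate the uptriforce into a downtriforce via \cref{blackrot}, apply the second equation of \cref{blackslide}, simplify the main term via the Reidemeister~I relation from \cref{chicken} to obtain $t^{-1}$ times a downtriforce-decorated strand, collapse the cup--cap correction via \cref{blackdub}, and then rewrite using \cref{eatzk}. The second equation does indeed follow from the first by the bar involution $\beta$.

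However, there is a concrete slip in your identification of the bubble in the identity-resolution term. When the crossing is resolved into two parallel strands, the right-hand strand (in the local picture) closes up with the loop arc into a bubble sitting to the \emph{right} of the through-strand; the downtriforce coming from \cref{blackslide} lands on the \emph{left} side of that bubble. Thus the bubble is $\downtribubblel[black]$, which by \cref{blackrot} equals $\uptribubbler[black]$, not $\uptribubblel[black]$. This matters: the coefficient that arises is
\[
    t^{-1} + z\,\downtribubblel[black] - \frac{zu^2}{u^2-1}
    = t^{-1} + z\,\uptribubbler[black] - \frac{zu^2}{u^2-1},
\]
and the \emph{first} formula in \cref{eatzk} then gives this as $\frac{u^2-zu-1}{u^2-1}\,\bubblegenr[black]{u}$ directly. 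Your plan to express things via $\bubblegenl[black]{u}$ and then invoke \cref{infgrassk} is both unnecessary and would not work as written: replacing $\bubblegenl[black]{u}$ by $\big(\bubblegenr[black]{u}\big)^{-1}$ does not yield the target, which involves $\bubblegenr[black]{u}$ as a linear factor, not its inverse. Once you correct the bubble to $\uptribubbler[black]$, the computation closes without \cref{infgrassk}, exactly as in the paper.
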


\begin{proof}
    We have
    \begin{multline*}
        \begin{tikzpicture}[anchorbase]
            \draw (0.3,0.2) to[out=0,in=up] (0.45,0) to[out=down,in=0] (0.3,-0.2) to[out=180,in=down] (0,0.5);
            \draw[wipe] (0,-0.5) to[out=up,in=180] (0.3,0.2);
            \draw (0,-0.5) to[out=up,in=180] (0.3,0.2);
            \uptriforce[black]{0.45,0};
        \end{tikzpicture}
        \overset{\cref{blackrot}}{\underset{\cref{blackslide}}{=}}
        \begin{tikzpicture}[anchorbase]
            \draw (0,-0.5) to[out=up,in=180] (0.3,0.2) to[out=0,in=up] (0.45,0) to[out=down,in=0] (0.3,-0.2);
            \draw[wipe] (0.3,-0.2) to[out=180,in=down] (0,0.5);
            \draw (0.3,-0.2) to[out=180,in=down] (0,0.5);
            \downtriforce[black]{0,0.3};
        \end{tikzpicture}
        + z
        \left(
            \begin{tikzpicture}[centerzero]
                \draw (0,-0.5) -- (0,0.5);
                \downtriforce[black]{0,0};
                \downtribubl[black]{0.5,0};
            \end{tikzpicture}
            -
            \begin{tikzpicture}[centerzero]
                \draw (0,-0.5) -- (0,-0.2) arc(180:0:0.15) arc(180:360:0.15) -- (0.6,0.2) arc(0:180:0.15) arc(360:180:0.15) -- (0,0.5);
                \downtriforce[black]{0.3,-0.2};
                \downtriforce[black]{0,0.3};
            \end{tikzpicture}
        \right)
        \overset{\cref{blackrot}}{\underset{\cref{chicken}}{=}} t^{-1}\,
        \begin{tikzpicture}[centerzero]
            \draw (0,-0.5) -- (0,0.5);
            \downtriforce[black]{0,0};
        \end{tikzpicture}
        + z
        \left(
            \begin{tikzpicture}[centerzero]
                \draw (0,-0.5) -- (0,0.5);
                \downtriforce[black]{0,0};
                \downtribubl[black]{0.5,0};
            \end{tikzpicture}
            -
            \begin{tikzpicture}[centerzero]
                \draw (0,-0.5) -- (0,0.5);
                \downtriforce[black]{0,0.2};
                \uptriforce[black]{0,-0.2};
            \end{tikzpicture}
        \right)
        \\
        \overset{\cref{blackdub}}{=}
        \begin{tikzpicture}[centerzero]
            \draw (0,-0.5) -- (0,0.5);
            \downtriforce[black]{0,0};
        \end{tikzpicture}
        \left(
            t^{-1} + z\, \downtribubblel[black] - \frac{zu^2}{u^2-1}
        \right)
        + \frac{zu^2}{u^2-1}
        \left(\,
            \begin{tikzpicture}[centerzero]
                \draw (0,-0.5) -- (0,0.5);
            \end{tikzpicture}
            -
            \begin{tikzpicture}[centerzero]
                \draw (0,-0.5) -- (0,0.5);
                \uptriforce[black]{0,0};
            \end{tikzpicture}
        \right)
        \overset{\cref{eatzk}}{=} \frac{u^2-zu-1}{u^2-1}\
        \begin{tikzpicture}[centerzero]
            \draw (0,-0.5) -- (0,0.5);
            \downtriforce[black]{0,0};
            \bubgenr[black]{u}{0.4,0};
        \end{tikzpicture}
        + \frac{zu^2}{u^2-1}
        \left(\,
            \begin{tikzpicture}[centerzero]
                \draw (0,-0.5) -- (0,0.5);
            \end{tikzpicture}
            -
            \begin{tikzpicture}[centerzero]
                \draw (0,-0.5) -- (0,0.5);
                \uptriforce[black]{0,0};
            \end{tikzpicture}
        \right).
    \end{multline*}
    The second relation then follows after applying the bar involution $\beta$.
\end{proof}

\begin{prop}
    The following relations hold in $\AK$:
    \begin{equation} \label{bubslidek}
        \begin{tikzpicture}[anchorbase]
            \draw (0,-0.5) -- (0,0.5);
            \bubgenr[black]{u}{-0.4,0};
        \end{tikzpicture}
        \ =
        \begin{tikzpicture}[anchorbase]
            \draw (0,-0.5) -- (0,0.5);
            \multdot[black]{0,0}{east}{\frac{1-z^2ux^{-1}(u-x^{-1})^{-2}}{1-z^2ux(u-x)^{-2}}};
            \bubgenr[black]{u}{0.4,0};
        \end{tikzpicture}
        \ , \qquad
        \begin{tikzpicture}[anchorbase]
            \draw (0,-0.5) -- (0,0.5);
            \bubgenl[black]{u}{-0.4,0};
        \end{tikzpicture}
        \ =
        \begin{tikzpicture}[anchorbase]
            \draw (0,-0.5) -- (0,0.5);
            \multdot[black]{0,0}{east}{\frac{1-z^2ux(u-x)^{-2}}{1-z^2ux^{-1}(u-x^{-1})^{-2}}};
            \bubgenl[black]{u}{0.5,0};
        \end{tikzpicture}
        \ .
    \end{equation}
\end{prop}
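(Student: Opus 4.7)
The plan is to mimic the proof of \cref{bubslide} in the affine Brauer category, replacing at each step the Brauer ingredients by their Kauffman analogues. Specifically, \cref{uptricross} is replaced by \cref{blackslide}, the Brauer curl relations \cref{curlsup,curlsdown} are replaced by the Kauffman curl relations \cref{blackcurl}, the identity \cref{diamond} is replaced by \cref{blackdub}, and the strand-sliding consequences of \cref{brauer} are replaced by \cref{braid,drops,nuggets}.

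First I would take a bubble on the left side of a vertical strand, decorated by $\uptriforce[black]{0,0}$, and pull it around the strand. Concretely, using \cref{braid}, one rewrites
\[
    \begin{tikzpicture}[centerzero]
        \draw (0,-0.6) -- (0,0.6);
        \uptribubr[black]{-0.5,0};
    \end{tikzpicture}
\]
as a crossing of the strand with a piece of the bubble, with $\uptriforce[black]{0,0}$ placed on the crossing strand just above the crossing. Then \cref{blackslide} lets me slide $\uptriforce[black]{0,0}$ through the crossing, producing three terms: the same diagram with the dot below the crossing, plus $z$ times a pair of parallel vertical strands carrying $\uptriforce[black]{0,0}$ on each, minus $z$ times a cap-cup pair with $\uptriforce[black]{0,0}$ on each of the resulting arcs. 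Using \cref{braid} and \cref{drops} to unknot the first term, the diagram decomposes into (i) $\uptribubr[black]{}$ on the right of the strand, (ii) a left-facing curl on the strand decorated by $\downtriforce[black]{0,0}$ (up to $z$-scalar), and (iii) a right-facing curl on the strand decorated by $\uptriforce[black]{0,0}$ (up to $z$-scalar). Applying \cref{blackcurl} and then \cref{blackdub} to collapse the double-triangle terms, one arrives at an identity of the form
\[
    \begin{tikzpicture}[centerzero]
        \draw (0,-0.4) -- (0,0.4);
        \uptribubr[black]{-0.5,0};
    \end{tikzpicture}
    + C_1(u)
    \begin{tikzpicture}[centerzero]
        \draw (0,-0.4) -- (0,0.4);
        \bubgenr[black]{u}{-0.5,0};
        \uptriforce[black]{0,0.15};
        \downtriforce[black]{0,-0.15};
    \end{tikzpicture}
    =
    \begin{tikzpicture}[centerzero]
        \draw (0,-0.4) -- (0,0.4);
        \uptribubr[black]{0.5,0};
    \end{tikzpicture}
    + C_2(u)
    \begin{tikzpicture}[centerzero]
        \draw (0,-0.4) -- (0,0.4);
        \bubgenr[black]{u}{0.5,0};
        \downtriforce[black]{0,0.15};
        \uptriforce[black]{0,-0.15};
    \end{tikzpicture}
\]
for some explicit rational functions $C_1(u), C_2(u) \in \kk(u)$ coming from the curl and $\blacksquare$-doubling identities.

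After adding a suitable scalar multiple of $\idstrand$ to both sides and rescaling by the constants appearing in \cref{eatzk}, both sides reorganize into $\bubblegenr[black]{u}$ multiplied on one side by $\idstrand$ minus a double dot, and on the other side by the same expression with the roles of $\uptriforce[black]{0,0}\downtriforce[black]{0,0}$ and $\downtriforce[black]{0,0}\uptriforce[black]{0,0}$ exchanged. Since $u(u-x)^{-1} \cdot u(u-x^{-1})^{-1} = u^2/((u-x)(u-x^{-1}))$, the left-side factor is $1 - z^2 u x (u-x)^{-2}$ up to a common rational scalar, and the right-side factor is $1 - z^2 u x^{-1}(u-x^{-1})^{-2}$. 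Dividing through gives the first equality of \cref{bubslidek}. The second equality then follows by multiplying the first on both the left and right by $\bubblegenl[black]{u}$, applying \cref{infgrassk}, and composing with the inverse polynomial factor, exactly as in the proof of \cref{bubslide}; alternatively, it is immediate by applying the bar involution $\beta$ of \cref{swap} and using that $\beta$ swaps $\bubblegenr[black]{u}$ with $\bubblegenl[black]{u}$ and $x$ with $x^{-1}$ (while sending $z \mapsto -z$, which leaves $z^2$ invariant).

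The main obstacle will be the bookkeeping of the various rational functions in $u$ appearing as coefficients, since the Kauffman curl relation \cref{blackcurl} and the identity \cref{blackdub} both introduce factors $u^2/(u^2-1)$ and $(u^2 \pm zu -1)/(u^2-1)$ that must be combined correctly with the prefactors in the definition \cref{eatzk} of $\bubblegenr[black]{u}$ so that everything collapses into the clean double-dot expressions $1 - z^2 u x (u-x)^{-2}$ and $1 - z^2 u x^{-1}(u-x^{-1})^{-2}$. The scalar matching is essentially forced by comparing leading $u$-behavior, but care is needed because, unlike in the Brauer case, one cannot assume homogeneity in the number of dots.
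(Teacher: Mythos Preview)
Your overall strategy is exactly the one the paper uses: pull the decorated bubble through the strand via \cref{braid}, apply \cref{blackslide}, simplify the resulting curls with \cref{blackcurl} and \cref{blackdub}, then renormalize using \cref{eatzk} to turn everything into $\bubblegenr[black]{u}$ times a dot-polynomial factor. The second equality is indeed obtained either by conjugating with $\bubblegenl[black]{u}$ via \cref{infgrassk} or by applying $\beta$ from \cref{swap}.

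However, the intermediate identity you display is wrong in a way that would block the final step. You claim the two sides carry the \emph{mixed} decorations $\uptriforce[black]{0,0.15}\,\downtriforce[black]{0,-0.15}$ and $\downtriforce[black]{0,0.15}\,\uptriforce[black]{0,-0.15}$. Since dots commute, these two strand decorations are equal, so your identity would be symmetric in $x\leftrightarrow x^{-1}$ and could never produce the asymmetric factor $\frac{1-z^{2}ux^{-1}(u-x^{-1})^{-2}}{1-z^{2}ux(u-x)^{-2}}$. What actually emerges from the computation (and what the paper obtains) is a \emph{same-type} pair on each side: after all the curl and doubling simplifications one finds
\[
\bubblegenr[black]{u}\,
\Bigl(\,\idstrand + z^{2}\bigl(\,\uptriforce[black]{}-\uptriforce[black]{}\,\uptriforce[black]{}\,\bigr)\Bigr)
\;=\;
\Bigl(\,\idstrand + z^{2}\bigl(\,\downtriforce[black]{}-\downtriforce[black]{}\,\downtriforce[black]{}\,\bigr)\Bigr)\,
\bubblegenr[black]{u}\,.
\]
The relevant algebraic identity is then $1+z^{2}\bigl(\tfrac{u}{u-x}-\tfrac{u^{2}}{(u-x)^{2}}\bigr)=1-\tfrac{z^{2}ux}{(u-x)^{2}}$, not a product $u(u-x)^{-1}\cdot u(u-x^{-1})^{-1}$ as you wrote. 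Correspondingly, the Kauffman computation is longer than the Brauer one: after the first application of \cref{blackslide} you must also invoke the skein relation \cref{skein}, apply \cref{blackcurl} a second time (to the curl produced by the first application), and use \cref{blackdub} and \cref{blackrot} repeatedly before the terms collapse to the form above.
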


\begin{proof}
    To prove the first equality in \cref{bubslidek}, we compute
    \begin{multline*}
        \begin{tikzpicture}[centerzero]
            \draw (0,-0.6) -- (0,0.6);
            \uptribubr[black]{-0.5,0};
        \end{tikzpicture}
        \ \overset{\cref{braid}}{=}\
        \begin{tikzpicture}[centerzero]
            \draw (0,-0.6) -- (0,0.6);
            \draw[wipe] (-0.5,0) to[out=-90,in=180] (-0.3,-0.4) to[out=0,in=-90,looseness=0.5] (0.2,0.2) to[out=up,in=up] (-0.5,0);
            \draw (-0.5,0) to[out=-90,in=180] (-0.3,-0.4) to[out=0,in=-90,looseness=0.5] (0.2,0.2) to[out=up,in=up] (-0.5,0);
            \uptriforce[black]{-0.15,-0.3};
        \end{tikzpicture}
        \ \overset{\cref{blackslide}}{=}\
        \begin{tikzpicture}[centerzero]
            \draw (-0.5,0) to[out=-90,in=180] (-0.3,-0.4) to[out=0,in=-90,looseness=0.5] (0.25,0.2);
            \draw[wipe] (0,-0.6) -- (0,0.6);
            \draw (0,-0.6) -- (0,0.6);
            \draw[wipe] (0.25,0.2) to[out=up,in=up] (-0.5,0);
            \draw (0.25,0.2) to[out=up,in=up] (-0.5,0);
            \uptriforce[black]{0.17,0.05};
        \end{tikzpicture}
        + z\
        \begin{tikzpicture}[centerzero]
            \draw (0,0.6) -- (0,0.45) \braiddown (-0.2,0) -- (-0.2,-0.2) arc(0:-180:0.2) -- (-0.6,0.2) arc(180:90:0.2);
            \draw[wipe] (-0.4,0.4) to[out=right,in=up] (0,-0.4) -- (0,-0.6);
            \draw (-0.4,0.4) to[out=right,in=up] (0,-0.4) -- (0,-0.6);
            \uptriforce[black]{-0.2,-0.1};
            \uptriforce[black]{0,-0.4};
        \end{tikzpicture}
        - z\
        \begin{tikzpicture}[centerzero]
            \draw (0,0.6) to[out=down,in=up] (-0.2,0.15);
            \draw[wipe] (0.1,0.15) to[out=up,in=up] (-0.5,0.25);
            \draw (-0.2,0.15) arc (-180:0:0.15) to[out=up,in=up] (-0.5,0.25) -- (-0.5,-0.25) arc(-180:0:0.15) arc(-180:-360:0.15) -- (0.1,-0.6);
            \uptriforce[black]{0.1,0.15};
            \uptriforce[black]{-0.2,-0.25};
        \end{tikzpicture}
        \overset{\cref{skein}}{\underset{\cref{blackrot}}{=}}
        \begin{tikzpicture}[centerzero]
            \draw (0,-0.6) -- (0,0.6);
            \draw[wipe] (-0.5,0) to[out=-90,in=180] (-0.3,-0.4) to[out=0,in=-90,looseness=0.5] (0.25,0.2);
            \draw (-0.5,0) to[out=-90,in=180] (-0.3,-0.4) to[out=0,in=-90,looseness=0.5] (0.25,0.2);
            \draw[wipe] (0.25,0.2) to[out=up,in=up] (-0.5,0);
            \draw (0.25,0.2) to[out=up,in=up] (-0.5,0);
            \uptriforce[black]{0.17,0.05};
        \end{tikzpicture}
        - z\
        \begin{tikzpicture}[centerzero]
            \draw (-0.45,0) to[out=down,in=left] (-0.3,-0.2) to[out=right,in=down] (0,0.5) -- (0,0.6);
            \draw[wipe] (0,-0.5) to[out=up,in=0] (-0.3,0.2);
            \draw (0,-0.6) -- (0,-0.5) to[out=up,in=0] (-0.3,0.2) to[out=left,in=up] (-0.45,0);
            \uptriforce[black]{0,-0.4};
        \end{tikzpicture}
        + z\
        \begin{tikzpicture}[anchorbase]
            \draw (0.3,0.2) to[out=0,in=up] (0.45,0) to[out=down,in=0] (0.3,-0.2) to[out=180,in=down] (0,0.5);
            \draw[wipe] (0,-0.5) to[out=up,in=180] (0.3,0.2);
            \draw (0,-0.5) to[out=up,in=180] (0.3,0.2);
            \uptriforce[black]{0.45,0};
        \end{tikzpicture}
        + z\
        \begin{tikzpicture}[centerzero]
            \draw (0,0.6) -- (0,0.45) \braiddown (-0.2,0) -- (-0.2,-0.2) arc(0:-180:0.2) -- (-0.6,0.2) arc(180:90:0.2);
            \draw[wipe] (-0.4,0.4) to[out=right,in=up] (0,-0.4) -- (0,-0.6);
            \draw (-0.4,0.4) to[out=right,in=up] (0,-0.4) -- (0,-0.6);
            \uptriforce[black]{-0.2,-0.1};
            \uptriforce[black]{0,-0.4};
        \end{tikzpicture}
        - z\
        \begin{tikzpicture}[anchorbase]
            \draw (0.3,0.2) to[out=0,in=up] (0.45,0) to[out=down,in=0] (0.3,-0.2) to[out=180,in=down] (0,0.5);
            \draw[wipe] (0,-0.5) to[out=up,in=180] (0.3,0.2);
            \draw (0,-0.5) to[out=up,in=180] (0.3,0.2);
            \uptriforce[black]{0.45,0};
            \downtriforce[black]{0,-0.35};
        \end{tikzpicture}
        \\
        \overset{\substack{\cref{braid} \\ \cref{chicken}}}{\underset{\substack{\cref{blackcurl} \\ \cref{blackrot}}}{=}}
        \begin{tikzpicture}[centerzero]
            \draw (0,-0.6) -- (0,0.6);
            \uptribubr[black]{0.4,0};
        \end{tikzpicture}
        - z t^{-1}\
        \begin{tikzpicture}[centerzero]
            \draw (0,-0.6) -- (0,0.6);
            \uptriforce[black]{0,0};
        \end{tikzpicture}
        + \frac{zu^2-z^2u-z}{u^2-1}
        \left(
            \begin{tikzpicture}[centerzero]
                \draw (0,-0.5) -- (0,0.5);
                \downtriforce[black]{0,0};
            \end{tikzpicture}
            - \
            \begin{tikzpicture}[centerzero]
                \draw (0,-0.5) -- (0,0.5);
                \downtriforce[black]{0,0.2};
                \downtriforce[black]{0,-0.2};
            \end{tikzpicture}
        \right)
        \bubblegenr[black]{u}
        + \frac{z^2u^2}{u^2-1}
        \left(\,
            \begin{tikzpicture}[centerzero]
                \draw (0,-0.5) -- (0,0.5);
            \end{tikzpicture}
            -
            \begin{tikzpicture}[centerzero]
                \draw (0,-0.5) -- (0,0.5);
                \uptriforce[black]{0,0};
            \end{tikzpicture}
            -
            \begin{tikzpicture}[centerzero]
                \draw (0,-0.5) -- (0,0.5);
                \downtriforce[black]{0,0};
            \end{tikzpicture}
            +
            \begin{tikzpicture}[centerzero]
                \draw (0,-0.5) -- (0,0.5);
                \uptriforce[black]{0,0.2};
                \downtriforce[black]{0,-0.2};
            \end{tikzpicture}
        \right)
        + z\
        \begin{tikzpicture}[centerzero]
            \draw (-0.45,0) to[out=down,in=left] (-0.3,-0.2) to[out=right,in=down] (0,0.5) -- (0,0.6);
            \draw[wipe] (0,-0.5) to[out=up,in=0] (-0.3,0.2);
            \draw (0,-0.6) -- (0,-0.5) to[out=up,in=0] (-0.3,0.2) to[out=left,in=up] (-0.45,0);
            \uptriforce[black]{0,-0.4};
            \downtriforce[black]{-0.45,0};
        \end{tikzpicture}
        \\
        \overset{\cref{blackdub}}{=}
        \begin{tikzpicture}[centerzero]
            \draw (0,-0.6) -- (0,0.6);
            \uptribubr[black]{0.4,0};
        \end{tikzpicture}
        - z t^{-1}\
        \begin{tikzpicture}[centerzero]
            \draw (0,-0.6) -- (0,0.6);
            \uptriforce[black]{0,0};
        \end{tikzpicture}
        + \frac{zu^2-z^2u-z}{u^2-1}
        \left(
            \begin{tikzpicture}[centerzero]
                \draw (0,-0.5) -- (0,0.5);
                \downtriforce[black]{0,0};
            \end{tikzpicture}
            - \
            \begin{tikzpicture}[centerzero]
                \draw (0,-0.5) -- (0,0.5);
                \downtriforce[black]{0,0.2};
                \downtriforce[black]{0,-0.2};
            \end{tikzpicture}
        \right)
        \bubblegenr[black]{u}
        + \frac{z^2u^2}{u^2-1}
        \left(\,
            \begin{tikzpicture}[centerzero]
                \draw (0,-0.5) -- (0,0.5);
            \end{tikzpicture}
            -
            \begin{tikzpicture}[centerzero]
                \draw (0,-0.5) -- (0,0.5);
                \uptriforce[black]{0,0};
            \end{tikzpicture}
            -
            \begin{tikzpicture}[centerzero]
                \draw (0,-0.5) -- (0,0.5);
                \downtriforce[black]{0,0};
            \end{tikzpicture}
            +
            \begin{tikzpicture}[centerzero]
                \draw (0,-0.5) -- (0,0.5);
                \uptriforce[black]{0,0.2};
                \downtriforce[black]{0,-0.2};
            \end{tikzpicture}
        \right)
        + z\
        \begin{tikzpicture}[centerzero]
            \draw (0,-0.6) -- (0,-0.5) to[out=up,in=0] (-0.3,0.2) to[out=left,in=up] (-0.45,0);
            \draw[wipe] (-0.3,-0.2) to[out=right,in=down] (0,0.5);
            \draw (-0.45,0) to[out=down,in=left] (-0.3,-0.2) to[out=right,in=down] (0,0.5) -- (0,0.6);
            \uptriforce[black]{0,-0.4};
            \downtriforce[black]{-0.45,0};
        \end{tikzpicture}
        - z^2\,
        \begin{tikzpicture}[centerzero]
            \draw (0,-0.6) -- (0,0.6);
            \downtribubl[black]{-0.4,0};
            \uptriforce[black]{0,0};
        \end{tikzpicture}
        + z^2\
        \begin{tikzpicture}[centerzero]
            \draw (0,-0.6) -- (0,-0.2) arc(0:180:0.15) arc(360:180:0.15) -- (-0.6,0.2) arc(180:0:0.15) arc(180:360:0.15) -- (0,0.6);
            \downtriforce[black]{-0.6,0};
            \uptriforce[black]{0,-0.3};
        \end{tikzpicture}
        \\
        \overset{\cref{blackcurl}}{\underset{\cref{blackrot}}{=}}
        \begin{tikzpicture}[centerzero]
            \draw (0,-0.6) -- (0,0.6);
            \uptribubr[black]{0.4,0};
        \end{tikzpicture}
        - z t^{-1}\
        \begin{tikzpicture}[centerzero]
            \draw (0,-0.6) -- (0,0.6);
            \uptriforce[black]{0,0};
        \end{tikzpicture}
        + \frac{zu^2-z^2u-z}{u^2-1}
        \left(
            \begin{tikzpicture}[centerzero]
                \draw (0,-0.5) -- (0,0.5);
                \downtriforce[black]{0,0};
            \end{tikzpicture}
            - \
            \begin{tikzpicture}[centerzero]
                \draw (0,-0.5) -- (0,0.5);
                \downtriforce[black]{0,0.2};
                \downtriforce[black]{0,-0.2};
            \end{tikzpicture}
        \right)
        \bubblegenr[black]{u}
        + \frac{z^2u^2}{u^2-1}
        \left(\,
            \begin{tikzpicture}[centerzero]
                \draw (0,-0.5) -- (0,0.5);
            \end{tikzpicture}
            -
            \begin{tikzpicture}[centerzero]
                \draw (0,-0.5) -- (0,0.5);
                \downtriforce[black]{0,0};
            \end{tikzpicture}
        \right)
        + \frac{zu^2-z^2u-z}{u^2-1}\
        \begin{tikzpicture}[centerzero]
            \draw (0,-0.6) -- (0,0.6);
            \bubgenr[black]{u}{-0.5,0};
            \uptriforce[black]{0,0.2};
            \uptriforce[black]{0,-0.2};
        \end{tikzpicture}
        - z^2\,
        \begin{tikzpicture}[centerzero]
            \draw (0,-0.6) -- (0,0.6);
            \downtribubl[black]{-0.4,0};
            \uptriforce[black]{0,0};
        \end{tikzpicture}
        + z^2\
        \begin{tikzpicture}[centerzero]
            \draw (0,-0.6) -- (0,0.6);
            \downtriforce[black]{0,0.2};
            \uptriforce[black]{0,-0.2};
        \end{tikzpicture}
        \\
        \overset{\cref{blackdub}}{\underset{\cref{blackrot}}{=}}
        \begin{tikzpicture}[centerzero]
            \draw (0,-0.6) -- (0,0.6);
            \uptribubr[black]{0.4,0};
        \end{tikzpicture}
        - z t^{-1}\
        \begin{tikzpicture}[centerzero]
            \draw (0,-0.6) -- (0,0.6);
            \uptriforce[black]{0,0};
        \end{tikzpicture}
        + \frac{zu^2-z^2u-z}{u^2-1}
        \left(
            \begin{tikzpicture}[centerzero]
                \draw (0,-0.5) -- (0,0.5);
                \downtriforce[black]{0,0};
            \end{tikzpicture}
            - \
            \begin{tikzpicture}[centerzero]
                \draw (0,-0.5) -- (0,0.5);
                \downtriforce[black]{0,0.2};
                \downtriforce[black]{0,-0.2};
            \end{tikzpicture}
        \right)
        \bubblegenr[black]{u}
        + \frac{zu^2 - z^2u - z}{u^2-1}\
        \begin{tikzpicture}[centerzero]
            \draw (0,-0.6) -- (0,0.6);
            \bubgenr[black]{u}{-0.5,0};
            \uptriforce[black]{0,0.2};
            \uptriforce[black]{0,-0.2};
        \end{tikzpicture}
        - z^2\,
        \begin{tikzpicture}[centerzero]
            \draw (0,-0.6) -- (0,0.6);
            \uptribubr[black]{-0.5,0};
            \uptriforce[black]{0,0};
        \end{tikzpicture}
        + \frac{z^2u^2}{u^2-1}\
        \begin{tikzpicture}[centerzero]
            \draw (0,-0.5) -- (0,0.5);
            \uptriforce[black]{0,0};
        \end{tikzpicture}
        \\
        \overset{\cref{eatzk}}{=}
        \begin{tikzpicture}[centerzero]
            \draw (0,-0.6) -- (0,0.6);
            \uptribubr[black]{0.4,0};
        \end{tikzpicture}
        + \frac{zu^2-z^2u-z}{u^2-1}
        \left(
            \begin{tikzpicture}[centerzero]
                \draw (0,-0.5) -- (0,0.5);
                \downtriforce[black]{0,0};
            \end{tikzpicture}
            - \
            \begin{tikzpicture}[centerzero]
                \draw (0,-0.5) -- (0,0.5);
                \downtriforce[black]{0,0.2};
                \downtriforce[black]{0,-0.2};
            \end{tikzpicture}
        \right)
        \bubblegenr[black]{u}
        + \frac{zu^2 - z^2u - z}{u^2-1}\ \bubblegenr[black]{u}
        \left(
            \begin{tikzpicture}[centerzero]
                \draw (0,-0.6) -- (0,0.6);
                \uptriforce[black]{0,0.2};
                \uptriforce[black]{0,-0.2};
            \end{tikzpicture}
            - \,
            \begin{tikzpicture}[centerzero]
                \draw (0,-0.6) -- (0,0.6);
                \uptriforce[black]{0,0};
            \end{tikzpicture}
        \right)
        .
    \end{multline*}
    Thus
    \[
        \begin{tikzpicture}[centerzero]
            \draw (0,-0.5) -- (0,0.5);
            \uptribubr[black]{-0.5,0};
        \end{tikzpicture}
        +
        \frac{zu^2 - z^2u - z}{u^2-1}\ \bubblegenr[black]{u}
        \left(
            \begin{tikzpicture}[centerzero]
                \draw (0,-0.5) -- (0,0.5);
                \uptriforce[black]{0,0};
            \end{tikzpicture}
            \ -\
            \begin{tikzpicture}[centerzero]
                \draw (0,-0.5) -- (0,0.5);
                \uptriforce[black]{0,0.2};
                \uptriforce[black]{0,-0.2};
            \end{tikzpicture}
        \right)
        =
        \begin{tikzpicture}[centerzero]
            \draw (0,-0.5) -- (0,0.5);
            \uptribubr[black]{0.4,0};
        \end{tikzpicture}
        + \frac{zu^2 - z^2u - z}{u^2-1}
        \left(
            \begin{tikzpicture}[centerzero]
                \draw (0,-0.5) -- (0,0.5);
                \downtriforce[black]{0,0};
            \end{tikzpicture}
            \ -\
            \begin{tikzpicture}[centerzero]
                \draw (0,-0.5) -- (0,0.5);
                \downtriforce[black]{0,0.2};
                \downtriforce[black]{0,-0.2};
            \end{tikzpicture}
        \right)
        \ \bubblegenr[black]{u}.
    \]
    Multiplying both sides by $\frac{z(u^2-1)}{u^2-zu-1}$ and adding $\frac{(t^{-1}-z)u^2 - t^{-1}}{u^2-zu-1}\, \idstrand$ to both sides gives
    \[
        \bubblegenr[black]{u}\
        \left(\
            \begin{tikzpicture}[centerzero]
                \draw (0,-0.5) -- (0,0.5);
            \end{tikzpicture}
            + z^2
            \left(
                \begin{tikzpicture}[centerzero]
                    \draw (0,-0.5) -- (0,0.5);
                    \uptriforce[black]{0,0};
                \end{tikzpicture}
                \ -\
                \begin{tikzpicture}[centerzero]
                    \draw (0,-0.5) -- (0,0.5);
                    \uptriforce[black]{0,0.2};
                    \uptriforce[black]{0,-0.2};
                \end{tikzpicture}
            \right)
        \right)
        =
        \left(\
            \begin{tikzpicture}[centerzero]
                \draw (0,-0.5) -- (0,0.5);
            \end{tikzpicture}
            + z^2
            \left(
                \begin{tikzpicture}[centerzero]
                    \draw (0,-0.5) -- (0,0.5);
                    \downtriforce[black]{0,0};
                \end{tikzpicture}
                \ -\
                \begin{tikzpicture}[centerzero]
                    \draw (0,-0.5) -- (0,0.5);
                    \downtriforce[black]{0,0.2};
                    \downtriforce[black]{0,-0.2};
                \end{tikzpicture}
            \right)
        \right)
        \bubblegenr[black]{u}\, .
    \]
    The first equation in \cref{bubslidek} then follows from the fact that
    \[
        1 + z^2 \left( \frac{u}{u-x} - \frac{u^2}{(u-x)^2} \right)
        = 1 - \frac{z^2ux}{(u-x)^2}.
    \]
    To obtain the second equality in \cref{bubslidek}, we multiply both sides of the first equality in \cref{bubslidek} on the left and right by $\bubblegenl[black]{u}$, then use \cref{infgrassk}.  Alternatively, we can apply the bar involution $\beta$.
\end{proof}

Note that, when $z=q-q^{-1}$ for some $q \in \kk^\times$, the rational function appearing in \cref{bubslidek} can be factored:
\[
    \frac{1-z^2ux^{-1}(u-x^{-1})^{-2}}{1-z^2ux(u-x)^{-2}}
    = \frac{(u-x)^2 (u-q^2x^{-1}) (u-q^{-2}x^{-1})}{(u-x^{-1})^2 (u-q^2x) (u-q^{-2}x)}.
\]

\section{Module categories over the affine Brauer category\label{sec:BrauerModules}}

A \emph{module category} over a strict $\kk$-linear monoidal category $\cA$ is a $\kk$-linear category $\cR$ together with a strict $\kk$-linear monoidal functor $\mathbf{R} \colon \cA \to \cEnd_\kk(\cR)$, where $\cEnd_\kk(\cR)$ denotes the strict $\kk$-linear monoidal category whose objects are $\kk$-linear endofunctors of $\cR$ and morphisms are natural transformations.  We usually suppress the functor $\mathbf{R}$, using the same notation $f \colon X \to Y$ both for a morphism in $\cA$ and for the natural transformation between endofunctors of $\cR$ that is its image under $\mathbf{R}$.  For a morphism $f \colon X \to Y$ in $\cA$, we represent the evaluation $f_V \colon XV \to YV$ of this natural transformation on an object $V \in \cR$ diagrammatically by drawing a line labelled by $V$ on the right-hand side of the usual string diagram for $f$:
\[
    \begin{tikzpicture}[centerzero]
        \draw (0,-0.4) \botlabel{X} -- (0,0.4) \toplabel{Y};
        \coupon{0,0}{f};
        \draw[module] (0.4,-0.4) \botlabel{V} -- (0.4,0.4);
    \end{tikzpicture}
\]

We wish to study module categories over the affine Brauer category $\AB$ in the case where $\kk$ is a field of characteristic different from $2$, which we assume through the end of \cref{sec:analysis}.  We will also want to assume some finiteness conditions on the module category.  As in \cite{BSW-HKM}, we will consider an abelian category $\cR$ that is either \emph{locally finite abelian} (that is, all objects are finite length and all morphism spaces are finite dimensional as $\kk$-vector spaces) or \emph{schurian} (that is, equivalent to the category of locally finite-dimensional modules over a locally finite-dimensional locally unital algebra)---see \cite[\S2.2]{BSW-HKM} for a more detailed discussion of these notions.

\subsection{Analysis of minimal polynomials\label{sec:analysis}}

Let $L\in \cR$ be an object which is a \emph{brick}, that is, an object such that $\End_\cR(L)=\kk$.  In this case, the coefficients of the series $\bubblegenr{u}$ must act by scalars in $\kk$.  Define
\[
    \OO_L(u)
    :=
    \begin{tikzpicture}[centerzero]
        \bubgenr{u}{0,0};
        \draw[module] (0.4,-0.4) \botlabel{L} -- (0.4,0.4);
    \end{tikzpicture}
    \in 1 + u^{-1} \kk \llbracket u^{-1} \rrbracket.
\]

On the other hand, we can consider the image $\go L$ of the generating functor under the action.  This carries an action of $\dotstrand$, which we can think of as defining a ring homomorphism $\kk[u]\to \End_\cR(\go L)$.  By our finiteness assumptions on $\cR$, together with the fact that $\go$ is self-adjoint, we know that $\End_\cR(\go L)$ is finite dimensional over $\kk$.  Thus, the algebra homomorphism
\[
    \kk[u] \to \End_\cR(\go L),\qquad
    f(u) \mapsto
    \begin{tikzpicture}[centerzero]
        \draw (-0.3,-0.4) -- (-0.3,0.4);
        \multdot{-0.3,0}{east}{f(x)};
        \draw[module] (0,-0.4) \botlabel{L} -- (0,0.4);
    \end{tikzpicture}
    \ ,
\]
has nontrivial kernel $I_L$ generated by a unique monic polynomial $m_L \in \kk[u]$.  Therefore, $m_L$ is the unique monic polynomial such that, for $f \in \kk[u]$,
\begin{equation} \label{strada}
    \begin{tikzpicture}[centerzero]
        \draw (-0.3,-0.4) -- (-0.3,0.4);
        \multdot{-0.3,0}{east}{f(x)};
        \draw[module] (0,-0.4) \botlabel{L} -- (0,0.4);
    \end{tikzpicture}
    = 0
    \iff m_L \text{ divides } f.
\end{equation}
Let
\[
    d_L = \deg m_L.
\]

\begin{lem} \label{cdmx}
    If $g \in I_L$, then
    \begin{equation} \label{star}
        \hat{g}(u) := \left( -u-\tfrac{1}{2} \right) g(-u) \OO_L(-u) \in I_L.
    \end{equation}
\end{lem}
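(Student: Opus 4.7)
The plan is to apply the first equation of \cref{curlsdown} tensored with $\id_L$, then multiply by $-u g(-u)$ and extract the $u^{-1}$-coefficient.

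First, the scalar action of $\bubblegenl{u}$ on $L$ equals $-\OO_L(-u)$. This follows from the identity $\bubblegenl{u}=-\bubblegenr{-u}$, which is implicit in the derivation of \cref{euclid} (and accounts for the sign convention in the first equation of \cref{infgrass}). Tensoring the first equation of \cref{curlsdown} with $\id_L$ on the right and using this evaluation, I obtain the identity in $\End_\cR(\go L)\otimes_\kk\kk\Taylor{u^{-1}}$:
\[
\begin{tikzpicture}[anchorbase]
    \draw (0,-0.5) to[out=up,in=180] (0.3,0.2) to[out=0,in=up] (0.45,0) to[out=down,in=0] (0.3,-0.2) to[out=180,in=down] (0,0.5);
    \downtriforce{0.45,0};
    \draw[module] (0.8,-0.5) -- (0.8,0.5);
\end{tikzpicture}
=\left(1+\tfrac{1}{2u}\right)\OO_L(-u)
\begin{tikzpicture}[centerzero]
    \draw (0,-0.5) -- (0,0.5);
    \uptriforce{0,0};
    \draw[module] (0.3,-0.5) -- (0.3,0.5);
\end{tikzpicture}
+\tfrac{1}{2u}
\begin{tikzpicture}[centerzero]
    \draw (0,-0.5) -- (0,0.5);
    \downtriforce{0,0};
    \draw[module] (0.3,-0.5) -- (0.3,0.5);
\end{tikzpicture}.
\]

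The crucial bookkeeping observation is $-u\!\left(1+\tfrac{1}{2u}\right)=-(u+\tfrac{1}{2})$, so multiplying by $-ug(-u)$ turns the coefficient of the $\uptriforce$-term into $-(u+\tfrac{1}{2})g(-u)\OO_L(-u)=\hat{g}(u)$. Using \cref{trick+}, the $u^{-1}$-coefficient extractions produce the dot-substitutions $[\uptriforce\cdot\hat{g}(u)]_{u^{-1}}=[\hat{g}]_{\ge 0}(x)$, $[\downtriforce\cdot(-ug(-u))]_{u^{-1}}=xg(x)$, and $[\downtriforce\cdot\tfrac{1}{2}g(-u)]_{u^{-1}}=\tfrac{1}{2}g(x)$. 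After invoking $g(x)\cdot\idstrand\cdot L=0$ (from $g\in I_L$), the resulting identity on $\go L$ reads
\[
\bigl(\text{curl with dot } xg(x)\bigr)\cdot L \;=\; [\hat{g}]_{\ge 0}(x)\cdot\idstrand\cdot L.
\]

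The main step left is showing the left-hand side vanishes, which I would establish by a second application of the curl relation --- this time \cref{curlsup} --- to re-expand the $xg(x)$-decorated curl as a combination of straight-strand dots (all divisible by $g$, hence in $I_L$ since $I_L$ is an ideal) together with bubble-evaluations that collapse on $L$ via $\OO_L$. Comparing the two curl-relation expansions forces $[\hat{g}]_{\ge 0}(x)\cdot\idstrand\cdot L=0$, which is the content of $\hat{g}\in I_L$ (interpreting the polynomial portion of $\hat g$ against $I_L\subset\kk[u]$; the negative-degree tail of the Laurent series $\hat g(u)$ is absorbed by the bubble structure encoded in $\OO_L$, via the identity $\OO_L(u)\OO_L(-u)=1$ coming from the second equation of \cref{infgrass}). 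The hard part will be the final consistency step: verifying that the two independent applications of the curl relation collapse to $0=[\hat{g}]_{\ge 0}(x)\cdot\idstrand\cdot L$ rather than merely to a tautology, and interpreting $\hat g\in I_L$ cleanly given that $\hat{g}(u)$ is formally a Laurent series.
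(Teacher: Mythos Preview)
Your derivation via the first equation of \cref{curlsdown} is equivalent to the paper's via \cref{curlsup} (the two right-curl identities differ only by the substitution $u \mapsto -u$, using \cref{infgrass}), and it correctly produces the identity, on $\go L$, equating the right-curl decorated by $xg(x)$ with the straight strand decorated by $[\hat g]_{\ge 0}(x) - \tfrac{1}{2}g(x)$. But two ingredients are missing. The decorated curl vanishes not via a second curl expansion---applying \cref{curlsup} to it reproduces exactly the same expression, giving the tautology you anticipated---but for the direct reason that the dot $xg(x)$ sits on the portion of the strand adjacent to $L$: slicing the diagram horizontally through the dot factors the morphism through $\big(1_{\go^{\otimes 2}} \otimes (xg)(\dotstrand)\big)_L$, which is zero since $xg \in I_L$.

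More seriously, even granting that vanishing, you obtain only $[\hat g]_{\ge 0} \in I_L$, whereas the lemma asserts $\hat g \in I_L \subset \kk[u]$; you must first show that $\hat g$ is actually a polynomial. The identity $\OO_L(u)\OO_L(-u)=1$ does not furnish this: $\hat g(-u) = (u-\tfrac{1}{2})g(u)\OO_L(u)$ is a polynomial times a genuine power series in $u^{-1}$, and the vanishing of its negative-degree tail is a nontrivial constraint on $\OO_L$, not a formal consequence of \cref{infgrass}. The paper supplies this via a separate computation (its display \cref{cerveza}): for $r>0$, one checks that $[\hat g(-u)]_{u^{-r}}$ equals the action on $L$ of a bubble carrying the dot label $x^r g(x)$, which is zero by the same adjacency argument.
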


\begin{proof}
    Suppose $g \in I_L$.  A priori, $\hat{g}(u) \in \kk \Laurent{u^{-1}}$.  However, since $g(\dotstrand)=0$, we have, for $r> 0$,
    \begin{multline} \label{cerveza}
        \left[ \hat{g}(-u) \right]_{u^{-r}}
        =
        \Big[ \left( u - \tfrac{1}{2} \right) g(u) \OO_L(u) \Big]_{u^{-r}}
        =
        \left[
            (u-\tfrac{1}{2})g(u)\
            \begin{tikzpicture}[centerzero={0,-0.1}]
                \bubgenr{u}{-0.4,0};
                \draw[module] (0,-0.3) \botlabel{L} -- (0,0.3);
            \end{tikzpicture}
        \right]_{u^{-r}}
        \\
        \overset{\cref{eatz}}{=}
        \left[
            u g(u) \left( \uptribubbler - \tfrac{1}{2u} + 1 \right)
            \begin{tikzpicture}[centerzero={0,-0.1}]
                \draw[module] (0,-0.3) \botlabel{L} -- (0,0.3);
            \end{tikzpicture}
        \right]_{u^{-r}}
        \overset{\dagger}{=}
        \left[
            u^{r} g(u)\ \uptribubbler
            \begin{tikzpicture}[centerzero={0,-0.1}]
                \draw[module] (0,-0.3) \botlabel{L} -- (0,0.3);
            \end{tikzpicture}
        \right]_{u^{-1}}
        \overset{\cref{trick}}{=}
        \begin{tikzpicture}[centerzero={0,-0.1}]
            \multbubr{-1.4,0}{x^{r}g(x)};
            \draw[module] (0,-0.3) \botlabel{L} -- (0,0.3);
        \end{tikzpicture}
        =0,
    \end{multline}
    where the equality $\dagger$ holds because $u^{r}(1-\frac{1}{2u})$ is a polynomial for $r > 0$.  From this, we can conclude that $\hat{g}(u) \in \kk[u]$.

    Next, note that
    \begin{multline} \label{burrito}
        0
        =
        \begin{tikzpicture}[centerzero]
            \draw (0,-0.5) to[out=up,in=180] (0.3,0.2) to[out=0,in=up] (0.45,0) to[out=down,in=0] (0.3,-0.2) to[out=180,in=down] (0,0.5);
            \multdot{0.45,0}{west}{x g(x)};
            \draw[module] (1.8,-0.5) \botlabel{L} -- (1.8,0.5);
        \end{tikzpicture}
        \overset{\cref{trick}}{=}
        \left[
            u g(u)\
            \begin{tikzpicture}[centerzero]
                \draw (0,-0.5) to[out=up,in=180] (0.3,0.2) to[out=0,in=up] (0.45,0) to[out=down,in=0] (0.3,-0.2) to[out=180,in=down] (0,0.5);
                \uptriforce{0.45,0};
                \draw[module] (0.8,-0.5) \botlabel{L} -- (0.8,0.5);
            \end{tikzpicture}
        \right]_{u^{-1}}
        \overset{\cref{curlsup}}{=}
        \left[
            g(u) \left( u-\frac{1}{2} \right)
            \begin{tikzpicture}[centerzero]
                \draw (0,-0.5) -- (0,0.5);
                \bubgenr{u}{0.4,0};
                \downtriforce{0,0};
                \draw[module] (0.8,-0.5) \botlabel{L} -- (0.8,0.5);
            \end{tikzpicture}
            \ - \frac{g(u)}{2}\
            \begin{tikzpicture}[centerzero]
                \draw (0,-0.5) -- (0,0.5);
                \uptriforce{0,0};
                \draw[module] (0.3,-0.5) \botlabel{L} -- (0.3,0.5);
            \end{tikzpicture}
        \right]_{u^{-1}}
        \\
        \overset{\cref{star}}{=}
        \left[
            \hat{g}(-u)\
            \begin{tikzpicture}[centerzero]
                \draw (0,-0.5) -- (0,0.5);
                \downtriforce{0,0};
                \draw[module] (0.3,-0.5) \botlabel{L} -- (0.3,0.5);
            \end{tikzpicture}
            \ - \frac{g(u)}{2}\
            \begin{tikzpicture}[centerzero]
                \draw (0,-0.5) -- (0,0.5);
                \uptriforce{0,0};
                \draw[module] (0.3,-0.5) \botlabel{L} -- (0.3,0.5);
            \end{tikzpicture}
        \right]_{u^{-1}}
        \overset{\cref{trick}}{=}
        \begin{tikzpicture}[anchorbase]
            \draw (0,-0.5) -- (0,0.5);
            \multdot{0,0}{west}{\hat{g}(x) - \frac{1}{2} g(x)};
        \end{tikzpicture}.
	 \end{multline}
    Therefore, $\hat{g} - \frac{1}{2}g \in I_L$, and so $\hat{g} \in I_L$, as desired.
\end{proof}

For $f \in \kk[u]$, define
\begin{equation} \label{metrobus}
    \OO_f(u)
    := \frac{\big( (-1)^{\deg f} u - \frac{1}{2} \big) f(-u)}{(u-\frac{1}{2})f(u)} \in 1 + u^{-1} \kk \llbracket u^{-1} \rrbracket.
\end{equation}

\begin{theo} \label{hemlock}
	We have $\OO_L = \OO_{m_L}$.
\end{theo}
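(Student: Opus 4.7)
The plan is to apply \cref{cdmx} with $g = m_L$ to obtain the divisibility $m_L(u) \mid \hat{m_L}(u)$ in $\kk[u]$, and then to pin down the quotient $h(u) := \hat{m_L}(u)/m_L(u)$ explicitly. Substituting the resulting $h$ back into the defining relation will then immediately recover $\OO_L = \OO_{m_L}$.

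First I will verify that $\hat{m_L}(u)$ is a polynomial of degree at most $d + 1$, where $d := \deg m_L$. Polynomiality is already established inside the proof of \cref{cdmx}: the computation there shows $[\hat{g}(-u)]_{u^{-r}} = 0$ for every $r \ge 1$ and every $g \in I_L$. The degree bound follows from $\hat{m_L}(u) = (-u - \tfrac{1}{2})\, m_L(-u)\, \OO_L(-u)$ together with $\OO_L(-u) \in 1 + u^{-1}\kk\llbracket u^{-1} \rrbracket$, which gives leading behaviour $(-u)(-u)^d = (-1)^{d+1} u^{d+1}$ at infinity. Hence $h$ is a polynomial of degree at most $1$ with leading coefficient $(-1)^{d+1}$, so $h(u) = (-1)^{d+1} u + b$ for some $b \in \kk$.

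The crucial step is to determine $b$. My plan is to extend the computation from the proof of \cref{cdmx} one step further, to the $u^0$-coefficient: running the same chain of identities with $r = 0$, the two contributions to $[(u - \tfrac{1}{2})\, g(u)\, \OO_L(u)]_{u^0}$ are the bubble scalar $\multbubbler{g(x)}_L$ (vanishing for $g \in I_L$ by the same trace-style argument used in the $r \ge 1$ case) and the polynomial term $[(u - \tfrac{1}{2})\, g(u)]_{u^0} = -g(0)/2$; hence $\hat{g}(0) = -g(0)/2$ for every $g \in I_L$. Specialising to $g = m_L$ and comparing with $\hat{m_L}(0) = m_L(0)\, h(0) = m_L(0)\, b$ forces $b = -\tfrac{1}{2}$ whenever $m_L(0) \ne 0$. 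Substituting $h(u) = (-1)^{d+1} u - \tfrac{1}{2}$, and hence $h(-u) = (-1)^d u - \tfrac{1}{2}$, into the rearrangement $\OO_L(u) = m_L(-u)\, h(-u) / [(u - \tfrac{1}{2})\, m_L(u)]$ then reproduces exactly the definition \cref{metrobus} of $\OO_{m_L}$.

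The main obstacle I anticipate is the edge case $m_L(0) = 0$, in which the identity $\hat{m_L}(0) = m_L(0)\, b$ degenerates to $0 = 0$ and yields no information on $b$. To handle this I would bring in a second constraint from \cref{infgrass}: the relation $\bubblegenl{u}\, \bubblegenr{u} = -1$ together with the symmetry $\bubblegenr{-u} = -\bubblegenl{u}$ (which follows from \cref{eatz} once one observes $\uptribubblel = \downtribubbler$, obtained by sliding dots over the cap via \cref{mirror}) gives $\OO_L(u)\, \OO_L(-u) = 1$. Substituting the factored form of $\OO_L$ turns this into the polynomial identity $h(u)\, h(-u) = \tfrac{1}{4} - u^2$, from which $b^2 = \tfrac{1}{4}$ and hence $b = \pm \tfrac{1}{2}$. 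The correct sign $b = -\tfrac{1}{2}$ can then be pinned down by extracting the $u^1$-coefficient of $\hat{m_L}$ in the analogous way (now involving $m_L'(0)$ and the bubble $\multbubbler{m_L(x)/x}_L$) or by a genericity argument reducing to the case $m_L(0) \ne 0$.
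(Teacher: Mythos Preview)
Your main-line argument for the case $m_L(0) \ne 0$ is correct and genuinely different from the paper's. Extending the computation \cref{cerveza} to the $u^0$-coefficient is legitimate: the extra contribution $[u\, m_L(u)\,\uptribubbler]_{u^0}$ equals $\multbubbler{m_L(x)}$ evaluated on $L$, and this vanishes because the dot can be placed on the strand adjacent to $L$, where $m_L(\dotstrand)_L = 0$ by definition of $m_L$. This yields $\hat m_L(0) = -m_L(0)/2$ and hence $b = -\tfrac12$ immediately. The paper instead first uses \cref{infgrass} to reduce to $b \in \{\pm\tfrac12\}$ and then performs a separate curl computation (placing $m_L(x)$ rather than $x\,m_L(x)$ on the curl, as in \cref{m-curl}) to select the sign. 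Your route, when it applies, is slicker and avoids both \cref{infgrass} and the curl relation.

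The gap is that your edge-case resolution for $m_L(0) = 0$ does not work as stated. Running your argument at the $u^1$-coefficient gives, after the same manipulation,
\[
    \multbubbler{m_L(x)/x}_L \;=\; m_L'(0)\Bigl(\tfrac12 - b\Bigr),
\]
but since $m_L(x)/x \notin I_L$ by the minimality of $m_L$, the left-hand bubble is not forced to vanish, and the displayed identity is perfectly consistent with $b = +\tfrac12$ (it would just say that this particular bubble happens to be zero). If $0$ is a multiple root of $m_L$ the same obstruction recurs at each successive order. The ``genericity'' suggestion is not a proof either: $m_L$ is the minimal polynomial of a fixed operator on a fixed object, and there is no evident deformation of $(\cR,L)$ that moves $m_L(0)$ off zero while preserving the structure you need. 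What actually pins down the sign in general is the paper's curl computation \cref{m-curl}: it shows that $[g]_{\ge 0}(x)$ kills $\go L$, where $g(u) = (-1)^{\deg m_L} m_L(u) - \frac{1+\epsilon}{2u} m_L(u)$, so $[g]_{\ge 0}$ is divisible by $m_L$; but $[g]_{\ge 0}$ has degree $\deg m_L$ with leading coefficient $(-1)^{\deg m_L}$, forcing the lower-order piece $\bigl[\frac{1+\epsilon}{2u}m_L(u)\bigr]_{\ge 0}$ to vanish and hence $\epsilon = -1$ (equivalently $b = -\tfrac12$) regardless of whether $m_L(0) = 0$.
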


\begin{proof}
    By \cref{infgrass}, we have $\OO_L(-u) = \OO_L(u)^{-1}$.  Thus, by \cref{star},
    \begin{equation} \label{meteor}
        \frac{\hat{m}_L(-u)}{\left( u-\frac{1}{2} \right) m_L(u)}
        = \OO_L
        = \frac{\left( -u-\frac{1}{2} \right) m_L(-u)}{\hat{m}_L(u)},
    \end{equation}
    and so
    \begin{equation} \label{sun}
        \hat{m}_L(u) \hat{m}_L(-u) = \left( u - \tfrac{1}{2} \right) \left( -u - \tfrac{1}{2} \right) m_L(u) m_L(-u).
    \end{equation}
    By \cref{cdmx}, $\hat{m}_L(u) \in I_L$, and so $m_L(u)$ divides $\hat{m}_L(u)$.  Combined with \cref{sun} and the fact that $\hat{m}_L(-u)$ is monic, this implies that
    \begin{equation}\label{f-factor}
    	 \hat{m}_L(-u) = \left( (-1)^{d_L}u + \frac{\epsilon}{2} \right) m_L(-u)
    \end{equation}
    for some $\epsilon \in \{\pm 1\}$.

    Now note that
    \begin{multline*}
        0
        =
        \begin{tikzpicture}[centerzero]
            \draw (0,-0.5) to[out=up,in=180] (0.3,0.2) to[out=0,in=up] (0.45,0) to[out=down,in=0] (0.3,-0.2) to[out=180,in=down] (0,0.5);
            \multdot{0.45,0}{west}{m_L(x)};
            \draw[module] (1.6,-0.5) \botlabel{L} -- (1.6,0.5);
        \end{tikzpicture}
        \overset{\cref{trick}}{=}
        \left[
            m_L(u)\
            \begin{tikzpicture}[centerzero]
                \draw (0,-0.5) to[out=up,in=180] (0.3,0.2) to[out=0,in=up] (0.45,0) to[out=down,in=0] (0.3,-0.2) to[out=180,in=down] (0,0.5);
                \uptriforce{0.45,0};
                \draw[module] (0.8,-0.5) \botlabel{L} -- (0.8,0.5);
            \end{tikzpicture}
        \right]_{u^{-1}}
        \\
        \overset{\cref{curlsup}}{=}
        \left[
            m_L(u) \left( 1-\frac{1}{2u} \right)
            \begin{tikzpicture}[centerzero]
                \draw (0,-0.5) -- (0,0.5);
                \bubgenr{u}{0.4,0};
                \downtriforce{0,0};
                \draw[module] (0.8,-0.5) \botlabel{L} -- (0.8,0.5);
            \end{tikzpicture}
            \ - \frac{m_L(u)}{2u}\
            \begin{tikzpicture}[centerzero]
                \draw (0,-0.5) -- (0,0.5);
                \uptriforce{0,0};
                \draw[module] (0.3,-0.5) \botlabel{L} -- (0.3,0.5);
            \end{tikzpicture}
        \right]_{u^{-1}}
        \overset{\cref{star}}{=}
        \left[
             \frac{\hat{m}_L(-u)}{u}\
            \begin{tikzpicture}[centerzero]
                \draw (0,-0.5) -- (0,0.5);
                \downtriforce{0,0};
                \draw[module] (0.3,-0.5) \botlabel{L} -- (0.3,0.5);
            \end{tikzpicture}
            \ - \frac{m_L(u)}{2u}\
            \begin{tikzpicture}[centerzero]
                \draw (0,-0.5) -- (0,0.5);
                \uptriforce{0,0};
                \draw[module] (0.3,-0.5) \botlabel{L} -- (0.3,0.5);
            \end{tikzpicture}
        \right]_{u^{-1}}
        \\
        \overset{\cref{f-factor}}{=}
        \left[
            \left( (-1)^{d_L} + \frac{\epsilon}{2u} \right) m_L(-u)\
            \begin{tikzpicture}[centerzero]
                \draw (0,-0.5) -- (0,0.5);
                \downtriforce{0,0};
                \draw[module] (0.3,-0.5) \botlabel{L} -- (0.3,0.5);
            \end{tikzpicture}
            \ - \frac{m_L(u)}{2u}\
            \begin{tikzpicture}[centerzero]
                \draw (0,-0.5) -- (0,0.5);
                \uptriforce{0,0};
                \draw[module] (0.3,-0.5) \botlabel{L} -- (0.3,0.5);
            \end{tikzpicture}
        \right]_{u^{-1}}
        \overset{\cref{trick+}}{=}
        \begin{tikzpicture}[centerzero]
            \draw (0,-0.5) -- (0,0.5);
            \multdot{0,0}{east}{[g]_{\ge 0}(x)};
            \draw[module] (0.3,-0.5) \botlabel{L} -- (0.3,0.5);
        \end{tikzpicture},
    \end{multline*}
    where
    \[
        g(u) = (-1)^{d_L} m_L(u) - \frac{1+\epsilon}{2u} m_L(u).
    \]
    Therefore, the polynomial
    \[
        [g(u)]_{u^{\ge 0}}
        = (-1)^{d_L} m_L(u) - \left[ \frac{1+\epsilon}{2u} m_L(u) \right]_{u^{\ge 0}}
    \]
    must be divisible by $m_L(u)$, which is only possible if $\left[ \frac{1+\epsilon}{2u} m_L(u) \right]_{u^{\ge 0}}=0$.  Since $m_L(u)$ has positive degree, this in turn can only hold if $\epsilon=-1$.  Thus,
    \[
        \hat{m}_L(-u) =
        \begin{cases}
            (u-\frac{1}{2}) m_L(-u) & \text{if } d_L \text{ is even}, \\
            (-u-\frac{1}{2}) m_L(-u) & \text{if } d_L \text{ is odd}.
        \end{cases}
    \]
    The result follows.
\end{proof}

\subsection{Admissibility in the Brauer case\label{sec:admissibleBrauer}}

Restrictions on the scalars by which bubbles can act have played an important role in the study of degenerate cyclotomic BMW algebras.  In this section, we show how such restrictions can be easily deduced from the results of this paper.
 We assume throughout this section that $\kk$ is a commutative ring in which $2$ is invertible.

Fix an $\AB$-module category $\mathbf{R} \colon \AB \to \cEnd_\kk(\cR)$.  Then $\mathbf{R}$ maps elements of $\End_\AB(\one)$ to natural transformations of the identity functor on $\cR$.  Evaluation on an element $V$ of $\cR$ then defines a ring homomorphism from $\End_\AB(\one)$ to the center $Z_V$ of the endomorphism algebra of $V$.  If $V$ is a brick, then $Z_V = \kk$.  However, it is sometimes useful to consider the more general situation.  In the literature on degenerate cyclotomic BMW algebras, the scalar by which $\multbubbler{n}$ acts is usually denoted $\omega_n$.  For $V$ an object in an $\AB$-module category $\cR$, let
\begin{equation} \label{patio}
    \Omega_V = (\omega^V_r)_{r \in \N},\qquad
    \omega^V_r
    :=
    \begin{tikzpicture}[centerzero]
        \multbubr{0,0}{r};
        \draw[module] (0.7,-0.5) \botlabel{V} -- (0.7,0.5);
    \end{tikzpicture}
    \in Z_V,\qquad r \in \N.
\end{equation}

\Cref{scratch} places a restriction on the possible sequences $\Omega_V$.  A sequence $\Omega = (\omega_r)_{r \in \N}$ in a commutative ring satisfying
\begin{equation} \label{cornerstone}
    \omega_{2r+1} = \frac{1}{2} \left( -\omega_{2r} + \sum_{n=0}^{2r} (-1)^n \omega_n \omega_{2r-n} \right)
\end{equation}
is called \emph{admissible} in \cite[Def.~2.10]{AMR06}.  It follows from \cref{scratch} that admissibility is simply a consequence of the relations of the category $\AB$.  We think that the analogue \cref{infgrass} of the infinite grassmannian relation is an elegant statement of this relation.

Suppose $m(u) \in \kk[u]$ is monic and factors completely as
\[
    m(u) = (u-a_1)(u-a_2) \dotsm (u-a_{d}).
\]
In \cite[Def.~2.5]{Goo11}, a sequence $\Omega = (\omega_r)_{r \in \N}$ in $\kk$ is called \emph{weakly admissible} for $m$ if it satisfies \cref{cornerstone} and
\begin{equation} \label{weak}
    \sum_{j=0}^{d} (-1)^j e_j(a_1,\dotsc,a_{d}) \omega_{j+n} = 0
    \quad \text{for all } n \in \N,
\end{equation}
where $e_j$ is the $j$-th elementary symmetric polynomial.  For $\Omega = \Omega_V$ as in \cref{patio}, the condition \cref{weak} is equivalent to the condition
\[
    \begin{tikzpicture}[centerzero]
        \multbubr{0,0}{x^n m(x)};
        \draw[module] (1.5,-0.4) \botlabel{V} -- (1.5,0.4);
    \end{tikzpicture}
    = 0
    \qquad \text{for all } n \in \N.
\]
In particular,
\begin{equation} \label{roses}
    \text{the sequence $\Omega_V$ is weakly admissible for $m$ if }
    \begin{tikzpicture}[centerzero]
        \draw (0,-0.4) -- (0,0.4);
        \multdot{0,0}{east}{m(x)};
        \draw[module] (0.3,-0.4) \botlabel{V} -- (0.3,0.4);
    \end{tikzpicture}
    = 0,
\end{equation}
Note the slightly confusing fact that ``weakly admissible'' is a \emph{stronger} condition than ``admissible''.

\begin{lem} \label{brew}
    The sequence $\Omega_V$ is weakly admissible for $m$ if and only if $\uptribubbler$ acts on $V$ by a rational function whose denominator divides $m$.  Equivalently, $\Omega_V$ is weakly admissible for $m$ if and only if
    \[
        m(u)\, \uptribubbler
        \begin{tikzpicture}[centerzero]
            \draw[module] (0,-0.4) \botlabel{V} -- (0,0.4);
        \end{tikzpicture}
        \in Z_V[u].
    \]
\end{lem}

\begin{proof}
    For $n \in \N$, we have
    \[
        \left[
            m(u)\
            \begin{tikzpicture}[centerzero={0,-0.1}]
                \uptribubr{-0.4,0};
                \draw[module] (0,-0.3) \botlabel{V} -- (0,0.3);
            \end{tikzpicture}
        \right]_{u^{-n-1}}
        =
        \left[
            u^n m(u)\
            \begin{tikzpicture}[centerzero={0,-0.1}]
                \uptribubr{-0.4,0};
                \draw[module] (0,-0.3) \botlabel{V} -- (0,0.3);
            \end{tikzpicture}
        \right]_{u^{-1}}
        \overset{\cref{trick}}{=}
        \begin{tikzpicture}[centerzero]
            \multbubr{0,0}{x^n m(x)};
            \draw[module] (1.5,-0.5) \botlabel{V} -- (1.5,0.5);
        \end{tikzpicture}
        \ .
    \]
    The result follows.
\end{proof}

We now turn our attention to another notion of admissibility that has played an important role in the literature.  For a finite sequence $\ba = (a_1,a_2,\dotsc,a_{d})$ in $\kk$, a sequence $\Omega = (\omega_r)_{r \in \N}$ in $\kk$ is called \emph{$\ba$-admissible} if
\begin{equation}\label{def:Omega}
    \Omega(u)
    := \sum_{r=0}^\infty \omega_r u^{-r}
    = -u + \frac{1}{2} + \left( u - \frac{(-1)^{d}}{2} \right) \prod_{i=1}^{d} \frac{u+a_i}{u-a_i}.
\end{equation}
If we define
\begin{equation} \label{OOmega}
    \OO_\Omega(u)
    := \frac{2}{2u-1} \Omega(u) + 1
    \in 1 + u^{-1} \kk \llbracket u^{-1} \rrbracket,
\end{equation}
then
\[
    \Omega \text{ is admissible}
    \iff
    \OO_\Omega(u) \OO_\Omega(-u) = 1.
\]
Furthermore, recalling \cref{metrobus},
\begin{equation} \label{bench}
    \Omega \text{ is $\ba$-admissible}
    \iff
    \OO_\Omega = \OO_m,
    \quad \text{for} \quad
    m(u) = \prod_{i=1}^{d}(u-a_i).
\end{equation}
The term \emph{$\ba$-admissible} was introduced in \cite[Def.~3.6]{AMR06}, although here we use  the equivalent formulation from \cite[Lem.~3.8]{AMR06}.

We can see the significance of these conditions by computing  the action of the bubbles $\multbubbler{n}$ on $L$.

\begin{cor} \label{cactus}
    If $\kk$ is a field, then we have
    \[
        u\ \uptribubbler
        \begin{tikzpicture}[centerzero]
            \draw[module] (0,-0.3) \botlabel{L} -- (0,0.3);
        \end{tikzpicture}
        = \sum_{n=0}^\infty u^{-n}\ \multbubbler{n}
        \begin{tikzpicture}[centerzero]
            \draw[module] (0,-0.3) \botlabel{L} -- (0,0.3);
        \end{tikzpicture}
        =
        -u + \frac{1}{2} + \left( (-1)^{d_L}u-\frac{1}{2} \right) \frac{m_L(-u)}{m_L(u)}.
    \]
    Thus, if $m_L(u) =\prod_{i=1}^{d}(u-a_i)$, then $\OO_L(u) = \OO_{\Omega}$ for $\Omega$ the $\ba$-admissible sequence defined by \cref{def:Omega}.
\end{cor}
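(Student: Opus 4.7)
The plan is to derive \cref{cactus} as a direct algebraic consequence of \cref{hemlock} together with the defining identity \cref{eatz} for the bubble generating function. The first equality in the statement is simply the definition of $\uptriforce$ unpacked: since $(u-x)^{-1} = \sum_{n \ge 0} u^{-n-1} x^n$, attaching dots we get $\uptribubbler = \sum_{n \ge 0} u^{-n-1}\ \multbubbler{n}$, and multiplying by $u$ yields the displayed sum.

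For the second (nontrivial) equality, I would solve \cref{eatz} for $\uptribubbler$ to get
\[
u \cdot \uptribubbler \ = \ \bigl(u - \tfrac{1}{2}\bigr) \bigl(\bubblegenr{u} - 1\bigr).
\]
Evaluating both sides on $L$ and using that $\bubblegenr{u}$ acts on $L$ by $\OO_L(u)$, the right-hand side becomes $(u-\tfrac{1}{2})(\OO_L(u) - 1)$. Now I would invoke \cref{hemlock} to replace $\OO_L(u)$ by $\OO_{m_L}(u)$ and substitute the explicit expression \cref{metrobus}. The factor $u - \tfrac{1}{2}$ cancels against the matching factor in the denominator of \cref{metrobus}, and distributing the subtraction produces
\[
\bigl((-1)^{\deg m_L} u - \tfrac{1}{2}\bigr)\frac{m_L(-u)}{m_L(u)} \, - \, u \, + \, \tfrac{1}{2},
\]
which is precisely the right-hand side of the corollary.

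There is no genuine obstacle here: once \cref{hemlock} is available, the corollary is a short rational-function manipulation. The only small point to verify is that the final expression lies in $\kk\llbracket u^{-1}\rrbracket$, which holds because $m_L$ is monic (of positive degree, by the proof of \cref{hemlock}), so $m_L(-u)/m_L(u)$ admits a Taylor expansion at $u = \infty$ and the whole right-hand side is a well-defined element of $\kk\llbracket u^{-1}\rrbracket$, matching the left-hand side term by term.
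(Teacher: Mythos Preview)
Your proposal is correct and follows essentially the same approach as the paper: solve \cref{eatz} for $u\,\uptribubbler$ in terms of $\bubblegenr{u}$, evaluate on $L$ so that $\bubblegenr{u}$ becomes $\OO_L(u)$, apply \cref{hemlock} to replace this by $\OO_{m_L}(u)$, and then substitute \cref{metrobus} and simplify. The paper's proof is just the one-line version of this same computation.
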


\begin{proof}
    Since
    \[
        u\ \uptribubbler \overset{\cref{eatz}}{=} - u -\frac{1}{2} + \left( u - \frac{1}{2} \right) \bubblegenr{u}
    \]
    and $\bubblegenr{u}$ acts on $L$ by $\OO_L(u)$, the result follows from \cref{hemlock}.
\end{proof}

Thus, if $\kk$ is a field and $m_L(u) = (u-a_1)(u-a_2) \dotsm (u-a_d)$ as a product of linear factors in the algebraic closure of $\kk$, then it follows from \cref{cactus} that
\[
    u\ \uptribubbler
    \begin{tikzpicture}[centerzero]
        \draw[module] (0,-0.3) \botlabel{L} -- (0,0.3);
    \end{tikzpicture}
    = \sum_{n=0}^\infty u^{-n}\ \multbubbler{n}
    \begin{tikzpicture}[centerzero]
        \draw[module] (0,-0.3) \botlabel{L} -- (0,0.3);
    \end{tikzpicture}
    =
    - u + \frac{1}{2} + \left( u-\frac{(-1)^d}{2} \right) \prod_{i=1}^d \frac{u+a_i}{u-a_i}.
\]

\section{Module categories over the affine Kauffman category\label{sec:KauffmanModules}}

In this section, we study module categories over the affine Kauffman category $\AK(z,t)$.  Our treatment is parallel to that of \cref{sec:BrauerModules}.  As in that section, we let $\cR$ be a module category over $\AK(z,t)$ that is either locally finite abelian or schurian.  We assume throughout this section that $\kk$ is a field.

\subsection{Analysis of the minimal polynomial}

Let $L \in \cR$ be a brick, and let $J_L$ be the kernel of the algebra homomorphism
\begin{equation} \label{pear}
    \kk[u,u^{-1}] \to \End_\cR(\go L),\qquad
    f(u) \mapsto
    \begin{tikzpicture}[centerzero]
        \draw (-0.3,-0.4) -- (-0.3,0.4);
        \multdot[black]{-0.3,0}{east}{f(x)};
        \draw[module] (0,-0.4) \botlabel{L} -- (0,0.4);
    \end{tikzpicture}
    \, .
\end{equation}
Since now the dot is invertible, for all $k \in \Z$ and $g \in \kk[u,u^{-1}]$, the element $u^k g(u)$ generates the same ideal as $g$.  Let $m_L$ be the unique generator of $J_L$ that lies in $\kk[u]$, is monic, and satisfies $M := m_L(0) \neq 0$.  As in the previous section, we let $d_L = \deg m_L$.  Note that we can equally well find the corresponding minimal polynomial for the inverse dot, which has the same degree.  For a monic polynomial $f(u) \in \kk[u]$ with $f(0) \ne 0$, define
\begin{equation} \label{beijing}
    \check{f}(u) = f(0)^{-1} u^{\deg f} f(u^{-1}),
\end{equation}
which is another monic polynomial with nonzero constant term $\check{f}(0) = f(0)^{-1}$.  Then $\check{m}_L(u)$ is the minimal polynomial of the inverse dot.

The coefficients of the series $\bubblegenr[black]{u}$ must act by scalars in $\kk$.  Define
\begin{equation} \label{alipay}
    \rOO_L(u)
    :=
    \begin{tikzpicture}[centerzero]
        \bubgenr[black]{u}{0,0};
        \draw[module] (0.4,-0.4) \botlabel{L} -- (0.4,0.4);
    \end{tikzpicture}
    \in t + u^{-1} \kk \llbracket u^{-1} \rrbracket,
    \qquad
    \lOO_L(u)
    :=
    \begin{tikzpicture}[centerzero]
        \bubgenl[black]{u}{0,0};
        \draw[module] (0.4,-0.4) \botlabel{L} -- (0.4,0.4);
    \end{tikzpicture}
    \in t^{-1} + u^{-1} \kk \llbracket u^{-1} \rrbracket.
\end{equation}
Note that, unlike in the degenerate case, there is no analogue of the first relation in \cref{infgrass}, which is why we have two series $\rOO_L$ and $\lOO_L$, compared with the single series $\OO_L$ in the degenerate case.   However, we will show below that, in some module categories, there is a parallel relation between these series; see \cref{sneeze}.

We have the following analogue of \cref{cdmx}.

\begin{lem} \label{blackcdmx}
    If $g \in J_L\cap \kk[u]$ has nonzero constant term then
    \begin{equation} \label{blackstar}
        \hat{g}(u) := \left( 1 - zu - u^2 \right) g(u^{-1})u^{\deg g} \rOO_L(u^{-1})
    \end{equation}
    is a polynomial of degree $\deg g+2$ with nonzero constant term, and $\hat{g} \in J_L$.
\end{lem}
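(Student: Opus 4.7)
The plan is to mirror the proof of \cref{cdmx}, adapting the polynomiality and curl arguments to the Kauffman setting. I will verify three claims in turn: that $\hat g$ is polynomial in $u$, that it has degree exactly $\deg g + 2$ with nonzero constant term, and that $\hat g \in J_L$.

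First I would establish polynomiality. From \cref{eatzk} we have $(u^2 - zu - 1)\bubblegenr[black]{u} = ((t^{-1}-z)u^2 - t^{-1}) + z(u^2-1) \uptribubbler[black]$; substituting $u \mapsto u^{-1}$ and multiplying by $u^2$ yields, upon evaluation on $L$,
\[
(1 - zu - u^2)\, \rOO_L(u^{-1}) = \bigl((t^{-1}-z) - t^{-1}u^2\bigr) + z(1-u^2)\sum_{n\ge 0} u^n\, b_n,
\]
where $b_n \in \kk$ is the scalar by which $\multbubbler[black]{n}$ acts on $L$. Multiplying by $\bar g(u) := u^{\deg g} g(u^{-1})$ expresses $\hat g(u)$ as the sum of a polynomial of degree $\le \deg g + 2$ and a bubble-valued correction. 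For $K > \deg g + 2$, extracting $[\hat g(u)]_{u^K}$ and using the bubble--dot correspondence of \cref{blacktrick+} identifies this coefficient (up to the factor $z$) with the scalar of the bubble labelled by $x^{K-\deg g-2}(x^2-1)g(x)$ acting on $L$. Since $g(x)\cdot\id_{\go L}=0$, this polynomial annihilates $\go L$; interpreting the bubble as the partial trace over the closed strand, the corresponding scalar vanishes. Hence $\hat g\in\kk[u]$ with $\deg\hat g\le\deg g+2$.

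Next, evaluating at $u=0$ gives $\hat g(0)=\bar g(0)\cdot t$, which equals $t$ times the leading coefficient of $g$ and is nonzero. For the exact degree I would invoke the finiteness hypothesis on $\cR$: since $x$ satisfies the polynomial relation $m_L$ on $\go L$, the scalars $b_n$ obey a linear recurrence, so $\rOO_L$ is a rational function $N/D$ with $\deg N = \deg D$ and leading-coefficient ratio $t$. Relation \cref{infgrassk} gives $\lOO_L = 1/\rOO_L = D/N$; for both $\rOO_L$ and $\lOO_L$ to be finite and nonzero at $0$, both $N(0)$ and $D(0)$ must be nonzero. Matching top coefficients in $\hat g(u) = (1 - zu - u^2)\bar g(u)\rOO_L(u^{-1})$ then gives $[\hat g]_{u^{\deg g + 2}} = -g(0)\,\rOO_L(0) \neq 0$.

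For the membership $\hat g\in J_L$, I would mimic \cref{burrito}. Insert the dot polynomial $(x^2-1)g(x)$ at the top of the curl on the left-hand side of \cref{blackcurl}; since $g(x)\cdot\id_{\go L}=0$, the dot polynomial acts as zero on $\go L$, so the curl-with-dot morphism applied to $L$ is zero. Multiplying both sides of \cref{blackcurl} by $g(u)(u^2-1)$ clears the $u^2-1$ denominators, and extracting $[\cdot]_{u^0}$ after evaluation on $L$ produces an identity of the form $0 = x^{-(\deg g+2)}\hat g(x)\cdot\id_{\go L} - zx^2 g(x)\cdot\id_{\go L}$, where the first term arises from rewriting the $\downtriforce[black]\bubblegenr[black]{u}$ summand using the polynomial identity $u^{\deg g + 2}\hat g(u^{-1}) = (u^2-zu-1)g(u)\rOO_L(u)$ established in Step 1. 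The second term vanishes since $g(x)\cdot\id_{\go L}=0$, and multiplying by the invertible $x^{\deg g+2}$ yields $\hat g(x)\cdot\id_{\go L}=0$, i.e., $\hat g\in J_L$.

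The main obstacle is the exact-degree claim: polynomiality and $J_L$-membership adapt the Brauer proof closely, but the nonvanishing of the leading coefficient requires both the rationality of $\rOO_L$ (from the finiteness of $\End_\cR(\go L)$) and \cref{infgrassk} to control $\rOO_L(0)$.
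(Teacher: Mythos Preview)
Your arguments for polynomiality of $\hat g$, for $\hat g(0)\neq 0$, and for $\hat g\in J_L$ are essentially the paper's: you use the same coefficient extraction for high powers of $u$ and the same curl computation via \cref{blackcurl}.

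The one genuine gap is in the exact-degree step. You argue via rationality of $\rOO_L$ and \cref{infgrassk} that $\rOO_L(0)$ is well-defined and nonzero, but the sentence ``for both $\rOO_L$ and $\lOO_L$ to be finite and nonzero at $0$, both $N(0)$ and $D(0)$ must be nonzero'' does not actually establish this: rationality with $\deg N=\deg D$ controls the behaviour at $u=\infty$, not at $u=0$, and you have not shown that $N(0),D(0)\ne 0$. (This can be repaired---the recurrence implies the denominator of $\rOO_L$ may be taken as $(u^2-zu-1)m_L(u)$, which is nonzero at $0$ since $m_L(0)\ne 0$; a parallel argument for $\lOO_L$ via $\check m_L$, combined with $\rOO_L(0)\lOO_L(0)=1$, then gives $\rOO_L(0)\ne 0$---but you have not written it.)

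More to the point, the detour is unnecessary. Your own computation from the polynomiality step, evaluated at $K=\deg g+2$ rather than $K>\deg g+2$, already gives the answer: in your expansion
\[
\hat g(u)=\bar g(u)\bigl((t^{-1}-z)-t^{-1}u^2\bigr)+z\,\bar g(u)(1-u^2)\sum_{n\ge 0}u^n b_n,
\]
the first summand has top coefficient $-t^{-1}g(0)$ at degree $\deg g+2$, while the second summand at that degree is still ($z$ times) the bubble labelled $(x^2-1)g(x)$ on $L$, which vanishes for the same reason as before. Thus $[\hat g]_{u^{\deg g+2}}=-t^{-1}g(0)\ne 0$ with no appeal to rationality or to \cref{infgrassk}; this is exactly how the paper handles it.
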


\begin{proof}
    Suppose $g \in J_L \cap \kk[u]$ has nonzero constant term.  A priori, $\hat{g}(u) \in \kk \Taylor{u}$ with nonzero constant term.  However, since $g(\dotstrand)=0$, we have, for $s< 0$,
    \begin{multline*}
        \left[ \hat{g}(u) \right]_{u^{\deg g+2-s}}
        \overset{\cref{blackstar}}{=} \left[ \left( 1-zu-u^2 \right) g(u^{-1})u^{\deg g} \rOO_L(u^{-1}) \right]_{u^{\deg g+2-s}}
        \\
        = \left[ \left( u^2-zu-1 \right) g(u) \rOO_L(u) \right]_{u^s}
        \overset{\cref{alipay}}{=}
        \left[
            \left( u^2-zu-1 \right) g(u)\
            \begin{tikzpicture}[centerzero={0,-0.1}]
                \bubgenr[black]{u}{-0.4,0};
                \draw[module] (0,-0.3) \botlabel{L} -- (0,0.3);
            \end{tikzpicture}
        \right]_{u^s}
        \\
        \overset{\cref{eatzk}}{=}
        \left[
            g(u) \Big( \left( t^{-1}-z \right) u^2 - t^{-1} + z \left( u^2-1 \right)\ \uptribubbler[black] \Big)
            \begin{tikzpicture}[centerzero={0,-0.1}]
                \draw[module] (0,-0.3) \botlabel{L} -- (0,0.3);
            \end{tikzpicture}
        \right]_{u^s}
        \\
        \overset{\dagger}{=}
        \left[
            z(u^{2-s}-u^{-s}) g(u) \
            \begin{tikzpicture}[centerzero={0,-0.1}]
                \uptribubr[black]{-0.5,0};
                \draw[module] (0,-0.3) \botlabel{L} -- (0,0.3);
            \end{tikzpicture}
        \right]_{u^{0}}
        \overset{\cref{blacktrick}}{=}
        \begin{tikzpicture}[centerzero={0,-0.1}]
            \multbubr{-2.8,0}{z(x^{2-s}-x^{-s})g(x)};
            \draw[module] (0,-0.3) \botlabel{L} -- (0,0.3);
        \end{tikzpicture}=0,
    \end{multline*}
    where, in the second equality, we replaced $u$ by $u^{-1}$, and the equality $\dagger$ holds because $s<0$ and $g(u) \left( \left( t^{-1}-z \right) u^2 - t^{-1} \right)$ is a polynomial.  From this, we can conclude that $\hat{g} \in \kk[u]$.  On the other hand, if $s=0$, a similar argument shows that $\left[ \hat{g}(u) \right]_{u^{\deg g+2}}=-t^{-1}g(0)$, and so the degree of $\hat{g}$ is precisely $\deg g+2$.

    Next, note that
    \begin{multline*}
        0
        =
        \begin{tikzpicture}[centerzero]
            \draw (0.3,0.2) to[out=0,in=up] (0.45,0) to[out=down,in=0] (0.3,-0.2) to[out=180,in=down] (0,0.5);
            \draw[wipe] (0,-0.5) to[out=up,in=180] (0.3,0.2);
            \draw (0,-0.5) to[out=up,in=180] (0.3,0.2);
            \multdot[black]{0.45,0}{west}{(x^2-1) g(x)};
            \draw[module] (2.2,-0.5) \botlabel{L} -- (2.2,0.5);
        \end{tikzpicture}
        \overset{\cref{blacktrick}}{=}
        \left[
            (u^2-1) g(u)\
            \begin{tikzpicture}[centerzero]
                \draw (0.3,0.2) to[out=0,in=up] (0.45,0) to[out=down,in=0] (0.3,-0.2) to[out=180,in=down] (0,0.5);
                \draw[wipe] (0,-0.5) to[out=up,in=180] (0.3,0.2);
                \draw (0,-0.5) to[out=up,in=180] (0.3,0.2);
                \uptriforce[black]{0.45,0};
                \draw[module] (0.7,-0.5) \botlabel{L} -- (0.7,0.5);
            \end{tikzpicture}
        \right]_{u^{0}}
        \\
        \overset{\cref{blackcurl}}{=}
        \left[
           (u^2-zu-1) g(u)\
            \begin{tikzpicture}[centerzero]
                \draw (0,-0.5) -- (0,0.5);
                \downtriforce[black]{0,0};
                \bubgenr[black]{u}{0.4,0};
                \draw[module] (0.85,-0.5) \botlabel{L} -- (0.85,0.5);
            \end{tikzpicture}
            + z u^2 g(u)
            \left(\,
                \begin{tikzpicture}[centerzero]
                    \draw (0,-0.5) -- (0,0.5);
                \end{tikzpicture}
                -
                \begin{tikzpicture}[centerzero]
                    \draw (0,-0.5) -- (0,0.5);
                    \uptriforce[black]{0,0};
                \end{tikzpicture}
            \right)
            \begin{tikzpicture}[centerzero]
                \draw[module] (0,-0.5) \botlabel{L} -- (0,0.5);
            \end{tikzpicture}
        \right]_{u^{0}}
        \\
        \overset{\cref{blackstar}}{=}
        \left[
            \hat{g}(u^{-1})u^{\deg g+2}\
            \begin{tikzpicture}[centerzero]
                \draw (0,-0.5) -- (0,0.5);
                \downtriforce[black]{0,0};
                \draw[module] (0.3,-0.5) \botlabel{L} -- (0.3,0.5);
            \end{tikzpicture}
            \ - z u^2 g(u)\
            \begin{tikzpicture}[centerzero]
                \draw (0,-0.5) -- (0,0.5);
                \uptriforce[black]{0,0};
                \draw[module] (0.3,-0.5) \botlabel{L} -- (0.3,0.5);
            \end{tikzpicture}
        \right]_{u^0}
        \overset{\cref{blacktrick}}{=}
        \begin{tikzpicture}[centerzero]
            \draw (0,-0.5) -- (0,0.5);
            \multdot[black]{0,0}{west}{\hat{g}(x)x^{-\deg g-2} - zx^2 g(x)};
            \draw[module] (3.6,-0.5) \botlabel{L} -- (3.6,0.5);
        \end{tikzpicture}.
    \end{multline*}
    Therefore, $\hat{g}(u)u^{-\deg g-2} - zu^2 g(u) \in J_L$, and so $\hat{g}(u) \in J_L$, as desired.
\end{proof}

Note that, rearranging \cref{blackstar}, we obtain the formula
\[
    \rOO_L
    = \frac{\hat{g}(u^{-1})u^{\deg g+2}}{\left( u^2 - zu -1 \right) g(u)},
\]
showing that $\rOO_L$ is a rational function.  Furthermore, applying \cref{blackcdmx} with $g=m_L$, we see that $m_L$ must divide $\hat{m}_L$, and so we can factor $\hat{m}_L(u)=h(u)m_L(u)$, where $h(u)$ is a quadratic polynomial.  Thus, we find that
\[
    \rOO_L
    = \frac{h(u^{-1})m_L(u^{-1})u^{d_L+2}}{\left( u^2 - zu -1 \right) m_L(u)}.
\]
It follows from \cref{alipay,blackstar} that $h(0) = tM^{-1}$.
\details{
    We have $\hat{m}_L(0) = Mh(0)$.  By \cref{alipay,blackstar}, $\hat{m}_L(0) = t$, and thus $h(0) = tM^{-1}$.
}
Thus, we can write $u^2h(u^{-1}) = t M^{-1}(u^2-1)+au+b$ for some $a,b \in \kk$.  Using \cref{beijing}, we then have
\begin{equation}\label{401taxi}
    \rOO_L
    = \frac{u^2-1}{u^2-zu-1} \left( t + \frac{(au+b)M}{u^2-1} \right) \frac{\check{m}_L(u)}{m_L(u)}.
\end{equation}

\begin{lem}
    Suppose that $f \in \kk[u]$ and that $r \in \N$ is odd and satisfies $r \ge \deg f$.  Then
    \begin{equation} \label{suite}
        \left[
            \frac{f(u)u}{u^2-1}
            \left(\,
                \begin{tikzpicture}[centerzero]
                    \draw (0,-0.5) -- (0,0.5);
                    \uptriforce[black]{0,0};
                \end{tikzpicture}
                -
                \begin{tikzpicture}[centerzero]
                    \draw (0,-0.5) -- (0,0.5);
                \end{tikzpicture}\,
            \right)
            +
            \frac{f(u^{-1}) u^r}{u^2-1}\,
            \begin{tikzpicture}[centerzero]
                \draw (0,-0.5) -- (0,0.5);
                \downtriforce[black]{0,0};
            \end{tikzpicture}\,
        \right]_{u^0}
        =
        \begin{tikzpicture}[centerzero]
            \draw (0,-0.5) -- (0,0.5);
            \multdot[black]{0,0}{west}{f(x)(x^{-1}+x^{-3} + \dotsb + x^{2-r})};
        \end{tikzpicture}
        .
    \end{equation}
\end{lem}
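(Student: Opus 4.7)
The plan is to apply \cref{blacktrick+} to convert each $u^0$-coefficient extraction on the left-hand side of \cref{suite} into an explicit polynomial-in-$x$ morphism, thereby reducing the identity to a Laurent-polynomial identity in $x$ that we then verify by linearity on monomials in $f$.

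We set $p_1(u) := f(u) u / (u^2-1)$ and $p_2(u) := f(u^{-1}) u^r / (u^2-1)$, both of which lie in $\kk\Laurent{u^{-1}}$. Applying \cref{blacktrick+} to the first and third summands on the left-hand side of \cref{suite}, and observing that the middle term just contributes $-[p_1(u)]_{u^0}\, \idstrand$, the left-hand side collapses to
\[
    \begin{tikzpicture}[centerzero]
        \draw (0,-0.3) -- (0,0.3);
        \multdot[black]{0,0}{west}{[p_1]_{\ge 0}(x) - [p_1(u)]_{u^0} + [p_2]_{\ge 0}(x^{-1})};
    \end{tikzpicture}
    \,.
\]
It therefore remains to establish the Laurent-polynomial identity
\[
    [p_1]_{\ge 0}(x) - [p_1(u)]_{u^0} + [p_2]_{\ge 0}(x^{-1}) = f(x) \left( x^{-1} + x^{-3} + \dotsb + x^{2-r} \right).
\]

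By $\kk$-linearity in $f$, it suffices to handle the monomial case $f(u) = u^i$ with $0 \le i \le r$. Using the geometric expansion $u^k / (u^2-1) = u^{k-2} + u^{k-4} + \dotsb$ at $u = \infty$, the positive-exponent part $[p_1]_{\ge 0}(x) - [p_1(u)]_{u^0}$ becomes a sum of strictly positive powers of $x$ descending from $x^{i-1}$ in steps of $-2$, while $[p_2]_{\ge 0}(x^{-1})$ becomes a sum of non-positive powers of $x$ ascending from $x^{i+2-r}$ in steps of $+2$. The main step---really just bookkeeping---is a short case analysis on the parity of $i$ to confirm that these two truncations meet at the correct breakpoint ($x^0$ when $i$ is odd, $x^{\pm 1}$ when $i$ is even) and together tile the arithmetic progression $x^{i-1}, x^{i-3}, \dotsc, x^{i+2-r}$ without gap or overlap---precisely the Laurent expansion of $x^i(x^{-1} + x^{-3} + \dotsb + x^{2-r})$. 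The hypotheses that $r$ is odd and $r \ge \deg f$ are exactly what guarantee this tiling is valid; degenerate boundary cases $i \in \{0, r-1\}$ where one truncation vanishes are handled directly.
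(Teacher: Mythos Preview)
Your proof is correct and follows essentially the same approach as the paper: reduce by linearity to monomials $f=u^n$, expand the geometric series, and read off the $u^0$-coefficient. The paper carries out the expansion directly without invoking \cref{blacktrick+}, whereas you use that lemma to first reduce to a Laurent-polynomial identity and then perform the parity case analysis; this is a slight repackaging, but the underlying computation is identical.
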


\begin{proof}
    By linearity, it suffices to consider the case $f = u^n$, $n \in N$.  In this case, letting $x$ denote the dot, the left-hand side of \cref{suite} is
    \begin{align*}
        &\left[
            \frac{u^{n-1}}{1-u^{-2}} \left( \frac{1}{1-xu^{-1}} - 1 \right)
            + \frac{u^{r-n-2}}{1-u^{-2}} \frac{1}{1-x^{-1}u^{-1}}
        \right]_{u^0}
        \\
        &\qquad \qquad =
        \left[
            u^{n-1}(1+u^{-2} + u^{-4} + \dotsb)(xu^{-1} + x^2 u^{-2} + \dotsb)
        \right.
        \\
        & \qquad \qquad \qquad \qquad
        \left.
            + u^{r-n-2} (1 + u^{-2} + u^{-4} + \dotsb) (1 + x^{-1}u^{-1} + x^{-2}u^{-2} + \dotsb)
        \right]_{u^0}
        \\
        &\qquad \qquad = x^{n-1} + x^{n-3} + \dotsb + x^{n+2-r},
    \end{align*}
    as desired.
\end{proof}

\begin{cor} \label{sofa}
    Suppose $f \in \kk[u] \cap J_L$, $r \in \N$, $r \ge \deg f$, and $r$ is odd.  Then
    \[
        \left[
            \frac{f(u^{-1}) u^r}{u^2-1}\,
            \begin{tikzpicture}[centerzero]
                \draw (0,-0.5) -- (0,0.5);
                \downtriforce[black]{0,0};
                \draw[module] (0.3,-0.5) -- (0.3,0.5);
            \end{tikzpicture}\,
        \right]_{u^0}
        =
        \left[
            \frac{f(u)u}{u^2-1}
            \left(\,
                \begin{tikzpicture}[centerzero]
                    \draw (0,-0.5) -- (0,0.5);
                \end{tikzpicture}
                -
                \begin{tikzpicture}[centerzero]
                    \draw (0,-0.5) -- (0,0.5);
                    \uptriforce[black]{0,0};
                \end{tikzpicture}
                \,
            \right)
            \begin{tikzpicture}[centerzero]
                \draw[module] (0,-0.5) \botlabel{L} -- (0,0.5);
            \end{tikzpicture}
        \right]_{u^0}
        .
    \]
\end{cor}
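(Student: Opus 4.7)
The plan is to deduce \cref{sofa} directly from \cref{suite} by attaching a module strand labelled $L$ on the right and invoking the hypothesis $f \in J_L$. First, I would evaluate both sides of \cref{suite} on $L$, i.e., post-compose with the identity on $L$ viewed as a natural transformation, so that the right-hand dot diagram becomes the action on $\go L$.

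Next, I would observe that the right-hand side of \cref{suite} is the action of the Laurent polynomial
\[
    f(x)\bigl(x^{-1} + x^{-3} + \dotsb + x^{2-r}\bigr)
\]
in the dot. Since $J_L$ is the kernel of the algebra homomorphism \cref{pear} (extended from $\kk[u,u^{-1}]$ to account for the invertibility of the dot), and $f \in J_L$ by hypothesis, any Laurent polynomial multiple of $f$ also lies in $J_L$. Consequently, $f(x)(x^{-1} + x^{-3} + \dotsb + x^{2-r})$ acts as zero on $\go L$, i.e., the right-hand side of \cref{suite} vanishes when evaluated on $L$.

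Finally, I would rearrange the resulting identity
\[
    \left[
        \frac{f(u)u}{u^2-1}
        \left(
            \begin{tikzpicture}[centerzero]
                \draw (0,-0.3) -- (0,0.3);
                \uptriforce[black]{0,0};
            \end{tikzpicture}
            -\
            \begin{tikzpicture}[centerzero]
                \draw (0,-0.3) -- (0,0.3);
            \end{tikzpicture}
        \right)
        + \frac{f(u^{-1})u^r}{u^2-1}\
        \begin{tikzpicture}[centerzero]
            \draw (0,-0.3) -- (0,0.3);
            \downtriforce[black]{0,0};
        \end{tikzpicture}
    \right]_{u^0}
    \begin{tikzpicture}[centerzero]
        \draw[module] (0,-0.3) \botlabel{L} -- (0,0.3);
    \end{tikzpicture}
    = 0
\]
by moving the first term to the right-hand side and flipping its sign, which yields exactly the claimed equality.

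There is no real obstacle here: the only point requiring minor care is that \cref{suite} is stated for polynomial $f$, but the multiplicand $x^{-1} + x^{-3} + \dotsb + x^{2-r}$ is genuinely a Laurent polynomial in $x$, so one must use that $J_L \subseteq \kk[u,u^{-1}]$ (not merely $\kk[u]$) is an ideal, which is immediate from the invertibility of the dot in $\AK$. The hypothesis that $r$ is odd and $r \ge \deg f$ is exactly what \cref{suite} requires, so it carries through unchanged.
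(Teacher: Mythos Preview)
Your proposal is correct and is exactly the deduction the paper intends: the corollary carries no explicit proof in the paper, and you have supplied the immediate argument---evaluate \cref{suite} on $L$, use $f \in J_L$ to kill the right-hand side (as $J_L$ is an ideal in $\kk[u,u^{-1}]$), and rearrange.
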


For monic $f \in \kk[u]$ with $f(0) \ne 0$, define
\begin{align} \label{metrobusKr}
    \rOO_f(u)
    &:=
    \begin{dcases}
        \left( \frac{tu^2 - zf(0) - t}{u^2-zu-1} \right) \frac{\check{f}(u)}{f(u)}
        & \text{if } \deg f \text{ is even},
        \\
        \left( \frac{tu^2 - zf(0)u -t}{u^2-zu-1} \right) \frac{\check{f}(u)}{f(u)}
        & \text{if } \deg f \text{ is odd}
    \end{dcases}
    \in t + u^{-1} \kk \llbracket u^{-1} \rrbracket,
    \\ \label{metrobusKl}
    \lOO_f(u)
    &:=
    \begin{dcases}
        \left( \frac{t^{-1}u^2 + zf(0)^{-1} - t^{-1}}{u^2+zu-1} \right) \frac{f(u)}{\check{f}(u)}
        & \text{if } \deg f \text{ is even},
        \\
        \left( \frac{t^{-1}u^2 + zf(0)^{-1}u - t^{-1}}{u^2+zu-1} \right) \frac{f(u)}{\check{f}(u)}
        & \text{if } \deg f \text{ is odd}
    \end{dcases}
    \in t^{-1} + u^{-1} \kk \llbracket u^{-1} \rrbracket.
\end{align}

\begin{prop} \label{sneeze}
    For monic $f \in \kk[u]$ with $f(0) \ne 0$, the following statements are equivalent:
    \begin{enumerate}
        \item \label{sneeze1} we have an equality of rational functions $\rOO_f(u^{-1}) = \lOO_f(u)$;
        \item \label{sneeze2} we have $z = t^{-1} f(0) - t f(0)^{-1}$ if $\deg f$ is even, and $f(0) = \pm t$ is $\deg f$ is odd;
        \item \label{sneeze3} we have $\rOO_f(u) \lOO_f(u) = 1$.
    \end{enumerate}
\end{prop}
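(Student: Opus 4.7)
The plan is to establish the two equivalences $(a) \iff (b)$ and $(b) \iff (c)$ by direct computation of rational functions; combining these yields the full three-way equivalence. The one algebraic ingredient I would use throughout is the identity
\[
    \frac{\check f(u^{-1})}{f(u^{-1})} = \frac{f(u)}{f(0)^2 \check f(u)},
\]
which follows immediately from \cref{beijing} by substituting $u \mapsto u^{-1}$ and rearranging.

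For $(a) \iff (b)$, I would substitute $u \mapsto u^{-1}$ in \cref{metrobusKr} and multiply the numerator and denominator of the quadratic-over-quadratic factor by $u^2$ to clear negative powers. Using the identity above, $\rOO_f(u^{-1})$ acquires the denominator $u^2 + zu - 1$ and a factor $f(u)/\check f(u)$, which exactly match the denominator and shape of $\lOO_f(u)$ in \cref{metrobusKl}. After cancelling these common factors, the equation $\rOO_f(u^{-1}) = \lOO_f(u)$ reduces to equality of two quadratic polynomials in $u$ that differ only by an overall scalar $f(0)^2$. Coefficient comparison in the even case collapses both the $u^2$ and constant-term comparisons to the single condition $zf(0) + t = t^{-1}f(0)^2$, equivalently $z = t^{-1}f(0) - tf(0)^{-1}$; in the odd case the $u$-coefficient matches automatically, while the $u^2$ and constant-term comparisons both yield $f(0)^2 = t^2$, i.e., $f(0) = \pm t$.

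For $(b) \iff (c)$, I would compute $\rOO_f(u)\lOO_f(u)$ directly. The factor $\tfrac{f(u)}{\check f(u)} \cdot \tfrac{\check f(u)}{f(u)} = 1$ cancels, so the product is a rational function with denominator $(u^2 - zu - 1)(u^2 + zu - 1) = u^4 - (2+z^2)u^2 + 1$. Setting the numerator equal to this polynomial and expanding, in the even case the numerator is $(tu^2 - zf(0) - t)(t^{-1}u^2 + zf(0)^{-1} - t^{-1})$: the $u^4$ coefficient is automatically $1$, and both the $u^2$ and constant-term coefficients yield (after using $z \in \kk^\times$) the condition $z = t^{-1}f(0) - tf(0)^{-1}$. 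In the odd case the numerator is $(tu^2 - zf(0)u - t)(t^{-1}u^2 + zf(0)^{-1}u - t^{-1})$: the $u^4$, $u^2$, and constant coefficients are all automatic, while the $u^3$ and $u$ coefficients are both nonzero scalar multiples of $tf(0)^{-1} - t^{-1}f(0)$, and thus vanish precisely when $f(0)^2 = t^2$.

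I do not foresee any serious obstacle; the argument is elementary polynomial algebra, and the main difficulty is careful bookkeeping in the two parallel cases (even vs.\ odd $\deg f$) and with the $u \mapsto u^{-1}$ substitutions. A useful sanity check is that in each comparison two independent coefficient equalities yield the same condition on $(z, t, f(0))$, reflecting an internal consistency of the definitions \cref{metrobusKr,metrobusKl}.
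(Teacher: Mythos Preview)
Your proposal is correct and follows essentially the same approach as the paper's own proof: both establish $(a)\Leftrightarrow(b)$ by substituting $u\mapsto u^{-1}$ in \cref{metrobusKr} (using the same identity from \cref{beijing}) and comparing coefficients of the resulting quadratics, and both establish $(b)\Leftrightarrow(c)$ by directly computing the product $\rOO_f(u)\lOO_f(u)$ and reducing to the same conditions on $(z,t,f(0))$. The only cosmetic difference is that the paper presents the product in the form $\frac{(u^2-1)^2 + z(u^2-1)(tf(0)^{-1}-t^{-1}f(0))u^\epsilon - z^2 u^{2\epsilon}}{(u^2-1)^2-z^2u^2}$ (with $\epsilon\in\{0,1\}$ according to parity) rather than expanding into powers of $u$, but the computation is the same.
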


\begin{proof}
    Let $d = \deg f$.  We first show that statements \cref{sneeze1} and \cref{sneeze2} are equivalent.  First, suppose that $d$ is even.  Then, using \cref{beijing}, we have
    \[
        \rOO_f(u) = \frac{tf(0)^{-1}u^2 - z - tf(0)^{-1}}{u^2-zu-1} \frac{f(u^{-1})}{f(u)} u^d,
        \quad
        \lOO_f(u) = \frac{t^{-1}f(0)u^2 + z - t^{-1}f(0)}{u^2+zu-1} \frac{f(u)}{f(u^{-1})} u^{-d},
    \]
    and the result follows easily.
    \details{
        We have
        \[
            \rOO_f(u^{-1}) = \frac{tf(0)^{-1}u^{-2} - z - tf(0)^{-1}}{u^{-2}-zu^{-1}-1} \frac{f(u)}{f(u^{-1})} u^{-d}
            = \frac{(z+tf(0)^{-1})u^2 - tf(0)^{-1}}{u^2+zu-1}.
        \]
    }
    Similarly, when $d$ is odd, we have
    \[
        \rOO_f(u) = \frac{tf(0)^{-1}u^2 - zu - tf(0)^{-1}}{u^2-zu-1} \frac{f(u^{-1})}{f(u)} u^d,
        \quad
        \lOO_f(u) = \frac{t^{-1}f(0)u^2 + z - t^{-1}f(0)}{u^2+zu-1} \frac{f(u)}{f(u^{-1})} u^{-d},
    \]
    and so
    \[
        \rOO_f(u^{-1}) = \lOO_f(u)
        \iff t f(0)^{-1} = t^{-1} f(0)
        \iff f(0) = \pm t,
    \]
    as desired.

    Now we show that statements \cref{sneeze2} and \cref{sneeze3} are equivalent.  We have
    \[
        \rOO_f(u)\lOO_f(u)
        =
        \begin{dcases}
            \frac{(u^2-1)^2 + z(u^2-1)(tf(0)^{-1}-t^{-1}f(0)) - z^2}{(u^2-1)^2-z^2u^2}
            & \text{if $d$ is even}, \\
            \frac{(u^2-1)^2 + z(u^2-1)(tf(0)^{-1}-t^{-1}f(0))u - z^2u^2}{(u^2-1)^2-z^2u^2}
            & \text{if $d$ is odd},
        \end{dcases}
   \]
    and the result follows from a straightforward computation.
\end{proof}

\begin{rem} \label{netherlands}
    Note that, for $q \in \kk^\times$, we have $z = q - q^{-1}$ if and only if $q$ and $-q^{-1}$ are the two roots of the quadratic polynomial $u^2-zu-1$.  If $f \in \kk[u]$ is even degree and satisfies the conditions of \cref{sneeze}, then we must have $z = q - q^{-1}$ for some $q \in \kk^\times$.  In this case, by the uniqueness of roots of the quadratic polynomial, we must have $f(0) = qt$ or $f(0) = -q^{-1}t$.
\end{rem}

Recall that $M = m_L(0)$, and that $d_L = \deg m_L$.

\begin{theo} \label{backhome}
    We have
    \[
        \rOO_L(u) = \rOO_{m_L}(u)
        \qquad \text{and} \qquad
        \lOO_L(u) = \lOO_{m_L}(u) = \rOO_{m_L}(u^{-1}).
    \]
    If $d_L$ is even, then $z = t^{-1} M - t M^{-1}$.  If $d_L$ is odd, then $M = \pm t$.
\end{theo}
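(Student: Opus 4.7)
The plan is to derive $\rOO_L(u) = \rOO_{m_L}(u)$ by pinning down the free parameter in \cref{401taxi}, and then to deduce the statements about $\lOO_L$ together with the parity conditions using \cref{infgrassk} and \cref{sneeze}. Starting from \cref{401taxi} we have
\[
    \rOO_L(u) = \frac{u^2-1}{u^2-zu-1}\left(t + \frac{(au+b)M}{u^2-1}\right)\frac{\check{m}_L(u)}{m_L(u)}.
\]
The leading coefficient of $\hat{m}_L(u)$ (computed as $-t^{-1}M$ in the proof of \cref{blackcdmx}) combined with $h(0) = tM^{-1}$ forces $b = tM^{-1} - t^{-1}M$ via the factorization $\hat{m}_L = h \cdot m_L$, but leaves $a$ undetermined.

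To determine $a$ and establish the parity dichotomy, the next step is to apply the curl relation \cref{blackcurl} with a single dot labeled $m_L(x)$, mirroring \cref{blackburrito} but without the $(x^2-1)$ factor. Since $m_L(x) L = 0$, by \cref{blacktrick} this yields $[m_L(u)\,C(u)]_{u^{0}} L = 0$, where $C(u)$ denotes the action on $\go L$ of the right-hand side of \cref{blackcurl}. Substituting the formula for $\rOO_L$ and using the identity $(tM^{-1}(u^2-1)+au+b)M = t(u^2-1) + M(au+b)$, the quantity $m_L(u)C(u)$ becomes
\[
    \frac{tu\check{m}_L(u)}{u-x^{-1}} \ + \ \frac{Mu(au+b)\check{m}_L(u)}{(u^2-1)(u-x^{-1})} \ - \ \frac{zu^2 x m_L(u)}{(u^2-1)(u-x)}.
\]
Expanding $(u-x^{\pm 1})^{-1}$ and $(u^2-1)^{-1}$ as Laurent series in $u^{-1}$ and extracting the coefficient of $u^0$, one observes that the first summand contributes $tM^{-1}x^{-D}m_L(x)$ (using $\check{m}_L(x^{-1}) = M^{-1}x^{-D}m_L(x)$), which vanishes modulo $J_L$. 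The remaining two terms produce an explicit Laurent polynomial $\varphi(x) \in \kk[x,x^{-1}]$ that depends linearly on the unknown $a$, and the equation reads $\varphi(x) L = 0$.

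The divisibility requirement $\varphi \in J_L$ is a parity-sensitive constraint: since $\varphi$ has degree at most $D$ in $x$ and $x^{-1}$ while $m_L$ has degree $D$, the constraint reduces to the vanishing of finitely many coefficients, and a direct check shows this is solvable only in two mutually exclusive cases, namely $(a,b) = (0,-z)$ with $\deg m_L$ even (forcing $b = -z$, i.e., $z = t^{-1}M - tM^{-1}$) or $(a,b) = (-z,0)$ with $\deg m_L$ odd (forcing $b = 0$, i.e., $M^2 = t^2$). In either case comparison with \cref{metrobusKr} gives $\rOO_L = \rOO_{m_L}$. For the remaining claims: by \cref{infgrassk} applied on $L$ we have $\rOO_L(u)\lOO_L(u) = 1$; the parity conditions just derived are precisely condition (b) of \cref{sneeze} applied to $f = m_L$, so by that proposition conditions (a) and (c) also hold, giving $\rOO_{m_L}(u)\lOO_{m_L}(u) = 1$ and $\rOO_{m_L}(u^{-1}) = \lOO_{m_L}(u)$, and hence $\lOO_L = \lOO_{m_L} = \rOO_{m_L}(u^{-1})$. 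The main obstacle is the bookkeeping in the middle step: unlike the Brauer case where only one $(u-x)^{-1}$ factor appears, here both $(u-x)^{-1}$ and $(u-x^{-1})^{-1}$ occur alongside the $(u^2-1)^{-1}$ pole, and isolating the nontrivial part of $\varphi$ that pins down $a$ and forces the parity dichotomy requires careful organization, e.g., via partial fractions or by using \cref{sofa} to rewrite the $(u-x^{-1})^{-1}$ contribution.
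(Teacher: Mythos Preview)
Your plan is essentially the paper's: both start from \cref{401taxi}, apply the curl relation \cref{blackcurl} with a single dot labeled $m_L(x)$, and use \cref{sofa} to convert the $(u-x^{-1})^{-1}$ contribution into a divisibility constraint that pins down $a$ and $b$ by parity of $d = \deg m_L$; the remaining assertions then follow from \cref{infgrassk} and \cref{sneeze}. Your observation that the leading coefficient of $\hat{m}_L$ (namely $-t^{-1}M$, from \cref{blackcdmx}) already forces $b = tM^{-1} - t^{-1}M$ is correct and is a streamlining the paper does not make; the paper instead treats $b$ as an unknown throughout the curl computation and only extracts the parity conditions on $M$ at the very end via \cref{sneeze}.

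There is one genuine gap in your sketch: your claim that the divisibility constraint $\varphi \in J_L$ forces $(a,b) = (-z,0)$ for \emph{all} odd $d$ is too optimistic. When you carry out the computation (as the paper does via \cref{sofa}), for odd $d$ the argument yields $a = -z$ always, but $b = 0$ only when $d > 1$; when $d = 1$ the resulting polynomial has degree too low for the constraint to see $b$. The paper handles $d = 1$ by a separate short argument: since the dot acts by the scalar $M$ on $\gok L$, one combines \cref{chicken}, \cref{kauffdot}, and \cref{drops} to get $M = M^{-1}t^2$, hence $M = \pm t$, and then computes $\rOO_L$ directly from \cref{eatzk}. You will need either this argument or some substitute (for instance, your formula $b = tM^{-1} - t^{-1}M$ together with $a = -z$ and the constraint $\rOO_L \lOO_L = 1$ from \cref{infgrassk} does force $b = 0$, but you have not written this out).
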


\begin{proof}
    The statements $\rOO_L = \rOO_{m_L}$ and $\lOO_L = \lOO_{m_L}$ are equivalent by \cref{swap}.
    The other assertions of the theorem follow from \cref{sneeze}.  Therefore, we need only prove that $\rOO_L = \rOO_{m_L}$.

    Note that
    \begin{multline*}
        0
        =
        \begin{tikzpicture}[centerzero]
            \draw (0.3,0.2) to[out=0,in=up] (0.45,0) to[out=down,in=0] (0.3,-0.2) to[out=180,in=down] (0,0.5);
            \draw[wipe] (0,-0.5) to[out=up,in=180] (0.3,0.2);
            \draw (0,-0.5) to[out=up,in=180] (0.3,0.2);
            \multdot[black]{0.45,0}{west}{m_L(x)};
            \draw[module] (1.7,-0.5) \botlabel{L} -- (1.7,0.5);
        \end{tikzpicture}
        \overset{\cref{blacktrick}}{=}
        \left[
            m_L(u)\
            \begin{tikzpicture}[centerzero]
                \draw (0.3,0.2) to[out=0,in=up] (0.45,0) to[out=down,in=0] (0.3,-0.2) to[out=180,in=down] (0,0.5);
                \draw[wipe] (0,-0.5) to[out=up,in=180] (0.3,0.2);
                \draw (0,-0.5) to[out=up,in=180] (0.3,0.2);
                \uptriforce[black]{0.45,0};
                \draw[module] (0.8,-0.5) \botlabel{L} -- (0.8,0.5);
            \end{tikzpicture}
        \right]_{u^{0}}
        \\
        \overset{\cref{blackcurl}}{=}
        \left[
            \frac{(u^2-zu-1)m_L(u)}{u^2-1}\
            \begin{tikzpicture}[centerzero]
                \draw (0,-0.5) -- (0,0.5);
                \downtriforce[black]{0,0};
                \bubgenr[black]{u}{0.4,0};
                \draw[module] (0.8,-0.5) \botlabel{L} -- (0.8,0.5);
            \end{tikzpicture}
            + \frac{zu^2m_L(u)}{u^2-1}
            \left(\,
                \begin{tikzpicture}[centerzero]
                    \draw (0,-0.5) -- (0,0.5);
                \end{tikzpicture}
                -
                \begin{tikzpicture}[centerzero]
                    \draw (0,-0.5) -- (0,0.5);
                    \uptriforce[black]{0,0};
                \end{tikzpicture}
            \right)
            \begin{tikzpicture}[centerzero]
                \draw[module] (0,-0.5) \botlabel{L} -- (0,0.5);
            \end{tikzpicture}
        \right]_{u^{0}}
        \\
        \overset{\cref{401taxi}}{\underset{\cref{beijing}}{=}}
        \left[
            \left( t \check{m}_L(u) + \frac{(au+b)m_L(u^{-1})u^{d_L}}{u^2-1} \right)\
            \begin{tikzpicture}[centerzero]
                \draw (0,-0.5) -- (0,0.5);
                \downtriforce[black]{0,0};
                \draw[module] (0.3,-0.5) \botlabel{L} -- (0.3,0.5);
            \end{tikzpicture}
            \ +\frac{z u^2 m_L(u)}{u^2-1}\
            \left(\,
                \begin{tikzpicture}[centerzero]
                    \draw (0,-0.5) -- (0,0.5);
                \end{tikzpicture}
                -
                \begin{tikzpicture}[centerzero]
                    \draw (0,-0.5) -- (0,0.5);
                    \uptriforce[black]{0,0};
                \end{tikzpicture}
            \right)
            \begin{tikzpicture}[centerzero]
                \draw[module] (0,-0.5) \botlabel{L} -- (0,0.5);
            \end{tikzpicture}
        \right]_{u^{0}}
        \\
        \overset{\cref{blacktrick}}{=}
        \left[
            \frac{(au+b)m_L(u^{-1})u^{d_L}}{u^2-1}\
            \begin{tikzpicture}[centerzero]
                \draw (0,-0.5) -- (0,0.5);
                \downtriforce[black]{0,0};
                \draw[module] (0.3,-0.5) \botlabel{L} -- (0.3,0.5);
            \end{tikzpicture}
            \ +\frac{z u^2 m_L(u)}{u^2-1}\
            \left(\,
                \begin{tikzpicture}[centerzero]
                    \draw (0,-0.5) -- (0,0.5);
                \end{tikzpicture}
                -
                \begin{tikzpicture}[centerzero]
                    \draw (0,-0.5) -- (0,0.5);
                    \uptriforce[black]{0,0};
                \end{tikzpicture}
            \right)
            \begin{tikzpicture}[centerzero]
                \draw[module] (0,-0.5) \botlabel{L} -- (0,0.5);
            \end{tikzpicture}
        \right]_{u^{0}},
    \end{multline*}
    where, in the last equality, we used the fact that $\check{m}_L(u)$ is the minimal polynomial of the inverse dot.

    First consider the case where $d_L$ is even.  Then, we can apply \cref{sofa} with $f(u)=am_L(u)$, $r=d_L+1$, and with $f(u) = bu m_L(u)$, $r=d_L+1$, to get
    \begin{multline} \label{question}
        0
        =
        \left[
            \frac{(au+b)m_L(u^{-1})u^{d_L}}{u^2-1}\
            \begin{tikzpicture}[centerzero]
                \draw (0,-0.5) -- (0,0.5);
                \downtriforce[black]{0,0};
                \draw[module] (0.3,-0.5) \botlabel{L} -- (0.3,0.5);
            \end{tikzpicture}
            \ +\frac{z u^2 m_L(u)}{u^2-1}\
            \left(\,
                \begin{tikzpicture}[centerzero]
                    \draw (0,-0.5) -- (0,0.5);
                \end{tikzpicture}
                -
                \begin{tikzpicture}[centerzero]
                    \draw (0,-0.5) -- (0,0.5);
                    \uptriforce[black]{0,0};
                \end{tikzpicture}
            \right)
            \begin{tikzpicture}[centerzero]
                \draw[module] (0,-0.5) \botlabel{L} -- (0,0.5);
            \end{tikzpicture}
        \right]_{u^{0}}
        \\
        = -
        \left[
            \frac{(zu^2+bu^2+au)m_L(u)}{u^2-1}\
            \left(\,
                \begin{tikzpicture}[centerzero]
                    \draw (0,-0.5) -- (0,0.5);
                    \uptriforce[black]{0,0};
                \end{tikzpicture}
                -
                \begin{tikzpicture}[centerzero]
                    \draw (0,-0.5) -- (0,0.5);
                \end{tikzpicture}
            \right)
            \begin{tikzpicture}[centerzero]
                \draw[module] (0,-0.5) \botlabel{L} -- (0,0.5);
            \end{tikzpicture}
        \right]_{u^{0}}
        \overset{\cref{blacktrick+}}{=}
        \begin{tikzpicture}[centerzero]
            \draw (0,-0.5) -- (0,0.5);
            \multdot[black]{0,0}{east}{g(x)};
            \draw[module] (0.3,-0.5) \botlabel{L} -- (0.3,0.5);
        \end{tikzpicture},
    \end{multline}
    where $g(u) = -\left[ \frac{(zu^2+bu^2+au)m_L(u)}{u^2-1} \right]_{u^{\ge 1}}$.  We take the strictly positive powers of $u$, instead of the nonnegative powers of $u$, because of the subtraction of the identity strand in the penultimate expression above.  Note that $\deg g \leq d_L$, and its constant term is zero.  Since \cref{question} implies that $g \in J_L$, it follows that $u^{-1}g(u) \in \kk[u] \cap J_L$.  This contradicts the definition of $m_L$ unless $g=0$.  Therefore,
    \begin{equation}\label{answer}
        \frac{(zu^2+bu^2+au)m_L(u)}{u^2-1}
        = m_L(u) \left( (z+b) + au^{-1} + (z+b)u^{-2} + au^{-3} + \dotsb \right)
        \in \kk\llbracket u^{-1}\rrbracket.
    \end{equation}
    Since $d_L$ is even, we must have $d_L \geq 2$, and so the vanishing of the $u^{d_L}$ and $u^{d_L-1}$ terms of \cref{answer} requires that $b=-z$, $a=0$, since the entire series vanishes. This implies that $\rOO_L = \rOO_{m_L}$ when $d_L$ is even.

    On the other hand, if $d_L$ is odd, we can apply \cref{sofa} with $f(u)=bm_L(u)$, $r=d_L$, and with $f(u)= a u m_L(u)$, $r=d_L+2$, giving us
    \begin{multline*}
        0
        =
        \left[
            \frac{(au+b)m_L(u^{-1})u^{d_L}}{u^2-1}\
            \begin{tikzpicture}[centerzero]
                \draw (0,-0.5) -- (0,0.5);
                \downtriforce[black]{0,0};
                \draw[module] (0.3,-0.5) \botlabel{L} -- (0.3,0.5);
            \end{tikzpicture}
            \ +\frac{m_L(u)(zu^2)}{u^2-1}\
            \left(\,
                \begin{tikzpicture}[centerzero]
                    \draw (0,-0.5) -- (0,0.5);
                \end{tikzpicture}
                -
                \begin{tikzpicture}[centerzero]
                    \draw (0,-0.5) -- (0,0.5);
                    \uptriforce[black]{0,0};
                \end{tikzpicture}
            \right)
            \begin{tikzpicture}[centerzero]
                \draw[module] (0,-0.5) \botlabel{L} -- (0,0.5);
            \end{tikzpicture}
        \right]_{u^{0}}\\
        =
        \left[
            \frac{m_L(u)(zu^2+au^2+bu)}{u^2-1}\
            \left(\,
                \begin{tikzpicture}[centerzero]
                    \draw (0,-0.5) -- (0,0.5);
                \end{tikzpicture}
                -
                \begin{tikzpicture}[centerzero]
                    \draw (0,-0.5) -- (0,0.5);
                    \uptriforce[black]{0,0};
                \end{tikzpicture}
            \right)
            \begin{tikzpicture}[centerzero]
                \draw[module] (0,-0.5) \botlabel{L} -- (0,0.5);
            \end{tikzpicture}
        \right]_{u^{0}}
        \overset{\cref{blacktrick+}}{=}
        \begin{tikzpicture}[centerzero]
            \draw (0,-0.5) -- (0,0.5);
            \multdot[black]{0,0}{east}{g(x)};
            \draw[module] (0.3,-0.5) \botlabel{L} -- (0.3,0.5);
        \end{tikzpicture},
    \end{multline*}
    where $g(u) = -\left[ \frac{(zu^2+au^2+bu)m_L(u)}{u^2-1} \right]_{u^{\ge 1}} $. An argument analogous to \cref{answer} shows that we must have $a=-z$, and, if $d_L>1$, that $b=0$.  This implies that $\rOO_L = \rOO_{m_L}$ when $d_L > 1$ and $d_L$ is odd.

    It remains to consider the case $d_L = 1$, when we have $m_L(u) = u - M$.  First note that
    \[
        M\
        \begin{tikzpicture}[centerzero]
            \draw (0,-0.5) -- (0,0.5);
            \draw[module] (0.3,-0.5) \botlabel{L} -- (0.3,0.5);
        \end{tikzpicture}
        =
        \begin{tikzpicture}[centerzero]
            \draw (0,-0.5) -- (0,0.5);
            \singdot[black]{0,0};
            \draw[module] (0.3,-0.5) \botlabel{L} -- (0.3,0.5);
        \end{tikzpicture}
        \overset{\cref{chicken}}{=} t\
        \begin{tikzpicture}[centerzero]
            \draw (0,-0.5) to[out=up,in=180] (0.3,0.2) to[out=0,in=up] (0.45,0) to[out=down,in=0] (0.3,-0.2);
            \draw[wipe] (0.3,-0.2) to[out=180,in=down] (0,0.5);
            \draw (0.3,-0.2) to[out=180,in=down] (0,0.5);
            \singdot[black]{0.01,-0.3};
            \draw[module] (0.7,-0.5) \botlabel{L} -- (0.7,0.5);
        \end{tikzpicture}
        \overset{\cref{kauffdot}}{=} t\
        \begin{tikzpicture}[centerzero]
            \draw (0.3,0.2) to[out=0,in=up] (0.45,0) to[out=down,in=0] (0.3,-0.2) to[out=180,in=down] (0,0.5);
            \draw[wipe] (0,-0.5) to[out=up,in=180] (0.3,0.2);
            \draw (0,-0.5) to[out=up,in=180] (0.3,0.2);
            \multdot[black]{0.45,0}{west}{-1};
            \draw[module] (1.1,-0.5) \botlabel{L} -- (1.1,0.5);
        \end{tikzpicture}
        \overset{\cref{drops}}{=} M^{-1} t^2\
        \begin{tikzpicture}[centerzero]
            \draw (0,-0.5) -- (0,0.5);
            \draw[module] (0.3,-0.5) \botlabel{L} -- (0.3,0.5);
        \end{tikzpicture}
        \, .
    \]
    Thus, $M = \pm t$.  We can then compute directly
    \begin{multline*}
        \rOO_L(u)
        \overset{\cref{alipay}}{=}
        \begin{tikzpicture}[centerzero]
            \bubgenr[black]{u}{0,0};
            \draw[module] (0.4,-0.4) \botlabel{L} -- (0.4,0.4);
        \end{tikzpicture}
        \overset{\cref{eatzk}}{\underset{\cref{skein}}{=}}
        \frac{(t^{-1}-z)u^2 - t^{-1}}{u^2-zu-1} + \frac{zu(u^2-1)}{(u^2-zu-1)(u-M)}
        \left( \frac{t-t^{-1}}{z} + 1 \right)
        \\
        =
        \left( \frac{tu^2 \pm ztu - t}{u^2-zu-1} \right) \frac{u \mp t^{-1}}{u \mp t},
    \end{multline*}
    which matches $\rOO_{m_L}$.
\end{proof}

\begin{cor} \label{backtri}
    We have
    \begin{equation}\label{backtri1}
        \begin{tikzpicture}[centerzero]
            \uptribubr[black]{-0.5,0};
            \draw[module] (0,-0.4) \botlabel{L} -- (0,0.4);
        \end{tikzpicture}
        =
        \begin{dcases}
            \frac{u^2}{u^2-1}- t^{-1}z^{-1} +\left(tz^{-1}-\frac{M}{u^2-1}\right) \frac{\check{m}_L(u)}{m_L(u)} & \text{if } d_L \text{ is even}, \\
            \frac{u^2}{u^2-1}- t^{-1}z^{-1} + \left(tz^{-1}-\frac{uM}{u^2-1}\right) \frac{\check{m}_L(u)}{m_L(u)} & \text{if } d_L \text{ is odd}.
    	\end{dcases}
    \end{equation}
    and
    \begin{equation}\label{backtri2}
        \begin{tikzpicture}[centerzero]
            \uptribubl[black]{-0.5,0};
            \draw[module] (0,-0.4) \botlabel{L} -- (0,0.4);
        \end{tikzpicture}
        =
        \begin{dcases}
            \frac{u^2}{u^2-1}+tz^{-1}-\left(t^{-1}z^{-1}+\frac{M^{-1}}{u^2-1}\right) \frac{m_L(u)}{\check{m}_L(u)} & \text{if } d_L \text{ is even}, \\
            \frac{u^2}{u^2-1}+tz^{-1}-\left(t^{-1}z^{-1}+\frac{uM^{-1}}{u^2-1}\right) \frac{m_L(u)}{\check{m}_L(u)} & \text{if } d_L \text{ is odd}.
    	\end{dcases}
    \end{equation}
\end{cor}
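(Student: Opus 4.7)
The plan is to rearrange the defining identities \cref{eatzk} to express $\uptribubbler[black]$ and $\uptribubblel[black]$ in terms of $\bubblegenr[black]{u}$ and $\bubblegenl[black]{u}$ respectively, then evaluate on $L$ using the explicit formulas for $\rOO_L = \rOO_{m_L}$ and $\lOO_L = \lOO_{m_L}$ from \cref{backhome}, and finally perform partial fraction simplifications. Since the formulas in \cref{eatzk} are linear in $\uptribubbler[black]$ and $\uptribubblel[black]$, the first step inverts trivially to
\[
    \uptribubbler[black]
    = \frac{u^2-zu-1}{z(u^2-1)} \bubblegenr[black]{u} - \frac{(t^{-1}-z)u^2-t^{-1}}{z(u^2-1)}
    \, ,\qquad
    \uptribubblel[black]
    = -\frac{u^2+zu-1}{z(u^2-1)} \bubblegenl[black]{u} + \frac{(t+z)u^2-t}{z(u^2-1)}\, .
\]

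Acting on $L$ and applying \cref{backhome}, each right-hand side becomes a rational function. For \cref{backtri1}, in the even-degree case the factor $u^2-zu-1$ in the numerator cancels against the denominator of $\rOO_{m_L}$ from \cref{metrobusKr}, leaving
\[
    \begin{tikzpicture}[centerzero]
        \uptribubr[black]{-0.5,0};
        \draw[module] (0,-0.4) \botlabel{L} -- (0,0.4);
    \end{tikzpicture}
    = \frac{tu^2 - zM - t}{z(u^2-1)} \cdot \frac{\check{m}_L(u)}{m_L(u)} - \frac{(t^{-1}-z)u^2-t^{-1}}{z(u^2-1)}.
\]
The two partial fraction identities
\[
    \frac{tu^2-zM-t}{z(u^2-1)} = \frac{t}{z} - \frac{M}{u^2-1}
    \, ,\qquad
    \frac{(t^{-1}-z)u^2-t^{-1}}{z(u^2-1)} = \frac{t^{-1}}{z} - \frac{u^2}{u^2-1}
\]
then immediately yield \cref{backtri1} in the even-degree case. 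In the odd-degree case, the only change is that $zM$ becomes $zMu$ in $\rOO_{m_L}$, and the analogous decomposition gives $\frac{t}{z}-\frac{Mu}{u^2-1}$, producing the second line of \cref{backtri1}.

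The argument for \cref{backtri2} is entirely parallel, using the second formula of \cref{eatzk} and the explicit formula \cref{metrobusKl} for $\lOO_{m_L}$. The denominator factor $u^2+zu-1$ cancels, and the identities
\[
    \frac{t^{-1}u^2+zM^{-1}-t^{-1}}{z(u^2-1)} = \frac{t^{-1}}{z}+\frac{M^{-1}}{u^2-1}
    \, ,\qquad
    \frac{(t+z)u^2-t}{z(u^2-1)} = \frac{t}{z}+\frac{u^2}{u^2-1}
\]
(together with the analogous odd-degree variant with $M^{-1}u$ in place of $M^{-1}$) deliver the two cases of \cref{backtri2}. Alternatively, \cref{backtri2} can be deduced directly from \cref{backtri1} by applying the bar involution $\beta$ of \cref{swap}, which swaps $\uptribubr[black]$ with $\uptribubl[black]$, sends $(z,t,M)$ to $(-z,t^{-1},M^{-1})$, and interchanges $m_L$ with $\check{m}_L$; no step here is substantive and the main ``obstacle'' is simply bookkeeping the parity cases in $\deg m_L$ and tracking the factor $u$ that distinguishes them.
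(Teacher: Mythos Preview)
Your proof is correct and follows essentially the same approach as the paper: invert the defining relations \cref{eatzk} to express $\uptribubbler[black]$ and $\uptribubblel[black]$ in terms of $\bubblegenr[black]{u}$ and $\bubblegenl[black]{u}$, then substitute the formulas for $\rOO_{m_L}$ and $\lOO_{m_L}$ from \cref{backhome}. The paper's writeup is terser (it pre-simplifies the constant term before quoting the inverted formulas and then simply says the result follows from \cref{backhome}), whereas you spell out the partial-fraction bookkeeping explicitly; the content is the same.
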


\begin{proof}
    By \cref{eatzk}, we have
    \begin{gather*}
        \uptribubbler[black]
        =
        \left( \frac{u^2}{u^2-1}- t^{-1}z^{-1} \right) 1_\one + \frac{u^2-zu-1}{z(u^2-1)}\, \bubblegenr[black]{u}
        \, ,
        \\
        \uptribubblel[black]
        = \left( \frac{u^2}{u^2-1}+tz^{-1} \right) 1_\one - \frac{u^2+zu-1}{z(u^2-1)}\, \bubblegenl[black]{u}\, .
    \end{gather*}
    Thus, \cref{backtri1,backtri2} follow from \cref{backhome}.
\end{proof}

Fix $q$ such that $z=q-q^{-1}$ (extending $\kk$ if needed). Since $z \neq 0$, we have $q\neq \pm 1$.  For a polynomial $f \in \kk[u]$ satisfying the conditions of \cref{sneeze}, define
\begin{equation} \label{what-is-epsilon}
    (\epsilon_1(f),\epsilon_2(f))
    =
    \begin{cases}
		(1,1) & \text{if } \deg f \text{ is even and } f(0) = qt, \\
		(-1,-1) & \text{if } \deg f \text{ is even and } f(0) = -q^{-1}t, \\	
		(1,-1) & \text{if } \deg f \text{ is odd and } f(0) = t, \\
		(-1,1) & \text{if } \deg f \text{ is odd and } f(0) = -t. \\
	\end{cases}
\end{equation}
One of these cases must hold by \cref{netherlands}.  Note that, in each of these cases, we have
\begin{equation}\label{f-degree}
	\deg f\equiv\frac{\epsilon_1(f)+\epsilon_2(f)}{2}+1\pmod 2 \qquad f(0) = \epsilon_1(f) q^{\frac{\epsilon_1(f)+\epsilon_2(f)}{2}}t.
\end{equation}
Since $q \ne \pm 1$, the pair $(\epsilon_1(f),\epsilon_2(f))$ is uniquely determined by the second equation in \cref{f-degree}.  We can then rewrite \cref{metrobusKr,metrobusKl} as
\begin{equation}\label{metrobusKr-1}
    \rOO_f(u) = t \frac{\left( u-q^{\epsilon_1(f)} \right) \left( u+q^{\epsilon_2(f)} \right)}{(u-q)(u+q^{-1})}\frac{\check{f}(u)}{f(u)}, \qquad
    \lOO_f(u) = t^{-1}\frac{\left( u-q^{-\epsilon_1(f)} \right) \left( u+q^{-\epsilon_2(f)} \right)}{(u+q)(u-q^{-1})}\frac{f(u)}{\check{f}(u)}.
\end{equation}
\details{
    When $\deg f$ is even, we have $\epsilon_1(f) = \epsilon_2(f)$, and so \cref{metrobusKr,metrobusKl} become
    \begin{gather*}
        \rOO_f(u) = t \frac{u^2-1-zt^{-1}F}{u^2-zu-1} \frac{\check{f}(u)}{f(u)}
        = t \frac{u^2-1- \epsilon_1(f) \left( q^{\epsilon_1(f)+1} - q^{\epsilon_1(f)-1} \right)}{(u-q)(u+q^{-1})} \frac{\check{f}(u)}{f(u)},
        \\
        \lOO_f(u) = t^{-1} \frac{u^2-1 + ztF^{-1}}{u^2+zu-1} \frac{\check{f}(u)}{f(u)}
        = t^{-1} \frac{u^2-1 + \epsilon_1(f) \left( q^{-\epsilon_1(f)+1} - q^{-\epsilon_1(f)-1} \right)}{(u+q)(u-q^{-1})} \frac{\check{f}(u)}{f(u)}.
    \end{gather*}
    When $\epsilon_1(f) = 1$, we have
    \begin{gather*}
        u^2 - 1 - \epsilon_1(f) \left( q^{\epsilon_1(f)+1} - q^{\epsilon_1(f)-1} \right)
        = u^2 - 1 - (q^2-1) = (u-q)(u+q),
        \\
        u^2 - 1 + \epsilon_1(f) \left( q^{-\epsilon_1(f)+1} - q^{-\epsilon_1(f)-1} \right)
        = u^2 - 1 + (1-q^{-2}) = (u-q^{-1})(u+q^{-1}),
    \end{gather*}
    as desired.  Similarly, when $\epsilon_1(f) = -1$, we have
    \begin{gather*}
        u^2 - 1 - \epsilon_1(f) \left( q^{\epsilon_1(f)+1} - q^{\epsilon_1(f)-1} \right)
        = u^2 - 1 + (1-q^{-2}) = (u-q^{-1})(u+q^{-1}),
        \\
        u^2 - 1 + \epsilon_1(f) \left( q^{-\epsilon_1(f)+1} - q^{-\epsilon_1(f)-1} \right)
        = u^2 - 1 - (q^2-1)
        = (u-q)(u+q).
    \end{gather*}

    When $\deg f$ is odd, we have $\epsilon_1(f) = -\epsilon_2(f)$, and so
    \begin{gather*}
        \rOO_f(u) = t \frac{u^2-1-zt^{-1}Fu}{u^2-zu-1} \frac{\check{f}(u)}{f(u)}
        = t \frac{u^2-1 - \epsilon_1(f)(q - q^{-1})u}{(u-q)(u+q^{-1})} \frac{\check{f}(u)}{f(u)},
        \\
        \lOO_f(u) = t^{-1} \frac{u^2-1 + ztF^{-1}u}{u^2+zu-1} \frac{\check{f}(u)}{f(u)}
        =  t^{-1} \frac{u^2-1 + \epsilon_1(f) (q-q^{-1})u}{(u+q)(u-q^{-1})} \frac{\check{f}(u)}{f(u)}
    \end{gather*}
    When $\epsilon_1(f) = 1$, we have
    \begin{gather*}
        u^2-1 - \epsilon_1(f)(q - q^{-1})u = u^2 - 1 - qu + q^{-1}u = (u-q)(u+q^{-1}),
        \\
        u^2-1 + \epsilon_1(f) (q-q^{-1})u = u^2 - 1 + qu - q^{-1}u = (u-q^{-1})(u+q),
    \end{gather*}
    as desired.  Similarly, when $\epsilon_1(f)=-1$, we have
    \begin{gather*}
        u^2-1 - \epsilon_1(f)(q - q^{-1}) = u^2 - 1 + q - q^{-1} = (u-q^{-1})(u+q),
        \\
        u^2-1 + \epsilon_1(f) (q-q^{-1})u = u^2 - 1 -qu + q^{-1}u = (u-q)(u+q^{-1}).
    \end{gather*}
}
Note that the equality $\rOO_{f}(u^{-1}) = \lOO_{f}(u)$ means that when we expand $\rOO_{f}(u^{-1})$ as a Taylor series at $u=\infty$, the constant term is $t$, whereas at $u=0$, it is $t^{-1}$.

\subsection{Admissibility in the Kauffman case\label{sec:admissibleKauffman}}

In this section, we explain how admissibility conditions for cyclotomic BMW algebras can be deduced from our results.  This analysis is very similar to that from \cref{sec:admissibleBrauer}, which considered the degenerate case.  Therefore, we will be brief here.  Although different notions of admissibility were originally introduced by Rui and Xu \cite{RX09} and by Wilcox and Yu \cite{WY11}, these were shown to be equivalent in \cite[Th.~4.4]{Goo10}.  We therefore restrict our attention to the conditions from \cite{RX09}, which are also used in \cite{GRS22}.  Throughout, we will use \cref{glass} to move between our definition of the affine Kauffman category and that in \cite{GRS22}.

Fix an $\AK$-module category $\mathbf{R} \colon \AK \to \cEnd_\kk(\cR)$.  As explained in \cref{sec:admissibleBrauer}, $\cR$ defines a ring homomorphism from $\End_\AB(\one)$ to the center $Z_V$ of the endomorphism algebra of $V$.  For $V$ an object in an $\AK$-module category $\cR$, let
\[
    \Omega_V = \left( \omega^V_r \right)_{r \in \Z},\qquad
    \omega^V_r
    :=
    \begin{tikzpicture}[centerzero]
        \multbubr[black]{0,0}{r};
        \draw[module] (0.7,-0.5) \botlabel{V} -- (0.7,0.5);
    \end{tikzpicture}
    \in Z_V,\qquad r \in \Z.
\]

For a sequence $\Omega = (\omega_r)_{r \in \Z}$ in $\kk$ with $\omega_0 = \frac{t-t^{-1}}{z}+1$, define
\[
    \Omega^{\ge 0} = \left( \omega_r \right)_{r \ge 0}
    \qquad \text{and} \qquad
    \Omega^{\le 0} = \left( \omega_r \right)_{r \le 0}.
\]
Then, inspired by \cref{eatzk}, we define
\[
    \begin{gathered}
        \rOO_\Omega(u) := \frac{(t^{-1}-z)u^2 - t^{-1}}{u^2-zu-1} + \frac{z(u^2-1)}{u^2-zu-1} \Omega^{\ge 0}
        \in t 1_\one + u^{-1} \kk \llbracket u^{-1} \rrbracket,
        \\
        \lOO_\Omega(u) := \frac{(t+z)u^2-t}{u^2+zu-1} - \frac{z(u^2-1)}{u^2+zu-1} \Omega^{\le 0}
        \in t^{-1} 1_\one + u^{-1} \kk \llbracket u^{-1} \rrbracket.
    \end{gathered}
\]
Suppose $m \in \kk[u]$ is monic, has nonzero constant term, and
\[
    m(u) = \prod_{a=1}^d (u-a_i)
\]
in the algebraic closure of $\kk$.

In \cite[Def.~2.19]{RX09} and \cite[Def.~1.7]{GRS22}, the data $(m,\Omega)$ is called \emph{admissible} if it satisfies two conditions:
\begin{enumerate}
    \item \label{fishy} The condition \cite[Def.~1.7(1)]{GRS22} is equivalent to the statement that
        \begin{equation} \label{narita}
            \rOO_\Omega \lOO_\Omega = 1,
        \end{equation}
        which holds for the sequence $\Omega_V$ by  \cref{infgrassk}.
    \item When $\Omega = \Omega_V$, the  condition \cite[Def.~1.7(2)]{GRS22} is equivalent to the statement
        \[
            \begin{tikzpicture}[centerzero]
                \multbubr[black]{0,0}{x^n m(x)};
                \draw[module] (1.5,-0.4) \botlabel{V} -- (1.5,0.4);
            \end{tikzpicture}
            = 0
            \qquad \text{for all } n \in \Z.
        \]
        This holds automatically if
        \[
            \begin{tikzpicture}[centerzero]
                \draw (0,-0.4) -- (0,0.4);
                \multdot[black]{0,0}{east}{m(x)};
                \draw[module] (0.3,-0.4) \botlabel{V} -- (0.3,0.4);
            \end{tikzpicture}
            = 0.
        \]
\end{enumerate}

The condition \cite[Assumption~1.9]{GRS22} (see also \cite[Def.~2.20(2)]{RX09}) on the sequence $\ba = (a_1,a_2,\dotsc,a_{d})$ (denoted $\mathbf{u}$ in \cite{GRS22}) is, in our language,
\begin{equation} \label{battery}
    m(0)
    = \prod_{i=1}^{d}(-a_i)
    =
    \begin{cases}
        \pm t & \text{if } \deg m \text{ is odd}, \\
        qt \text{ or } -q^{-1}t & \text{if } \deg m \text{ is even},
    \end{cases}
\end{equation}
where, in the second case, $z = q - q^{-1}$.  By \cref{sneeze,netherlands}, this must hold if condition \cref{fishy} holds for $\rOO_{m}$, $\lOO_m$.  Assuming $m$ satisfies \cref{battery}, the notion of \emph{$\ba$-admissibility} for $\Omega$ is defined in \cite[Def.~1.10]{GRS22}; see also \cite[Lem.~2.28]{RX09}.  In our language, it follows as in the proof of \cref{backtri} that
\begin{equation} \label{ladder}
    \Omega \text{ is $\ba$-admissible}
    \iff
    \rOO_\Omega = \rOO_m \text{ and } \lOO_\Omega = \lOO_m \text{ for } m(u) = \prod_{i=1}^{d}(u-a_i).
\end{equation}
Note that, if \cref{narita,battery} are satisfied, then $\rOO_\Omega = \rOO_m$ if and only if $\lOO_\Omega = \lOO_m$, by \cref{sneeze}.

\section{Cyclotomic Brauer categories\label{sec:cyclotomicBrauer}}

In this section, we consider the cyclotomic Brauer categories introduced in \cite{RS19}.  These are a categorical analogue of the degenerate cyclotomic BMW algebras; as \cref{mexico} below shows, the endomorphism algebras in these categories are degenerate cyclotomic BMW algebras, though the parameters that appear are slightly subtle.  Throughout this section, we assume that $\kk$ is a field whose characteristic is not $2$.

\subsection{Definition and first properties}

Fix a monic polynomial $p(u) \in \kk[u]$ and a power series
\[
    \OO(u) = \sum_{r \ge -1} \OO^{(r)} u^{-r-1} \in 1 + u^{-1} \kk \llbracket u^{-1} \rrbracket.
\]
Let $\cI(p,\OO)$ be the left tensor ideal of $\AB$ generated by
\begin{equation} \label{frijoles}
    \multdotstrand{east}{p(x)}
    \qquad \text{and} \qquad
    \left[ \bubblegenr{u} \right]_{u^{-r-1}} - \OO^{(r)} 1_\one,\quad r \in \N.
\end{equation}
The corresponding \emph{cyclotomic Brauer category} is the quotient
\[
    \CB(p,\OO) := \AB/\cI(p,\OO).
\]
It follows from \cref{infgrass} that $\CB(p,\OO)$ is the zero category unless
\begin{equation} \label{lilac}
    \OO(u) \OO(-u) = 1.
\end{equation}
Therefore, we assume for the remainder of this section that \cref{lilac} is satisfied.

\begin{rem}
    The above definition coincides with the \emph{(specialized) cyclotomic Brauer category} of \cite[Def.~1.7]{RS19}, except that some additional conditions are placed on the parameters there.  Note also that \cite{RS19} works with \emph{right} tensor ideals instead of left tensor ideals.  This agrees with the fact that we consider the reverse of their affine Brauer category; see \cref{RSflip}.  We omit the word ``specialized'' in the terminology since we do not use the other cyclotomic Brauer category of \cite[Def.~1.5]{RS19}, where they take the quotient by
    $
        \begin{tikzpicture}[centerzero]
            \draw (0,-0.2) -- (0,0.2);
            \multdot{0,0}{east}{p(x)};
        \end{tikzpicture}
    $.
\end{rem}

Let $L(p,\OO)$ denote the image of $\one$ in the quotient $\CB(p,\OO)$.  The category $\CB(p,\OO)$ is no longer monoidal, but it is a left module category over $\AB$ and it is generated under this action by $L(p,\OO)$.  By \cite[Th.~B]{RS19}, all the  endomorphisms of $L(p,\OO)$ are polynomials in the bubbles, which have all been specialized to scalars.  This shows that the element $L(p,\OO)$ is a brick if it is nonzero.  Recall, from \cref{strada}, that $m_{L(p,\OO)}$ is the monic minimal polynomial of the action of the dot on $1_{\go L(p,\OO)}$.  Recall also the definition \cref{metrobus} of $\OO_f$.

\begin{prop}[{\cite[Th.~C]{RS19}}]\label{RS-thC}
    Suppose $f \in \kk[u]$ is monic.  The endomorphism ring \linebreak $\End(\go L(f,\OO_f))$ in the category $\CB(f,\OO_f)$ is $\C[\dotstrand]/(f(\dotstrand))$.
\end{prop}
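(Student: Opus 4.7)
The plan is to show that the evident algebra homomorphism $\kk[x]/(f(x)) \to \End_{\CB(f,\OO_f)}(\go L(f,\OO_f))$ sending $x$ to the dot on the strand $\go L$ is an isomorphism. The map is well-defined because $f(\dotstrand)$ is one of the generators of the defining left tensor ideal $\cI(f,\OO_f)$ listed in \cref{frijoles}, so it acts as zero on $\go L$.

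For surjectivity, I would invoke the basis theorem for $\AB$ of \cite[Th.~A]{RS19}, which realizes $\End_{\AB}(\go)$ as $\kk[x] \otimes_\kk \End_{\AB}(\one)$, with $x$ recording dots on the strand and the second factor accounting for dotted bubbles placed in the empty region. By the definition of $\cI(f,\OO_f)$, every coefficient of $\bubblegenr{u}$ is specialized to a scalar in $\CB(f,\OO_f)$, and by \cref{scratch} this also determines the values of the odd dotted bubbles. Hence $\End_{\CB}(\one) = \kk \cdot 1_\one$ and $\End_{\CB}(\go L)$ is spanned by the powers of the dot, yielding the surjection $\kk[x]/(f(x)) \twoheadrightarrow \End_{\CB}(\go L)$.

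Injectivity is the main obstacle. Writing $m_L$ for the minimal polynomial of the dot on $\go L$ (in the sense of \cref{strada}), we have $m_L \mid f$ and $\End_{\CB}(\go L) \cong \kk[x]/(m_L)$; the task is to show $m_L = f$. \Cref{hemlock} gives $\OO_{m_L} = \OO_L = \OO_f$, but this together with divisibility is insufficient on its own: a direct check shows that the quotient $f/m_L$ is only required to have an appropriate parity symmetry (even when its degree is even, or satisfying a shifted parity relation when odd), so genuinely larger $f$ are not ruled out abstractly.

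My plan to overcome this is to produce a module category over $\AB$ in which the dot on $\go L$ has minimal polynomial equal to $f$, which would yield a surjection $\End_{\CB}(\go L) \twoheadrightarrow \kk[x]/(f(x))$ from the other direction and force equality. A natural source is the category of finite-dimensional modules over the degenerate cyclotomic Nazarov--Wenzl algebra of level $\deg f$ with parameters compatible with $(f,\OO_f)$; the basis theorem of \cite{AMR06} for these algebras guarantees that the dot on the first strand acts with minimal polynomial precisely $f$, closing the argument.
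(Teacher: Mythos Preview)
The paper's proof is a direct citation: it matches $\CB(f,\OO_f)$ with the category $\CB^f(\omega)$ of \cite{RS19}, verifies via what becomes \cref{bench} that the choice $\OO = \OO_f$ is exactly the $\ba$-admissibility hypothesis required there, and then invokes \cite[Th.~C]{RS19}, the basis theorem for all morphism spaces in the cyclotomic Brauer category. Your surjectivity argument is correct and more explicit than necessary.

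Your injectivity plan has a gap. You correctly identify what is needed: an $\AB$-module category $\cR$ with a brick $L'$ on which the dot has minimal polynomial exactly $f$; universality of the cyclotomic quotient then gives a map $\End_{\CB(f,\OO_f)}(\go L(f,\OO_f)) \to \End_\cR(\go L')$ whose image contains $\deg f$ linearly independent dot powers, forcing the lower bound. But ``the category of finite-dimensional modules over the degenerate cyclotomic Nazarov--Wenzl algebra'' is not an $\AB$-module category in any evident way. A single $W_n$ carries no action of the endofunctor $\go$, and assembling the full tower $(W_n)_{n \ge 0}$ into a module category---with compatible cups, caps, and crossings relating different levels---is itself a nontrivial construction, essentially the one carried out in \cite{RS19}. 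The basis theorem of \cite{AMR06} gives you $\dim_\kk W_n$, but the natural comparison map runs $W_n \to \End_{\CB}(\go^n L)$; by your own spanning argument this is a \emph{surjection}, so it yields an upper bound on the target, not the lower bound you need. Thus your plan either requires substantial additional work to build the module category independently, or collapses to citing \cite[Th.~C]{RS19} directly---which is precisely what the paper does.
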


In fact, \cite[Th.~C]{RS19} describes a basis for \emph{all} morphism spaces in $\CB(f,\OO_f)$, but for our purposes, we only need to know the result for endomorphisms of $\go L(f,\OO_f)$.

\begin{proof}
    The category denoted $\CB^f(\omega)$ in \cite{RS19} is denoted
    $\CB(f,\OO_\Omega)$ in our language, where $\Omega = \omega =
    (\omega_k)_{k \in \N}$ is a sequence in $\kk$ and $\OO_\Omega$ is
    defined in \cref{OOmega}.  As we explain below in \cref{bench}, if
    the $d$-tuple $\ba$ are the roots of $f$ with multiplicity, then $\OO_\Omega$ is $\ba$-admissible, in the language of \cite{RS19}, if $\OO_\Omega = \OO_f$.  Thus, the result follows from \cite[Th.~C]{RS19}.
\end{proof}

\begin{theo} \label{mexico}
    The cyclotomic Brauer category $\CB(p,\OO)$ is not the zero category if and only if $\OO = \OO_f$ for some positive-degree polynomial $f$ dividing $p$.  If this category is nonzero, then $\OO = \OO_{m_{L(p,\OO)}}$, and $m_{L(p,\OO)}$ is divisible by any polynomial $f$ that divides $p$ and satisfies $\OO = \OO_f$.
\end{theo}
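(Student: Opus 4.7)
The plan is to prove the two directions of the biconditional, then derive the divisibility claim, using as main tools \cref{hemlock} (the identity $\OO_L = \OO_{m_L}$ for bricks $L$) and \cref{RS-thC} (the endomorphism identification for the ``standard'' cyclotomic quotients $\CB(f,\OO_f)$).

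For the ``if'' direction, I would suppose $\OO = \OO_f$ with $f$ a positive-degree monic divisor of $p$, write $p = fg$, and note that
\[
    \multdotstrand{east}{p(x)} \ =\ \multdotstrand{east}{g(x)} \circ \multdotstrand{east}{f(x)}
\]
lies in the left tensor ideal generated by $\multdotstrand{east}{f(x)}$. This gives $\cI(p,\OO_f) \subseteq \cI(f,\OO_f)$, hence a quotient functor $\CB(p,\OO_f) \twoheadrightarrow \CB(f,\OO_f)$. By \cref{RS-thC}, $\End_{\CB(f,\OO_f)}(\go L(f,\OO_f)) \cong \kk[x]/(f(x))$ is nonzero, so neither category is the zero category.

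For the ``only if'' direction and the formula $\OO = \OO_{m_L}$, I would work inside $\CB(p,\OO)$ regarded as a module category over $\AB$ with generator $L := L(p,\OO)$. Assuming the category is not the zero category, $L$ is a brick by the discussion following \cref{frijoles}, and by construction the bubbles act on $L$ through the prescribed scalars $\OO^{(r)}$, so $\OO_L = \OO$. The basis theorem \cite[Th.~B]{RS19} makes $\End_{\CB(p,\OO)}(\go L)$ finite-dimensional (it is controlled by the dot, which satisfies $p(x)=0$, together with the scalar bubbles), so \cref{hemlock} applies and yields $\OO = \OO_{m_L}$. The relation $\multdotstrand{east}{p(x)} \in \cI(p,\OO)$ gives $m_L \mid p$, and $m_L$ has positive degree because the category being nonzero forces $\go L \ne 0$. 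Taking $f := m_L$ completes this direction and simultaneously establishes the identity $\OO = \OO_{m_L}$.

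For the divisibility claim, suppose $f \mid p$ satisfies $\OO_f = \OO$. The quotient functor $\CB(p,\OO) \twoheadrightarrow \CB(f,\OO_f)$ from the ``if'' direction sends $L(p,\OO)$ to $L(f,\OO_f)$, so the relation $m_L(x) = 0$ on $\go L(p,\OO)$ descends to the same relation on $\go L(f,\OO_f)$; by \cref{RS-thC} the minimal polynomial of the dot there is $f$, giving $f \mid m_L$. The main obstacle, as I see it, is unpacking exactly how ``$\CB(p,\OO)$ is not the zero category'' forces $\go L \ne 0$ (so that $m_L$ has positive degree): a careless reading would allow the degenerate case $\go L = 0$, in which $m_L = 1$ and \cref{hemlock} reduces to the vacuous identity $\OO = 1$ without matching any positive-degree $f$. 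This delicate point should be addressed by combining the basis theorem \cite[Th.~B]{RS19} with the curl relations of \cref{curlsup} to rule out a ``degenerate'' quotient in which $\go L = 0$ but $L \ne 0$ whenever $\OO$ fails to be of admissible form.
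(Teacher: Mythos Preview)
Your argument is essentially the paper's: the ``if'' direction via the quotient functor $\CB(p,\OO_f)\twoheadrightarrow\CB(f,\OO_f)$ and \cref{RS-thC}; the ``only if'' direction by applying \cref{hemlock} to the brick $L=L(p,\OO)$; and the divisibility by pushing $m_L(x)=0$ through the quotient functor and using $m_{L(f,\OO_f)}=f$ from \cref{RS-thC}. The paper's proof is no more detailed than yours on any of these points—in particular, where you worry about $\deg m_L>0$, the paper simply writes ``Then $m_{L(p,\OO)}$ has positive degree'' and moves on.

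You are right that this is the delicate point, but your proposed resolution—ruling out $\go L=0$ with $L\ne 0$ via the basis theorem and curl relations—cannot succeed in general. Take $\OO=1$ and $p(u)=u$ (or any $p$ with no positive-degree divisor $f$ satisfying $f(-u)=f(u)$, which is exactly the condition $\OO_f=1$). The strict monoidal functor $\AB\to\mathsf{Vec}_\kk$ sending $\go\mapsto 0$ kills every generator of $\cI(p,1)$ (the dot and all bubbles factor through $\go$) but sends $1_\one$ to $1_\kk\ne 0$; hence $\CB(p,1)$ is not the zero category. In this quotient one checks directly from \cref{dotmoves} that $1_{\go^{\otimes 2}}=\cupmor\capmor$ and $\capmor\cupmor=0$, forcing $\go L=0$ and $m_L=1$. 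So the degenerate scenario you describe is not a phantom to be excluded but an actual edge case; neither your sketch nor the paper's bare assertion addresses it.
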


\begin{proof}
    First suppose that $\OO = \OO_f$ for some positive-degree polynomial $f$ dividing $p$.  Then we have the quotient functor $\CB(p,\OO) \to \CB(f,\OO)$.  \Cref{RS-thC} implies that $\CB(f,\OO)$ is not the zero category, and hence $\CB(p,\OO)$ is not the zero category.  For the other direction, suppose that $\CB(p,\OO)$ is not the zero category.  Then $m_{L(p,\OO)}$ has positive degree and \cref{hemlock} implies that $\OO = \OO_{m_{L(p,\OO)}}$.

    Now let $f$ be a monic polynomial dividing $p$ and satisfying $\OO = \OO_f$.  Then we have the quotient functor $\CB(p,\OO) \to \CB(f,\OO)$.  It follows that $m_{L(f,\OO)}$ divides $m_{L(p,\OO)}$.  On the other hand, \cref{RS-thC} implies that the endomorphisms $\multdotstrand{east}{n}$, $0 \le n \le \deg f - 1$, are linearly independent in $\CB(f,\OO)$.  It follows that $f = m_{L(f,\OO)}$, and so $f$ divides $m_{L(p,\OO)}$, as claimed.
\end{proof}

\begin{cor} \label{tacos}
    If $\CB(p,\OO)$ is not the zero category, then it is isomorphic to
    \[
        \CB(m) := \CB(m,\OO_m)
        \qquad \text{for } m = m_{L(p,\OO)}.
    \]
\end{cor}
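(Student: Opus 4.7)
The plan is to prove the stronger statement that the two defining left tensor ideals of $\AB$ actually coincide, namely $\cI(p,\OO) = \cI(m,\OO_m)$ with $m := m_{L(p,\OO)}$, so that $\CB(p,\OO) = \AB/\cI(p,\OO) = \AB/\cI(m,\OO_m) = \CB(m)$ on the nose. The bubble-scalar parts of the generating sets in \cref{frijoles} of the two ideals already agree, since by \cref{mexico} the nonzero-ness of $\CB(p,\OO)$ forces $\OO = \OO_m$. So the task reduces to checking that each of $p(\dotstrand)$ and $m(\dotstrand)$ lies in the other ideal.

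For the inclusion $m(\dotstrand) \in \cI(p,\OO)$, I would unpack \cref{strada}: since $L(p,\OO) = [\one]$ in $\CB(p,\OO)$, the endomorphism $m(\dotstrand) \otimes 1_{L(p,\OO)}$ of $\go L(p,\OO)$ is nothing more than the image of $m(\dotstrand) \in \Hom_\AB(\go,\go)$ in the quotient, and the defining property of $m$ as the monic generator of $I_{L(p,\OO)}$ is exactly that this endomorphism vanishes in $\CB(p,\OO)$. Hence $m(\dotstrand) \in \cI(p,\OO)$.

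For the reverse inclusion $p(\dotstrand) \in \cI(m,\OO_m)$, note that $p(\dotstrand)$ lies in $\cI(p,\OO)$ by construction, so $p(\dotstrand) \otimes 1_{L(p,\OO)}$ is zero in $\CB(p,\OO)$, forcing $p \in I_{L(p,\OO)}$ and hence $m \mid p$. Writing $p = mq$ for some $q \in \kk[u]$, the morphism $p(\dotstrand) = q(\dotstrand) \circ m(\dotstrand)$ then lies in $\cI(m,\OO_m)$ because $m(\dotstrand)$ does and any ideal is closed under pre- and post-composition.

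I do not anticipate any real obstacle: \cref{hemlock,mexico} have already done the substantive work of tying the scalar series $\OO$ to a minimal polynomial, and what is left is essentially a bookkeeping argument unpacking the definitions of the quotient $\CB(p,\OO)$ and of the minimal polynomial $m_{L(p,\OO)}$. The only point requiring mild care is the observation that the left tensor ideal framework still makes the composition argument in the third paragraph legitimate, which is immediate since ideals in any $\kk$-linear category are automatically closed under composition with arbitrary morphisms.
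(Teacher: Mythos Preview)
Your proof is correct and is exactly the natural argument implicit in the paper, which states the corollary without proof. You have correctly unpacked that \cref{mexico} gives $\OO=\OO_m$, that the definition of $m=m_{L(p,\OO)}$ forces $m(\dotstrand)\in\cI(p,\OO)$, and that $m\mid p$ forces $p(\dotstrand)\in\cI(m,\OO_m)$, so the two left tensor ideals literally coincide.
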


If $L$ is a brick in an $\AB$-module category $\cR$, then it follows from \cref{hemlock} that the action of $\AB$ on $L$ factors through $\CB(m_L)$.  Thus, we have the following proposition.

\begin{prop} \label{delah}
    Suppose $\ba = (a_1,a_2,\dotsc,a_{d})$ is a finite sequence in $\kk$.  The sequence $\Omega$ is $\ba$-admissible if and only if $\Omega = \Omega_L$ for some brick $L$ in an $\AB$-module category $\cR$ satisfying  $m_L(u) = \prod_{i=1}^{d} (u-a_i)$.
\end{prop}

\begin{proof}

    If $L$ is a brick in an $\AB$-module category $\cR$ with $m_L(u) = \prod_{i=1}^{d} (u-a_i)$, then $\Omega_L$ is $\ba$-admissible by \cref{cactus}.

    Conversely, suppose that the sequence $\Omega$ is $\ba$-admissible.  Define $m(u) = \prod_{i=1}^{d} (u-a_i)$, and let $L$ be the object of the cyclotomic Brauer category $\CB(m)$ corresponding to the unit object $\one$ of $\AB$.  Then it follows from \cref{RS-thC} that $m = m_L$.  Thus,
    \[
        \OO_\Omega \overset{\cref{bench}}{=} \OO_{m_L} \overset{\text{Th.~}\ref{hemlock}}{=} \OO_L
        \qquad \implies \qquad
        \Omega = \Omega_L.\qedhere
    \]
\end{proof}

\begin{cor}[{\cite[Cor.~3.9]{AMR06}}] \label{beef}
    If a sequence $\Omega$ in $\kk$ is $\ba$-admissible for some finite sequence $\ba$ in $\kk$, then $\Omega$ is weakly admissible (and hence also admissible).
\end{cor}

\begin{proof}
    Suppose $\Omega$ is $\ba$-admissible, with $\ba = (a_1,a_2,\dotsc,a_{d})$.  Then, by \cref{delah}, $\Omega = \Omega_L$ for some object $L$ in an $\AB$-module category $\cR$ satisfying $\End_\cR(L) = \kk$ and $m_L(u) = \prod_{i=1}^{d} (u-a_i)$.  It then follows from \cref{roses} that $\Omega$ is weakly admissible.
\end{proof}

\subsection{Calculation of $m_L$}

It follows from \cref{mexico} that, when $\CB(p,\OO)$ is not the zero category, then $m_{L(p,\OO)}$ is the unique monic polynomial of maximal degree in the set of polynomials $f$ that divide $p$ and satisfy $\OO = \OO_f$.  Our next goal is to describe $m_{L(p,\OO)}$ explicitly.  To simplify notation, set
\[
    L = L(p,\OO),\qquad
    m = m_{L(p,\OO)}\qquad d=\deg m.
\]
Recall that $I_L$ is the ideal of $\kk[u]$ generated by $m_L$.  The following lemma will be useful.

\begin{lem} \label{hack}
    If elements $x,y,z,w$ of a principal ideal domain $R$ satisfy $x/y=z/w$ in the fraction field of $R$, then $x/y = a \gcd(x,z)/\gcd(y,w)$ for some unit $a \in R$.
\end{lem}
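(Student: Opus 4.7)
The plan is to reduce to the standard factorization statement in a PID by clearing gcds. From $a/b = c/d$ we have the identity $ad = bc$ in $R$. Setting $g = \gcd(a,c)$ and $h = \gcd(b,d)$, I would write $a = g a'$, $c = g c'$, $b = h b'$, $d = h d'$ with $\gcd(a',c')=1$ and $\gcd(b',d')=1$. Substituting into $ad = bc$ and cancelling $gh$ yields
\[
    a' d' = b' c'.
\]

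From here, the coprimality conditions pin everything down up to a single unit. Since $c' \mid a' d'$ and $\gcd(a',c')=1$, I get $c' \mid d'$; symmetrically, since $d' \mid b'c'$ and $\gcd(b',d')=1$, I get $d' \mid c'$. Therefore $c'$ and $d'$ are associates, say $d' = y c'$ for some unit $y \in R^\times$. Substituting back into $a' d' = b' c'$ and cancelling $c'$ gives $b' = y a'$.

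The conclusion then follows by direct computation:
\[
    \frac{a}{b} = \frac{g a'}{h b'} = \frac{g a'}{h y a'} = y^{-1} \cdot \frac{\gcd(a,c)}{\gcd(b,d)},
\]
so the statement holds with the unit $y^{-1}$. There is no real obstacle here; the only thing to be careful about is tracking which of the two coprimality relations forces divisibility in which direction, and making sure that the resulting unit is indeed a unit (which it is, as $y \in R^\times$ immediately implies $y^{-1} \in R^\times$).
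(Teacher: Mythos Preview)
Your proof is correct. The approach differs from the paper's, though. The paper gives a one-line argument using the multiplicativity of gcd up to units: from $ad=bc$ one has
\[
    d\gcd(a,c)\sim\gcd(ad,cd)=\gcd(bc,cd)\sim c\gcd(b,d),
\]
and dividing through gives $a/b=c/d\sim\gcd(a,c)/\gcd(b,d)$. Your route instead writes out the coprime cofactors explicitly and chases divisibility to identify $b'$ and $d'$ with $a'$ and $c'$ up to a common unit. Both arguments are standard; the paper's is shorter and avoids case analysis, while yours is more explicit about where the unit actually comes from. One minor omission in your write-up is the degenerate case $a=c=0$ (where your cancellation of $gh$ and of $a'$ would fail), but there the conclusion is trivial since both sides are zero.
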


\begin{proof}
    Using $\sim$ to indicate equality up to multiplication by a unit in $R$, we have
    \[
        w \gcd(x,z)
        \sim \gcd(xw,zw)
        \sim \gcd(yz,zw)
        \sim z \gcd(y,w),
    \]
    and the result follows.
\end{proof}

\begin{convention} \label{monic}
    For $a_1,a_2,\dotsc,a_n \in \kk[u]$, the notation $\gcd(a_1,a_2,\dotsc,a_n)$ will denote the unique \emph{monic} gcd of the elements $a_1,a_2,\dotsc,a_n$.
\end{convention}

By \cref{cdmx}, we have
\begin{equation} \label{apple}
    \hat{p}(u) := (-u-\tfrac{1}{2})p(-u)\OO(-u) \in I_L.
\end{equation}
As in \cref{meteor}, by \cref{lilac} and the definition of $\hat{p}$, we have
\[
    \frac{\hat{p}(-u)}{(u-\tfrac{1}{2})p(u)}
    = \OO
    = \frac{(-u-\tfrac{1}{2})p(-u)}{\hat{p}(u)}.
\]
Since the leading term of $\OO$ is $1$, it follows from \cref{hack} that
\begin{equation} \label{cutters1}
    \OO(u) = (-1)^{\deg Q}\frac{Q(-u)}{Q(u)},
    \qquad \text{where} \quad
    Q(u)=\gcd \left( (u-\tfrac{1}{2})p(u),\hat{p}(u) \right) \in I_L.
\end{equation}

\begin{lem} \label{garlic1}
    If $\deg Q$ is odd, then $Q$ is divisible by $u$, and $\frac{Q(u)}{u} \in I_L$.
\end{lem}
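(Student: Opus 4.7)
Setting $m := m_{L(p,\OO)}$ for brevity, the plan is to combine the identity $\OO = \OO_m$ from \cref{hemlock} with the fact that $m$ generates $I_L$. Since $Q \in I_L$ by \cref{cutters1}, we may factor $Q = m \cdot h$ for some $h \in \kk[u]$. The goal then reduces to showing $h(0) = 0$: this gives $u \mid h$, which implies $u \mid Q = mh$, and also $Q/u = m \cdot (h/u) \in (m) \subseteq I_L$, settling both conclusions simultaneously.

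To prove $h(0) = 0$, I would equate the two expressions
\[
    (-1)^{\deg Q}\, \frac{Q(-u)}{Q(u)} \;=\; \OO(u) \;=\; \OO_m(u) \;=\; \frac{\big((-1)^{\deg m} u - \tfrac{1}{2}\big)\, m(-u)}{(u - \tfrac{1}{2})\, m(u)}.
\]
Substituting $Q = mh$ on the left and cancelling the factors $m(\pm u)$ yields
\[
    \big((-1)^{\deg m} u - \tfrac{1}{2}\big)\, h(u) \;=\; (-1)^{\deg m + \deg h} (u - \tfrac{1}{2})\, h(-u).
\]
The hypothesis $\deg Q = \deg m + \deg h$ odd collapses the sign on the right to $-1$, and evaluating at $u = 0$ then gives $-\tfrac{1}{2}h(0) = \tfrac{1}{2}h(0)$, i.e.\ $2h(0) = 0$. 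Hence $h(0) = 0$, using that $2 \in \kk^\times$.

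The only real bookkeeping issue is keeping track of the two signs $(-1)^{\deg m}$ and $(-1)^{\deg m + \deg h}$ when cross-multiplying, but the parity hypothesis on $\deg Q$ makes them collapse into the single clean statement above — no sub-case analysis on the parity of $\deg m$ is needed at $u=0$. Once that is in hand, the evaluation is immediate, and the conclusions $u \mid Q$ and $Q/u \in I_L$ follow straight from the factorization $Q = mh$ with $u \mid h$.
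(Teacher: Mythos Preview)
Your proof is correct. For the first claim ($u \mid Q$), your argument is essentially the same as the paper's: both factor $Q = m h$ (the paper writes $f$ for your $h$), derive the same functional equation from $\OO = \OO_m$ combined with \cref{cutters1}, and deduce that $h(0)=0$. Your evaluation at $u=0$ is a small streamlining of the paper's version, which instead splits into cases on the parity of $\deg m$ to identify $h$ or $(u+\tfrac12)h$ as an odd polynomial before concluding $h(0)=0$.

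Where your argument genuinely diverges is in the second claim ($Q/u \in I_L$). The paper does \emph{not} exploit that it has actually shown $u \mid h$ rather than merely $u \mid Q$; instead it runs an independent diagrammatic computation (a curl relation applied to $Q(x)$, paralleling the proof of \cref{cdmx}) to produce the identity $-Q-\bar Q \in I_L$ with $\bar Q = Q/u$, and concludes from $Q \in I_L$. Your route is shorter and more algebraic: since $I_L = (m)$ and $u \mid h$, one has $Q/u = m\cdot(h/u) \in (m) = I_L$ immediately. This buys you a cleaner proof; the paper's computation, on the other hand, is self-contained in the sense that it does not invoke the ideal structure $I_L=(m)$ at that step, only that $Q\in I_L$.
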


\begin{proof}
    Suppose $\deg Q$ is odd.  Since $Q \in I_L$, the polynomial $m$ divides $Q$.  Let  $f=Q/m$.  Since
    \begin{equation}\label{biscuit}
        \frac{\left( (-1)^{d}u-\frac{1}{2} \right) m(-u)}{(u-\frac{1}{2})m(u)}
        = \OO
        = -\frac{Q(-u)}{Q(u)}
        = -\frac{f(-u)m(-u)}{f(u)m(u)},	
    \end{equation}
    we have
    \[
        \left( (-1)^{d}u-\tfrac{1}{2} \right) f(u)
        = - \left( u-\tfrac{1}{2} \right) f(-u).
    \]
    Thus, if $d$ is even, then $f(u)$ is an odd function of $u$.  Similarly, if $d$ is odd, then $(u+1/2)f(u)$ is an odd function of $u$.  In both cases, $u=0$ is a root of $f$, and hence a root of $Q$.  So, $Q$ is divisible by $u$.

    Next, setting $\bar{Q}(u) = \frac{Q(u)}{u}$, we have
    \begin{multline*}
        0
        =
        \begin{tikzpicture}[centerzero]
            \draw (0,-0.5) to[out=up,in=180] (0.3,0.2) to[out=0,in=up] (0.45,0) to[out=down,in=0] (0.3,-0.2) to[out=180,in=down] (0,0.5);
            \multdot{0.45,0}{west}{Q(x)};
            \draw[module] (1.6,-0.5) \botlabel{L} -- (1.6,0.5);
        \end{tikzpicture}
        \overset{\cref{trick}}{=}
        \left[
            Q(u)\
            \begin{tikzpicture}[centerzero]
                \draw (0,-0.5) to[out=up,in=180] (0.3,0.2) to[out=0,in=up] (0.45,0) to[out=down,in=0] (0.3,-0.2) to[out=180,in=down] (0,0.5);
                \uptriforce{0.45,0};
                \draw[module] (0.8,-0.5) \botlabel{L} -- (0.8,0.5);
            \end{tikzpicture}
        \right]_{u^{-1}}
        \\
        \overset{\cref{curlsup}}{=}
        \left[
            \left( 1-\frac{1}{2u} \right) Q(u)\
            \begin{tikzpicture}[centerzero]
                \draw (0,-0.5) -- (0,0.5);
                \bubgenr{u}{0.4,0};
                \downtriforce{0,0};
                \draw[module] (0.8,-0.5) \botlabel{L} -- (0.8,0.5);
            \end{tikzpicture}
            \ - \frac{Q(u)}{2u}\
            \begin{tikzpicture}[centerzero]
                \draw (0,-0.5) -- (0,0.5);
                \uptriforce{0,0};
                \draw[module] (0.3,-0.5) \botlabel{L} -- (0.3,0.5);
            \end{tikzpicture}
        \right]_{u^{-1}}
        =
        \left[
            \left( 1-\frac{1}{2u} \right) Q(u) \OO(u)\
            \begin{tikzpicture}[centerzero]
                \draw (0,-0.5) -- (0,0.5);
                \downtriforce{0,0};
                \draw[module] (0.4,-0.5) \botlabel{L} -- (0.4,0.5);
            \end{tikzpicture}
            \ - \frac{Q(u)}{2u}\
            \begin{tikzpicture}[centerzero]
                \draw (0,-0.5) -- (0,0.5);
                \uptriforce{0,0};
                \draw[module] (0.3,-0.5) \botlabel{L} -- (0.3,0.5);
            \end{tikzpicture}
        \right]_{u^{-1}}
        \\
        \overset{\cref{biscuit}}{=}
        \left[
            \left( -Q(-u) - \frac{1}{2} \bar{Q}(-u) \right)
            \begin{tikzpicture}[centerzero]
                \draw (0,-0.5) -- (0,0.5);
                \downtriforce{0,0};
                \draw[module] (0.4,-0.5) \botlabel{L} -- (0.4,0.5);
            \end{tikzpicture}
            \ - \frac{1}{2} \bar{Q}(u)\
            \begin{tikzpicture}[centerzero]
                \draw (0,-0.5) -- (0,0.5);
                \uptriforce{0,0};
                \draw[module] (0.3,-0.5) \botlabel{L} -- (0.3,0.5);
            \end{tikzpicture}
        \right]_{u^{-1}}
        \overset{\cref{trick}}{=}
        \begin{tikzpicture}[centerzero]
            \draw (0,-0.5) -- (0,0.5);
            \multdot{0,0}{east}{-Q(x)-\bar{Q}(x)};
            \draw[module] (0.3,-0.5) \botlabel{L} -- (0.3,0.5);
        \end{tikzpicture}
        \ .
    \end{multline*}
    Thus, $-Q-\bar{Q} \in I_L$, and hence $\bar{Q} \in I_L$, completing the proof.
\end{proof}

We are now ready to give the explicit expression for $m$.  Recall the definition \cref{apple} of $\hat{p}$.

\begin{theo} \label{UNAM}
    We have
    \[
        m_{L(p,\OO)}(u)
        =
        \begin{cases}
            \frac{\gcd(p(u),\hat{p}(u))}{u} & \text{if $\gcd((u-1/2)p(u),\hat{p}(u))$ has odd degree}, \\
            \gcd(p(u),\hat{p}(u)) & \text{otherwise}.
        \end{cases}
    \]
\end{theo}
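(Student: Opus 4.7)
The plan is to compute $m := m_{L(p,\OO)}$ by analyzing $g := \gcd(p,\hat{p})$ together with the maximality characterization of $m$ from \cref{mexico}.  Since $p \in I_L$ by the definition of $\CB(p,\OO)$ and $\hat{p} \in I_L$ by \cref{cdmx}, both polynomials are divisible by the generator $m$ of $I_L$; hence $m \mid g$.  Setting $f := g/m \in \kk[u]$, the theorem reduces to showing $f \in \{1, u\}$ together with a parity criterion distinguishing the two cases.

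To access $f$ explicitly, I would exploit $\OO = \OO_m$ (from \cref{hemlock}).  Substituting the formula \cref{metrobus} for $\OO_m(-u)$ into the definition \cref{star} of $\hat{p}$, and writing $s := p/m$, yields the key identity
\[
    \hat{p}(u) = \bigl( (-1)^{\deg m + 1} u - \tfrac{1}{2} \bigr)\, s(-u)\, m(u),
\]
so that $f = \gcd\bigl(s(u),\,((-1)^{\deg m + 1} u - \tfrac{1}{2}) s(-u)\bigr)$.  Every monic linear factor $(u-a)$ of $f$ must therefore divide $s$ and satisfy either $a = (-1)^{\deg m+1}/2$ or $(u+a) \mid s$.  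I would then rule out all possibilities except $a = 0$ (with multiplicity at most one) by a direct application of maximality: for each forbidden factor $h \in \bigl\{u - (-1)^{\deg m + 1}/2,\; u^2 - a^2 \text{ with } a \in \kk^\times,\; u^2\bigr\}$, a short computation with \cref{metrobus} shows $\OO_{mh} = \OO_m$ while $mh \mid p$, contradicting \cref{mexico}.  This forces $f \in \{1, u\}$.

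For the parity criterion, write $p = g p_1$ and $\hat{p} = g q_1$ with $\gcd(p_1, q_1) = 1$, so $Q = g \cdot \gcd(u - \tfrac{1}{2}, q_1)$.  The identity above gives $q_1 = \pm\bigl((-1)^{\deg m + 1} u - \tfrac{1}{2}\bigr) p_1(-u)$, so $(u - \tfrac{1}{2}) \mid q_1$ if and only if $\deg m$ is odd or $(u + \tfrac{1}{2}) \mid p_1$.  The second alternative is excluded when $\deg m$ is even by the same maximality argument applied to $u + \tfrac{1}{2} = u - (-1)^{\deg m + 1}/2$, using that $\gcd(u+\tfrac{1}{2}, f) = 1$.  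Hence $\deg Q - \deg g \in \{0, 1\}$ matches the parity of $\deg m$; combining with $\deg g = \deg m + \deg f$ yields $\deg Q \equiv \deg f \pmod{2}$.  Since $\deg f \in \{0, 1\}$, this says exactly that $\deg Q$ is odd iff $f = u$, equivalently $m = g/u$, and $\deg Q$ is even iff $f = 1$, equivalently $m = g$, which is the theorem.

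The main obstacle is the maximality-based computation: I must verify that $\OO_{mh} = \OO_m$ for each of the three specific $h$ listed above.  I expect this to follow directly from the explicit formula \cref{metrobus} by careful parity bookkeeping of $\deg m$ and $\deg h$, but the sign juggling with $(-1)^{\deg m + \deg h}$ is where errors are most likely to creep in.
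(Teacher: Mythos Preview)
Your approach is correct and takes a genuinely different route from the paper's.  The paper works primarily with $Q = \gcd\bigl((u-\tfrac12)p,\hat p\bigr)$, uses the representation $\OO(u) = (-1)^{\deg Q} Q(-u)/Q(u)$ from \cref{cutters1}, and splits into four cases according to the parity of $\deg Q$ and whether $Q \mid p$; the key extra ingredient is \cref{garlic1}, a diagrammatic curl computation showing that when $\deg Q$ is odd one has $u \mid Q$ and $Q/u \in I_L$.  You instead start from $g = \gcd(p,\hat p)$, invoke $\OO = \OO_m$ to obtain the closed formula $\hat p(u) = \bigl((-1)^{\deg m+1}u - \tfrac12\bigr)\, s(-u)\, m(u)$, and then use only the maximality clause of \cref{mexico} to force $f = g/m \in \{1,u\}$.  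Your route replaces the extra diagrammatic lemma by pure algebra with the explicit $\OO_f$'s; the paper's route avoids your case-by-case elimination of factors of $f$.  Both rely on \cref{mexico} at the end.

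One point to tighten: your linear-factor argument (``every monic linear factor $(u-a)$ of $f$ \dots'') implicitly assumes the roots of $f$ lie in $\kk$, and the forbidden factor $u^2-a^2$ need not be in $\kk[u]$ otherwise.  A clean fix that stays inside $\kk[u]$: first, if $c := u - (-1)^{\deg m+1}/2$ divides $f$, then $mc \mid p$ and $\OO_{mc} = \OO_m$ already gives a contradiction with maximality.  Hence $\gcd(f,c)=1$, so from $f(u) \mid c(u)\, s(-u)$ you get $f(u) \mid s(-u)$, i.e.\ $f(-u) \mid s(u)$.  Now set $F := \operatorname{lcm}\bigl(f(u), f(-u)\bigr) \in \kk[u]$; then $F \mid s$ and $F(-u) = \pm F(u)$.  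If $F(-u)=F(u)$, then $\deg F$ is even and $\OO_{mF} = \OO_m$, forcing $F=1$ by maximality; if $F(-u)=-F(u)$, write $F = uG$ with $G$ even, and $\OO_{mG} = \OO_m$ forces $G=1$, so $F=u$.  Either way $f \mid F$, giving $f \in \{1,u\}$ over any field of characteristic $\ne 2$, and the rest of your parity argument goes through unchanged.
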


\begin{proof}
    It follows from the definition \cref{cutters1} of $Q$ that, if $Q$ divides $p$, then $Q = \gcd(p,\hat{p})$.  On the other hand, if $Q$ does \emph{not} divide $p$, then $u-\frac{1}{2}$ divides $Q(u)$ and
    \[
        \frac{Q(u)}{u-\frac{1}{2}} = \gcd(p(u),\hat{p}(u)).
    \]
    Note that $\gcd(p,\hat{p}) \in I_L$ by \cref{cdmx}.  We break the proof into four cases.

    \smallskip

    \emph{Case 1: $Q$ has even degree and divides $p$}.  In this case, it follows from \cref{mexico,cutters1} that $Q$ divides $m$.  Since $Q$ are both monic, we have $m=Q$, as desired.

    \smallskip

    \emph{Case 2: $Q$ has odd degree and divides $p$}.  By \cref{garlic1}, $Q(u)$ is divisible by $u$, and so
    \[
        \OO(u)
        \overset{\cref{cutters1}}{=} -\frac{Q(-u)}{Q(u)}
        = \frac{\frac{Q(-u)}{-u}}{\frac{Q(u)}{u}}.
    \]
    Then it follows from \cref{mexico} that $\frac{Q(u)}{u}$ divides $m(u)$.  The polynomial $m(u)$ also divides $\frac{Q(u)}{u}$, by \cref{garlic1}. Since both polynomials are monic, we have $m(u)= \frac{Q(u)}{u}$, as desired.

    \smallskip

    \emph{Case 3: $Q$ has even degree and does not divide $p$}.  We have
    \[
        \OO(u)
        \overset{\cref{cutters1}}{=} \frac{Q(-u)}{Q(u)}
        = \frac{(-u-\frac{1}{2}) \frac{Q(-u)}{-u-\frac{1}{2}}}{(u-\frac{1}{2}) \frac{Q(u)}{u-\frac{1}{2}}}.
    \]
    Thus, by \cref{mexico}, $\frac{Q(u)}{u-\frac{1}{2}} = \gcd(p(u),\hat{p}(u))$ divides $m(u)$.  Since $\frac{Q(u)}{u-\frac{1}{2}}$ is monic, the result follows.

    \smallskip

    \emph{Case 4: $Q$ has odd degree and does not divide $p$}.  We have
    \[
        \OO(u)
        \overset{\cref{cutters1}}{=} - \frac{Q(-u)}{Q(u)}
        = \frac{(-u-\frac{1}{2}) \frac{Q(-u)}{(-u)(-u-\frac{1}{2})}}{(u-\frac{1}{2}) \frac{Q(u)}{u(u-\frac{1}{2})}},
    \]
    and the result again follows.
\end{proof}

Many of the relations \cref{frijoles} are redundant. We now give a more efficient presentation of cyclotomic Brauer categories.  By \cref{tacos}, it suffices to consider the categories $\CB(m)$ for $m \in \kk[u]$ of positive degree.

\begin{prop}
    For $m \in \kk[u]$, the category $\CB(m)$ is isomorphic to the quotient of $\AB$ by the left tensor ideal generated by
    \begin{equation} \label{pine}
        \begin{tikzpicture}[centerzero]
            \draw (0,-0.2) -- (0,0.2);x
            \multdot{0,0}{east}{m(x)};
        \end{tikzpicture}
        \qquad \text{and} \qquad
        \left[ \bubblegenr{u} \right]_{u^{-r-1}} - \OO_m^{(r)} 1_\one,
        \quad
        r \in 2\N,\ 0 \le r < \deg m.
    \end{equation}
\end{prop}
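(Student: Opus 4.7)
The plan.  Let $\cJ'$ denote the left tensor ideal of $\AB$ generated by the relations in \cref{pine}.  Since $\cJ' \subseteq \cI(m,\OO_m)$ tautologically, there is a quotient functor $\AB/\cJ' \twoheadrightarrow \CB(m)$; to upgrade this to an isomorphism, I would show $[\bubblegenr{u}]_{u^{-r-1}} - \OO_m^{(r)} 1_\one \in \cJ'$ for every $r \in \N$.

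First I would dispose of the odd-indexed bubble coefficients.  The key input is \cref{scratch}, which expresses each odd bubble coefficient as a universal polynomial in the bubbles with fewer dots; the same identity is satisfied by the scalars $\OO_m^{(r)}$ because $\OO_m(u) \OO_m(-u) = 1$---a direct calculation from \cref{metrobus}---ensures that the coefficients of $\OO_m$ satisfy the analogue of \cref{euclid}, from which \cref{scratch} is derived.  Thus, given the congruences $[\bubblegenr{u}]_{u^{-s-1}} \equiv \OO_m^{(s)} \pmod{\cJ'}$ for the even $s < \deg m$ provided by hypothesis, an induction on $r$ yields the congruence for all $r < \deg m$.

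Next I would treat $r \ge \deg m$ by exploiting the relation $m(\dotstrand) \in \cJ'$.  For each polynomial $h \in \kk[u]$, the endomorphism of $\go$ given by a curl carrying $h(x) m(x)$ dots on the loop factors through $m(\dotstrand)$ and so lies in $\cJ'$.  Applying \cref{curlsup} and then \cref{trick,trick+} expands this vanishing into an identity, modulo $\cJ'$, that is $\kk$-linear in the bubble coefficients $B^{(r)} := [\bubblegenr{u}]_{u^{-r-1}}$ with coefficients that are polynomial in $\dotstrand$.  Choosing $h(u) = u^k$ and extracting the constant term in $\dotstrand$ produces a relation in which $B^{(d+k-1)}$ (with $d = \deg m$) appears with invertible scalar coefficient---namely the leading coefficient $1$ of $u^k m(u)(1 - \tfrac{1}{2u})$, since $m$ is monic---and in which all other $B^{(s)}$ appearing have $s < d+k-1$.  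Combined with the first step, an induction on $r$ yields $B^{(r)} \equiv \OO_m^{(r)} \pmod{\cJ'}$ for all $r$; identification of the unique solution with $\OO_m^{(r)}$ is forced because $\OO_m(u)$ itself satisfies exactly these curl identities, as in the computation underlying \cref{m-curl} in the proof of \cref{hemlock}.

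The main obstacle is the bookkeeping in the second step: one must verify that extracting the $\dotstrand^0$-coefficient cleanly produces a single relation among the $B^{(r)}$ in which the top-index coefficient is an invertible scalar, undisturbed by the contributions that arise from reducing higher powers of $\dotstrand$ modulo $m(\dotstrand)$.  This is a routine but delicate finite computation, entirely parallel to (though more elaborate than) the one carried out in \cref{m-curl}.
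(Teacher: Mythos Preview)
Your plan is correct in outline, and your use of \cref{scratch} for the odd-index coefficients matches the paper's.  The divergence is in the second step, where the paper's route is much simpler and avoids the obstacle you flag.

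For $r \ge \deg m$ the paper does not use curls at all.  Since $\cJ'$ is a left tensor ideal containing $m(\dotstrand)$, it also contains the closed bubble
\[
    \multbubbler{x^n m(x)} \;=\; \capmor \circ \bigl(1_\go \otimes x^n m(\dotstrand)\bigr) \circ \cupmor
    \qquad \text{for every } n \ge 0.
\]
Because $m$ is monic, this is already a relation in $\End_{\CB(m)'}(\one)$ expressing $\multbubbler{\deg m + n}$ as a $\kk$-linear combination of bubbles with strictly fewer dots.  Induction on $r$ (interleaved with \cref{scratch} for odd indices, exactly as you have it) then shows every $\multbubbler{r}$ is a scalar in $\CB(m)' = \AB/\cJ'$.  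The paper leaves implicit the identification of those scalars with $\OO_m^{(r)}$; it follows either because the recursion determines the values uniquely and $\OO_m$ satisfies it (as you observe, $\OO_m(u)\OO_m(-u)=1$ handles \cref{scratch}, and \cref{metrobus} makes $(u-\tfrac12)m(u)\OO_m(u)$ a polynomial, handling the bubble recursion), or more cheaply because the surjection $\CB(m)' \twoheadrightarrow \CB(m)$ carries scalar to scalar and $\CB(m) \neq 0$ by \cref{RS-thC}.

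Your curl-based identity, by contrast, lives in $\End(\go)$ rather than $\End(\one)$, and ``extracting the $\dotstrand^0$-coefficient'' requires knowing that $1, \dotstrand, \dotsc, \dotstrand^{d-1}$ remain independent in $\End_{\CB(m)'}(\go)$ over $\End_{\CB(m)'}(\one)$ \emph{after} passing to the quotient.  That is not available a priori---it is essentially the basis theorem for $\CB(m)$ pulled back along the surjection---so the step you call ``routine but delicate'' is actually circular.  The closed-bubble argument above sidesteps it entirely.
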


\begin{proof}
    Let $\CB(m)'$ be the quotient of $\AB$ by the left tensor ideal generated by \cref{pine}.  It suffices to show that $\multbubbler{r}$ is a scalar multiple of $1_\one$ in $\CB(m)'$ for all $r \in \N$.  Since
    \[
        \bubblegenr{u}
        \overset{\cref{eatz}}{=} \tfrac{2u}{2u-1} \uptribubbler + 1
        = 1 + \left( \sum_{r=0}^\infty \frac{1}{(2u)^r} \right) \uptribubbler
        = 1 + \sum_{r=0}^\infty \left( \sum_{s=0}^r \frac{1}{2^{r-s}} \multbubbler{s} \right) u^{-r-1},
    \]
    it follows from \cref{pine} that $\multbubbler{r}$ is a scalar multiple of $1_\one$ in $\CB(m)'$ for $r \in 2\N$, $0 \le r < \deg m$.  Furthermore, by \cref{scratch}, for $r \in 2\N+1$, the bubble $\multbubbler{r}$ can be expressed in terms of $\multbubbler{s}$ for $s<r$.  Therefore, $\multbubbler{r}$ is a scalar multiple of $1_\one$ in $\CB(m)'$ for all $0 \le r < \deg m$.

    Now, since
    $
        \begin{tikzpicture}[centerzero]
            \draw (0,-0.2) -- (0,0.2);x
            \multdot{0,0}{east}{m(x)};
        \end{tikzpicture}
        = 0
    $
    in $\CB(m)'$, any bubble $\multbubbler{r}$ for $r \ge \deg m$ can be written as a linear combination of $\multbubbler{s}$ for $s < r$.  Therefore, $\multbubbler{r}$ is a scalar multiple of $1_\one$ in $\CB(m)'$ for \emph{all} $r \in \N$.
\end{proof}

\subsection{Cyclotomic BMW algebras}

Since we have worked exclusively in the affine Brauer \emph{category},
it might not be clear to the reader whether our arguments yield any
results about the degenerate cyclotomic BMW \emph{algebras}, which is
the context of the earlier admissibility results we discussed.  In the
remainder of this section, we explain how our arguments do, in fact, yield such results.

For a polynomial $p \in \kk[u]$ that factors completely in $\kk$ (we can always pass to the algebraic closure of $\kk$ to ensure that this is the case), $\Omega = (\omega_r)_{r \in \N}$ a sequence of elements of $\kk$, and $n \in \N$, we denote by $W_n(p,\Omega)$ the corresponding $n$-strand degenerate cyclotomic BMW algebra.  (We do not assume any admissibility conditions on these data.)  The algebra $W_n(p,\Omega)$ is denoted $W_{d,n}(a_1,\dotsc,a_{d})$ in \cite[Def.~2.2]{Goo11}, where $d = \deg p$ and $a_1,\dotsc,a_{d}$ are the roots of $p$, with multiplicity.  This algebra is generated by elements $e_i$, $s_i$, $x_j$, $1 \le i \le n-1$, $1 \le j \le n$.  It will be useful to use string diagrams to represent elements in $W_n(p,\Omega)$ in the usual way, numbering strands \emph{from right to left}.  We will use thick blue strings when drawing diagrams representing elements of $W_n(p,\Omega)$ in this way.  Thus, we represent the generators of $W_2(p,\Omega)$ as follows:
\[
    e_1 =\,
    \begin{tikzpicture}[centerzero]
        \draw[alg] (-0.2,-0.35) -- (-0.2,-0.3) arc(180:0:0.2) -- (0.2,-0.35);
        \draw[alg] (-0.2,0.35) -- (-0.2,0.3) arc(180:360:0.2) -- (0.2,0.35);
    \end{tikzpicture}
    \ ,\qquad
    s_1 =
    \begin{tikzpicture}[centerzero]
        \draw[alg] (0.2,-0.35) -- (-0.2,0.35);
        \draw[alg] (-0.2,-0.35) -- (0.2,0.35);
    \end{tikzpicture}
    \ ,\qquad
    x_1 =\,
    \begin{tikzpicture}[centerzero]
        \draw[alg] (-0.2,-0.35) -- (-0.2,0.35);
        \draw[alg] (0.2,-0.35) -- (0.2,0.35);
        \singdot{0.2,0};
    \end{tikzpicture}
    \ ,\qquad
    x_2 =\,
    \begin{tikzpicture}[centerzero]
        \draw[alg] (-0.2,-0.35) -- (-0.2,0.35);
        \draw[alg] (0.2,-0.35) -- (0.2,0.35);
        \singdot{-0.2,0};
    \end{tikzpicture}
    \ .
\]

Define $I(p,\Omega)$ to be the kernel of the map
\[
    \kk[u] \to W_2(p,\Omega),\qquad g(u) \mapsto g(x_1)e_1,
\]
and let $f_{p,\Omega}$ be the unique monic generator of this ideal.  Equivalently, $f_{p,\Omega}$ is the unique monic polynomial of minimal degree such that $f_{p,\Omega}(x_1)e_1=0$.  Since $p(x_1)e_1=0$ by the definition of $W_2(p,\Omega)$, we know that $p \in I(p,\Omega)$, and so $f_{p,\Omega}$ divides $p$.

In $\AB$,  the adjunction relations (the third and fourth equalities in \cref{brauer}) imply, for $n \in \N$,
\begin{equation} \label{snake}
    \begin{tikzpicture}[centerzero]
        \draw (0,-0.7) -- (0,0.7);
        \multdot{0,0}{east}{n};
    \end{tikzpicture}
    \ =\
    \begin{tikzpicture}[centerzero]
        \draw (0.2,-0.7) -- (0.2,-0.3) arc(0:180:0.2) arc(360:180:0.2) -- (-0.6,0.3) arc(180:0:0.2) arc(180:360:0.2) -- (0.2,0.7);
        \multdot{0.2,0.3}{west}{n};
    \end{tikzpicture}
    \ .
\end{equation}
This shows that for any object $L$ in an $\AB$-module category, and any polynomial $f$, we have
\begin{equation}\label{bear2}
    \begin{tikzpicture}[centerzero]
        \draw (0,-0.4) -- (0,0.4);
        \multdot{0,0}{east}{f(x)};
        \draw[module] (0.3,-0.4) \botlabel{L} -- (0.3,0.4);
    \end{tikzpicture}
    = 0
    \qquad \iff \qquad
    \begin{tikzpicture}[centerzero]
        \draw (-0.2,-0.5) -- (-0.2,-0.3) arc(180:0:0.2) -- (0.2,-0.5);
        \draw (-0.2,0.5) -- (-0.2,0.3) arc(180:360:0.2) -- (0.2,0.5);
        \multdot{0.2,0.3}{west}{f(x)};
        \draw[module] (1.1,-0.5) \botlabel{L} -- (1.1,0.5);
    \end{tikzpicture}
    = 0.
\end{equation}
However, in the \emph{algebra} $W_2(p,\Omega)$, the implication \cref{bear2} may not hold, since the equality \cref{snake} does not make sense in this algebra.

The closest analogue to \cref{hemlock,cactus} that we have found in the literature is \cite[Th. 5.2]{Goo11}, more specifically, the implication $(1) \Rightarrow (4)$, the ``only if'' direction of \cref{goodman} below.  (Note that the proof in the published version of \cite[Th. 5.2]{Goo11} suggests that there was a misprint and the points labelled 1.\ and 2.\ should have been (3) and (4). We will use the latter notation which matches the proof.  Note also that the implication $(1) \Leftarrow (4)$, the ``if'' direction of \cref{goodman}, was already shown in \cite[Th.~5.5]{AMR06}.)  In our language, the implication $(1) \Leftrightarrow (4)$ of \cite[Th.~5.2]{Goo11} is the following result.

\begin{prop}[{\cite[Th.~5.2]{Goo11}}] \label{goodman}
    We have $f_{p,\Omega}=p$ if and only if $\OO_\Omega = \OO_p$, where $\OO_p$ is defined in \cref{metrobus}.
\end{prop}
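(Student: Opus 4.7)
The plan is to identify $f_{p,\Omega}$ with $m := m_{L(p,\OO_\Omega)}$, the minimal polynomial of the dot acting on $\go L$ in $\CB(p,\OO_\Omega)$, where $L = L(p,\OO_\Omega)$. Granting this identification, the desired equivalence follows painlessly: by \cref{hemlock} we have $\OO_\Omega = \OO_L = \OO_m$, and since $m \mid p$ automatically (by definition of the cyclotomic quotient), \cref{mexico} gives $\OO_\Omega = \OO_p \iff \OO_m = \OO_p \iff m = p$, which under the identification is exactly $f_{p,\Omega} = p$.

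Thus the real work is to prove $f_{p,\Omega} = m$. The natural tool is the $\kk$-algebra homomorphism
\[ F \colon W_2(p,\Omega) \to \End_{\CB(p,\OO_\Omega)}(\go^2 L) \]
sending the generators $e_1, s_1, x_1, x_2$ to their diagrammatic counterparts acting on $\go^2 L$; this is well-defined since every defining relation of $W_2(p,\Omega)$ is a specialization of a relation already valid in $\AB$ together with the cyclotomic quotient relations. Under $F$, the element $g(x_1) e_1$ is sent to the diagram obtained by placing $g$-polynomial dots on the right strand below the cup-cap, applied to $L$; by the rotation identity \cref{bear2}, this image vanishes iff $g(x)$ annihilates $\go L$, iff $m \mid g$. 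Applying $F$ to the equation $f_{p,\Omega}(x_1) e_1 = 0$ immediately yields $m \mid f_{p,\Omega}$. For the reverse divisibility $f_{p,\Omega} \mid m$, observe that \cref{tacos} identifies $\CB(p,\OO_\Omega) \cong \CB(m,\OO_m)$, and then the basis theorem of \cite{RS19} (of which \cref{RS-thC} is a special case) yields an algebra isomorphism $\End_{\CB(p,\OO_\Omega)}(\go^2 L) \cong W_2(m,\Omega)$ of known PBW dimension $3(\deg m)^2$. One then argues that the natural surjection $W_2(p,\Omega) \twoheadrightarrow W_2(m,\Omega)$, obtained by adjoining $m(x_1)=0$, is in fact an isomorphism, forcing $m(x_1) e_1 = 0$ to hold already in $W_2(p,\Omega)$ and hence $f_{p,\Omega} \mid m$.

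The main obstacle is this final injectivity claim: that adjoining $m(x_1) = 0$ to $W_2(p,\Omega)$ does not actually shrink the algebra. Intuitively, the bubble relations $e_1 x_1^r e_1 = \omega_r e_1$—whose scalar values $\omega_r$ encode precisely the identity $\OO_\Omega = \OO_m$ via the formula in \cref{cactus}—should already collapse the naive affine Brauer PBW basis of size $3(\deg p)^2$ down to one of size $3(\deg m)^2$. Making this precise requires a careful PBW-style reduction at the algebra level, essentially porting the categorical basis theorem \cite[Th.~C]{RS19} and the identification $\CB(p,\OO_\Omega) \cong \CB(m,\OO_m)$ of \cref{tacos} to the level of endomorphism algebras.
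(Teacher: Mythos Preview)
Your approach has a genuine gap, and the injectivity claim you flag as the ``main obstacle'' is in fact \emph{false} in general: when $m := m_{L(p,\OO_\Omega)}$ is a proper divisor of $p$, the surjection $W_2(p,\Omega) \twoheadrightarrow W_2(m,\Omega)$ is not an isomorphism. The paper makes this explicit in the discussion following \cref{alpha-iso}: the kernel of $\pi_W \colon W_n(p,\Omega) \to W_n(m,\Omega)$ projects isomorphically onto the kernel of the map $\pi_H \colon H_n^p \to H_n^m$ of degenerate cyclotomic Hecke algebras, and the latter is nonzero whenever $\deg m < \deg p$ since these algebras have different dimensions. So no PBW reduction will rescue this; $W_2(p,\Omega)$ is genuinely larger than $W_2(m,\Omega)$ when $m \ne p$. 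What \emph{is} true is the weaker statement that $\pi_W$ restricts to an isomorphism on the two-sided ideal $E_{p,\Omega}$ generated by $e_1$ (this is \cite[Prop.~5.11]{Goo12}), which would yield $m(x_1)e_1 = 0$ in $W_2(p,\Omega)$ and hence $f_{p,\Omega} \mid m$; but that is itself a nontrivial external input, not something you can extract from the basis theorem for $\CB(m)$ alone.

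The paper sidesteps this entirely. Rather than first identifying $f_{p,\Omega}$ with $m$ and then invoking \cref{hemlock}, it proves $\OO_\Omega = \OO_{f_{p,\Omega}}$ \emph{directly inside the algebra} $W_2(p,\Omega)$. The key is \cref{shwarma}, the algebra-level analogue of \cref{cdmx}: one cannot freely rotate diagrams in $W_2(p,\Omega)$, but one can run the curl computation \cref{burrito} with an extra cap attached at the bottom, yielding the same closure property $g \in I(p,\Omega) \Rightarrow \hat g \in I(p,\Omega)$ for the ideal $I(p,\Omega) = \{g : g(x_1)e_1 = 0\}$. From there the argument of \cref{hemlock} ports over to give $\OO_\Omega = \OO_{f_{p,\Omega}}$, which is the ``only if'' direction of \cref{goodman} immediately. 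For the ``if'' direction, your argument that $m \mid f_{p,\Omega}$ via the map $F$ and \cref{bear2} is essentially correct and is what the paper does (second paragraph of the proof of \cref{noodles}), specialized to $g = p$. The identification $f_{p,\Omega} = m$ (\cref{jcc}) then falls out as a \emph{consequence} of comparing the maximality characterizations in \cref{mexico} and \cref{noodles}, rather than serving as the starting point.
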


Let us first show how one can deduce \cref{hemlock,cactus} from \cref{goodman}.  Suppose $L$ is a brick in an $\AB$-module category $\cR$, and define $\OO_L$ and $m_L$ as in \cref{sec:BrauerModules}.  Let $m_L(u) = (u-a_1)(u-a_2) \dotsm (u-a_{d_L})$ as a product of linear factors in the algebraic closure of $\kk$, and let $\ba = (a_1,a_2,\dotsc,a_{d_L})$.  It is straightforward to verify that we have a homomorphism of associative algebras from $W_2(m_L,\Omega_L)$ to $\End_\cR(\go^{\otimes 2} L)$ given by
\begin{equation} \label{onion}
    e_1 \mapsto\,
    \begin{tikzpicture}[centerzero]
        \draw (-0.2,-0.35) -- (-0.2,-0.3) arc(180:0:0.2) -- (0.2,-0.35);
        \draw (-0.2,0.35) -- (-0.2,0.3) arc(180:360:0.2) -- (0.2,0.35);
        \draw[module] (0.5,-0.35) \botlabel{L} -- (0.5,0.35);
    \end{tikzpicture}
    \ ,\qquad
    s_1 \mapsto
    \begin{tikzpicture}[centerzero]
        \draw[-] (0.2,-0.35) -- (-0.2,0.35);
        \draw[-] (-0.2,-0.35) -- (0.2,0.35);
        \draw[module] (0.5,-0.35) \botlabel{L} -- (0.5,0.35);
    \end{tikzpicture}
    \ ,\qquad
    x_1 \mapsto\,
    \begin{tikzpicture}[centerzero]
        \draw (-0.2,-0.35) -- (-0.2,0.35);
        \draw (0.2,-0.35) -- (0.2,0.35);
        \singdot{0.2,0};
        \draw[module] (0.5,-0.35) \botlabel{L} -- (0.5,0.35);
    \end{tikzpicture}
    \ ,\qquad
    x_2 \mapsto\,
    \begin{tikzpicture}[centerzero]
        \draw (-0.2,-0.35) -- (-0.2,0.35);
        \draw (0.2,-0.35) -- (0.2,0.35);
        \singdot{-0.2,0};
        \draw[module] (0.5,-0.35) \botlabel{L} -- (0.5,0.35);
    \end{tikzpicture}
    \ .
\end{equation}
By the definition of $m_L(u)$ as the minimal polynomial of the action of $x_1$ on $L$, the elements
\[
    \begin{tikzpicture}[centerzero]
        \draw (0,-0.4) -- (0,0.4);
        \multdot{0,0}{east}{n};
        \draw[module] (0.3,-0.4) \botlabel{L} -- (0.3,0.4);
    \end{tikzpicture},
    \qquad 0 \le n < d_L,
\]
are linearly independent.  By \cref{bear2}, this implies that the elements
\[
    \begin{tikzpicture}[centerzero]
        \draw (-0.2,-0.5) -- (-0.2,-0.3) arc(180:0:0.2) -- (0.2,-0.5);
        \draw (-0.2,0.5) -- (-0.2,0.3) arc(180:360:0.2) -- (0.2,0.5);
        \multdot{0.2,0.3}{east}{n};
        \draw[module] (0.5,-0.5) \botlabel{L} -- (0.5,0.5);
    \end{tikzpicture}
    \ ,\quad 0 \le n < d_L,
\]
are also linearly independent.  Since these are the images under the homomorphism \cref{onion} of $x_1^n e_1 \in W_2(m_L,\Omega_L)$, $0 \le n < d_L$, the latter elements are linearly independent as well.  Therefore, by the ``only if'' direction of \cref{goodman}, $\Omega_L$ is $\ba$-admissible, which is precisely the conclusion of \cref{hemlock,cactus}.

To illustrate the opposite direction, we will prove a stronger result than the ``only if'' direction of \cref{goodman}, describing precisely the polynomial $f_{p,\Omega}$.  In fact, we will show in \cref{jcc} that it is the same as the polynomial $m_{p,\OO_{\Omega}}$ calculated in \cref{UNAM}.  One of the key steps in establishing this relationship is the following result, which shows that the ideal $I(p,\Omega)$ satisfies a similar closure relation to  \cref{cdmx}.  Note that the proof is strongly analogous to \cref{cdmx}, but to carry out this proof in the context of cyclotomic BMW \emph{algebras}, we have to rotate all our diagrams so that the loop which before joined the bottom and top of the diagram now has both ends at the top, and we must add a cap at the bottom.

\begin{lem} \label{shwarma}
    Suppose $e_1 \ne 0$.  If $g \in I(p,\Omega)$, then
    \[
        \hat{g}(u) := \left( -u-\tfrac{1}{2} \right) g(-u) \OO_\Omega(-u) \in I(p,\Omega).
    \]
\end{lem}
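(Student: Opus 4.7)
My plan is to mimic the proof of \cref{cdmx}, translating each step to the algebra $W_2(p,\Omega)$ by rotating the diagrams as the paragraph preceding the lemma indicates: the loops attached to through-strands in \cref{cdmx} are moved to the top of diagrams in $W_2(p,\Omega)$, and caps are added at the bottom. The idempotent $e_1 \in W_2(p,\Omega)$ plays the role of the identity on $\go L$ in the module-category proof, and the hypothesis $g(x_1)e_1 = 0$ (which is the condition $g \in I(p,\Omega)$) replaces the condition that $g$ acts as zero on $\go L$. Throughout, the bubble $\multbubbler{n}$ specializes to the scalar $\omega_n$, so that $\bubblegenr{u}$ specializes to $\OO_\Omega(u)$.

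First I would establish that $\hat{g}(u) \in \kk[u]$. Following the pattern of \cref{cerveza}, for $r > 0$ I compute $[\hat{g}(-u)]_{u^{-r}}$ by the same generating-function manipulations, inserting $e_1$ in place of the $L$-strand and using \cref{eatz} and \cref{trick}. After the manipulation, this coefficient equals (a scalar multiple of) $\multbubbler{x^r g(x)} \cdot e_1 \in W_2(p,\Omega)$. The bubble with dot polynomial $x^r g(x)$, attached via a cap to the top strand of $e_1$, reduces by the adjunction rotation \cref{snake} (or rather its rotated analogue used in $W_2(p,\Omega)$) to $x_1^r g(x_1) \cdot e_1$, up to signs from \cref{dotmoves}. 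This vanishes because $g(x_1)e_1 = 0$. Since $e_1 \ne 0$, the scalar coefficient itself must be zero, so $\hat{g} \in \kk[u]$.

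Next, to show $\hat{g}(x_1)e_1 = 0$ (i.e., $\hat{g} \in I(p,\Omega)$), I adapt \cref{burrito}. There one used \cref{curlsup} to rewrite a curl dotted by $xg(x)$ attached to a through-strand, producing two expressions for the same morphism. In $W_2(p,\Omega)$, I instead take the curl attached to the cap at the top of $e_1$ (this is precisely the rotated configuration demanded in the remark before the lemma), and apply \cref{curlsup}. The left-hand side reduces to a diagram proportional to $g(x_1)e_1$ via rotation of the dot polynomial through the cap. The right-hand side produces two terms: one involving $\bubblegenr{u}$, which specializes to $\OO_\Omega$ and, after coefficient extraction, contributes $\hat{g}(x_1)e_1$; and one that contributes $-\tfrac{1}{2}g(x_1)e_1$. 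Equating the two sides gives
\[
    \bigl(\hat{g}(x_1) - \tfrac{1}{2}g(x_1)\bigr)e_1 = 0,
\]
and since $g(x_1)e_1 = 0$, we conclude $\hat{g}(x_1)e_1 = 0$, as desired.

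The main obstacle will be keeping careful track of the signs and coefficients that arise when dots are pushed through the cap of $e_1$ via \cref{dotmoves}, and verifying that the rotated form of the curl relation \cref{curlsup}, combined with the specialization of the generating function $\bubblegenr{u}$ to $\OO_\Omega$, reproduces precisely the generating-function manipulations of \cref{cdmx} in the algebra setting. Once those translations are made correctly, the structure of the argument is essentially identical to that of \cref{cdmx}, and the hypothesis $e_1 \ne 0$ is used precisely when passing from a scalar-times-$e_1$ equality to vanishing of that scalar in the polynomiality step.
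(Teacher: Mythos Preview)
Your proposal is correct and follows essentially the same approach as the paper's proof: both establish polynomiality of $\hat{g}$ by recognizing that $[\hat{g}(-u)]_{u^{-n}}\, e_1 = e_1 x_1^n g(x_1) e_1 = 0$ for $n>0$ and invoking $e_1\neq 0$, and both then apply the rotated curl relation to obtain $(\hat{g}(x_1)-\tfrac{1}{2}g(x_1))e_1=0$, concluding $\hat{g}\in I(p,\Omega)$. One small point of precision: in your first step, the relevant identity in $W_2(p,\Omega)$ is $e_1 x_1^n g(x_1) e_1$ (i.e., the dotted bubble sandwiched between cup and cap), not just $x_1^r g(x_1)\cdot e_1$; but this does not affect the argument since both vanish from $g(x_1)e_1=0$.
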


\begin{proof}
    A priori, $\hat{g}(u) \in \kk\Laurent{u^{-1}}$.  However, by the argument of \cref{cerveza}, we have
    \[
        e_1 x_1^n g(x_1) e_1
        =
        \begin{tikzpicture}[centerzero]
            \draw[alg] (0.2,0) arc(0:360:0.2);
            \draw[alg] (0.2,0.6) -- (0.2,0.5) arc(360:180:0.2) -- (-0.2,0.6);
            \draw[alg] (0.2,-0.6) -- (0.2,-0.5) arc(0:180:0.2) -- (-0.2,-0.6);
            \multdot{0.2,0}{west}{x^n g(x)};
        \end{tikzpicture}
        =
        [\hat{g}(-u)]_{u^{-n}}\
        \begin{tikzpicture}[centerzero]
            \draw[alg] (0.2,0.4) -- (0.2,0.3) arc(360:180:0.2) -- (-0.2,0.4);
            \draw[alg] (0.2,-0.4) -- (0.2,-0.3) arc(0:180:0.2) -- (-0.2,-0.4);
        \end{tikzpicture}
        = [\hat{g}(-u)]_{u^{-n}} e_1
        = 0
        \qquad \text{for all } n > 0.
    \]
    Since $e_1 \neq 0$, the series $\hat{g}(u)$ must be a polynomial in $\kk[u]$.  Then, applying the argument of \cref{burrito}, we find that
    \[
        0 =\,
        \begin{tikzpicture}[centerzero]
            \draw[alg] (-0.2,0.6) to[out=down,in=up] (0.2,0.1) to[out=down,in=down,looseness=1.5] (-0.2,0.1) to[out=up,in=down] (0.2,0.6);
            \draw[alg] (-0.2,-0.6) -- (-0.2,-0.5) to[out=up,in=up,looseness=1.5] (0.2,-0.5) -- (0.2,-0.6);
            \multdot{0.2,0.1}{west}{x g(x)};
        \end{tikzpicture}
        =\,
        \begin{tikzpicture}[anchorbase]
            \draw[alg] (-0.2,-0.6) -- (-0.2,-0.35) to[out=up,in=up,looseness=1.5] (0.2,-0.35) -- (0.2,-0.6);
            \draw[alg] (-0.2,0.6) -- (-0.2,0.35) to[out=down,in=down,looseness=1.5] (0.2,0.35) -- (0.2,0.6);
            \multdot{0.2,0.35}{west}{\hat{g}(x) - \frac{1}{2} g(x)};
        \end{tikzpicture}
        \,
        = \left( \hat{g}(x_1) - \tfrac{1}{2} g(x_1) \right)e_1.
    \]
    and so $\hat{g}- \frac{1}{2}{g}\in I(p,\Omega)$.  Therefore, $\hat{g} =(\hat{g}- \frac{1}{2}{g})+ \frac{1}{2}{g}\in I(p,\Omega)$.
\end{proof}

We can now give a characterization of $f_{p,\Omega}$; compare with \cref{mexico}.

\begin{theo} \label{noodles}
    We have that $e_1 \neq 0$ in $W_2(p,\Omega)$ if and only if $\OO_\Omega = \OO_g$ for some positive-degree polynomial $g$ dividing $p$.  If this condition is satisfied, then $\OO_\Omega = \OO_{f_{p,\Omega}}$, and $f_{p,\Omega}$ is divisible by any polynomial $g$ that divides $p$ and satisfies $\OO_\Omega = \OO_g$.
\end{theo}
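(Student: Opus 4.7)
The plan is to mirror the proof of \cref{mexico}, substituting \cref{goodman} for \cref{RS-thC} and \cref{shwarma} for \cref{cdmx}. All computations take place inside $W_2(p,\Omega)$, with pictures that were vertical strands or closed bubbles in $\AB$ reinterpreted as 2-strand diagrams attached to $e_1$, exactly as illustrated in the proof of \cref{shwarma}.

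For the ``if'' direction, the hypothesis $g \mid p$ gives a well-defined surjective algebra homomorphism $\pi\colon W_2(p,\Omega) \twoheadrightarrow W_2(g,\Omega)$ on generators: the only defining relation of $W_2(p,\Omega)$ not tautologically present in $W_2(g,\Omega)$ is $p(x_1)=0$, and this is implied by $g(x_1)=0$. Applying the ``if'' direction of \cref{goodman} to $(g,\Omega)$ yields $f_{g,\Omega} = g$, of positive degree, so $e_1 \neq 0$ in $W_2(g,\Omega)$; hence $e_1 \neq 0$ in $W_2(p,\Omega)$. The same surjection handles the divisibility claim: if $g \mid p$ and $\OO_\Omega = \OO_g$, then $\pi$ sends $f_{p,\Omega}(x_1) e_1 = 0$ to zero in $W_2(g,\Omega)$, so $f_{p,\Omega} \in I(g,\Omega) = (g)$, i.e., $g \mid f_{p,\Omega}$.

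For the ``only if'' direction, assuming $e_1 \neq 0$, one first observes that $f_{p,\Omega}$ has positive degree (else $f_{p,\Omega} = 1$ and $e_1 = 0$). I would then establish $\OO_\Omega = \OO_{f_{p,\Omega}}$ by replaying the proof of \cref{hemlock} inside $W_2(p,\Omega)$. The key ingredients are: (i) admissibility of $\Omega$, a consequence of $e_1 \neq 0$ together with the bubble relation \cref{scratch} holding in $W_2(p,\Omega)$, giving $\OO_\Omega(u)\OO_\Omega(-u) = 1$ by \cref{pepper}; (ii) $\hat{f}_{p,\Omega} \in I(p,\Omega)$ from \cref{shwarma}, hence $f_{p,\Omega}\mid\hat{f}_{p,\Omega}$; (iii) combining (i) and (ii), the identity $\hat{f}_{p,\Omega}(u)\hat{f}_{p,\Omega}(-u) = (u-\tfrac{1}{2})(-u-\tfrac{1}{2}) f_{p,\Omega}(u)f_{p,\Omega}(-u)$, which together with monicness of $\hat{f}_{p,\Omega}(-u)$ forces a factorization $\hat{f}_{p,\Omega}(-u) = \bigl((-1)^{\deg f_{p,\Omega}} u + \epsilon/2\bigr) f_{p,\Omega}(-u)$ for some $\epsilon\in\{\pm 1\}$; (iv) the algebraic analog of the curl computation \cref{m-curl}, pinning down $\epsilon = -1$ and thus giving $\OO_\Omega = \OO_{f_{p,\Omega}}$.

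The main obstacle is carrying out step (iv) inside $W_2(p,\Omega)$. In the category setting, \cref{m-curl} evaluates a curl decorated by $m_L$ dots on a vertical strand labelled $L$; the vanishing of this element, expanded via \cref{curlsup} and \cref{trick+}, yields a polynomial in $I_L$ whose coefficient structure forces $\epsilon = -1$. To translate this into $W_2(p,\Omega)$, the curl-and-dot configuration must be rotated so that it attaches to $e_1$ via a cup/cap configuration (as was done to produce \cref{abe} inside the proof of \cref{shwarma}). The same expansion via \cref{curlsup} and \cref{trick+} then produces a polynomial lying in $I(p,\Omega)$, and matching its $u^{\deg f_{p,\Omega}}$ coefficient against $f_{p,\Omega}$ selects $\epsilon = -1$.
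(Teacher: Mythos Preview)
Your proposal is correct. For the ``only if'' direction (deducing $\OO_\Omega = \OO_{f_{p,\Omega}}$ from $e_1\ne 0$), your approach coincides with the paper's: both invoke \cref{shwarma} and then replay the argument of \cref{hemlock} inside $W_2(p,\Omega)$, with the curl computation carried out in its $e_1$-attached form as in \cref{abe}. (Your description of step~(iv) is slightly imprecise---the vanishing that forces $\epsilon=-1$ comes from the fact that the resulting polynomial lies in $I(p,\Omega)$ and has degree at most $\deg f_{p,\Omega}$, hence must be a scalar multiple of $f_{p,\Omega}$, forcing the lower-order correction term to vanish---but the mechanism is as you describe.)

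For the ``if'' direction and the divisibility claim, you take a genuinely different route from the paper. You use the algebra surjection $W_2(p,\Omega)\twoheadrightarrow W_2(g,\Omega)$ together with \cref{goodman} (which gives $f_{g,\Omega}=g$, hence $I(g,\Omega)=(g)$) to conclude both $e_1\ne 0$ and $g\mid f_{p,\Omega}$. The paper instead constructs a homomorphism $W_2(p,\Omega)\to \End_{\CB(g,\OO_g)}(\go^{\otimes 2} L(g,\OO_g))$ into the cyclotomic Brauer \emph{category}, exploits the snake relation \cref{bear2} (which is unavailable inside $W_2$ itself) to convert $f_{p,\Omega}(x_1)e_1=0$ into a single-strand vanishing, and then invokes \cref{RS-thC}. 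Your approach has the virtue of staying entirely within the algebra setting and importing only an algebra result; the paper's approach reinforces its broader theme that the categorical basis theorem subsumes such algebra facts. The two external inputs are of comparable strength---indeed, \cref{alpha-iso} later shows they are essentially equivalent.
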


\begin{proof}
    First, suppose $e_1 \ne 0$.  We claim that $\OO_\Omega = \OO_f$ for $f = f_{p,\Omega}$.  Indeed, by \cref{shwarma}, $f$ divides $\hat{f}$.  Then an argument analogous to that in the proof of \cref{hemlock} shows that $\hat{f}(u) = ((-1)^{1+\deg f}u - \frac{1}{2})f(u)$ and that $\OO_\Omega = \OO_f$.

    It remains to show that $f$ is divisible by any polynomial $g$ that divides $p$ and satisfies $\OO_\Omega = \OO_g$.  Consider the cyclotomic Brauer category $\CB(g,\OO_g)$.  We have a natural homomorphism $W_2(p,\Omega) \to \End_{\CB(g,\OO_g)}(\go^{\otimes 2} L(g,\OO_g))$.  Since the image of $f(x_1)e_1$ under this map is zero, we have
    \[
        \begin{tikzpicture}[centerzero]
            \draw (-0.2,-0.5) -- (-0.2,-0.3) arc(180:0:0.2) -- (0.2,-0.5);
            \draw (-0.2,0.5) -- (-0.2,0.3) arc(180:360:0.2) -- (0.2,0.5);
            \multdot{0.2,0.3}{west}{f(x)};
            \draw[module] (1.1,-0.5) \botlabel{L(g,\OO_g)} -- (1.1,0.5);
        \end{tikzpicture}
        = 0
        \quad\overset{\cref{bear2}}{\implies}\quad
        \begin{tikzpicture}[centerzero]
            \draw (0,-0.4) -- (0,0.4);
            \multdot{0,0}{east}{f(x)};
            \draw[module] (0.3,-0.4) \botlabel{L(g,\OO_g)} -- (0.3,0.4);
        \end{tikzpicture}
        = 0.
    \]
    On the other hand, by \cref{RS-thC}, in $\CB(g,\OO_g)$, the morphism $\dotstrand \colon \go L(g,\OO_g)\to \go L(g,\OO_g)$ has minimal polynomial $g$.  Thus, we must have that $f$ is divisible by $g$, as desired.
\end{proof}

We can now give an explicit description of $f_{p,\Omega}$.

\begin{prop} \label{jcc}
    We have
    \[
        f_{p,\Omega}(u)=m_{p,\OO_{\Omega}}(u)
        =
        \begin{cases}
            \frac{\gcd(p(u),\hat{p}(u))}{u} & \text{if $\gcd((u-1/2)p(u),\hat{p}(u))$ has odd degree}, \\
            \gcd(p(u),\hat{p}(u)) & \text{otherwise}.
        \end{cases}
    \]
\end{prop}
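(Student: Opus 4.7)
The plan is to deduce Proposition \ref{jcc} by combining three earlier results: Theorem \ref{noodles}, which characterizes $f_{p,\Omega}$; Theorem \ref{mexico}, which characterizes $m_{L(p,\OO_\Omega)}$; and Theorem \ref{UNAM}, which gives the explicit gcd formula. The second equality $m_{p,\OO_\Omega}(u) = \gcd(p(u),\hat{p}(u))$ (up to division by $u$) is the content of Theorem \ref{UNAM} applied to the pair $(p,\OO_\Omega)$, so the only real task is to establish the first equality $f_{p,\Omega} = m_{p,\OO_\Omega}$, where we write $m_{p,\OO_\Omega} := m_{L(p,\OO_\Omega)}$.

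First I would treat the generic case where $e_1 \neq 0$ in $W_2(p,\Omega)$. Here I would simply observe that Theorem \ref{noodles} and Theorem \ref{mexico} furnish identical universal properties for $f_{p,\Omega}$ and $m_{p,\OO_\Omega}$, respectively: each is a monic polynomial dividing $p$, satisfying $\OO_\Omega = \OO_{(-)}$, and divisible by every other such polynomial. Applying each universal property to the other polynomial yields mutual divisibility, and since both are monic they must be equal.

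Next I would address the degenerate case where $e_1 = 0$ in $W_2(p,\Omega)$. Then $I(p,\Omega) = \kk[u]$, so $f_{p,\Omega} = 1$ by definition. By Theorem \ref{noodles}, the vanishing $e_1 = 0$ is equivalent to the nonexistence of a positive-degree divisor $g$ of $p$ with $\OO_\Omega = \OO_g$; by Theorem \ref{mexico}, this in turn is equivalent to $\CB(p,\OO_\Omega)$ being the zero category. In that case $L(p,\OO_\Omega) = 0$, every polynomial annihilates it, and so $m_{p,\OO_\Omega} = 1 = f_{p,\Omega}$. The formula on the right-hand side of the proposition, interpreted via the conventions of \cref{UNAM}, then also evaluates to $1$, which can be checked directly from the definition of $\hat{p}$ (namely $\gcd(p,\hat{p}) = 1$ in this regime).

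The main obstacle is entirely conceptual rather than technical: the real work lies in verifying that the universal properties packaged by Theorems \ref{noodles} and \ref{mexico} are indeed formulated in compatible language. Once one accepts this, Proposition \ref{jcc} follows essentially by inspection, with all the substantive work---the closure lemma for $I(p,\Omega)$ in \cref{shwarma}, the cyclotomic Brauer category analysis of \cref{sec:cyclotomicBrauer}, and the gcd computation of \cref{UNAM}---already done.
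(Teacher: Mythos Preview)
Your proposal is correct and takes essentially the same approach as the paper: both argue that Theorems \ref{mexico} and \ref{noodles} characterize $m_{p,\OO_\Omega}$ and $f_{p,\Omega}$ by the same universal property, forcing equality, with the explicit formula then supplied by Theorem \ref{UNAM}. The paper compresses this into a single sentence, while you spell out the mutual-divisibility argument and treat the degenerate case $e_1=0$ separately, but the underlying logic is identical.
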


\begin{proof}
    \Cref{mexico,noodles} show that $m_{p,\OO_{\Omega}}$ and $f_{p,\Omega}$ are uniquely characterized by the same property, proving the result.
\end{proof}

The polynomial $f_{p,\Omega}$ is also studied by Goodman \cite[\S 5]{Goo12}; in that paper, it is denoted $p_0=(u-u_1)\cdots (u-u_d)$.  The fact that $\OO_\Omega = \OO_{f_{p,\Omega}}$ when $\deg f_{p,\Omega} > 0$ is shown in \cite[Lem.~5.5(2)]{Goo12}.  In \cite{Goo12}, the condition that $\deg f_{p,\Omega}>0$ is called \emph{semi-admissibility} of the data $(p,\Omega)$.  However, to the best of our knowledge, no \emph{explicit} description of $f_{p,\Omega}$ has appeared in the literature.

Another connection between the results of \cref{sec:cyclotomicBrauer} and degenerate cyclotomic BMW algebras can be found by considering the natural map
\[
    \alpha_p \colon W_n(p,\Omega)\to \End_{\CB(p,\OO_\Omega)}(\go^n).
\]

\begin{prop} \label{alpha-iso}
    The map $\alpha_p$ is surjective, with kernel generated, as a two-sided ideal, by $m_{p,\Omega}(x_1)$.  In particular, $\alpha_p$ is an isomorphism if and only if $\OO_p = \OO_\Omega$.
\end{prop}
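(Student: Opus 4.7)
The plan is to establish three claims in sequence: (a) $\alpha_p$ is well-defined and surjective, (b) $\ker\alpha_p = \langle m_{p,\Omega}(x_1)\rangle$, and (c) the iso criterion.

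For (a), I will define $\alpha_p$ on generators by sending $e_i,s_i,x_j$ to the corresponding cap--cup, crossing, and dot morphisms in $\End_{\CB(p,\OO_\Omega)}(\go^n)$, with strands numbered right to left, matching \cref{onion}. The affine Nazarov--Wenzl relations defining $W_n^{\mathrm{aff}}$ are direct consequences of \cref{brauer,dotmoves}. The cyclotomic relation $p(x_1)=0$ holds because $\cI(p,\OO_\Omega)$ is a left tensor ideal containing $p(x)\colon\go\to\go$, so $\id_{\go^{n-1}}\otimes p(x)=0$ in $\CB(p,\OO_\Omega)$; in the right-to-left convention this is precisely $p(x_1)$. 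The bubble-scalar relations are built into $\cI(p,\OO_\Omega)$ by construction. Surjectivity is then clear: $\End_{\CB(p,\OO_\Omega)}(\go^n)$ is generated as a $\kk$-algebra by images of cups, caps, crossings, dots, and bubbles, and bubbles act by scalars in the quotient.

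For (b), I will exploit the identification $\CB(p,\OO_\Omega)\cong\CB(m_{p,\Omega})$ from \cref{tacos}, in which $m_{p,\Omega}$ is by definition the minimal polynomial of the dot on the strand adjacent to $L(p,\OO_\Omega)$. Hence $\id_{\go^{n-1}}\otimes m_{p,\Omega}(x)=0$ in $\End_{\CB(p,\OO_\Omega)}(\go^n)$, so $I:=\langle m_{p,\Omega}(x_1)\rangle$ lies inside $\ker\alpha_p$. Since $m_{p,\Omega}$ divides $p$, the quotient $W_n(p,\Omega)/I$ is canonically $W_n(m_{p,\Omega},\Omega)$. By \cref{mexico}, $\OO_\Omega=\OO_{m_{p,\Omega}}$, and by \cref{bench} this is exactly the $\ba$-admissibility of $\Omega$ for $\ba$ the multiset of roots of $m_{p,\Omega}$ (extending $\kk$ as needed). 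Under this admissibility, \cite[Th.~5.5]{AMR06} equips $W_n(m_{p,\Omega},\Omega)$ with a basis of dotted Brauer diagrams having dots of degree less than $\deg m_{p,\Omega}$ on each through-strand, while the full morphism-space version of \cite[Th.~C]{RS19} (which underlies \cref{RS-thC}) gives an identically-indexed basis of $\End_{\CB(m_{p,\Omega})}(\go^n)$. The induced surjection $\bar\alpha_p$ thus carries basis to basis, hence is an isomorphism; consequently $\ker\alpha_p=I$.

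For (c), if $\OO_p=\OO_\Omega$, a direct substitution into \cref{metrobus,apple} yields $\hat p(u)=((-1)^{\deg p+1}u-\tfrac12)p(u)$, and then \cref{UNAM} gives $\gcd(p,\hat p)=p$ (with even-degree $\gcd((u-\tfrac12)p,\hat p)$ in both parity cases), so $m_{p,\Omega}=p$; then $m_{p,\Omega}(x_1)=p(x_1)=0$ in $W_n(p,\Omega)$ and $\alpha_p$ is iso. Conversely, if $\alpha_p$ is an isomorphism, the subalgebra of $\End_{\CB(p,\OO_\Omega)}(\go^n)$ consisting of polynomials in a single dot is $\kk[x]/(m_{p,\Omega}(x))$; its preimage under $\alpha_p$ is surjected onto by $\kk[u]/(p(u))$, and injectivity of $\alpha_p$ on this subalgebra (which reduces to the $n=1$ case $W_1(p,\Omega)=\kk[u]/(p)$, or equivalently to the basis argument of (b) restricted to pure-dot diagrams) forces $p\mid m_{p,\Omega}$; combined with the reverse divisibility, $p=m_{p,\Omega}$, hence $\OO_p=\OO_{m_{p,\Omega}}=\OO_\Omega$ by \cref{mexico}.

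The main obstacle will be step (b), namely the basis comparison: one must invoke the full morphism-space statement of \cite[Th.~C]{RS19} (strictly stronger than the endomorphism version \cref{RS-thC} recorded here) and align its basis precisely with that of \cite[Th.~5.5]{AMR06}, matching the admissibility conventions on the two sides. The parity and degree bookkeeping across these references is the delicate part; everything else is essentially a formal consequence of the category-level results already established.
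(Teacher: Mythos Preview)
Your approach is essentially the same as the paper's: both factor $\alpha_p$ through the surjection $\pi_W\colon W_n(p,\Omega)\to W_n(m,\Omega)$ with $m=m_{p,\Omega}$, identify $\CB(p,\OO_\Omega)\cong\CB(m)$ via \cref{tacos}, and then invoke the full morphism-space version of \cite[Th.~C]{RS19} to conclude that $\alpha_m$ is an isomorphism. The paper packages that last step as a single citation whereas you unpack the basis comparison with \cite[Th.~5.5]{AMR06}, and it leaves the ``in particular'' clause implicit, so your step~(c) is additional detail rather than a different route.
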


\begin{proof}
    The map $\alpha_p$ factors through the canonical map $\pi_W \colon W_n(p,\Omega) \to W_n(m,\Omega)$ for $m=m_{p,\Omega}$, since
    \begin{equation*}
    	\alpha(m_{p,\Omega}(x_1))=
        \begin{tikzpicture}[centerzero]
            \draw (0,-0.4) -- (0,0.4);
            \multdot{0,0}{east}{m_{p,\Omega}(x)};
            \draw[module] (0.7,-0.4) \botlabel{L(p,\OO_\Omega)} -- (0.7,0.4);
        \end{tikzpicture}
        = 0,
    \end{equation*}
    by the definition of $m_{p,\Omega}$.  In fact, we have the following commutative diagram:
    \[
        \begin{tikzcd}
        	{ W_n(p,\Omega)} & {\End_{\CB(p,\OO_\Omega)}(\go^n)} \\
        	{ W_n(m,\Omega)} & {\End_{\CB(m,\OO_\Omega)}(\go^n)}
        	\arrow["{\alpha_p}", from=1-1, to=1-2]
        	\arrow["{\pi_W}"', from=1-1, to=2-1]
        	\arrow["{\pi_{\CB}}", from=1-2, to=2-2]
        	\arrow["{\alpha_m}"', from=2-1, to=2-2]
        \end{tikzcd}
    \]
    By \cite[Th.~C]{RS19}, the map $\alpha_m$ is an isomorphism, as is $\pi_{\CB}$, by \cref{tacos}.  Thus, the homomorphisms $\alpha_p$ and $\pi_W$ are intertwined by the induced isomorphism $W_n(m,\Omega) \cong \End_{\CB(p,\OO_\Omega)}(\go^n)$.  Therefore, the result follows from the fact that $\pi_W$ is surjective, with kernel generated by $m_{p,\Omega}(x_1)$.
\end{proof}

We can understand the map $\alpha_p$ better by considering its interaction with the two-sided ideal $E_{p,\Omega}$ of $W_n(p,\Omega)$ generated by $e_1$.  Since $s_is_{i+1}e_is_{i+1}s_i=e_{i+1}$, the ideal $E_{p,\Omega}$ is also generated by $e_i$ for any $i$, or equivalently by the set of all $e_i$.  In terms of diagrams, $E_{p,\Omega}$ is the ideal spanned by all diagrams that factor through $\go^{\otimes r}$ for some $r<n$; this set is manifestly a two-sided ideal.   By \cite[Prop.~7.2]{AMR06}, applied with $f=0$, the quotient $W_n(p,\Omega)/E_{p,\Omega}$ is isomorphic to $H_n^p$, the degenerate cyclotomic Hecke algebra for $p$.  Thus, for the polynomials $p$ and $m=m_{p,\Omega}$, we have short exact sequences compatible with projection:
\[
    \begin{tikzcd}
    	0 & {E_{p,\Omega}} & {W_n(p,\Omega)} & {H_n^p} & 0 \\
    	0 & {E_{m,\Omega}} & { W_n(m,\Omega)} & {H^m_n} & 0
    	\arrow[from=1-1, to=1-2]
    	\arrow[from=1-2, to=1-3]
    	\arrow["{\pi_E}", from=1-2, to=2-2]
    	\arrow[from=1-3, to=1-4]
    	\arrow["{\pi_{W}}", from=1-3, to=2-3]
    	\arrow[from=1-4, to=1-5]
    	\arrow["{\pi_H}", from=1-4, to=2-4]
    	\arrow[from=2-1, to=2-2]
    	\arrow[from=2-2, to=2-3]
    	\arrow[from=2-3, to=2-4]
    	\arrow[from=2-4, to=2-5]
    \end{tikzcd}
\]
By \cite[Prop.~5.11]{Goo12}, the map $\pi_E$ is an isomorphism, so the kernel of $\pi_W$ projects isomorphically to the kernel of
$ \pi_H$.

\section{Cyclotomic Kauffman categories\label{sec:cyclotomicKauffman}}

In this section, we consider the cyclotomic Kauffman categories introduced in \cite{GRS22}.  These are a categorical analogue of the (nondegenerate) cyclotomic BMW algebras, in the same way that the cyclotomic Brauer categories are a categorical analogue of the degenerate cyclotomic BMW algebras.   Throughout this section, we assume that $\kk$ is a field.  Our discussion in this section is parallel to that of \cref{sec:cyclotomicBrauer}.

Fix a monic polynomial $p(u) \in \kk[u]$ with nonzero constant term, and a power series
\[
    \rOO(u) = \sum_{r=0}^\infty \rOO^{(r)} u^{-r} \in t + u^{-1} \kk \llbracket u^{-1} \rrbracket.
\]
In light of \cref{infgrassk}, define
\begin{equation} \label{romania}
    \lOO(u) = \sum_{r=0}^\infty \lOO^{(r)} u^{-r}
    := \rOO(u)^{-1}
    \in t^{-1} + u^{-1} \kk \llbracket u^{-1} \rrbracket.
\end{equation}
Let $\cJ(p,\rOO)$ be the left tensor ideal of $\AK$ generated by
\[
    \multdotstrand[black]{east}{p(x)},
    \qquad
    \left[ \bubblegenr[black]{u} \right]_{u^{-r}} - \rOO^{(r)} 1_\one,
    \qquad r \in \N.
\]
It then follows from \cref{infgrassk,romania} that
\[
    \left[ \bubblegenl[black]{u} \right]_{u^{-r}} - \lOO^{(r)} 1_\one \in \cJ(p,\rOO),\qquad r \in \N.
\]
We define the corresponding \emph{cyclotomic Kauffman category}
\[
    \CK(p,\rOO) := \AK/\cJ(p,\rOO).
\]

\begin{rem}
    Recall, from \cref{glass}, the identification of $\AK$ with the reverse of the affine Kauffman category of \cite{GRS22}. Our use of the term \emph{cyclotomic Kauffman category} is slightly more general than that of \cite[Def.~1.8]{GRS22}, since we do not impose an analogue of \cite[Def.~1.7(2)]{GRS22} on our parameters.  See \cref{sec:admissibleKauffman} for further discussion of these conditions.
\end{rem}

Let $L(p,\rOO)$ denote the image of $\one$ in the quotient $\CK(p,\rOO)$.  The category $\CK(p,\rOO)$ is a left module category over $\AK$, and it is generated under this action by $L(p,\rOO)$.  The element $L(p,\rOO)$ is a brick if it is nonzero, since, by \cite[Th.~1.6]{GRS22}, all its endomorphisms are polynomials in the bubbles, which have all been specialized to scalars.  Recall, from \cref{sec:KauffmanModules}, that $J_{L(p,\rOO)}$ is the kernel of the algebra homomorphism \cref{pear}, and $m_{L(p,\rOO)}$ is the unique generator of $J_{L(p,\rOO))}$ that lies in $\kk[u]$, is monic, and has nonzero constant term.  Recall also the definition \cref{metrobusKr} of $\rOO_f$.

\begin{prop}[{\cite[Th.~1.12]{GRS22}}] \label{badminton}
    Suppose $f \in \kk[u]$ satisfies the conditions in \cref{sneeze}.  The endomorphism ring $\End(\gok L(f,\rOO_f))$ in the category $\CK(f,\rOO_f)$ is $\C \left[ \dotstrand[black] \right] / ( f ( \dotstrand[black] ) )$.
\end{prop}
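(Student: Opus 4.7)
The plan is to deduce this from \cite[Th.~1.12]{GRS22} in complete analogy with the proof of \cref{RS-thC}. The cyclotomic Kauffman category studied in \cite{GRS22} is parametrized by a monic polynomial $f$ together with a sequence of scalars $\Omega = (\omega_r)_{r \in \Z}$ giving the values of the dotted bubbles; in our language this is precisely $\CK(f,\rOO_\Omega)$, where $\rOO_\Omega$ is the corresponding generating function built from $\Omega$. Under this dictionary, \cite[Th.~1.12]{GRS22} asserts that, when $\Omega$ is $f$-admissible in the sense of \cite[Def.~1.7(2)]{GRS22}, certain diagrams form a basis of each morphism space of the quotient, and in particular the powers of the dot of degree strictly less than $\deg f$ form a basis of $\End(\gok L(f,\rOO_\Omega))$.

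The key step is to show that the scalar admissibility conditions of \cite[Def.~1.7(2)]{GRS22} are equivalent to the single rational-function identity $\rOO_\Omega = \rOO_f$. I would carry this out by expanding the explicit formulas \cref{metrobusKr-1} for $\rOO_f$ (equivalently, the rational-function description \cref{401taxi} applied to $f$) as a Laurent series at $u=\infty$ and matching coefficients against the finite list of scalar constraints in \cite[Def.~1.7(2)]{GRS22}. \Cref{sneeze} guarantees that the hypotheses placed on $f$ are exactly those under which $\rOO_f$ and $\lOO_f$ satisfy $\rOO_f(u)\lOO_f(u) = 1$, which is a necessary condition for a nontrivial cyclotomic quotient by \cref{infgrassk}; this matches the remaining content of \cite[Def.~1.7(2)]{GRS22}. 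Once equivalence with admissibility is established, we may choose $\Omega$ so that $\rOO_\Omega = \rOO_f$, and the proposition follows immediately from \cite[Th.~1.12]{GRS22}.

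The main obstacle is the bookkeeping required to translate between these two formalisms: \cite{GRS22} works with right tensor ideals where we use left (see \cref{glass}), and its admissibility is formulated as a finite system of scalar constraints whereas ours is packaged into a single rational-function identity. The detailed translation, which also explains the precise relationship with the existing literature on cyclotomic BMW algebras, is deferred to \cref{sec:admissibleKauffman}; this proposition is then obtained as a direct application.
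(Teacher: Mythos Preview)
Your proposal is correct and takes essentially the same approach as the paper: both deduce the statement from \cite[Th.~1.12]{GRS22} by translating between the admissibility conditions of \cite{GRS22} and the generating-function identity $\rOO_\Omega = \rOO_f$, with the detailed dictionary deferred to \cref{sec:admissibleKauffman}. One small imprecision: the condition in \cite{GRS22} that corresponds to $\rOO_\Omega = \rOO_f$ is $\ba$-admissibility, defined in \cite[Def.~1.10]{GRS22} (see \cref{ladder}), not \cite[Def.~1.7(2)]{GRS22}; the latter corresponds instead to the weak-admissibility relation \cref{sproulK}, while the hypothesis from \cref{sneeze} matches \cite[Assumption~1.9]{GRS22}.
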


In fact, \cite[Th.~1.12]{GRS22} describes a basis for \emph{all} morphism spaces in $\CK(f,\rOO_f)$, but for our purposes, we only need to know the result for endomorphisms of $\gok L(f,\rOO_f)$.

\begin{proof}
    Suppose $f(u) = (u-a_1)(u-a_2) \dotsm (u-a_{d})$ as a product of linear factors in the algebraic closure of $\kk$, and let $\ba = (a_1,a_2,\dotsc,a_{d})$.  The category denoted $\CK(\omega,\ba)$ in \cite{GRS22} is denoted $\CB(f,\rOO_f)$ in our language, under the assumption that $\omega$ is $\ba$-admissible.  As we explain in \cref{ladder} below, $\ba$-admissibility of $\omega$ corresponds to the fact that we have chosen $\rOO_f$ (as opposed to some general $\rOO$).  The condition in \cref{sneeze}\cref{sneeze2} corresponds to \cite[Assumption~1.9]{GRS22}.  Thus, the result follows from \cite[Th.~1.12]{GRS22}.
\end{proof}

\begin{theo} \label{china}
    The cyclotomic Kauffman category $\CK(p,\rOO)$ is not the zero category if and only if $\rOO = \rOO_f$ for some positive-degree monic polynomial $f$ dividing $p$ and satisfying the conditions of \cref{sneeze}.  If such a polynomial exists, then $\rOO = \rOO_{m_{L(p,\rOO)}}$, and $m_{L(p,\rOO)}$ is divisible by any polynomial $f$ that divides $p$, satisfies $\rOO = \rOO_f$, and satisfies the conditions of \cref{sneeze}.
\end{theo}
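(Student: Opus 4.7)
The plan is to follow the same template as the proof of \cref{mexico}, substituting \cref{backhome} for \cref{hemlock} and \cref{badminton} for \cref{RS-thC}, with \cref{sneeze} playing the role of enforcing compatibility with the Grassmannian-type relation \cref{infgrassk}.

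First I would prove the easy direction. Suppose $\rOO = \rOO_f$ for some positive-degree monic $f$ dividing $p$ and satisfying the conditions of \cref{sneeze}. Because $f$ satisfies those conditions, \cref{sneeze}\cref{sneeze3} gives $\rOO_f(u)\,\lOO_f(u) = 1$, so the power series $\lOO$ defined from $\rOO = \rOO_f$ via \cref{romania} equals $\lOO_f$. Consequently $\cJ(f,\rOO_f)$ contains all of the generators of $\cJ(p,\rOO)$ (using that $p$ is a multiple of $f$ to handle the dot relation), and we get a well-defined quotient functor $\CK(p,\rOO)\to \CK(f,\rOO_f)$ sending $L(p,\rOO)\mapsto L(f,\rOO_f)$. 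By \cref{badminton}, $\End(\gok L(f,\rOO_f))$ is $\deg f$-dimensional, hence nonzero, so $\CK(p,\rOO)$ is not the zero category.

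For the converse, assume $\CK(p,\rOO)$ is not the zero category, so $L := L(p,\rOO)$ is a brick. The minimal polynomial $m := m_L \in \kk[u]$ has positive degree (since it generates a proper ideal, and $\dotstrand[black]$ satisfies $p(x) = 0$ on $L$ so $m$ divides $p$ and in particular $m$ is not a unit). Then \cref{backhome} immediately yields $\rOO_L = \rOO_m$, and also tells us that $m$ satisfies the conditions of \cref{sneeze}. Since $\rOO_L = \rOO$ by construction, we conclude $\rOO = \rOO_m$, exhibiting $m$ as the required polynomial.

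Finally, to prove the divisibility assertion, let $f$ be any monic polynomial dividing $p$, satisfying $\rOO = \rOO_f$, and satisfying the conditions of \cref{sneeze}. As in the first paragraph, we obtain a quotient functor $\CK(p,\rOO)\to \CK(f,\rOO)$ sending $L(p,\rOO)$ to $L(f,\rOO)$, so the image of the dot in $\End(\gok L(p,\rOO))$ surjects onto its image in $\End(\gok L(f,\rOO))$, forcing $m_{L(f,\rOO)}$ to divide $m = m_{L(p,\rOO)}$. But \cref{badminton} shows $\End(\gok L(f,\rOO))$ is $\deg f$-dimensional, so $m_{L(f,\rOO)}$ must have degree $\deg f$, and since both are monic divisors of $f$ we get $m_{L(f,\rOO)} = f$. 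Hence $f$ divides $m$, as desired.

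The main obstacle I anticipate is the well-definedness of the quotient functor $\CK(p,\rOO)\to \CK(f,\rOO)$: one has to verify that \emph{both} families of bubble relations (for $\bubblegenr[black]{u}$ and for $\bubblegenl[black]{u}$) in $\cJ(p,\rOO)$ land in $\cJ(f,\rOO_f)$, and this is precisely where the assumption that $f$ satisfies \cref{sneeze} is used, via the equivalence of \cref{sneeze1} and \cref{sneeze3}. Beyond that, the argument is essentially bookkeeping on top of the structural inputs \cref{backhome} and \cref{badminton}.
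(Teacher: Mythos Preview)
Your proposal is correct and follows essentially the same approach as the paper: quotient functor plus \cref{badminton} for the ``if'' direction, \cref{backhome} for the converse, and another application of the quotient functor together with \cref{badminton} for the divisibility claim. One minor remark: the conditions of \cref{sneeze} are actually needed only to invoke \cref{badminton}, not for the existence of the quotient functor itself, since $\cJ(p,\rOO)$ is generated by the dot relation and the right-bubble relations alone (so your anticipated obstacle does not arise).
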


\begin{proof}
    First suppose that $\rOO = \rOO_f$ for some positive-degree polynomial $f$ dividing $p$ and satisfying the conditions of \cref{sneeze}.  Then we have the quotient functor $\CK(p,\rOO) \to \CK(f,\rOO)$.  \Cref{badminton} implies that $\CK(f,\rOO)$ is not the zero category, and hence $\CK(p,\rOO)$ is not the zero category.  For the other direction, suppose that $\CK(p,\rOO)$ is not the zero category.  Then $m_{L(p,\rOO)}$ has positive degree, and \cref{backhome} implies that $\rOO = \rOO_{m_{L(p,\OO)}}$ and that $m_{L(p,\rOO)}$ satisfies the conditions in \cref{sneeze}.

    Now let $f$ be a monic polynomial dividing $p$, satisfying the conditions in \cref{sneeze}, and satisfying $\rOO = \rOO_f$.  Then we have the quotient functor $\CK(p,\rOO) \to \CK(f,\rOO)$.  It follows that $m_{L(f,\rOO)}$ divides $m_{L(p,\rOO)}$.  On the other hand, \cref{badminton} implies that the endomorphisms $\multdotstrand[black]{east}{n}$, $0 \le n \le \deg f - 1$, are linearly independent in $\CK(f,\rOO)$.  It follows that $f = m_{L(f,\rOO)}$, and so $f$ divides $m_{L(p,\rOO)}$, as claimed.
\end{proof}

\begin{cor} \label{soup}
    If $\CK(p,\rOO)$ is not the zero category, then it is isomorphic to
    \[
        \CK(m) := \CK(m,\rOO_m)
        \qquad \text{for } m = m_{L(p,\rOO)}.
    \]
\end{cor}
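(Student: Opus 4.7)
The plan is to derive the isomorphism as a direct consequence of \cref{china}, by comparing the two defining left tensor ideals $\cJ(p,\rOO)$ and $\cJ(m,\rOO_m)$ of $\AK$, where $m := m_{L(p,\rOO)}$. First I would invoke \cref{china} to conclude that the nonvanishing hypothesis forces $\rOO = \rOO_m$ and $m \mid p$, which already rewrites $\CK(p,\rOO)$ as $\CK(p,\rOO_m)$. The corollary will then follow once I establish the ideal equality $\cJ(p,\rOO_m) = \cJ(m,\rOO_m)$, since this yields $\CK(p,\rOO_m) \cong \CK(m,\rOO_m) = \CK(m)$.

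Next I would verify the two inclusions. For $\cJ(p,\rOO_m) \subseteq \cJ(m,\rOO_m)$, the bubble generators are literally the same sequence $\rOO_m^{(r)}$ by construction, and the dot generator $p(x)$ lies in $\cJ(m,\rOO_m)$ because $m \mid p$ lets us write $p(x)$ as a composition of $m(x)$ with a polynomial in the dot. For the reverse inclusion $\cJ(m,\rOO_m) \subseteq \cJ(p,\rOO_m)$, once again the bubble generators match, so the only nontrivial step is to show $m(x) \in \cJ(p,\rOO)$. For this I would unpack the definition of $m$ via the homomorphism \cref{pear}: the object $L(p,\rOO)$ is the image of $\one$ under the quotient functor $\AK \to \CK(p,\rOO)$, so $\gok L(p,\rOO)$ is the image of $\gok$, and $m$ generates the kernel of the action map $\kk[u] \to \End_{\CK(p,\rOO)}(\gok L(p,\rOO))$ by definition. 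In particular, the morphism $m(x)\colon \gok \to \gok$ in $\AK$ maps to zero in $\CK(p,\rOO)$, which is precisely the statement that $m(x) \in \cJ(p,\rOO)$ as a morphism of $\AK$.

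Combining the two inclusions then gives $\cJ(p,\rOO) = \cJ(m,\rOO_m)$, and hence $\CK(p,\rOO) \cong \CK(m)$. The argument runs in parallel with the Brauer analogue \cref{tacos}, and no serious obstacle remains: the real content of the corollary---namely the rigidity of the power series $\rOO$ and the divisibility of any compatible $f$ by $m_{L(p,\rOO)}$---is already housed in \cref{china}. The only subtle point worth flagging carefully is the translation from ``$m$ is the minimal polynomial of the dot acting on $\gok L(p,\rOO)$'' into the ideal-theoretic statement ``$m(x) \in \cJ(p,\rOO)$'', which amounts to identifying the kernel of the quotient functor with the defining left tensor ideal on the endomorphisms of $\gok$.
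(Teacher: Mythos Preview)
Your proposal is correct and is precisely the natural argument the paper leaves implicit: the paper states this corollary without proof, treating it as immediate from \cref{china}, and your spelling-out of the two ideal inclusions is exactly how one fills in the details. One small point of attribution: the divisibility $m \mid p$ is not a conclusion of \cref{china} itself but follows (as you correctly argue later) from the definition of $m$ as the minimal polynomial of the dot on $\gok L(p,\rOO)$ together with the fact that $p(x)$ lies in the defining ideal $\cJ(p,\rOO)$.
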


If $L$ is a brick in an $\AK$-module category $\cR$, then it follows from \cref{backhome} that the action of $\AB$ on $L$ factors through $\CK(m_L)$.

We assume for the remainder of this section that
\begin{center}
    $\CK(p,\rOO)$ is not the zero category.
\end{center}
It follows from \cref{china} that $m_{L(p,\rOO)}$ is the unique monic polynomial of maximal degree in the set of polynomials $f$ that divide $p$ and satisfy $\rOO = \rOO_f$.  Our next goal is to describe $m_{L(p,\rOO)}$ explicitly.  To simplify notation, set
\[
    L = L(p,\rOO),\qquad
    m = m_{L(p,\rOO)}.
\]
It follows immediately from the definition of $\CK(p,\rOO)$ that $\rOO = \rOO_L = \rOO_m$.

For $f,g \in \kk[u]$ satisfying the conditions of \cref{sneeze}, define
\begin{equation} \label{croak}
    H_{f,g} := \left( u - q^{\frac{\epsilon_1(g)-\epsilon_1(f)}{2}} \right)^{\frac{|\epsilon_1(g)-\epsilon_1(f)|}{2}}
    \left( u + q^{\frac{\epsilon_2(g) - \epsilon_2(f)}{2}} \right)^{\frac{|\epsilon_2(g)-\epsilon_2(f)|}{2}},
\end{equation}
where $\epsilon_1$ and $\epsilon_2$ are as in \cref{what-is-epsilon}.  Note that
\begin{equation} \label{suitcase}
    \frac{\check{H}_{f,g}}{H_{f,g}}
    = \frac{\left( u-q^{\epsilon_1(f)} \right) \left( u + q^{\epsilon_2(f)} \right)}{\left( u - q^{\epsilon_1(g)} \right) \left( u + q^{\epsilon_2(g)} \right)}.
\end{equation}

\begin{lem} \label{belgium}
    Suppose that $f,g \in \kk[u]$ satisfy the conditions of \cref{sneeze}, and that $g$ is divisible by $f$.  Then
    \[
        \rOO_g = \rOO_f
        \iff
        g = f H_{f,g} \gamma
    \]
    for some $\gamma \in \kk[u]$ of even degree satisfying $\check{\gamma} = \gamma$ and $\gamma(0)=1$.
\end{lem}

\begin{proof}
    We have
    \begin{equation} \label{rabbit}
        \rOO_g = \rOO_f
        \overset{\cref{metrobusKr-1}}{\iff}
        \frac{g(u)}{f(u)} \frac{\left( u-q^{\epsilon_1(f)} \right) \left( u + q^{\epsilon_2(f)} \right)}{\left( u - q^{\epsilon_1(g)} \right) \left( u + q^{\epsilon_2(g)} \right)} = \frac{\check{g}(u)}{\check{f}(u)}
        \overset{\cref{suitcase}}{\iff} \frac{g}{f H_{f,g}} = \frac{\check{g}}{\check{f} \check{H}_{f,g}}.
    \end{equation}
    Since
    \[
        \frac{\check{g}}{\check{f}}(u)
        = \frac{g(u^{-1}) u^{\deg g}}{f(u^{-1}) u^{\deg f}}
        = \frac{g}{f}(u^{-1}) u^{\deg g - \deg f}
        \in \kk[u],
    \]
    it follows from \cref{rabbit,suitcase} that, when $\epsilon_1(f) \ne \epsilon_1(g)$, the polynomial $g/f$ is divisible by $u-q^{\epsilon_1(g)} = u - q^{\frac{\epsilon_1(g)-\epsilon_1(f)}{2}}$.  Similarly, when $\epsilon_2(f) \ne \epsilon_2(g)$, the polynomial $g/f$ is divisible by $u + q^{\epsilon_2(g)} = u + q^{\frac{\epsilon_2(g)-\epsilon_2(f)}{2}}$.  Thus, $\gamma := g/f H_{f,g}$ is a polynomial satisfying $\check{\gamma} = \gamma$ and $g = f H_{f,g} \gamma$.

    It follows from \cref{f-degree,croak} that $\deg \gamma \equiv 0 \pmod 2$.  Finally, we compute
    \begin{align*}
        \gamma(0)
        &= \frac{g(0)}{f(0) H_{f,g}(0)}
        \\
        &\overset{\mathclap{\cref{f-degree}}}{\underset{\mathclap{\cref{croak}}}{=}}\ \epsilon_1(g) \epsilon_2(g) q^{\frac{\epsilon_1(g)-\epsilon_1(f)+\epsilon_2(g)-\epsilon_2(f)}{2}}
        \left(- q^{\frac{\epsilon_1(g)-\epsilon_1(f)}{2}} \right)^{-\frac{|\epsilon_1(g)-\epsilon_1(f)|}{2}}
        \left( q^{\frac{\epsilon_2(g) - \epsilon_2(f)}{2}} \right)^{-\frac{|\epsilon_2(g)-\epsilon_2(f)|}{2}}
        \\
        &= 1. \qedhere
    \end{align*}
\end{proof}

Recall, from \cref{blackstar}, the polynomial
\begin{equation} \label{blackstar2}
    \hat{p}(u)
    = (1 - zu - u^2) p(u^{-1}) u^{\deg p} \rOO(u^{-1})
    \in \kk[u].
\end{equation}
By \cref{blackcdmx}, we have $ \hat{p}\in J_L$.  Since we are seeking a generator of the ideal $J_L$, it is natural whenever we have a pair of elements of $J_L$ to consider their greatest common denominator, which also lies in $J_L$.  For certain technical reasons, we want to instead consider
\[
    R = \gcd \big( (u^2-zu-1)p(u),\hat{p}(u) \big)\in J_L.
\]
By definition, the minimal polynomial $m$ must divide $R$.  It follows from \cref{metrobusKr-1} that the numerator and denominator of $\rOO = \rOO_m$ vanish to the same order at $1$ and at $-1$.  Therefore
\begin{equation} \label{sandal}
    \text{$p$, $\hat{p}$ and $R$ vanish to the same order at $1$ and vanish to the same order at $-1$}.
\end{equation}

\begin{lem} \label{downpour}
    We have
    \begin{equation} \label{stroop}
        \rOO = t \frac{\check{R}}{R}
    \end{equation}
    and $R(0) = \pm t$.
\end{lem}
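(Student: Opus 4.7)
My plan is to produce two distinct rational expressions for $\rOO$ and then apply \cref{hack}, paralleling the strategy behind \cref{cutters1} in the Brauer case.

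The first expression comes from \cref{blackstar2} applied to $g = p$: substituting $u \mapsto u^{-1}$ in $\hat p(u) = (1-zu-u^2)p(u^{-1})u^{\deg p}\rOO(u^{-1})$ and multiplying by $u^{\deg \hat p} = u^{\deg p + 2}$ yields
\[
    (u^2-zu-1)\,p(u)\,\rOO(u) = u^{\deg\hat p}\hat p(u^{-1}) = \hat p(0)\,\check{\hat p}(u).
\]
Direct evaluation of the defining formula for $\hat p$ at $u=0$ gives $\hat p(0) = t$: the factor $1-zu-u^2$ is $1$ there, the factor $u^{\deg p}p(u^{-1})$ is $1$ there by monicity of $p$, and $\rOO(u^{-1})$ is $t$ there. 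So the first expression is $\rOO(u) = t\,\check{\hat p}(u)/[(u^2-zu-1)p(u)]$. For the second expression, I would invoke $\rOO(u^{-1}) = \rOO(u)^{-1}$, which holds by \cref{sneeze} since $\rOO = \rOO_m$ and $m$ satisfies the hypotheses of \cref{sneeze} by \cref{backhome}. Inverting the $\hat p$-formula for $\rOO(u^{-1})$ then gives $\rOO(u) = -p(0)(u^2+zu-1)\check p(u)/\hat p(u)$.

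Applying \cref{hack} to these two expressions---and using the monic convention of \cref{monic} to absorb the unit scalars $t$ and $-p(0)$---produces $\rOO = y\,\gcd\bigl(\check{\hat p},(u^2+zu-1)\check p\bigr)/R$ for some unit $y\in\kk^\times$. I would identify the numerator with $\check R$ as follows: first, $R(0) \neq 0$ (since $R \mid \hat p$ and $\hat p(0) = t \neq 0$), so $\check R$ is a well-defined monic polynomial of degree $\deg R$. Then, using that the map $f \mapsto \check f$ is multiplicative and commutes with taking monic gcds on polynomials with nonzero constant term (both elementary from \cref{beijing}), the numerator is the image under $\check{}$ of $R = \gcd((u^2-zu-1)p,\hat p)$, and hence equals $\check R$. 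Comparing leading coefficients---$R$ and $\check R$ are both monic of the same degree, so $\check R/R$ has leading term $1$, while $\rOO$ has leading term $t$---forces $y = t$.

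For $R(0) = \pm t$, I would substitute $u \mapsto u^{-1}$ in $\rOO = t\check R/R$ using $\check R(u^{-1}) = R(0)^{-1}u^{-\deg R}R(u)$ and $R(u^{-1}) = R(0)u^{-\deg R}\check R(u)$ to compute $\rOO(u^{-1}) = t\,R(0)^{-2}\,R(u)/\check R(u)$. Matching this against $\rOO(u)^{-1} = R(u)/[t\,\check R(u)]$ yields $R(0)^2 = t^2$. The main subtlety throughout is the gcd bookkeeping and verifying that $\check{}$ interacts correctly with the gcd, but once the nonzero-constant-term hypothesis for $R$ is in place, the rest is mechanical.
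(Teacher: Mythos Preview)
Your proposal is correct and follows essentially the same approach as the paper: produce two rational expressions for $\rOO$ using \cref{blackstar2} and the symmetry $\rOO(u^{-1})=\rOO(u)^{-1}$, apply \cref{hack}, identify the numerator as $\check R$ via multiplicativity of $\check{}$ and its compatibility with gcd, then fix the unit by comparing leading terms. The only cosmetic difference is in the final step: the paper computes $\rOO(0)=t^{-1}$ directly from the explicit formula \cref{metrobusKr-1} for $\rOO_m$ and then reads off $R(0)^2=t^2$ from $t^{-1}=t\check R(0)/R(0)=t/R(0)^2$, whereas you reach the same equation by substituting $u\mapsto u^{-1}$ into $\rOO=t\check R/R$ and matching against $\rOO(u)^{-1}$.
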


\begin{proof}
    Let $d = \deg p$, and let $\sim$ denote the relation on the ring of rational functions of differing by a factor of $\kk^\times$.  We have
    \begin{multline*}
        \frac{\check{\hat{p}}(u)}{(u^2-zu-1)p(u)}
        \overset{\cref{beijing}}{\sim} \frac{\hat{p}(u^{-1}) u^{d+2}}{(u^2-zu-1)p(u)}
        \overset{\cref{blackstar2}}{=} \rOO(u)
        = \rOO(u^{-1})^{-1}
        \\
        \overset{\cref{blackstar2}}{=} \frac{(1-zu-u^2)p(u^{-1})u^d}{\hat{p}(u)}
        \overset{\cref{beijing}}{\sim} \frac{(u^2+zu-1) \check{p}(u)}{\hat{p}(u)}.
    \end{multline*}
    By \cref{hack}, we have
    \[
        \rOO(u)
        \sim \frac{\gcd \left( \check{\hat{p}}(u), (u^2+zu-1) \check{p}(u) \right)}{\gcd \left( \hat{p}(u), (u^2-zu-1) p(u) \right)}.
    \]
    For monic $a,b,c \in \kk[u]$ with nonzero constant terms, we see that if $a$ divides $b$ and $c$, then $\check{a}$ divides $\check{b}$ and $\check{c}$.  It follows that the map $a \mapsto \check{a}$ commutes with taking $\gcd$.  Therefore,
    \[
        \gcd \left( \check{\hat{p}}(u), (u^2+zu-1) \check{p}(u) \right) = \check{R}(u).
    \]
    Thus, $\rOO \sim \check{R}/R$.  Comparing leading terms, we see that \cref{stroop} holds.

    For the final assertion, note that
    \[
        \rOO(0) = \rOO_m(0) \overset{\cref{metrobusKr-1}}{=} t q^{\epsilon_1(m)+\epsilon_2(m)} \frac{\check{m}(0)}{m(0)}
        = \frac{t q^{\epsilon_1(m)+\epsilon_2(m)}}{m(0)^2}
        \overset{\cref{f-degree}}{=} t^{-1}.
    \]
    Thus
    \[
        t^{-1} = t \frac{\check{R}(0)}{R(0)} = \frac{t}{R(0)^2} \implies R(0)^2 = t^2 \implies R(0) = \pm t.
        \qedhere
    \]
\end{proof}

\begin{theo} \label{gooigi}
   If $\CK(p,\rOO)$ is not the zero category, then
   \[
        m_{L(p,\rOO)}
        =
        \begin{dcases}
            \gcd(p,\hat{p}) & \text{if $\deg R$ is odd and $R(0)=t$}, \\
            \frac{\gcd(p,\hat{p})}{u^2-1} & \text{if $\deg R$ is odd and $R(0)=-t$}, \\
            \frac{\gcd(p,\hat{p})}{u+1} & \text{if $\deg R$ is even and $R(0)=t$}, \\
            \frac{\gcd(p,\hat{p})}{u-1} & \text{if $\deg R$ is even and $R(0)=-t$}.
        \end{dcases}
    \]
\end{theo}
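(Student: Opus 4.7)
The plan is to identify $m := m_{L(p,\rOO)}$ with the candidate $m^*$ given by the right-hand side of the formula in each of the four cases. Since $m$ divides both $p$ (by definition of $\CK(p,\rOO)$) and $\hat p$ (by \cref{blackcdmx}), it divides $S := \gcd(p,\hat p)$; writing $\gamma := S/m$, the task reduces to identifying $\gamma$.

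First I would compute $\nu_{\pm 1}(\gamma)$ by evaluating the identity $\rOO_m = \rOO$ at $u = \pm 1$. Using \cref{downpour} and \cref{sandal}, one finds
\[
    \rOO(1) = (-1)^{\nu_1(S)}\tfrac{t}{R(0)}, \qquad \rOO(-1) = (-1)^{\deg R - \nu_{-1}(S)}\tfrac{t}{R(0)}.
\]
A short check over the four possibilities in \cref{f-degree} shows, uniformly in $(\epsilon_1(f),\epsilon_2(f))$,
\[
    \rOO_f(1) = (-1)^{\nu_1(f)}, \qquad \rOO_f(-1) = -(-1)^{\nu_{-1}(f)}
\]
for every monic $f$ satisfying the hypotheses of \cref{sneeze}, the factors involving $q^{\pm 1}$ and $f(0)$ cancelling neatly. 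Equating these for $f = m$, together with $\nu_{\pm 1}(m) \leq \nu_{\pm 1}(S)$ and the hypothesis that $\CK(p,\rOO)$ is nonzero, forces $\nu_{\pm 1}(S)$ to be even and pins down the maximum admissible $\nu_{\pm 1}(m)$; the corresponding $(\nu_1(\gamma),\nu_{-1}(\gamma))$ equals $(0,0)$, $(1,1)$, $(0,1)$, $(1,0)$ in Cases 1--4 respectively, with $\nu_c(S) \geq 2$ in those cases where $\nu_c(\gamma) = 1$.

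Next I would verify that $m^* := S/\gamma^*$, with $\gamma^* := (u-1)^{\nu_1(\gamma)}(u+1)^{\nu_{-1}(\gamma)}$, is a valid candidate: it is a polynomial (by the previous step), satisfies the hypotheses of \cref{sneeze}, and satisfies $\rOO_{m^*} = \rOO$. Writing $R = ST$ with $T \mid (u-q)(u+q^{-1})$ and using that $\gamma^*$ is self-conjugate, so that $\check{m^*}/m^* = \check S/S$, the identity $\rOO_{m^*} = \rOO$ collapses via \cref{metrobusKr-1} to the polynomial equation
\[
    (u-q^{\epsilon_1(m^*)})(u+q^{\epsilon_2(m^*)})\, T = (u-q)(u+q^{-1})\, \check T,
\]
which I would verify directly in each of the four possibilities $T \in \{1, u-q, u+q^{-1}, (u-q)(u+q^{-1})\}$, reading off $\epsilon_i(m^*)$ from $m^*(0) = S(0)/\gamma^*(0)$ and $\deg m^*$ via \cref{f-degree}; the same table also confirms sneeze for $m^*$. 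By the maximality clause of \cref{china}, $m^* \mid m$.

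Finally I would upgrade $m^* \mid m$ to equality by writing $m = m^*\delta$. Since $m \mid S$, we have $\delta \mid \gamma^*$, leaving at most four candidates. For each nontrivial choice, either $\delta \in \{u-1, u+1\}$, in which case $\deg m$ has the opposite parity of $\deg m^*$ while $m(0) = \pm m^*(0)$, placing $m(0)$ (using $q^2 \ne 1$) outside the values admissible at the new parity by \cref{sneeze}; or $\delta = \gamma^*$, giving $m = S$, for which the $\epsilon_i$ values determined by $\deg S$ and $S(0)$ fail the polynomial identity above with $T$ and $\check T$, forcing $\rOO_S \ne \rOO$. Hence $\delta = 1$ and $m = m^*$. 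The main obstacle is the combinatorial bookkeeping across the twelve sub-cases (four theorem cases times three nontrivial values of $T$), though each reduces to the same polynomial identity and a sneeze check made automatic by \cref{f-degree}.
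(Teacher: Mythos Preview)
Your approach is correct and takes a genuinely different route from the paper's. Two minor corrections to step~1: the evaluation at $u=\pm 1$ only determines the \emph{parity} of $\nu_{\pm 1}(\gamma)$, not its exact value, and the claims ``forces $\nu_{\pm 1}(S)$ to be even'' and ``$\nu_c(S)\ge 2$'' do not follow (only $\nu_c(S)\ge 1$ does, which is all you need for $m^*$ to be a polynomial). With this fix there is in fact a shortcut: once step~2 gives $m^*\mid m\mid S=m^*\gamma^*$, writing $\gamma=\gamma^*/\delta$ with $\delta\mid\gamma^*$, the parity constraint from step~1 forces $\nu_{\pm 1}(\delta)$ to be even, hence zero (as $\gamma^*$ is squarefree), so $\delta=1$ and your step~3 becomes unnecessary. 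It is also worth noting that in all four theorem cases one has uniformly $\deg m^*\equiv 1-\deg T\pmod 2$ and $m^*(0)=t/T(0)$, so your twelve sub-cases in step~2 collapse to four, indexed by $T$ alone.

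The paper's proof instead works with $R_1=R/\gamma^*$, $R_2=R_1(u^2-1)^2$, and $R_3=\gcd(R_1,p)$, invoking \cref{belgium} twice: once (together with \cref{sandal}) to show $R_1$ is a polynomial divisible by $m$, and once to deduce $\rOO_{R_3}=\rOO$. Your approach replaces this structural lemma by the explicit evaluations $\rOO_f(1)=(-1)^{\nu_1(f)}$ and $\rOO_f(-1)=-(-1)^{\nu_{-1}(f)}$ together with a direct case check on $T=R/S$; this is more hands-on but gives a transparent explanation for exactly which factor $\gamma^*$ must be removed from $\gcd(p,\hat p)$.
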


\begin{proof}
    Let
    \begin{equation} \label{scone}
        R_1(u)
        =
        \begin{dcases}
            R(u) & \text{if $\deg R$ is odd and $R(0)=t$}, \\
            \frac{R(u)}{u^2-1} & \text{if $\deg R$ is odd and $R(0)=-t$}, \\
            \frac{R(u)}{u+1} & \text{if $\deg R$ is even and $R(0)=t$}, \\
            \frac{R(u)}{u-1} & \text{if $\deg R$ is even and $R(0)=-t$}.
        \end{dcases}
    \end{equation}
    Since $m$ divides $p$ and $\hat{p}$, we have that $m$ divides $R$ and thus $m$ also divides $R_2 := R_1(u)(u^2-1)^2$.  Note that $R_2$ is monic, has odd degree, and $R_2(0) = R_1(0) = t$.  Thus, $R_2$ satisfies the conditions of \cref{sneeze} and, by \cref{what-is-epsilon}, we have $\epsilon_1(R_2)=1$, $\epsilon_2(R_2)=-1$.  Since
    \[
        \rOO_{R_2}
        = t \frac{\check{R}_2}{R_2}
        = t \frac{\check{R}}{R}
        \overset{\cref{stroop}}{=} \rOO
        = \rOO_m,
    \]
    \cref{belgium} implies that $R_2 = m H_{m,R_2} \gamma$ where $\gamma \in \kk[u]$ has even degree, $\check{\gamma} = \gamma$, and $\gamma(0)=1$.

    It follows from \cref{sandal} that $R_2$ vanishes at strictly higher order at $\pm 1$ than $p$ and $\hat{p}$.  Since $m$ divides $p$ and $\hat{p}$, the vanishing order of $m$ at $\pm 1$ is less than or equal to the vanishing order of $p$ and $\hat{p}$.  Since $H_{m,R_2}$ does not vanish at $\pm 1$, it follows that $\gamma$ vanishes at $\pm 1$, and thus is divisible by $(u^2-1)^2$, since $\check{\gamma} = \gamma$.  This shows that $R_1 = \frac{R_2}{(u^2-1)^2} = m H_{m,R_2} \frac{\gamma}{(u^2-1)^2}$ is a polynomial divisible by $m$.

    The polynomial $R_1$ is monic, has odd degree, and $R_1(0) = t$.  Thus, $R_1$ satisfies the conditions of \cref{sneeze} and, by \cref{what-is-epsilon}, we have $\epsilon_1(R)=1$, $\epsilon_2(R)=-1$.  Therefore,
    \[
        \rOO \overset{\cref{stroop}}{=} t \frac{\check{R}}{R} = t \frac{\check{R}_1}{R_1} \overset{\cref{metrobusKr-1}}{=} \rOO_{R_1}.
    \]

    Consider $R_3=\gcd(R_1,p)$, which is divisible by $m$.  Let $a \in \{1,u^2-1,u+1,u-1\}$ be the denominator appearing in \cref{scone}.  Then
    \[
        R_1 = \gcd \left( (u^2-zu-1) \frac{p}{a}, \frac{\hat{p}}{a} \right)
        = \frac{1}{a} \gcd \left( (u^2-zu-1) p, \hat{p} \right).
    \]
    Thus,
    \[
        R_3 = \gcd \left(p, (u^2-zu-1) \frac{p}{a}, \frac{\hat{p}}{a} \right)
        = \frac{1}{a} \gcd \left( p, \hat{p} \right),
    \]
    where the last equality follows from the fact that $u^2-zu-1$ is not divisible by $a$.  Therefore,
    \[
        \frac{R_1}{R_3}
        = \frac{\gcd \left( (u^2-zu-1) p, \check{p} \right)}{\gcd \left( p, \check{p} \right)},
    \]
    which clearly divides $u^2-zu-1$.  Equivalently, $R_1/R_3 \in \{1, u-q, u+q^{-1}, u^2-uz-1\}$.  In fact, one can easily confirm that $R_1/R_3=H_{R_3,R_1}$.  It thus follows immediately from \cref{belgium} that $\rOO_{R_3} = \rOO$.
    \details{
        If $R_1/R_3 = u^2-uz-1 = (u-q)(u+q^{-1})$, then $R_3(0) = -R_1(0) = -t$.  Thus, $R_3$ satisfies the conditions of \cref{sneeze} and, by \cref{what-is-epsilon}, we have $\epsilon_1(R_3)=-1$ and $\epsilon_2(R_3)=1$.  Therefore,
        \[
            \rOO = r \frac{\check{R}_1}{R_1}
            = \frac{(u-q^{-1})(u+q)}{(u-q)(u+q^{-1})} \frac{\check{R}_3}{R_3}
            \overset{\cref{metrobusKr-1}}{=} \rOO_{R_3}.
        \]
        The other cases are analogous.
    }

    Since $R_3$ divides $p$, \cref{china} implies that $m$ is divisible by $R_3$.  As both polynomials are monic, it follows that $m=R_3$, completing the proof.
\end{proof}

\begin{cor}
    The category $\CK(p,\rOO)$ is not the zero category if and only if all three of the following conditions are satisfied:
    \begin{enumerate}
		\item we have $\rOO = \rOO_f$ for some $f$ satisfying the conditions of \cref{sneeze};
		\item the power series $\hat{p}$ is a polynomial;
		\item the rational function $R_1$ defined in \cref{scone} is a polynomial of strictly positive degree.
	\end{enumerate}
\end{cor}

\begin{proof}
    The ``only if'' direction follows from the proof of \cref{gooigi}.  For the ``if'' direction, we note that the given conditions imply that the conditions of \cref{china} are satisfied with $f=R_1$.
\end{proof}

Following the methods of \cref{sec:admissibleBrauer}, one can translate the results of the current section from statements about cyclotomic Kauffman \emph{categories} to statements about cyclotomic BMW \emph{algebras}.  Since the treatment is parallel, and we consider the category point of view to be more natural, we do not give the details of this translation here.


\bibliographystyle{alphaurl}
\bibliography{BKBubbles}

\end{document}